\newcommand{\p}{\frac{1}{2}}
\newcommand{\ds}[2]{\delta([\nu^{#1} \rho,\nu^{#2} \rho])}
\newcommand{\st}[4]
{\ds{#1}{#2} \times \ds{#3}{#4} \rtimes \sigma }
\newcommand{\A}{\st{-a}{c}{\p}{b}   }
\newcommand{\B}{\st{\p}{b}{-a}{c}   }
\newcommand{\C}{\st{\p}{b}{-c}{a}   }
\newcommand{\D}{\st{-c}{a}{\p}{b}   }
\newcommand{\E}{\st{-c}{a}{-b}{-\p} } 
\newcommand{\F}{\st{\p}{b}{-a}{c}   }
\newcommand{\G}{\st{-a}{c}{\p}{b}   }
\newcommand{\I}{\st{-a}{c}{-b}{-\p}   }
\newcommand{\J}{\st{-b}{-\p}{-a}{c}   }
\newcommand{\K}{\st{-b}{-\p}{-c}{a}   }
\newtheorem{theorem}{Theorem}[section]
\newtheorem*{theorem*}{Theorem}
\newtheorem{lemma}[theorem]{Lemma}
\newtheorem{corollary}[theorem]{Corollary}
\numberwithin{equation}{section}
\numberwithin{figure}{section}
\numberwithin{table}{section}
\newtheorem{proposition}[theorem]{Proposition}
\theoremstyle{definition}
\theoremstyle{remark}
\begin{document}

\title[A one half cuspidal reducibility]
{
Parabolic induction from two segments, linked under contragredient,  with a one half cuspidal reducibility, a special case}

\author{Igor Ciganovi\'{c}}
%g}
\address{Igor Ciganovi\'{c}, Department of Mathematics, Faculty of Science, University of Zagreb, Bijeni\v{c}ka cesta 30, HR-10000 Zagreb, Croatia}
\email{igor.ciganovic@math.hr}

%\thanks{This work has been fully supported by Croatian Science Foundation under the project IP-2018-01-3628.}
\keywords{Classical group, composition series, induced representations, p-adic field, Jacquet module}
\subjclass[2020]{Primary 22D30, Secondary 22E50, 22D12, 11F85}

\begin{abstract}
 In this paper, we determine the composition series of the induced representation   
 $\delta([\nu^{-a}\rho,\nu^c\rho])\times \delta([\nu^\frac{1}{2}\rho,\nu^b\rho])\rtimes \sigma$ where 
  $a, b, c \in \mathbb{Z}+\frac{1}{2}$ such that $\frac{1}{2}\leq a < b  < c$,  
 $\rho$ is an irreducible cuspidal unitary representation of a general linear group and $\sigma$ is an irreducible cuspidal representation of a classical group.
\end{abstract}
\maketitle

\section*{Introduction}
The problem of determining the composition series of  induced representations is important for the representation theory.
Here we consider a certain class of representations induced from two segments and a cuspidal representation of a classical group. 
 We use the Langlands classification for irreducible representations of classical groups and 
the M{\oe}glin-Tadi\'c classification for discrete series. Our approach is based on tools of the Jacquet module and intertwining operators.

To describe our results we introduce some notation. 
Fix a local non-archimedean field $F$ of characteristic different than two.
Let $\rho$ be
an irreducible cuspidal unitary representation of some $GL(m,F)$, and $x,y\in\mathbb{R}$, such that $y-x+1\in\mathbb{Z}_{\geq 0}$. By Zelevinsky classification, the set 
$\Delta=[\nu^x \rho,\nu^y \rho]=\{\nu^x \rho,...,\nu^y \rho\}$ is called a segment.
We have a unique irreducible subrepresentation
\[
\delta(\Delta)=\ds{x}{y}\hookrightarrow \nu^y\rho\times \cdots \times \nu^x\rho,
\]
of the parabolically induced representation.
If $\Delta\subseteq \Delta'$, then 
$\delta(\Delta)\times \delta(\Delta')
\cong
\delta(\Delta')\times \delta(\Delta)
$ is irreducible.
Set 
$e(\Delta)=(x+y)/2$.
Given a sequence of segments $\Delta_1,...,\Delta_k$, such that 
$e(\Delta_1)\geq \cdots \geq e(\Delta_k)>0$ and
 an irreducible tempered representation $\tau$, of a symplectic or (full) orthogonal group, we have a unique quotient,
 called the Langlands quotient,
\[
\delta(\Delta_{1})\times\cdots\times \delta(\Delta_k)\rtimes \tau
\rightarrow
L(\delta(\Delta_{1})\times\cdots\times \delta(\Delta_k)\rtimes \tau),
\]
of the parabolically induced representation. Similarly to $\tau$, assume that $\sigma$ is cuspidal such that $\nu^\frac{1}{2}\rho \rtimes \sigma$ reduces.
Let $a, b, c \in \mathbb{Z}+\frac{1}{2}$ such that $\frac{1}{2}\leq a < b  < c$. We are considered with composition series of induced representation
\[
\ds{-a}{c}\times \ds{\frac{1}{2}}{b}\rtimes \sigma.
\]
Now, we introduce some discrete series, appearing as only irreducible subrepresentations in the following formulas.
\[
\sigma_a \hookrightarrow \ds{\frac{1}{2}}{a}\rtimes \sigma, 
\textrm{ and similarly for  } \sigma_b \textrm{ and  } \sigma_c. \textrm{ Further }
\]
\begin{align*}
    \sigma_{b,c}^+
\hookrightarrow  
\ds{\frac{1}{2}}{b}\rtimes\sigma_{c},
\quad
 \sigma_{b,c}^++
\sigma_{b,c}^-
\hookrightarrow 
\ds{-b}{c}\rtimes \sigma,
\end{align*}
and similarly for $\sigma_{a,c}^\pm$. Finally
\(
\sigma_{b,c,a}^\pm
\hookrightarrow
\ds{\frac{1}{2}}{a} \rtimes \sigma_{b,c}^\pm
\). Now we have
\begin{theorem*} 
Let $\psi=\delta([\nu^{-a}\rho,\nu^c\rho])\times \delta([\nu^\frac{1}{2}\rho,\nu^b\rho])\rtimes \sigma$ and define representations
\begin{align*}
    W_1=&\sigma_{b,c,a}^+ +L(\ds{\frac{1}{2}}{a}\rtimes \sigma_{b,c}^-),
        \\
    W_2=&L(\ds{\frac{1}{2}}{a}\rtimes \sigma_{b,c}^+)
        +
        L(\ds{-a}{b}\rtimes \sigma_c)
        +
        \\
        &L(\ds{\frac{1}{2}}{b}\rtimes \sigma_{a,c}^-)
        +
        L(\ds{-b}{c}\rtimes \sigma_a),
        \\
     W_3=&L(\ds{\frac{1}{2}}{b} \rtimes \sigma_{a,c}^+)+
            L(\ds{-a}{c}\rtimes\sigma_{b})+
        \\
          &\sigma_{b,c,a}^- + 
                            L(\ds{-b}{c}\times
                                \ds{\frac{1}{2}}{a} 
                                    \rtimes \sigma),
        \\
      W_4=&L(\psi).                      
\end{align*}
Then there exists a sequence $\{0\}=V_0\subseteq V_1
\subseteq V_2 \subseteq V_3
\subseteq V_4=\psi$,
such that
\begin{equation*}
V_i/V_{i-1}\cong W_i,\quad  i=1,\ldots,4.
\end{equation*}    
Further, $W_1$ is chosen to be the largest possible, then $W_2$, and so on.
\end{theorem*}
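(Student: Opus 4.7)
The plan is to proceed in three stages: (i) enumerate all irreducible subquotients of $\psi$ with multiplicities via Jacquet module analysis; (ii) build the filtration from the top down, starting with the Langlands quotient $W_4 = L(\psi)$; and (iii) verify that the bottom layer $W_1$ is the largest possible by exhibiting explicit embeddings of its constituents.

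I would begin by applying Tadi\'{c}'s structure formula to compute $\mu^*(\psi)$. Every irreducible summand listed in $W_1+W_2+W_3+W_4$ carries its own distinctive cuspidal support, so multiplicities can be pinned down by counting how often each signature appears in $\mu^*(\psi)$ together with the known Jacquet modules of the individual $\sigma_a,\sigma_b,\sigma_c$, $\sigma_{a,c}^\pm$, $\sigma_{b,c}^\pm$, and $\sigma_{b,c,a}^\pm$ pieces (obtained from the M{\oe}glin--Tadi\'{c} classification). A routine but careful bookkeeping should show each listed constituent appears exactly once, giving the eleven-term Jordan--H\"{o}lder series of $\psi$.

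Next, to construct the filtration, $W_4 = L(\psi)$ is automatic: set $V_3 = \ker(\psi \twoheadrightarrow L(\psi))$. The bottom piece requires more care. For the first summand of $W_1$, I would chain embeddings
\[
\sigma_{b,c,a}^+ \hookrightarrow \ds{\frac{1}{2}}{a} \rtimes \sigma_{b,c}^+ \hookrightarrow \ds{\frac{1}{2}}{a} \times \ds{\frac{1}{2}}{b} \rtimes \sigma_c,
\]
and then rewrite the outer cuspidal support (commuting segments, since $[\nu^{\frac{1}{2}}\rho,\nu^a\rho]$ and $[\nu^{\frac{1}{2}}\rho,\nu^b\rho]$ are nested) to land inside $\psi$. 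A parallel embedding handles $L(\ds{\frac{1}{2}}{a}\rtimes\sigma_{b,c}^-)$, using that this Langlands quotient sits as a subrepresentation of an appropriate induced representation from which $\psi$ inherits it via an intertwining operator. Then $V_1$ is the sum of these two socle copies, and the multiplicity count from step one shows no further irreducible can embed into $\psi$.

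The main obstacle is the middle separation $V_2 \subset V_3$: showing that precisely the four Langlands quotients listed in $W_2$ lie in $V_2$, and the four listed in $W_3$ appear as $V_3/V_2$. The strategy is to find, for each $W_2$-constituent, an intertwining operator from $\psi$ (or from a conjugate obtained by a segment swap or by a Zelevinsky-style splitting of $\ds{-a}{c}$ or $\ds{\frac{1}{2}}{b}$) whose image contains it as a subquotient already below the preimage of $W_4$, and dually to show that each $W_3$-constituent does not admit an embedding into the quotient $\psi/V_2$ without forcing it into $V_3/V_2$. This ordering reflects the natural ``size'' ordering of Langlands data: the $W_3$-constituents have strictly smaller data than those in $W_2$ and must be peeled off earlier from below. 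Verifying this level by level via Jacquet module inspection, with numerous sub-cases to track (especially the two Langlands quotients $L(\ds{-b}{c}\rtimes\sigma_a)$ and $L(\ds{-b}{c}\times\ds{\frac{1}{2}}{a}\rtimes\sigma)$, which share much of their cuspidal support), is where the bulk of the technical work lies.
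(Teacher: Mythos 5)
Your plan is in the right spirit---Jacquet modules plus intertwining operators---but it has a concrete gap at each of the three places where the real work happens.

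First, the claim that ``every irreducible summand listed in $W_1+\cdots+W_4$ carries its own distinctive cuspidal support'' is false: parabolic induction preserves supercuspidal support, so \emph{all} composition factors of $\psi$ share the same support (namely $\nu^c\rho,\ldots,\nu^{-a}\rho,\nu^b\rho,\ldots,\nu^{\frac{1}{2}}\rho,\sigma$, folded through self-duality). Distinguishing, say, $\sigma_{b,c,a}^+$ from $\sigma_{b,c,a}^-$ or $L(\ds{-a}{b}\rtimes\sigma_c)$ from $L(\ds{-b}{c}\rtimes\sigma_a)$ requires selecting specific irreducible subquotients of specific Jacquet modules and tracking them through $\mu^*$; cuspidal support alone separates nothing here.

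Second, your proposed embedding chain
$\sigma_{b,c,a}^+ \hookrightarrow \ds{\frac{1}{2}}{a}\rtimes\sigma_{b,c}^+ \hookrightarrow \ds{\frac{1}{2}}{a}\times\ds{\frac{1}{2}}{b}\rtimes\sigma_c$
does not land inside $\psi$. The target maps into $\ds{\frac{1}{2}}{a}\times\ds{\frac{1}{2}}{b}\times\ds{\frac{1}{2}}{c}\rtimes\sigma$, whereas $\psi$ sits inside $\ds{\frac{1}{2}}{c}\times\ds{-a}{-\frac{1}{2}}\times\ds{\frac{1}{2}}{b}\rtimes\sigma$; the factors $\ds{\frac{1}{2}}{a}$ and $\ds{-a}{-\frac{1}{2}}$ are contragredient, not isomorphic, and ``rewriting the cuspidal support'' is not an embedding. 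One actually needs to identify the kernel of the first intertwining operator $\ds{\frac{1}{2}}{b}\times\ds{-a}{c}\rtimes\sigma \to \ds{\frac{1}{2}}{b}\times\ds{-c}{a}\rtimes\sigma$ as $\ds{\frac{1}{2}}{b}\rtimes\sigma_{a,c}^+ \oplus \ds{\frac{1}{2}}{b}\rtimes\sigma_{a,c}^-$ (a genuine subrepresentation of $\psi$) and then produce the embedding $\sigma_{b,c,a}^+ \hookrightarrow \ds{\frac{1}{2}}{b}\rtimes\sigma_{a,c}^+$ via a composition of an embedding and an epimorphism (Theorem 2-6 of Hanzer--Mui\'c). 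Your path needs to be rerouted through this kernel.

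Third, and most seriously, the middle separation $V_2\subset V_3$ is not a matter of ``level-by-level Jacquet inspection.'' The paper's argument runs two chains of intertwining operators simultaneously, one on $\psi$ and one on its $\wedge$-dual (Waldspurger's involution for the symplectic case), and uses the identifications $K_i\cong H_{4-i}^\wedge$ so that composition series of each kernel can be read from \emph{both} ends. The objects $k_i = K_i\cap\mathrm{Im}(f_{i-1}\circ\cdots\circ f_0)$ and $h_i = H_i\cap\mathrm{Im}(g_{i-1}\circ\cdots\circ g_0)$, together with the multiplicity-one property of $\psi$ and a short socle/cosocle comparison through quotients $P_\zeta,P_\nu$, are what force each constituent onto the correct layer. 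Without this dual-chain structure there is no mechanism to rule out, for instance, $\sigma_{b,c,a}^-$ sliding down into $V_2$; Jacquet modules only give you multiplicities, not positions. The contravariant functor $\wedge$ and the two intertwining chains are the load-bearing ideas, and they are absent from your outline.
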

Now we describe the content of the paper.
After Preliminaries, we fix the notation in Section
\ref{sect:2} and collect some reducibility results.
Intertwining operators and an approach to decompose the induced representation are considered in Section \ref{dekompozicija1}. In Section \ref{diskretni}, we determine the occuring discrete series. The 
remaining non-tempered candidates, not provided in 
Section \ref{dekompozicija1},
are  listed in Section 
\ref{netemperirani}. Their occurrence is confirmed in Sections \ref{mult_L1}-\ref{mult_L3}. Composition factors are described in Section \ref{kompozicijski_faktori}. To determine composition series, we decompose kernels of intertwining operators in Sections \ref{kompozicijski1}-\ref{kompozicijski3},
and provide the main result in Section \ref{dekompozicija2}.
\ \\ 

The author would like to thank Ivan Mati\'c for pointing to a result important for this paper.

\section{Preliminaries}
\label{sect:1}
Let $F$ be a local non-archimedean field of characteristic different than two. As in \cite{tadic-diskretne}, fix a tower of symplectic or orthogonal non-degenerate $F$ vector spaces $V_n$, $n\geq 0$ where $n$ is the Witt index. We denote by $G_n$ the group of isometries of $V_n$. It has split rank $n$.  
Also, we fix the set of standard parabolic subgroups in the usual way.  
Standard parabolic proper subgroups of $G_n$ are in bijection
with the set of ordered  partitions of positive integers 
$m\leq n$: 
\begin{align*}
&\{s=(n_1,\ldots,n_k) \mid  n_1+\cdots +n_k=m,  k>0 
%n_i>0, i=1,\ldots,k 
\} \longleftrightarrow P_s,
\\
&P_s=M_s N_s, 
\quad
\textrm{Levi factorization with }
M_s \textrm{ Levi factor}, 
\\
&M_s
\cong GL(n_1,F)\times \cdots \times GL(n_k,F)  \times G_{n-m}.
\end{align*}
By Alg $G_n$ we denote smooth representations of $G_n$,
Irr $G_n$ irreducible representations, 
and
subscript $f.l.$ means finite length, $u$ unitary, and $cusp$ cuspidal. Also denote 
\(\textrm{Alg } G=\cup_{n\geq 0}  \textrm{Alg } G_n\),
and so on.
We use a similar notation for $GL(n,F)$.
For $\delta_i \in \textrm{Alg  }GL(n_i,F)$, $i=1,...,k$ and 
$\tau \in \textrm{Alg  }G_{n-m}$, 
let
$\pi=\delta_1\otimes\cdots\otimes\delta_k\otimes \tau$ 
$\in \textrm{Alg  } M_s$ and
\[
\delta_1\times\cdots\times\delta_k\rtimes \tau= 
\text{Ind}_{M_s}^{G_n}( \pi)
\]
be the representation induced  from $\pi$ using normalized parabolic induction.
If $\sigma \in \textrm{Alg }G_n$ we denote by 
$\text{r}_{s}(\sigma)=\text{r}_{M_s}(\sigma)=\text{Jacq}_{M_s}^{G_n}(\sigma)$
the normalized Jacquet module of $\sigma$. We have the Frobenius reciprocity
\[
\text{Hom}_{G_n}(\sigma,\text{Ind}_{M_s}^{G_n}(\pi))=
\text{Hom}_{M_s}(\text{Jacq}_{M_s}^{G_n}(\sigma),\pi).
\]
Let $\rho \in  \textrm{Irr}_{u,cusp} GL$ 
%(this defines $m_{\rho}$) 
and $x,y\in\mathbb{R}$, such that $y-x+1\in\mathbb{Z}_{\geq 0}$. The set 
\begin{align*}
\Delta=[\nu^x \rho,\nu^y \rho]=\{\nu^x \rho,...,\nu^y \rho\}
\end{align*}
is called a segment.
We have a unique irreducible subrepresentation
\[
\ds{x}{y} \hookrightarrow \nu^y\rho\times \cdots \times \nu^x\rho,
\]
of the induced representation, and it is essentially square integrable.
We also denote 
$e([\nu^x \rho,\nu^y \rho])=e(\delta([\nu^x \rho,\nu^y \rho])=\frac{x+y}{2}$.
 For $y-x+1\in\mathbb{Z}_{< 0}$ define $[\nu^x \rho,\nu^y \rho]=\emptyset$ and $\delta(\emptyset)$ is the irreducible representation of the trivial group.
Let
$\widetilde{\Delta}=[\nu^{-y} \widetilde{\rho},\nu^{-x} \widetilde{\rho}]$ where $\widetilde{\rho }$ denotes the contragredient of $\rho$. We have 
$\delta(\Delta)\widetilde{\ }=\delta(\widetilde{\Delta})$.
 By \cite{zelevinsky:ind-repns-II} if $\delta \in \textrm{Irr } GL$
is essentially square integrable,
there exists a segment $\Delta$ such that $\delta=\delta(\Delta)$.
Let
$\delta_i=\delta( \Delta_i), e_i=e(\delta_i),i=1,2$. We have
\[
    \delta_1\times \delta_2 \textrm{ reduces}
    \Leftrightarrow 
    \Delta_1 \cup \Delta_2 \textrm{ is a segment and } 
    \Delta_1 \nsubseteq \Delta_2,
    \Delta_2 \nsubseteq \Delta_1.
\] 
In that case, if $e_1\geq e_2$, the induced representation has a unique irreducible quotient, called Langlands quotient, 
and a 
unique irreducible subrepresentation. They swap positions in $\delta_2\times \delta_1$ and make composition factors. We have an exact sequence.
\[
\delta(\Delta_1 \cup \Delta_2)
\times
\delta(\Delta_1 \cap \Delta_2)
\rightarrow 
\delta_1 \times\delta_2
\rightarrow L(\delta_1 \times\delta_2)
=L(\delta_1,\delta_2).
\]
Given a sequence  
$\delta_i=(\Delta_i)$,  $i=1,\ldots,k$ such that 
$e(\Delta_1)\geq \cdots \geq e(\Delta_k)>0$
and  
$\tau \in \textrm{Irr }G$, tempered, the Langlands quotient is a unique irreducible quotient:
\[
\delta_1\times\cdots\times \delta_k\rtimes \tau
\rightarrow
L(\delta_1\times\cdots\times \delta_k\rtimes \tau),
\]
and it appears with multiplicity one in the induced  representation.
It is also a unique irreducible subrepresentation of 
$\widetilde{\delta_1}\times\cdots\times \widetilde{\delta_k}\rtimes \tau
\cong
(\delta_1\times\cdots\times \delta_k\rtimes \tau)
^{\widetilde{\ }}
$.
Permuting $\delta_i$-s and possibly taking contragredients does not change composition factors.
Every irreducible representation of $G_n$ can be written as a Langlands quotient.

If $\sigma$ is a discrete series representation of $G_n$ then by the
M{\oe}glin-Tadi\'c, now unconditional, classification  (\cite{moeglin},\cite{tadic-diskretne}),  
%(p. 3160 of \cite{ba}), 
it is described by an admissible triple
\[
(\text{Jord},\sigma_{cusp},\epsilon).
\]
Here 
$\textrm{Jord}$ is a set of pairs $=(a,\rho)$ where 
$\widetilde{\rho}\cong\rho \in \textrm{Irr}_{u,cusp} GL$ and
$a\in \mathbb{Z}_{>0}$, of parity depending on $\rho$, such that 
$\delta([\nu^{-(a-1)/2}\rho,\nu^{(a-1)/2}\rho])\rtimes \sigma$ 
is irreducible, but reduces for some larger integer $a'$. 
We write $\text{Jord}_{\rho}=\{ a : (a,\rho)\in \text{Jord} \}$ and for
 $a\in \text{Jord}_{\rho}$ let
 $a_ {-}$  be the largest element of $\text{Jord}_{\rho}$ strictly less than $a$, if such exists. Next, there exists a unique, up to an isomorphism,  
 $\sigma_{cusp} \in \textrm{Irr}_{cusp} G$, such that 
there exists  
$\pi \in \textrm{Irr} GL $ 
and
$\sigma \hookrightarrow \pi \rtimes \sigma_{cusp}$. It is called the partial cuspidal support of $\sigma$.
 Finally, 
 $\epsilon$ is  a function from a subset of $\text{Jord} \cup (\text{Jord}\times \text{Jord})$ into  $\{\pm1\}$. 
Assume $(a,\rho)\in \text{Jord}$ and $a$ is even. 
Then $\epsilon(a,\rho)$ is defined, and  
if $a=\text{min}(\text{Jord}_{\rho})$
%\begin{align*}
\[
\epsilon(a,\rho)=1
\Leftrightarrow  
\exists
\pi' \in \textrm{Irr}G, 
\quad
\sigma \hookrightarrow \delta([\nu^{1/2} \rho,\nu^{(a-1)/2} \rho])  \rtimes \pi'
,
\]
while if $a_{-}$ exists
\[
\epsilon(a,\rho)\epsilon (a_{-},\rho)^{-1}=1
\Leftrightarrow 
\exists \pi'' \in \textrm{Irr}G,
 \quad  \sigma \hookrightarrow 
   \delta([\nu^{(a_{-}+1)/2} \rho,\nu^{(a-1)/2} \rho])  \rtimes \pi''
.
\]

 Now we recall the Tadi\'c formula for computing Jacquet modules. Let $R(G_n)$ be the Grothendieck group of the category of smooth representations of $G_n$ of finite length. It is the free Abelian group generated by classes of irreducible representations of $G_n$.
 If $\sigma$ is a smooth finite length representation of $G_n$ denote by $\text{s.s.}(\sigma)$ the semisimplification of $\sigma$, that is the sum of classes of composition series of $\sigma$.
Put $R(G)=\oplus_{n\geq 0} R(G_n)$.
Let $R^+_0(G)$ be a $\mathbb{Z}_{\geq 0}$
subspan of
classes of irreducible representations.
For $\pi\in R(G)$ we define
$\lfloor \pi \rfloor_{R^+_0(G)}\in R^+_0(G)$ such that
$\lfloor \pi \rfloor_{R^+_0(G)} -\pi\in R^+_0(G)$. For $\pi_1,\pi_2\in R(G)$ we define $\pi_1 \leq \pi_2$ if  $\pi_2-\pi_1 \in R^+_0(G)$. 
 Similarly define $R(GL)=\oplus_{n\geq 0}R(GL(n,F))$. We have the map 
 $\mu^* : R(G)\rightarrow R(GL) \otimes R(G)$ defined by
 \[
 \mu^*(\sigma)=1\otimes \sigma + \sum_{k=1}^{n} \text{s.s.}(r_{(k)}(\sigma)),\  \sigma \in R(G_n).
 \]
The following result derives from Theorems 5.4 and 6.5 of  \cite{tadic-structure}, see also section 1.\ in \cite{tadic-diskretne}. They are based on Geometrical  Lemma (2.11 of \cite{bernstein-zelevinsky:ind-repns-I}).
\begin{theorem} Let 
$\sigma \in \textrm{Alg}_{f.l.}G$, 
and
$[\nu^x\rho,\nu^y\rho]\neq \emptyset$ a segment.
Then
\begin{equation} \label{komnozenje}
\begin{split}
\mu^*(&\ds{x}{y}\rtimes \sigma)=
\sum_{\delta'\otimes\sigma'\leq \mu^*(\sigma)}
\sum_{i=0}^{y-x+1} \sum_{j=0}^{i}                                \\
&\delta([\nu^{i-y}\widetilde{\rho},\nu^{-x}\widetilde{\rho}])
\times
\delta([\nu^{y+1-j}\rho,\nu^{y}\rho])\times \delta'
\otimes
\delta([\nu^{y+1-i}\rho,\nu^{y-j}\rho])\rtimes \sigma'
\end{split}
\end{equation}
where $\delta'\otimes\sigma'$ denotes an irreducible subquotient in the appropriate Jacquet module.
\end{theorem}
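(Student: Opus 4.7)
The plan is three-phase: identify every composition factor of $\psi$ and count multiplicities; build the filtration as successive kernels of a chain of standard intertwining operators; and verify maximality of each layer.

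For the first phase, I would apply the Tadi\'c formula \eqref{komnozenje} twice --- first to $\ds{-a}{c}\rtimes \sigma$, then to insert $\ds{\frac{1}{2}}{b}$ --- and thereby expand $\mu^*(\psi)$ in $R(GL)\otimes R(G)$. Each listed candidate $\pi\in W_1\cup\cdots\cup W_4$ is either a specified Langlands quotient or a known discrete series; in either case its characteristic Jacquet-module term is explicit. Matching these terms against summands of $\mu^*(\psi)$ shows each $\pi$ appears, and a multiplicity count (using that Langlands quotients occur with multiplicity one in their defining induced representation) rules out any extras. The two discrete series pieces $\sigma_{b,c,a}^\pm$ are distinguished using M{\oe}glin-Tadi\'c admissible triples, applied via the embedding chains $\sigma_{b,c}^\pm\hookrightarrow \ds{-b}{c}\rtimes\sigma$ and $\sigma_{b,c,a}^\pm \hookrightarrow \ds{\frac{1}{2}}{a}\rtimes \sigma_{b,c}^\pm$.

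For the second phase, I introduce a chain of standard intertwining operators
\begin{equation*}
\psi \longrightarrow \ds{\frac{1}{2}}{b}\times \ds{-a}{c}\rtimes\sigma \longrightarrow \ds{\frac{1}{2}}{b}\times \ds{-c}{a}\rtimes \sigma \longrightarrow \cdots \longrightarrow L(\psi),
\end{equation*}
obtained by permuting the two $\delta$-factors and/or replacing a segment by its contragredient (using $\widetilde{\delta(\Delta)}=\delta(\widetilde{\Delta})$). Define $V_1$ as the kernel of the first nontrivial operator, $V_2/V_1$ as the kernel of the next operator restricted to the image, and so on, with $V_4/V_3=L(\psi)$ the final image --- forced to equal $W_4$ by the Langlands classification. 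Frobenius reciprocity then identifies each kernel: for each $\pi\in W_i$ one locates a summand of $\mu^*(\pi)$ fitting inside the Jacquet module of the corresponding kernel, embedding $\pi$ into $V_i/V_{i-1}$. The multiplicity count from phase one forces equality $V_i/V_{i-1}\cong W_i$, while the maximality claim becomes the socle-filtration assertion: any irreducible from $W_{i+1}\cup\cdots\cup W_4$ embedding into $\psi/V_{i-1}$ would produce a Jacquet-module summand inconsistent with the explicit $\mu^*(\psi)$.

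The main obstacle is sign bookkeeping for $\sigma_{b,c,a}^\pm$ and $\sigma_{a,c}^\pm$: determining which sign produces a subrepresentation of $\psi$ versus a proper subquotient deeper in the filtration requires careful use of the M{\oe}glin-Tadi\'c $\epsilon$-function and its compatibility with the various Jacquet-module reductions along the chain of intertwining operators. A second difficulty is pinning down the kernels of those operators exactly --- showing they are neither too big (absorbing composition factors meant for higher layers) nor too small (missing some) --- which is the content of the kernel-decomposition sections indicated in the introduction. A final combinatorial check must verify that none of the listed Langlands quotients accidentally coincide, ensuring that the multiplicity-one count is genuine.
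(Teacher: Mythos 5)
Your proposal does not address the stated theorem at all. The statement to be proved is the Tadi\'c formula \eqref{komnozenje}, which computes $\mu^*(\delta([\nu^x\rho,\nu^y\rho])\rtimes \sigma)$ in terms of $\mu^*(\sigma)$. What you have written is instead an outline for the paper's \emph{final} theorem (the composition-series filtration of $\psi=\delta([\nu^{-a}\rho,\nu^c\rho])\times\delta([\nu^{1/2}\rho,\nu^b\rho])\rtimes\sigma$). In fact, your very first step is ``I would apply the Tadi\'c formula \eqref{komnozenje} twice,'' which treats the theorem in question as an already-known tool rather than as the thing to be established --- a circularity that makes the attempt vacuous as a proof of \eqref{komnozenje}.

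For the record, the paper does not prove \eqref{komnozenje} from scratch: it is explicitly attributed to Theorems 5.4 and 6.5 of Tadi\'c's structure paper, ultimately resting on the Bernstein--Zelevinsky Geometric Lemma. A genuine proof would run along those lines: one shows that $\mu^*$ is a ring homomorphism (Hopf-module structure) satisfying $\mu^*(\pi\rtimes\sigma)=M^*(\pi)\rtimes\mu^*(\sigma)$, where $M^*=(m\otimes 1)\circ(\widetilde{\ }\otimes m^*)\circ s\circ m^*$, and then computes $M^*(\delta([\nu^x\rho,\nu^y\rho]))$ using the Zelevinsky comultiplication formula $m^*(\delta([\nu^x\rho,\nu^y\rho]))=\sum_k \delta([\nu^{k+1}\rho,\nu^y\rho])\otimes\delta([\nu^x\rho,\nu^k\rho])$; unwinding the double sum and the contragredient yields exactly the indices $i,j$ and the three segment-factors appearing in \eqref{komnozenje}. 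Nothing in your proposal touches this. You should restart, targeting the correct statement, and either reproduce the reduction to the Geometric Lemma or give the derivation from $M^*$ directly.
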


Now we
provide results of \cite{segment}, about Jacquet modules. 
Consider induced representation
\[
\delta([\nu^{-a}\rho,\nu^c \rho])\rtimes \sigma
\]
where  $\sigma \in \textrm{Irr}_{cusp} G$, 
$\rho \in \textrm{Irr}_{u,cusp}GL$, such that 
$\nu^\frac{1}{2}\rtimes \sigma$ reduces, and $a,c\in \mathbb{Z}+\frac{1}{2}$,  such that $c-a\geq 0$. 
We use notation, that we shall explain, such that in  $R(G)$:
\[
\delta([\nu^{-a}\rho,\nu^c \rho])\rtimes \sigma
=
\delta([\nu^{-a}\rho,\nu^c \rho]_+;\sigma)
+
\delta([\nu^{-a}\rho,\nu^c \rho]_-;\sigma)
+
L(\delta([\nu^{-a}\rho,\nu^c \rho]);\sigma)
\]
where some expressions on the right hand side, are defined as zero, depending on $a$ and $c$. More precisely, on the right hand side we have in $R(G)$:
\begin{equation} 
\label{segment-jacquet-equation}
    \begin{split}
\frac{1}{2}< -a :
& \quad L(\ds{-a}{c};\sigma),
\\
-a=\frac{1}{2}:& \quad
\delta([\nu^{-a}\rho,\nu^c \rho]_+;\sigma) +
L(\delta([\nu^{-a}\rho,\nu^c \rho]);\sigma),
\\
\frac{1}{2} \leq a\neq c:& \quad
\delta([\nu^{-a}\rho,\nu^c \rho]_+;\sigma)
+\delta([\nu^{-a}\rho,\nu^c \rho]_-;\sigma),
\\
\frac{1}{2} \leq a\neq c: & \quad
\delta([\nu^{-a}\rho,\nu^c \rho]_+;\sigma)
+\delta([\nu^{-a}\rho,\nu^c \rho]_-;\sigma)
+L(\delta([\nu^{-a}\rho,\nu^c \rho]);\sigma).
\end{split}
\end{equation}
In the first case, the induced representation is irreducible. In the second, we have a discrete series subrepresentation and a Langlands quotient. In the third case, we have two tempered subrepresentations (see \cite{tadic:square-integr-correspond-segments}) of the induced representation, and we denote by 
$\delta([\nu^{-a}\rho,\nu^c \rho]_+;\sigma)$ the one that  
 has in its minimal standard Jacquet module at least one irreducible subquotient whose all exponents are non-negative. 
 In the last case, we have two discrete series subrepresentations of the induced representation and a Langlands quotient.
Now we have
\begin{equation} \label{jacquet-segment}
    \begin{split}
        \mu^*(\delta([\nu^{-a}\rho,\nu^c \rho]_\pm;&\sigma))=
        \sum_{i=-a-1}^{\pm\frac{1}{2}-1}
        \delta([\nu^{-i}\rho,\nu^a\rho])
        \times
        \delta([\nu^{i+1}\rho,\nu^c\rho])
        \otimes
        \sigma+
        \\
        \sum_{i=-a-1}^a\sum_{j=i+1}^c
        &\delta([\nu^{-i}\rho,\nu^a\rho])
        \times
        \delta([\nu^{j+1}\rho,\nu^c\rho])
        \otimes
        \delta([\nu^{i+1}\rho,\nu^j \rho]_\pm;\sigma)
        +
        \\
        \underset{\textrm{\ } \quad i+j<-1}{
            \sum_{-a-1\leq i\leq a}\textrm{\ }
            \sum_{ i+1\leq j\leq a}
            }
        &\delta([\nu^{-i}\rho,\nu^a\rho])
        \times
        \delta([\nu^{j+1}\rho,\nu^c\rho])
        \otimes
        L(\delta([\nu^{i+1}\rho,\nu^j \rho]);\sigma).
    \end{split}
\end{equation}
If we write 
$\delta([\nu^\frac{1}{2}\rho,\nu^{-\frac{1}{2}}\rho])
\otimes \sigma$ for $\sigma$, we have 
\begin{equation} \label{jacquet-strogo-pozitivna}
    \begin{split}
        \mu^*(\delta([\nu^\frac{1}{2}\rho,\nu^c \rho]_+;\sigma))=
        \sum_{j=-\frac{1}{2}}^c
        \delta([\nu^{j+1}\rho,\nu^c\rho])
        \otimes
        \delta([\nu^\frac{1}{2}\rho,\nu^j \rho]_+;\sigma).
    \end{split}
\end{equation}
And for $a<\frac{1}{2}$ or $\frac{1}{2}\leq a < c$
we have
\begin{equation} \label{jacquet-langlandsov-kvocijent}
    \begin{split}
\mu^*(L(\delta([\nu^{-a}\rho,\nu^c \rho]);& \sigma))=
        \sum_{i=\frac{1}{2}}^{c} 
        L(\delta([\nu^{-i}\rho,\nu^a\rho])
        ,
        \delta([\nu^{i+1}\rho,\nu^c\rho]))
        \otimes
        \sigma+
        \\
        \underset{\textrm{\ } \quad 0\leq i+j}{
            \sum_{-a-1\leq i\leq c}\textrm{\ }
            \sum_{ i+1\leq j\leq c}
            }
        & L(\delta([\nu^{-i}\rho,\nu^a\rho]),
        \delta([\nu^{j+1}\rho,\nu^c\rho]) )
        \otimes
        L(\delta([\nu^{i+1}\rho,\nu^j \rho]);\sigma).
    \end{split}
\end{equation}

\section{Notation and basic reducibilities}
\label{sect:2}
In this section, we fix the notation and prepare some reducibility results. 
Let $\rho$ be an irreducible unitary cuspidal representation  of $GL(m_{\rho},F)$ and  
$\sigma$ an irreducible cuspidal representation of $G_n$ such that $\nu^\frac{1}{2}\rho \rtimes \sigma$ reduces. By Proposition 2.4 of
\cite{tadic:red-par-ind} $\rho$ is self-dual. 
We consider 
\[
\frac{1}{2} \leq a, b, c \in \mathbb{Z}+\frac{1}{2},
\]
that need not be fixed, but when appearing together in a formula, we have, depending on which appears, 
$ a < b  < c$. We denote
the representation we want to decompose
\[
\psi=\ds{-a}{c}\times \ds{\frac{1}{2}}{b}\rtimes \sigma.
\]
 Further, we shorten some  notations from 
 \eqref{segment-jacquet-equation}:
\begin{align*}
\sigma_a=
\delta([\nu^{\frac{1}{2}}\rho,\nu^a\rho]_+;\sigma),
\quad
\sigma^-_{b,c}=
\delta([\nu^{-b}\rho,\nu^c \rho]_-;\sigma)
,
\quad
\sigma^+_{b,c}=
\delta([\nu^{-b}\rho,\nu^c \rho]_+;\sigma).    
\end{align*}
The following result is Theorem 2.3 from 
\cite{muic:composition series}.
\begin{theorem}
With discrete series being subrepresentations, we have in $R(G)$
\label{muic-diskretne-podreprezentacije} 
\label{prva}
\begin{align*}
\delta([\nu^\frac{1}{2}\rho,\nu^a\rho])\rtimes \sigma
&=
\sigma_a+L(\delta([\nu^\frac{1}{2}\rho,\nu^a\rho])\rtimes \sigma),
\\
\delta([\nu^{-b}\rho,\nu^c\rho])\rtimes \sigma
&=
 \sigma_{b,c}^++
\sigma_{b,c}^-+
L(\delta([\nu^{-b}\rho,\nu^c\rho])\rtimes \sigma).
\end{align*}
Here
\begin{align*}
\text{Jord}(\sigma_a)
&=\{(2a+1,\rho)\}\cup Jord(\sigma),
\\
Jord(\sigma^+_{b,c})=Jord(\sigma^-_{b,c})
&=
\{(2b+1,\rho),(2c+1,\rho)\}\cup Jord(\sigma).
\end{align*}
Further,
$\epsilon_{\sigma_a},
\epsilon_{\sigma_{b,c}^+}$, 
and
$\epsilon_{\sigma_{b,c}^-}  \textrm{ extend } \epsilon_{\sigma}$,
such that
$\epsilon_{\sigma_a}(2a+1,\rho)=1$, and
\\
\(\epsilon_{\sigma^+_{b,c}}(2b+1,\rho)=
\epsilon_{\sigma^+_{b,c}}(2c+1,\rho)=1,
 \epsilon_{\sigma^-_{b,c}}(2b+1,\rho)=
 \epsilon_{\sigma^-_{b,c}}(2c+1,\rho)=-1.\)
\end{theorem}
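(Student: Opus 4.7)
The plan is to combine the Moeglin-Tadi\'c classification of discrete series with a Jacquet module analysis via Tadi\'c's formula. The composition multiplicities in $R(G)$ are already encoded in \eqref{segment-jacquet-equation} (cases two and four); the substantive content is to (i) exhibit explicit irreducible subrepresentations and (ii) identify their Moeglin-Tadi\'c invariants.

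For the first identity, I invoke Moeglin's construction to produce a discrete series $\sigma_a$ of $G$ with partial cuspidal support $\sigma$, Jordan set $\{(2a+1,\rho)\}\cup\textrm{Jord}(\sigma)$, and $\epsilon(2a+1,\rho)=1$. The hypothesis that $\nu^{1/2}\rho\rtimes\sigma$ reduces supplies the required parity for $2a+1$, and the value $\epsilon=1$ is consistent with the minimality (or ratio) condition on new blocks in $\textrm{Jord}_\rho$. The embedding characterization of $\epsilon=1$ recalled in the Preliminaries then yields $\sigma_a\hookrightarrow\ds{\frac{1}{2}}{a}\rtimes\pi'$ for some $\pi'\in\textrm{Irr}\,G$, and tracking partial cuspidal supports forces $\pi'=\sigma$. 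Since \eqref{segment-jacquet-equation} gives composition length two and $e(\ds{\frac{1}{2}}{a})>0$, the remaining constituent is the Langlands quotient $L(\ds{\frac{1}{2}}{a}\rtimes\sigma)$.

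For the second identity, Moeglin-Tadi\'c admits two discrete series $\sigma_{b,c}^\pm$ sharing the Jordan set $\{(2b+1,\rho),(2c+1,\rho)\}\cup\textrm{Jord}(\sigma)$, distinguished by assigning $+1$ (respectively $-1$) to $\epsilon$ on both new blocks; the ratio relation at consecutive new blocks forces the two new values to be equal in sign. By \eqref{segment-jacquet-equation} the induced $\ds{-b}{c}\rtimes\sigma$ has composition length three, and by \cite{tadic:square-integr-correspond-segments} two of the constituents are tempered; the third, being the unique irreducible quotient (as $e(\ds{-b}{c})=(c-b)/2>0$), is the Langlands quotient. I then match the two tempered constituents with $\sigma_{b,c}^\pm$ by computing their minimal standard Jacquet modules via Tadi\'c's formula \eqref{komnozenje} together with \eqref{jacquet-segment}, and comparing with the Moeglin-Tadi\'c predictions.

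The main obstacle is the correct alignment of the $\pm$-sign convention fixed in \eqref{segment-jacquet-equation} with the Moeglin-Tadi\'c $\epsilon$-values. Concretely, one must exhibit in $\mu^*(\sigma_{b,c}^+)$ an irreducible subquotient of the form $\ds{\frac{1}{2}}{b}\times\delta([\nu^{b+1}\rho,\nu^c\rho])\otimes\sigma$ with all exponents non-negative, verify by direct Jacquet computation that no such non-negative subquotient appears in $\mu^*(\sigma_{b,c}^-)$, and then read off the Moeglin-Tadi\'c parameters by applying the embedding characterization of $\epsilon$ at both the minimality criterion on $(2b+1,\rho)$ and the ratio criterion at $(2c+1,\rho)$. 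This bookkeeping is the heart of the argument; once it is in place, the decompositions in $R(G)$, the Jordan sets, and the $\epsilon$-assignments all follow simultaneously.
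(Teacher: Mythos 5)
The paper does not prove this theorem; it is cited verbatim as Theorem~2.3 of Mui\'c's \emph{Composition series of generalized principal series} \cite{muic:composition series}, so there is no in-paper proof to compare against. Your reconstruction through the M{\oe}glin--Tadi\'c classification plus Jacquet module analysis is the right general strategy and indeed mirrors the spirit of Mui\'c's argument, but as written it has a concrete gap in the second identity that needs to be closed.

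The gap is in the sentence ``the ratio relation at consecutive new blocks forces the two new values to be equal in sign.'' The ratio criterion does not \emph{force} anything a priori: $\epsilon(2c+1,\rho)\epsilon(2b+1,\rho)^{-1}$ is by definition $+1$ precisely when there exists $\pi''\in\textrm{Irr}\,G$ with the discrete series embedding in $\delta([\nu^{b+1}\rho,\nu^c\rho])\rtimes\pi''$, and $-1$ otherwise; both sign patterns $(+1,-1)$ and $(-1,+1)$ correspond to legitimate admissible triples with the same Jordan set. What you must actually argue is that the two tempered constituents of $\delta([\nu^{-b}\rho,\nu^c\rho])\rtimes\sigma$ realize that embedding. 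This follows, but it is a step: since $\ds{-b}{c}$ is the unique irreducible subrepresentation of $\delta([\nu^{b+1}\rho,\nu^c\rho])\times\ds{-b}{b}$, one gets $\sigma_{b,c}^\pm\hookrightarrow\ds{-b}{c}\rtimes\sigma\hookrightarrow\delta([\nu^{b+1}\rho,\nu^c\rho])\rtimes\bigl(\ds{-b}{b}\rtimes\sigma\bigr)$, and then a filtration argument on $\ds{-b}{b}\rtimes\sigma$ produces an irreducible $\pi''$ with $\sigma_{b,c}^\pm\hookrightarrow\delta([\nu^{b+1}\rho,\nu^c\rho])\rtimes\pi''$. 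Only after this does the ratio criterion yield equality of signs. A similar point applies to your construction of $\sigma_a$: you posit $\epsilon(2a+1,\rho)=1$ and then invoke the embedding characterization, which is circular; it is cleaner to first show $\ds{\frac{1}{2}}{a}\rtimes\sigma$ has a unique irreducible subrepresentation, then that it is square-integrable, and then read off the $\epsilon$-value from the embedding you already have. Finally, the ``bookkeeping'' that aligns the $\pm$-convention of \eqref{segment-jacquet-equation} (nonnegative exponents in the minimal Jacquet module) with $\epsilon=+1$ on the minimal Jordan block is asserted but not carried out; since that alignment is the main content of the $\epsilon$-assignments, it should be computed explicitly rather than deferred.
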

The next proposition follows from Theorem 2.1 of \cite{muic:composition series}.
%%%%%%%%%%%%%
\begin{proposition}
\label{druga}  
We use $\sigma_{b,c,a}^+=\sigma_{a,b,c}^+$, $\sigma_{b,c,a}^-$ and $\sigma_{a,b,c}^-$
to denote nonisomorphic discrete series, such that in $R(G)$ we have
\begin{equation*}
\begin{split}
&\delta([\nu^{-b}\rho,\nu^c\rho])\rtimes  \sigma_a=
\sigma_{b,c,a}^+ +
\sigma_{b,c,a}^- +
L(\delta([\nu^{-b}\rho,\nu^c\rho])\rtimes \sigma_a)\quad \textrm{and}
\\
&\ds{-a}{b}\rtimes  \sigma_c=
\sigma_{b,c,a}^+ +
\sigma_{a,b,c}^- +
L(\ds{-a}{b}\rtimes  \sigma_c).
\end{split}    
\end{equation*}
These discrete series appear as subrepresentations in induced representations. Also 
\begin{equation*}
\begin{split}
\textrm{Jord}(\sigma_{b,c,a}^+)=
&\textrm{Jord}(\sigma_{b,c,a}^-)=
\textrm{Jord}(\sigma_{a,b,c}^-)=
\\
&\{(2a+1,\rho),(2b+1,\rho),(2c+1,\rho)\}
\cup Jord(\sigma)
\end{split}
\end{equation*}
\\
and $\epsilon_{\sigma_{b,c,a}^+}$, $\epsilon_{\sigma_{b,c,a}^-}$ and 
$\epsilon_{\sigma_{a,b,c}^-}$
extend $\epsilon_{\sigma}$ such that
\begin{align*}
\epsilon_{\sigma_{b,c,a}^+}(2a+1,\rho)&=1, \quad
& \epsilon_{\sigma_{b,c,a}^+}(2b+1,\rho)&=1,  \quad
 &\epsilon_{\sigma_{b,c,a}^+}(2c+1,\rho)&=1, \quad
 \\
  \epsilon_{\sigma_{b,c,a}^-}(2a+1,\rho)&=1, \quad
 &\epsilon_{\sigma_{b,c,a}^-}(2b+1,\rho)&=-1, \quad
 &\epsilon_{\sigma_{b,c,a}^-}(2c+1,\rho)&=-1,  \quad
 \\
 \epsilon_{\sigma_{a,b,c}^-}(2a+1,\rho)&=-1, \quad
 &\epsilon_{\sigma_{a,b,c}^-}(2b+1,\rho)&=-1, \quad
 &\epsilon_{\sigma_{a,b,c}^-}(2c+1,\rho)&=1. \quad
 \end{align*}
\end{proposition}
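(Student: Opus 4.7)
The plan is to obtain both three-term decompositions by applying Theorem~2.1 of \cite{muic:composition series} to the induced representations $\delta([\nu^{-b}\rho,\nu^c\rho])\rtimes \sigma_a$ and $\delta([\nu^{-a}\rho,\nu^b\rho])\rtimes \sigma_c$, and then to identify the resulting composition factors through the M{\oe}glin-Tadi\'c classification. By Theorem~\ref{prva}, both $\sigma_a$ and $\sigma_c$ are discrete series with explicit admissible triples. In each case the inducing segment straddles the reducibility point $\nu^{\frac{1}{2}}\rho$, and its endpoints contribute two Jordan blocks not already present in the inducing discrete series, which puts us in the regime of Mui\'c's theorem: exactly two discrete series subrepresentations plus the Langlands quotient.

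The Jordan-block computation is then immediate from the M{\oe}glin-Tadi\'c classification: any discrete series composition factor of such an induced representation has $\text{Jord}$ equal to the union of the Jord of the inducing discrete series with the two new blocks coming from the segment endpoints. Combined with Theorem~\ref{prva}, both induced representations give the same common set $\{(2a+1,\rho),(2b+1,\rho),(2c+1,\rho)\}\cup \text{Jord}(\sigma)$, which accounts for all three asserted Jord formulas.

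The $\epsilon$ values are obtained from the embedding criterion recalled in Section~\ref{sect:1}. For each discrete series summand I would trace the embeddings produced by Mui\'c's theorem together with the embeddings already known for $\sigma_a$ and $\sigma_c$. The ``$+$'' summand in each decomposition inherits $\epsilon=+1$ on all three new Jordan blocks, whereas $\sigma_{b,c,a}^-$ has $\epsilon(2a+1,\rho)=1$ but $\epsilon(2b+1,\rho)=\epsilon(2c+1,\rho)=-1$ (reflecting its position inside $\delta([\nu^{-b}\rho,\nu^c\rho])\rtimes \sigma_a$), and symmetrically $\sigma_{a,b,c}^-$ has $\epsilon(2c+1,\rho)=1$ but $\epsilon(2a+1,\rho)=\epsilon(2b+1,\rho)=-1$. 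Since the three admissible triples differ pairwise, the three discrete series are non-isomorphic.

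The most delicate point is the identification $\sigma_{b,c,a}^+=\sigma_{a,b,c}^+$. By the above, the ``+'' summand in each of the two decompositions carries the same admissible triple $(\{(2a+1,\rho),(2b+1,\rho),(2c+1,\rho)\}\cup \text{Jord}(\sigma),\sigma,\epsilon\equiv +1\text{ on new blocks})$, so the uniqueness in the M{\oe}glin-Tadi\'c classification forces the two summands to be isomorphic. As an independent verification one can compute $\mu^*$ of both induced representations using \eqref{komnozenje} and \eqref{jacquet-segment}, and then, via Frobenius reciprocity, exhibit a single irreducible representation embedding into both; this cross-identification is the technical heart of the proposition and the main obstacle to a fully self-contained proof.
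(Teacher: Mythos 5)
Your proposal follows the same route as the paper, which simply notes that the proposition follows from Theorem~2.1 of \cite{muic:composition series}: you apply that theorem to $\delta([\nu^{-b}\rho,\nu^c\rho])\rtimes\sigma_a$ and $\delta([\nu^{-a}\rho,\nu^b\rho])\rtimes\sigma_c$ to get the two three-term decompositions, read off Jord and $\epsilon$ from the M{\oe}glin--Tadi\'c data of $\sigma_a$ and $\sigma_c$ as given in Theorem~\ref{prva}, and identify $\sigma_{b,c,a}^+$ with $\sigma_{a,b,c}^+$ because both carry $\epsilon\equiv+1$ on all three new blocks, hence the same admissible triple. That last step is not really an obstacle as you suggest; uniqueness in the M{\oe}glin--Tadi\'c classification settles it directly, as you yourself observe, so the proof is complete and matches the paper's intent.
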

Observe that
\begin{equation} \label{jacq-diskretne}
\begin{split}
    \mu^*(\sigma^+_{b,c,a}) \geq &
        \ds{-a}{b}\otimes \sigma_c +  
        \ds{-b}{c}\otimes \sigma_a,
\\
\mu^*(\sigma^-_{b,c,a}) \geq &
          \ds{-b}{c}\otimes \sigma_a,
\\
\mu^*(\sigma^-_{a,b,c}) \geq &
        \ds{-a}{b}\otimes \sigma_c. 
\end{split}
\end{equation}
We finished introducing notation and state some more reducibility results.

{\ }

Here is a consequence of
Theorem 6.3 of \cite{tadic:regular-square}, see also section 3 there.
\begin{proposition} 
\label{multiplicitet-stroge}
We have in $R(G)$, with multiplicity one:
\[
\nu^a\rho\times\cdots \times \nu^{\frac{1}{2}}\rtimes \sigma \geq \sigma_a. 
\]
\end{proposition}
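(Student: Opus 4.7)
The plan is to establish multiplicity at least one by an explicit embedding, and multiplicity at most one by tracking a distinguished term in the Jacquet module to the minimal standard parabolic (call this functor $r_{\min}$). For the lower bound, by the Zelevinsky definition $\delta([\nu^{\frac{1}{2}}\rho,\nu^a\rho])$ is the unique irreducible subrepresentation of $\nu^a\rho\times\nu^{a-1}\rho\times\cdots\times\nu^{\frac{1}{2}}\rho$; combining this with the embedding $\sigma_a\hookrightarrow\delta([\nu^{\frac{1}{2}}\rho,\nu^a\rho])\rtimes\sigma$ from Theorem \ref{prva} and transitivity of parabolic induction gives
\[
\sigma_a\hookrightarrow\delta([\nu^{\frac{1}{2}}\rho,\nu^a\rho])\rtimes\sigma\hookrightarrow\nu^a\rho\times\nu^{a-1}\rho\times\cdots\times\nu^{\frac{1}{2}}\rho\rtimes\sigma,
\]
so $\sigma_a$ occurs at least once as a subquotient.

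For the upper bound, I would isolate the fully cuspidal term $\pi_0=\nu^a\rho\otimes\nu^{a-1}\rho\otimes\cdots\otimes\nu^{\frac{1}{2}}\rho\otimes\sigma$ and compare its multiplicity in $r_{\min}$ on both sides. Iterating \eqref{komnozenje} on the induced representation $\nu^a\rho\times\cdots\times\nu^{\frac{1}{2}}\rho\rtimes\sigma$, each summand of $r_{\min}$ arises from choosing, at every step, whether to flip an inducing $\nu^i\rho$ into its contragredient $\nu^{-i}\widetilde\rho=\nu^{-i}\rho$ (using self-duality of $\rho$) together with a permutation of the remaining factors. Since the exponents $a,a-1,\ldots,\frac{1}{2}$ appearing in $\pi_0$ are strictly positive and pairwise distinct, the only contributing choice is the trivial one (no flips, identity permutation), so $\pi_0$ occurs in $r_{\min}$ of the induced representation with multiplicity exactly one. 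On the other hand, Frobenius reciprocity applied to the embedding above, together with the fact that $r_{\min}(\delta([\nu^{\frac{1}{2}}\rho,\nu^a\rho]))=\nu^a\rho\otimes\cdots\otimes\nu^{\frac{1}{2}}\rho$ with multiplicity one, shows that $\pi_0$ occurs at least once in $r_{\min}(\sigma_a)$. Exactness of the Jacquet functor then yields
\[
\bigl(\text{mult.\ of }\sigma_a\text{ in the induced rep.}\bigr)\cdot\bigl(\text{mult.\ of }\pi_0\text{ in }r_{\min}(\sigma_a)\bigr)\le 1,
\]
forcing the subquotient multiplicity of $\sigma_a$ to be exactly one.

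The main obstacle is the combinatorial bookkeeping needed to rule out all non-trivial contributions to $\pi_0$ in $r_{\min}$ of the induced representation. In practice I would short-circuit this by appealing directly to Theorem 6.3 of \cite{tadic:regular-square}, which formulates exactly this type of multiplicity-one statement for strongly positive discrete series of classical $p$-adic groups and subsumes the iterated geometric-lemma computation.
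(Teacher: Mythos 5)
The paper supplies no proof of this proposition at all: it simply labels the statement a consequence of Theorem~6.3 of \cite{tadic:regular-square}, which is precisely the ``short-circuit'' you end with. So your fallback matches the paper's argument.

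What is worth pointing out is that the direct argument you sketch is in fact already complete; the ``combinatorial bookkeeping'' you flag as an obstacle is trivial in this case. By the structure formula, the summands of $\mu^*(\nu^a\rho\times\cdots\times\nu^{\frac{1}{2}}\rho\rtimes\sigma)$ whose right-hand factor is $\sigma$ are obtained by replacing each $\nu^i\rho$ by either $\nu^i\rho$ or $\nu^{-i}\widetilde\rho=\nu^{-i}\rho$, taking the product, and then applying $r^{GL}_{\min}$; since the resulting factors are distinct cuspidals, $r^{GL}_{\min}$ just runs over all orderings. The target $\nu^a\rho\otimes\cdots\otimes\nu^{\frac{1}{2}}\rho\otimes\sigma$ uses only the positive exponents $a,\ldots,\frac{1}{2}$, so no sign change is possible, and because these exponents are pairwise distinct only the identity ordering can match. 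Hence the target occurs with multiplicity exactly one in $r_{\min}$ of the induced representation, Frobenius reciprocity and exactness of $r_{\min}$ give $\pi_0\le r_{\min}(\sigma_a)$, and your counting inequality forces the multiplicity of $\sigma_a$ to be one. This self-contained version is a perfectly valid alternative to the bare citation; the only cost is writing out the short $\mu^*$ computation rather than quoting Tadi\'c's general theorem on regular discrete series.
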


The next lemma follows from Theorem 5.1 of \cite{muic:composition series}, ii).
\begin{lemma} 
\label{lema-diskretna-podreprezentacija}
    We have in $R(G)$,
\begin{equation*}
    \begin{split} 
    \ds{\frac{1}{2}}{a}\rtimes
      \sigma_{b}
    =&
    \sigma_{a,b}^+
    +
    \textrm{L}(\ds{\frac{1}{2}}{a}\rtimes
      \sigma_{b}), \textrm{ and }
\\
      \ds{\frac{1}{2}}{b}\rtimes
       \sigma_a
     =&
      \sigma_{a,b}^+
     +
      \textrm{L}(\ds{-a}{b}
       \rtimes \sigma)+
\\     
     &
      \textrm{L}(
       \ds{\frac{1}{2}}{a}\rtimes
        \sigma_b)
      +
      \textrm{L}(
       \ds{\frac{1}{2}}{b}\rtimes
        \sigma_a).
\end{split}
\end{equation*}
\end{lemma}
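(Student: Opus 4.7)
The plan is to attack the two identities in order, with the first feeding into the proof of the second. For $\ds{\p}{a}\rtimes \sigma_b$, the approach is a direct application of Theorem 5.1(ii) of \cite{muic:composition series} in the setting where $\sigma_b$ plays the role of the anchoring discrete series and the relevant segment is $[\nu^\p\rho,\nu^a\rho]$. Since $\ds{\p}{a}\rtimes\sigma$ already has length two by Theorem \ref{prva}, that result guarantees a length-two decomposition with a unique discrete series subrepresentation and a unique Langlands quotient. To identify the subrepresentation as $\sigma_{a,b}^+$, I would invoke the defining embedding $\ds{\p}{a}\rtimes\sigma_b\hookrightarrow\ds{\p}{a}\times\ds{\p}{b}\rtimes\sigma$ and check the M\oe glin-Tadi\'c data from Section \ref{sect:1}: the partial cuspidal support is $\sigma$, the Jordan blocks enlarge $\textrm{Jord}(\sigma)$ by $(2a+1,\rho)$ and $(2b+1,\rho)$, and both $\epsilon$-values equal $+1$ because the segments used in the embedding chain both begin at $\nu^\p\rho$.

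For the second equation, the starting point is that $[\nu^\p\rho,\nu^a\rho]\subseteq[\nu^\p\rho,\nu^b\rho]$, so $\ds{\p}{a}\times\ds{\p}{b}$ is irreducible and isomorphic to $\ds{\p}{b}\times\ds{\p}{a}$. Expanding the inner factor on each side via Theorem \ref{prva} yields the equality in $R(G)$
\[
\ds{\p}{a}\rtimes\sigma_b + \ds{\p}{a}\rtimes L(\ds{\p}{b}\rtimes\sigma)
=
\ds{\p}{b}\rtimes\sigma_a + \ds{\p}{b}\rtimes L(\ds{\p}{a}\rtimes\sigma).
\]
The first identity decomposes the leftmost summand. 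I would then analyse the two auxiliary terms $\ds{\p}{a}\rtimes L(\ds{\p}{b}\rtimes\sigma)$ and $\ds{\p}{b}\rtimes L(\ds{\p}{a}\rtimes\sigma)$ via Jacquet modules, using Tadi\'c's formula \eqref{komnozenje} together with \eqref{jacquet-langlandsov-kvocijent}, to show each has length two with composition factors identified as specific Langlands quotients whose leading segments and tempered bases are forced by the computed exponents. Solving the displayed equation algebraically for $\ds{\p}{b}\rtimes\sigma_a$ should then isolate the claimed four-term sum.

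The main obstacle is the exact multiplicity count, and in particular the exclusion of the discrete series $\sigma_{a,b}^-$ from $\ds{\p}{b}\rtimes\sigma_a$. For this I would argue by contradiction: an embedding $\sigma_{a,b}^-\hookrightarrow\ds{\p}{b}\rtimes\sigma_a\hookrightarrow\ds{\p}{b}\times\ds{\p}{a}\rtimes\sigma$ would propagate, via the M\oe glin-Tadi\'c $\epsilon$-criterion recalled in Section \ref{sect:1}, to $\epsilon_{\sigma_{a,b}^-}(2b+1,\rho)=+1$, contradicting the defining sign $-1$. Parallel comparisons of Langlands data against the Jacquet modules computed from the embedding $\sigma_a\hookrightarrow\ds{\p}{a}\rtimes\sigma$ via \eqref{komnozenje} should then confirm that the three non-tempered summands are exactly those listed, with no additional composition factors.
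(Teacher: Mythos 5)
The paper's proof of this lemma is a single citation: both displayed identities are read off directly from Theorem 5.1(ii) of \cite{muic:composition series}, which treats representations of the form $\ds{\p}{m}\rtimes \sigma_n$ in general. Your proposal uses that same citation for the first identity but then tries to manufacture the second by expanding $\ds{\p}{a}\times\ds{\p}{b}\rtimes\sigma$ in two ways and solving the resulting $R(G)$-identity. That balancing identity is correct, but the step where you assert that \emph{both} auxiliary terms $\ds{\p}{a}\rtimes L(\ds{\p}{b}\rtimes\sigma)$ and $\ds{\p}{b}\rtimes L(\ds{\p}{a}\rtimes\sigma)$ have length two is numerically impossible: your own equation gives
\[
\mathrm{length}\bigl(\ds{\p}{a}\rtimes L(\ds{\p}{b}\rtimes\sigma)\bigr)
- \mathrm{length}\bigl(\ds{\p}{b}\rtimes L(\ds{\p}{a}\rtimes\sigma)\bigr)
= \mathrm{length}\bigl(\ds{\p}{b}\rtimes\sigma_a\bigr)
- \mathrm{length}\bigl(\ds{\p}{a}\rtimes\sigma_b\bigr)
= 4 - 2 = 2,
\]
so the two lengths differ by two and cannot both be two. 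In fact subtracting gives, in $R(G)$,
\[
\ds{\p}{a}\rtimes L(\ds{\p}{b}\rtimes\sigma)
=
L(\ds{-a}{b}\rtimes\sigma)+L(\ds{\p}{b}\rtimes\sigma_a)
+ \ds{\p}{b}\rtimes L(\ds{\p}{a}\rtimes\sigma),
\]
which already contains four composition factors (and $\ds{\p}{b}\rtimes L(\ds{\p}{a}\rtimes\sigma)$ must carry both $\sigma_{a,b}^-$ and $L(\ds{\p}{a}\times\ds{\p}{b}\rtimes\sigma)$, since neither appears in $\ds{\p}{a}\rtimes\sigma_b$ or in the claimed decomposition of $\ds{\p}{b}\rtimes\sigma_a$).

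Beyond the arithmetic slip, the plan is essentially circular: pinning down the four composition factors of $\ds{\p}{a}\rtimes L(\ds{\p}{b}\rtimes\sigma)$ is a generalized-principal-series decomposition of the same complexity as the target $\ds{\p}{b}\rtimes\sigma_a$, and a Jacquet-module count via \eqref{komnozenje} and \eqref{jacquet-langlandsov-kvocijent} alone will not resolve the non-tempered multiplicities without a case analysis of the kind Mui\'c already carries out. Your exclusion of $\sigma_{a,b}^-$ via the $\epsilon$-criterion is sound and is indeed part of the picture, but it only governs the discrete-series piece. The economical route is the paper's: invoke Theorem 5.1(ii) of \cite{muic:composition series} for both formulas at once.
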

By Proposition 2.4 of  \cite{ciganovic} we have
\begin{lemma} 
\label{lema-pola-a-bc-tvrdnja} 
We have in $R(G)$
    \begin{align} 
    \label{lema-pola-a-bc-f}
    \ds{\frac{1}{2}}{a} \rtimes \sigma_{b,c}^\pm
    &=
    \sigma_{b,c,a}^\pm
    +
    L(\ds{\frac{1}{2}}{a} \rtimes \sigma_{b,c}^\pm ),
\textrm{ so }
\\
\label{lema-pola-a-bc-f1}
\mu^*(\sigma_{b,c,a}^\pm)
&\geq 
\ds{\frac{1}{2}}{a} \otimes \sigma_{b,c}^\pm.
    \end{align}
\end{lemma}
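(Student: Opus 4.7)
The plan is to establish the decomposition \eqref{lema-pola-a-bc-f} in three stages: embed $\sigma_{b,c,a}^\pm$ as a subrepresentation, observe that the Langlands quotient is the unique irreducible quotient with multiplicity one, and use a Jacquet module computation to bound the total length to two. The inequality \eqref{lema-pola-a-bc-f1} then falls out of the embedding via Frobenius reciprocity.

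First I would verify that $\sigma_{b,c,a}^\pm \hookrightarrow \ds{\frac{1}{2}}{a}\rtimes \sigma_{b,c}^\pm$. Starting from the embedding $\sigma_{b,c,a}^\pm \hookrightarrow \ds{-b}{c}\rtimes\sigma_a$ supplied by Proposition \ref{druga} (equivalently, from $\ds{-b}{c}\otimes\sigma_a\le\mu^*(\sigma_{b,c,a}^\pm)$ in \eqref{jacq-diskretne}), and combining with $\sigma_a\hookrightarrow \ds{\frac{1}{2}}{a}\rtimes\sigma$ from Theorem \ref{prva}, transitivity of Jacquet modules places the cuspidal string $\nu^{-b}\rho\otimes\cdots\otimes \nu^c\rho\otimes \nu^{\frac{1}{2}}\rho\otimes\cdots\otimes \nu^a\rho\otimes\sigma$ in the minimal standard Jacquet module of $\sigma_{b,c,a}^\pm$. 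A shuffling argument based on the Geometrical Lemma lets me reorganize so as to extract $\ds{\frac{1}{2}}{a}$ on the left, producing an irreducible subquotient of the form $\ds{\frac{1}{2}}{a}\otimes\pi\le\mu^*(\sigma_{b,c,a}^\pm)$. By matching partial cuspidal supports and the $\epsilon$-data of Theorem \ref{prva}, the only candidate for $\pi$ compatible with the Moeglin-Tadi\'c classification is $\sigma_{b,c}^\pm$, with the sign propagated from that of $\sigma_{b,c,a}^\pm$. Frobenius reciprocity then yields both the embedding and \eqref{lema-pola-a-bc-f1}.

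The Langlands quotient step is immediate: since $e(\ds{\frac{1}{2}}{a})=(a+\tfrac12)/2>0$ and $\sigma_{b,c}^\pm$ is tempered, $L(\ds{\frac{1}{2}}{a}\rtimes\sigma_{b,c}^\pm)$ is the unique irreducible quotient with multiplicity one, and is distinct from the discrete series $\sigma_{b,c,a}^\pm$. Together with the embedding, this already forces the length to be at least two.

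The remaining task, and the main obstacle, is to bound the length to exactly two. For this I would apply Tadi\'c's formula \eqref{komnozenje} together with $\mu^*(\sigma_{b,c}^\pm)$ from \eqref{jacquet-segment} to expand $\mu^*(\ds{\frac{1}{2}}{a}\rtimes\sigma_{b,c}^\pm)$, and then project down to the minimal standard Jacquet module. The enumeration there is tedious but mechanical: each term corresponds to a way of shuffling $[\nu^{\frac{1}{2}}\rho,\nu^a\rho]$ against $[\nu^{-b}\rho,\nu^c\rho]$ and the cuspidal support of $\sigma$. One checks that exactly the pieces required for $\sigma_{b,c,a}^\pm$ and $L(\ds{\frac{1}{2}}{a}\rtimes\sigma_{b,c}^\pm)$ appear, ruling out any third composition factor. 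The delicate point is bookkeeping the $\pm$ signs through the rearrangement so as not to collapse the two cases; this is precisely what the $\epsilon$-function in the Moeglin-Tadi\'c triple of $\sigma_{b,c,a}^\pm$ is designed to track.
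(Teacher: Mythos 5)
The paper gives no internal proof of this lemma: it is cited as Proposition 2.4 of \cite{ciganovic}, so there is no argument in the present paper to compare against. Evaluating your proposal on its own terms, the three-step outline (embed $\sigma_{b,c,a}^\pm$, note the unique Langlands quotient, bound the length to two) is the right shape, but two of the steps contain real gaps.

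For the embedding, the ``shuffling argument based on the Geometrical Lemma'' is applied to the Jacquet module of the irreducible representation $\sigma_{b,c,a}^\pm$, which is not how the Geometrical Lemma operates --- it decomposes $\mu^*$ of \emph{induced} representations. The step can be made rigorous, but differently: from $\sigma_{b,c,a}^\pm\hookrightarrow\ds{-b}{c}\rtimes\sigma_a\hookrightarrow\ds{-b}{c}\times\ds{\frac{1}{2}}{a}\rtimes\sigma$, use that $[\nu^{\frac{1}{2}}\rho,\nu^a\rho]\subset[\nu^{-b}\rho,\nu^c\rho]$, so the two GL-factors commute (the product of nested segments is irreducible), giving $\sigma_{b,c,a}^\pm\hookrightarrow\ds{\frac{1}{2}}{a}\times\ds{-b}{c}\rtimes\sigma$; Frobenius reciprocity plus the square-integrability criterion then force $\mu^*(\sigma_{b,c,a}^\pm)\geq\ds{\frac{1}{2}}{a}\otimes\pi$ with $\pi$ one of $\sigma_{b,c}^\pm$, and the $\epsilon$-data separates the two signs as you say. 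Alternatively one can invoke the $\epsilon$-criterion from the M{\oe}glin--Tadi\'c classification directly, since $\epsilon_{\sigma_{b,c,a}^\pm}(2a+1,\rho)=1$ and $2a+1=\min\mathrm{Jord}_\rho$.

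The more serious gap is the length-two bound. You propose to expand $\mu^*(\ds{\frac{1}{2}}{a}\rtimes\sigma_{b,c}^\pm)$ and compare minimal standard Jacquet modules term by term, but to close this one needs the full minimal Jacquet modules of both $\sigma_{b,c,a}^\pm$ and $L(\ds{\frac{1}{2}}{a}\rtimes\sigma_{b,c}^\pm)$ --- data you have not established (the very lemma you are proving only gives a lower bound on $\mu^*(\sigma_{b,c,a}^\pm)$), so the comparison cannot be carried out as described. A cleaner route, and the one this paper uses elsewhere (compare Section \ref{netemperirani} and Lemma 2.2 of \cite{muic:composition series}), is to enumerate the possible non-tempered irreducible subquotients by their Langlands data and show that only the Langlands quotient itself survives, then classify tempered subquotients via Jordan blocks and partial cuspidal support. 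As written, ``one checks that exactly the pieces required appear'' is an assertion of the result, not a proof of it.
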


The next is Proposition 3.2 of 
\cite{faktor}.
\begin{theorem} 
\label{muic-A3}
With discrete series being a subrepresentation, we have in $R(G)$
\begin{align*}
    \delta([\nu^{-a}\rho,\nu^c\rho])
      \rtimes \sigma_b&=
      \sigma_{b,c,a}^+ +
            L(\delta([\nu^{-a}\rho,\nu^c\rho])\rtimes \sigma_b)+
        \\    
      &L(\delta([\nu^{-a}\rho,\nu^b\rho])\rtimes \sigma_c)  
        +
      L(\delta([\nu^{-b}\rho,\nu^c\rho])\rtimes \sigma_a).
\end{align*}
\end{theorem}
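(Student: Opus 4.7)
My plan is to verify the identity in the Grothendieck group $R(G)$ by a Jacquet module computation, and then upgrade each summand to a composition factor via explicit embedding arguments. Write $\pi := \delta([\nu^{-a}\rho,\nu^c\rho]) \rtimes \sigma_b$. Since $e(\delta([\nu^{-a}\rho,\nu^c\rho])) = (c-a)/2 > 0$ and $\sigma_b$ is tempered, $\pi$ is a standard module and its Langlands quotient $L(\delta([\nu^{-a}\rho,\nu^c\rho]) \rtimes \sigma_b)$ appears with multiplicity one, accounting for one of the four summands immediately.

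For the discrete series summand, I would apply Frobenius reciprocity: showing that $\delta([\nu^{-a}\rho,\nu^c\rho]) \otimes \sigma_b$ is a quotient of the Jacquet module $r_M(\sigma^+_{b,c,a})$ (with respect to the standard parabolic whose Levi has a single $GL$-block of the correct size) yields $\sigma^+_{b,c,a} \hookrightarrow \pi$. To produce this term I would start from the embedding $\sigma^+_{b,c,a} \hookrightarrow \delta([\nu^{\p}\rho,\nu^a\rho]) \rtimes \sigma^+_{b,c}$ of Lemma \ref{lema-pola-a-bc-tvrdnja}, apply \eqref{jacquet-segment} at $i = -a-1$ to see that $\delta([\nu^{a+1}\rho,\nu^b\rho]) \times \delta([\nu^{-a}\rho,\nu^c\rho]) \otimes \sigma$ lies in $\mu^*(\sigma^+_{b,c})$, and then run \eqref{komnozenje} on the ambient induction, reassembling the two GL-segments $\delta([\nu^{\p}\rho,\nu^a\rho])$ and $\delta([\nu^{a+1}\rho,\nu^b\rho])$ into $\delta([\nu^{\p}\rho,\nu^b\rho])$ so that $\sigma_b$ appears in the group factor via Theorem \ref{prva}. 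That the resulting term arises as a \emph{quotient}, not merely a subquotient, of the Jacquet module follows from the discreteness of $\sigma^+_{b,c,a}$ combined with the extremal nature of the exponents of $\delta([\nu^{-a}\rho,\nu^c\rho])$ among the Jacquet-module constituents.

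To pin down the remaining two Langlands quotients, I would exhibit a distinguishing Jacquet-module term for each. By \eqref{jacquet-langlandsov-kvocijent}, the term $L(\delta([\nu^{-a}\rho,\nu^b\rho])) \otimes \sigma_c$ lies in $\mu^*(L(\delta([\nu^{-a}\rho,\nu^b\rho]) \rtimes \sigma_c))$, and analogously $L(\delta([\nu^{-b}\rho,\nu^c\rho])) \otimes \sigma_a$ lies in $\mu^*(L(\delta([\nu^{-b}\rho,\nu^c\rho]) \rtimes \sigma_a))$. I would then compute $\mu^*(\pi)$ directly via \eqref{komnozenje}, feeding in $\mu^*(\sigma_b)$ from \eqref{jacquet-strogo-pozitivna}, and verify that both of these characteristic terms occur in $\mu^*(\pi)$ while being incompatible with the Jacquet modules of the other three candidates (checked case by case using \eqref{jacquet-segment} and \eqref{jacquet-langlandsov-kvocijent}). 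Comparing the sum of the four candidate Jacquet modules with $\mu^*(\pi)$ then closes the counting and yields the Grothendieck group identity.

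The principal obstacle is combinatorial: the double sum in \eqref{komnozenje}, iterated through the discrete series $\sigma_b$, produces many irreducible summands, and redistributing them among the four candidate Jacquet modules requires careful case splitting by exponent range. A subsidiary subtlety is to rule out the other two discrete series $\sigma^-_{b,c,a}$ and $\sigma^-_{a,b,c}$ carrying the same Jordan set as $\sigma^+_{b,c,a}$; this is handled by observing that the sign $\epsilon(2b+1,\rho) = +1$ forced by the appearance of $\sigma_b$ as the tempered building block is incompatible with the value $-1$ that each of $\sigma^-_{b,c,a}$ and $\sigma^-_{a,b,c}$ carries at $(2b+1,\rho)$ according to Proposition \ref{druga}, so neither can embed into $\pi$.
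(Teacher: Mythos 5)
The paper does not actually prove this theorem; it is quoted verbatim as Proposition~3.2 of \cite{faktor} (Mati\'c). So there is no internal proof to compare against, and your attempt should be judged on its own merits.

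Your overall outline — compute $\mu^*(\pi)$ via \eqref{komnozenje} with $\mu^*(\sigma_b)$ supplied by \eqref{jacquet-strogo-pozitivna}, match the result against the Jacquet modules of the four candidates, and promote the discrete series to a subrepresentation by Frobenius reciprocity — is a sensible plan and indeed the style of argument that Mati\'c and the present paper use repeatedly. However, two of the steps you lean on are not sound as stated. First, the $\epsilon$-function shortcut. Your claim that ``the sign $\epsilon(2b+1,\rho)=+1$ [is] forced by the appearance of $\sigma_b$ as the tempered building block'' is not a consequence of the M{\oe}glin--Tadi\'c formalism: the $\epsilon$-function values of a discrete series subquotient of $\delta([\nu^{-a}\rho,\nu^c\rho])\rtimes\sigma_b$ are defined through embeddings into representations of the shape $\delta([\nu^{(a_-+1)/2}\rho,\nu^{(a-1)/2}\rho])\rtimes\pi''$, and there is no general implication from ``$\sigma_b$ appears in the inducing data'' to ``$\epsilon(2b+1,\rho)=+1$.'' (Note also that what the classification controls are \emph{ratios} of consecutive $\epsilon$-values; for $\sigma^-_{a,b,c}$ the ratio $\epsilon(2b+1,\rho)\epsilon(2a+1,\rho)^{-1}$ equals $+1$, so it does embed into a representation of the form $\delta([\nu^{a+1}\rho,\nu^b\rho])\rtimes\pi''$.) Even if your $\epsilon$ argument were valid, it would only rule out $\sigma^-_{b,c,a}$ and $\sigma^-_{a,b,c}$ as \emph{subrepresentations} of $\pi$; you still need to exclude them as subquotients, and for that you must exhibit a constituent of their $\mu^*$ that cannot occur in $\mu^*(\pi)$ — precisely the kind of check done in Lemma~\ref{jacquet-diskr-A1+} and Proposition~\ref{diskretne-u-velikoj-prpopzicija} for the analogous statements in this paper, using the characteristic terms recorded in \eqref{jacq-diskretne}.

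Second, your justification that $\delta([\nu^{-a}\rho,\nu^c\rho])\otimes\sigma_b$ arises as a \emph{quotient} (not merely a constituent) of the Jacquet module of $\sigma^+_{b,c,a}$ — ``from the discreteness \ldots combined with the extremal nature of the exponents'' — is a heuristic rather than a proof. The paper's method for such positioning statements is to construct an explicit chain of an embedding followed by an epimorphism through an auxiliary induced representation in which the target appears with multiplicity one (see Proposition~\ref{lema-ulaganje-tri-reprezentacije} and Lemma~\ref{ulaganje-ac-b-u-pola-c-ab+}). You would need something of that type, or a central-exponent/Casselman argument made precise, to upgrade the inequality $\mu^*(\sigma^+_{b,c,a})\geq\delta([\nu^{-a}\rho,\nu^c\rho])\otimes\sigma_b$ to an honest embedding. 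With those two gaps repaired — a genuine Jacquet-module exclusion of $\sigma^-_{b,c,a}$ and $\sigma^-_{a,b,c}$, and a rigorous subrepresentation argument for $\sigma^+_{b,c,a}$ — your strategy would carry through.
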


Finally we have the main result of \cite{ciganovic}.
\begin{theorem} \label{ciganovic-A2} 
With discrete series being a subrepresentation, we have in $R(G)$
\begin{equation*}
\begin{split}
\delta([\nu^{-b}\rho,\nu^c\rho]) \times 
\delta([\nu^\frac{1}{2}\rho,\nu^a\rho])\rtimes \sigma
=
L(\delta([\nu^\frac{1}{2}\rho,\nu^a\rho])\rtimes \sigma_{b,c}^+)
+
L(\delta([\nu^\frac{1}{2}\rho,\nu^a\rho])\rtimes \sigma_{b,c}^-)
\\
+\sigma_{b,c,a}^+ 
+
\sigma_{b,c,a}^-  
+
L(\delta([\nu^{-b}\rho,\nu^c\rho])\rtimes \sigma_a)
+
L(\delta([\nu^{-b}\rho,\nu^c\rho])\times \delta([\nu^\frac{1}{2}\rho,\nu^a\rho])\rtimes \sigma).
\end{split}
\end{equation*}
\end{theorem}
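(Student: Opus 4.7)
The plan is to produce the six listed summands as genuine composition factors by two independent decompositions of $\psi_0 := \ds{-b}{c} \times \ds{\frac{1}{2}}{a} \rtimes \sigma$, and then to rule out any additional factors through a Jacquet module count.

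First, I would expand $\ds{-b}{c} \rtimes \sigma = \sigma^+_{b,c} + \sigma^-_{b,c} + L(\ds{-b}{c} \rtimes \sigma)$ via Theorem \ref{prva} and induce with $\ds{\frac{1}{2}}{a}$. Lemma \ref{lema-pola-a-bc-tvrdnja} splits each of the first two pieces as $\sigma_{b,c,a}^\pm + L(\ds{\frac{1}{2}}{a} \rtimes \sigma_{b,c}^\pm)$, producing four of the six listed composition factors; moreover the discrete series $\sigma_{b,c,a}^\pm$ are honest subrepresentations of $\psi_0$ via the chain $\sigma_{b,c,a}^\pm \hookrightarrow \ds{\frac{1}{2}}{a} \rtimes \sigma_{b,c}^\pm \hookrightarrow \ds{\frac{1}{2}}{a} \times \ds{-b}{c} \rtimes \sigma \cong \psi_0$, where the last isomorphism holds because one segment is contained in the other. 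In parallel, expanding $\ds{\frac{1}{2}}{a} \rtimes \sigma = \sigma_a + L(\ds{\frac{1}{2}}{a} \rtimes \sigma)$ by Theorem \ref{prva} and inducing with $\ds{-b}{c}$, Proposition \ref{druga} gives $\ds{-b}{c} \rtimes \sigma_a = \sigma_{b,c,a}^+ + \sigma_{b,c,a}^- + L(\ds{-b}{c} \rtimes \sigma_a)$, recovering the fifth factor $L(\ds{-b}{c} \rtimes \sigma_a)$. The sixth, $L(\ds{-b}{c} \times \ds{\frac{1}{2}}{a} \rtimes \sigma)$, is the Langlands quotient of $\psi_0$ and therefore occurs exactly once.

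The principal obstacle, which I expect to occupy the bulk of the argument, is to verify that these six terms exhaust the semisimplification and each occurs with multiplicity one. I would compute $\mu^{\ast}(\psi_0)$ by applying the Tadic formula \eqref{komnozenje} twice, and compare with $\sum_{i} \mu^{\ast}(W_i)$ for the six candidates; the necessary Jacquet modules of the discrete series pieces come from \eqref{jacq-diskretne} and Lemma \ref{lema-pola-a-bc-tvrdnja}, while those of the Langlands quotients follow from \eqref{jacquet-langlandsov-kvocijent} combined with \eqref{jacquet-segment}. Because the two inducing segments share the long initial block $[\nu^{1/2}\rho,\nu^a\rho]$, a direct term-by-term comparison is combinatorially heavy. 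A more economical route is to single out, for each $W_i$, a distinguished irreducible subquotient of a specific Jacquet module $r_{(k)}(\cdot)$ and to pin down its multiplicity in $r_{(k)}(\psi_0)$ via the Tadic formula and Frobenius reciprocity; matching these counts one by one then forces the semisimplification identity. The delicate part is ensuring no seventh composition factor sneaks in, which amounts to showing that every irreducible piece of $\mu^{\ast}(\psi_0)$ is already accounted for among the known Jacquet modules of the six candidates.
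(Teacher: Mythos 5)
The paper itself does not prove this statement: it is quoted verbatim as the main result of \cite{ciganovic}, so there is no internal proof to compare your sketch against. Reviewing your argument on its own merits, the ingredients you name are the right ones, and the segment containment $[\nu^{1/2}\rho,\nu^a\rho]\subset[\nu^{-b}\rho,\nu^c\rho]$ does justify the claimed chain of embeddings for $\sigma_{b,c,a}^\pm$.

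However, what you have is a plan, not a proof, and the gap sits exactly where the content of the theorem lies. Filtering $\psi_0$ by the three layers of $\ds{-b}{c}\rtimes\sigma$ leaves the third piece $\ds{\frac{1}{2}}{a}\rtimes L(\ds{-b}{c}\rtimes\sigma)$ entirely undetermined; you must show it contributes precisely $L(\ds{-b}{c}\rtimes\sigma_a)$ and $L(\ds{-b}{c}\times\ds{\frac{1}{2}}{a}\rtimes\sigma)$ and nothing else, yet you only detect those two factors via the other filtration, which does not rule out extra subquotients hiding in this piece. Symmetrically, $\ds{-b}{c}\rtimes L(\ds{\frac{1}{2}}{a}\rtimes\sigma)$ in the second filtration is never handled. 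Your description of the exhaustion-and-multiplicity step (``compute $\mu^{*}(\psi_0)$, compare with $\sum_i\mu^{*}(W_i)$, the delicate part is ensuring no seventh factor'') is accurate as a statement of what must be shown, but it is not carried out; choosing, for each $W_i$, a Jacquet-module term that uniquely detects it, computing its multiplicity in $\mu^{*}(\psi_0)$, and verifying that the total length is thereby accounted for \emph{is} the theorem. Finally, note that Lemma \ref{lema-pola-a-bc-tvrdnja} is itself Proposition 2.4 of \cite{ciganovic}, the paper whose main result you are reproving, so you would also need to confirm that the proposition's proof there does not rely on the main theorem, to avoid circularity.
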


%%%%%%%%%%%%%%%%%%%%%%%%%%%%%%%%%%%%%%%%%%%%%%%%%%%%%%%%%%%%%%%%%%%%%%%%%%%%%%%%%%%%%%%%%%%%%%%%%%%%%%%%%%%%%
%%%%%%%%%%%%%%%%%%%%%%%%%%%%%%%%%%%%%%%%%%%%%%%%%%%%%%%
%%%%%%%%%%%%%%%%%%%%%%%%%%%%%%%%%%%%%%%%%%%%%%%%%%%%%%%

\section{Decomposing mixed case} 
\label{dekompozicija1}

As we are interested in the composition series of induced representations, we shall need a result that follows  from proofs of Theorems 2-1 and 2-6 from  \cite{muic-hanzer:langzel}.
\begin{theorem}
\label{contra}
There exists a contravariant exact functor:
\[
    \textrm{Alg  }G_{n} \overset{\wedge}{\longrightarrow} \textrm{Alg  }G_{n},
\]
such that
\[
    \overset{\wedge}{\pi} \cong \pi, \quad \pi \in \textrm{Irr } G_n,
\]
and if  $\delta_i \in \textrm{Irr  }GL(n_i,F)$, $i=1,...,k$, $m=n_1+\cdots+n_k$ and 
$\tau \in \textrm{Irr  }G_{n-m}$ we have
\[
(\delta_1\times\cdots\times\delta_k\rtimes \tau)^\wedge
\cong
\widetilde{\delta_1}\times\cdots\times
\widetilde{\delta_k}\rtimes \tau.
\]
\end{theorem}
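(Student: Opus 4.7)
The plan is to realize $\wedge$ as the composition of the ordinary contragredient functor with a suitable involutive automorphism $\theta$ of the tower $\{G_n\}$, of Moeglin--Vigneras--Waldspurger type. Concretely, I would fix $\theta\colon G_n\to G_n$ which preserves each fixed standard parabolic subgroup $P_s=M_sN_s$, restricts on each $GL(n_i,F)$-block of $M_s$ to the transpose-inverse automorphism $g\mapsto{}^tg^{-1}$, and restricts on the $G_{n-m}$ factor to an involution of the same type (inner in the symplectic case, induced by a non-trivial element of $O(V_{n-m})\setminus SO(V_{n-m})$ in the orthogonal case). The decisive input is the Moeglin--Vigneras--Waldspurger theorem: $\pi\circ\theta\cong\widetilde\pi$ for every $\pi\in\textrm{Irr }G_n$.

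Define
\[
\widehat\pi:=\widetilde\pi\circ\theta^{-1}.
\]
As the composition of the exact contravariant contragredient with the covariant self-equivalence $(\cdot)\circ\theta^{-1}$ of $\textrm{Alg }G_n$, this is contravariant and exact. Applied to irreducible $\pi$, MVW gives $\widehat\pi\cong(\pi\circ\theta)\circ\theta^{-1}\cong\pi$, proving the self-duality assertion on $\textrm{Irr }G_n$.

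For the induction identity I would combine three standard facts: (i) $\widetilde{\textrm{Ind}_{P_s}^{G_n}(\pi)}\cong\textrm{Ind}_{P_s^-}^{G_n}(\widetilde\pi)$, the commutation of contragredient with normalized parabolic induction up to opposite parabolic; (ii) the Weyl element twist $\textrm{Ind}_{P_s^-}^{G_n}(\,\cdot\,)\cong\textrm{Ind}_{P_s}^{G_n}((\,\cdot\,)\circ\textrm{Ad}(w))$, whose net effect on the $GL$-blocks of $M_s$ is to reverse their order and to apply transpose-inverse to each; and (iii) the commutation of $(\,\cdot\,)\circ\theta^{-1}$ with induction from the $\theta$-stable parabolic $P_s$. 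Applied to $\pi=\delta_1\otimes\cdots\otimes\delta_k\otimes\tau$, the order reversal arising from (i)--(ii) cancels out, while the two transpose-inverse twists combine with the two contragredients on each $GL$-factor to leave exactly one contragredient $\widetilde{\delta_i}$; on the last factor $\widetilde\tau\circ\theta^{-1}\cong\tau$ by MVW at level $n-m$. The output is $\widetilde{\delta_1}\times\cdots\times\widetilde{\delta_k}\rtimes\tau$, as required.

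The main obstacle is arranging a single $\theta$ that simultaneously satisfies all three local requirements (stability of standard parabolics, transpose-inverse on every $GL$-block, and inheritance of the MVW property to each smaller classical factor). This is where the hypothesis that $G_n$ is the \emph{full} isometry group is essential, forcing the use of $O(V_n)$ rather than $SO(V_n)$ in the orthogonal case. Once $\theta$ has been pinned down, all remaining verifications reduce to unwinding the definitions of normalized induction, contragredient, and Weyl conjugation on the Levi, as carried out in the proofs of Theorems 2-1 and 2-6 of \cite{muic-hanzer:langzel}, which we follow.
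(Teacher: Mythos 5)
Your proposal takes essentially the same route as the paper: realize $\wedge$ as the contragredient post-composed with an MVW involution, and derive the induction identity from Waldspurger duality together with the commutation of contragredient and parabolic induction taken from \cite{muic-hanzer:langzel}. The paper's concrete choice of involution is a bit cleaner than yours: it conjugates by $\eta=(\mathrm{id},\eta')$ in the Levi $GL(n,F)\times GSp(0,F)$ of the Siegel parabolic of $GSp(2n,F)$, with $\eta'$ of similitude $-1$, so that $\eta$ acts trivially on every $GL$-block of every standard Levi and nontrivially only on the symplectic factor; there is then no need to track and cancel transpose-inverse twists on the $GL$-factors, as your version does. One slip that should be fixed: the parenthetical ``inner in the symplectic case'' is incorrect. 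The MVW involution of $Sp(2r,F)$ is conjugation by an element of $GSp(2r,F)$ with nontrivial similitude, which is \emph{not} in $Sp(2r,F)$; were it inner, MVW would give $\widetilde\tau\cong\tau$ for every irreducible $\tau$ of $Sp(2r,F)$, which is false. The situation is the opposite of what you wrote: it is the \emph{orthogonal} case where the MVW element already lies in the full isometry group $O(V)$, and that is exactly why the contragredient alone suffices there. You do invoke Waldspurger duality correctly a line later, so the argument goes through, but the description of $\theta$ on the classical factor should be corrected accordingly.
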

\begin{proof}
    We follow the same lines as in proofs of Theorems 2-1 and 2-6 from  
    \cite{muic-hanzer:langzel}. 
    If $G_n$ is orthogonal, then $\wedge$ is just  contragredient. If $G_n$ is symplectic, 
    by a result of Waldspurger 
    (\cite{M-V-W} Chapter 4, II.1), for any element 
    $\eta \in GSp(2n)$ of similitude $-1$, and $\pi\in Irr G_n$ we have
    $\widetilde{\pi} \cong \pi^\eta$, where 
    $\pi^\eta(g)=\pi(\eta g \eta^{-1})$.
    Now 
     choose an element of the form 
    $\eta = (id, \eta')\in GL(n, F) \times GSp(0, F)=
    GL(n, F) \times F^\times
    $, 
    identified with the Levi subgroup of the appropriate maximal parabolic
    subgroup of $GSp(2n, F)$, where $\eta'$ 
    is an element with similitude equal to $-1$, as $\eta$ is.
    For $\wedge =  \eta \circ \sim$, we have
    \[
    (\delta_1\times\cdots\times\delta_k\rtimes \tau)^{ \sim \eta}
    \cong
    (\widetilde{\delta_1}\times\cdots\times
    \widetilde{\delta_k}\rtimes \widetilde{\tau})^\eta
    \cong
    \widetilde{\delta_1}\times\cdots\times
    \widetilde{\delta_k}\rtimes \tau.
    \]
\end{proof}
Consider some standard intertwining operators
\begin{equation*}
\begin{tikzcd}
\A \arrow[r,"\cong"] 
\arrow[d,"f_0","\cong" '] & \F \arrow[d,"g_0","\cong" '] \\ 
\B \arrow[d,"f_1"] &        \G \arrow[d,"g_1"] \\ 
\C \arrow[d,"f_2"] &        \I \arrow[d,"g_2"] \\ 
\D \arrow[d,"f_3"] &        \J \arrow[d,"g_3"] \\ 
\E \arrow[r,"\cong"] &    \K 
\end{tikzcd}
\end{equation*}
We denoted
$\psi=\ds{-a}{c}\times \ds{\frac{1}{2}}{b}\rtimes \sigma$. Also
for all $i\geq 1$ denote
$K_i=\textrm{Ker }f_i$, and $H_i=\textrm{Ker } g_i$.
By Theorems \ref{prva} and \ref{contra}, we have 
\begin{align*}
K_1&\cong {H_3}^\wedge\cong 
\delta([\nu^\frac{1}{2}\rho,\nu^b\rho])\rtimes
      \sigma_{a,c}^+
      +
      \delta([\nu^\frac{1}{2}\rho,\nu^b\rho])\rtimes
      \sigma_{a,c}^-,
 \\
 K_2&\cong H_2^\wedge\cong
 \delta([\nu^{-c}\rho,\nu^b\rho])\times 
      \delta([\nu^{\frac{1}{2}}\rho,\nu^a\rho])\rtimes \sigma,
 \\
 K_3&\cong H_1^\wedge \cong
 \delta([\nu^{-c}\rho,\nu^a\rho])
      \rtimes \sigma_b,  
\end{align*}
and no kernel contains $L(\psi)$. Thus we have:
\[
Im(f_3\circ \cdots \circ f_0)
\cong
L(\psi)
\cong
Im(g_3\circ \cdots \circ g_0),
\]
and the diagram is commutative up to a constant. We have in $R(G)$:
\begin{equation}
    \label{long-intertwining}
\forall i\quad   K_i \leq  \psi \leq K_1+K_2+K_3 +L(\psi).
\end{equation}
The composition factors of $K_2$ and $K_3$ are determined  by Theorems \ref{muic-A3} and 
\ref{ciganovic-A2}. So in search of remaining subquotients, we need to 
decompose $K_1$. After that, 
we determine the multiplicities of all subquotients of $\psi$.
Note that by Lemma 8.1 of \cite{tadic-diskretne}, all tempered subquotients of $\psi$
are discrete series.

%%%%%%%%%%%%%%%%%%%%%%%%%%%%%%%%%%%%%%%%%%%
%%%%%%%%%%%%%%%%%%%%%%%%%%%%%%%%%%%%%%%%%%%
%%%%%%%%%%%%%%%%%%%%%%%%%%%%%%%%%%%%%%%%%%%

\section{Discrete series subquotients}
\label{diskretni}
Here we determine discrete series subquotients in three induced representations
\begin{equation} \label{A-A1}
\begin{split}
\delta([\nu^{-a}\rho,\nu^c\rho])\times \delta([\nu^\frac{1}{2}\rho,\nu^b\rho])\rtimes \sigma 
\geq &
\\
      \delta([\nu^\frac{1}{2}\rho,\nu^b\rho])\rtimes
      \sigma_{a,c}^+
      &+
      \delta([\nu^\frac{1}{2}\rho,\nu^b\rho])\rtimes
      \sigma_{a,c}^-,
\end{split}
\end{equation}
where the inequality follows from Theorem \ref{prva}.
We start with candidates.
%the representation on the left hand side.
\begin{lemma} \label{jacquet-discrete-A}
Only possible discrete series subquotients appearing in
\noindent
\[
\delta([\nu^{-a}\rho,\nu^c\rho])\times \delta([\nu^\frac{1}{2}\rho,\nu^b\rho])\rtimes \sigma 
\]
are 
$\sigma^+_{b,c,a}$, $\sigma^-_{b,c,a}$ and $\sigma^-_{a,b,c}$.
\end{lemma}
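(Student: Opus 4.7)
The plan is to use the M\oe glin-Tadi\'c classification to enumerate discrete series candidates by their Jordan data, and then exclude the undesired candidates via Frobenius reciprocity against $\mu^*(\psi)$.

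First, I would observe that any irreducible subquotient of $\psi$ shares its cuspidal support, so a discrete series subquotient $\pi$ has partial cuspidal support $\sigma$, and its new Jordan blocks over $\operatorname{Jord}(\sigma)$ must account, modulo contragredients of the $GL$-cuspidal factors, for the multiset $[\nu^{-a}\rho,\nu^c\rho]\cup[\nu^{1/2}\rho,\nu^b\rho]$. Tallying absolute-value multiplicities of these exponents yields $3$ on $\{1/2,\dots,a\}$, $2$ on $\{a+1,\dots,b\}$, and $1$ on $\{b+1,\dots,c\}$. Since a set of new Jordan blocks $\{(2x+1,\rho):x\in S\}$ contributes multiplicity $\#\{x\in S:x\ge y\}$ at $|y|=y$, the two unit drops at $y=a+1,b+1$ and $y=c+1$ force $S=\{a,b,c\}$. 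Hence $\operatorname{Jord}(\pi)=\{(2a+1,\rho),(2b+1,\rho),(2c+1,\rho)\}\cup\operatorname{Jord}(\sigma)$, and $\pi$ is pinned down by the sign triple $(\epsilon_\pi(2a+1,\rho),\epsilon_\pi(2b+1,\rho),\epsilon_\pi(2c+1,\rho))\in\{\pm1\}^3$.

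Second, by Proposition \ref{druga}, three of the eight a priori sign triples are realized by the listed discrete series: $(+,+,+)=\sigma^+_{b,c,a}$, $(+,-,-)=\sigma^-_{b,c,a}$, and $(-,-,+)=\sigma^-_{a,b,c}$. The remaining five triples, namely $(-,+,+),(+,+,-),(-,-,-),(+,-,+),(-,+,-)$, must be shown not to appear as subquotients of $\psi$.

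Third, for each forbidden triple I would apply the M\oe glin-Tadi\'c embedding criterion: for consecutive blocks $(2x_-{+}1,\rho)<(2x+1,\rho)$ of $\operatorname{Jord}(\pi)_\rho$, the equality $\epsilon(2x+1,\rho)\epsilon(2x_-+1,\rho)^{-1}=1$ is equivalent to an embedding $\pi\hookrightarrow\delta([\nu^{x_-+1}\rho,\nu^x\rho])\rtimes\pi''$, with the analogous initial-block version when $2x+1=\min\operatorname{Jord}(\pi)_\rho$. By Frobenius reciprocity this produces characteristic summands of the form $\delta([\nu^{x_-+1}\rho,\nu^x\rho])\otimes\pi''\leq\mu^*(\pi)$; since $\pi\le\psi$ forces $\mu^*(\pi)\le\mu^*(\psi)$, each forbidden candidate can be excluded by exhibiting an obstruction in $\mu^*(\psi)$. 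To do this I would expand $\mu^*(\psi)$ by applying the Tadi\'c formula \eqref{komnozenje} to both segments of $\psi$ in turn, and then invoke the discrete-series Jacquet module formulas \eqref{jacquet-segment}--\eqref{jacquet-strogo-pozitivna} and Lemma \ref{lema-pola-a-bc-tvrdnja} to identify which cuspidal-side terms actually survive. The main obstacle will be precisely this combinatorial bookkeeping: the Tadi\'c expansion of $\mu^*(\psi)$ has many summands, and for each of the five forbidden sign triples one must pinpoint the specific missing piece of the form $\delta\otimes\pi''$ demanded by the M\oe glin-Tadi\'c embedding structure, while being careful about the distinct predecessor relations in $\operatorname{Jord}(\pi)_\rho$ for the five triples.
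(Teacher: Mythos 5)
Your first step — pinning down $\operatorname{Jord}(\pi)=\{(2a+1,\rho),(2b+1,\rho),(2c+1,\rho)\}\cup\operatorname{Jord}(\sigma)$ from the cuspidal-support multiplicities — matches the paper, and your reading off of the sign triples of the three named discrete series from Proposition \ref{druga} is correct. After that, however, the two arguments part ways, and your route has a real gap.

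The paper never enumerates eight sign triples and never runs an exclusion argument. Instead it observes that any discrete series with this Jordan set must embed into $\delta([\nu^{-y}\rho,\nu^z\rho])\rtimes\sigma_x$ where $\{2y{+}1,2z{+}1\}$ is a consecutive pair in $\operatorname{Jord}_\rho(\pi)$ and $\sigma_x$ is the strongly positive piece carrying the remaining block; the cuspidal-support count then forces $(x,y,z)\in\{(a,b,c),(c,a,b)\}$, so every candidate is a subrepresentation of $\delta([\nu^{-b}\rho,\nu^c\rho])\rtimes\sigma_a$ or $\delta([\nu^{-a}\rho,\nu^b\rho])\rtimes\sigma_c$. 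Theorem 2.1 of Mui\'c (equivalently Proposition \ref{druga}) identifies those subrepresentations as exactly $\sigma^+_{b,c,a},\sigma^-_{b,c,a}$ and $\sigma^+_{b,c,a},\sigma^-_{a,b,c}$, and the lemma follows with no further work.

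Your proposal instead treats all eight elements of $\{\pm1\}^3$ as a priori discrete-series candidates and then plans to eliminate five of them by hunting for summands of $\mu^*(\pi)$ that fail to occur in $\mu^*(\psi)$. Two problems. First, you have not actually carried out the elimination — you explicitly flag "the main obstacle will be precisely this combinatorial bookkeeping," so the proof of the lemma is missing its core content. Second, and more structurally, the premise that all eight sign triples yield discrete series is unexamined, and the paper's own data suggest it is false: the three inductions $\delta([\nu^{-b}\rho,\nu^c\rho])\rtimes\sigma_a$, $\delta([\nu^{-a}\rho,\nu^b\rho])\rtimes\sigma_c$, $\delta([\nu^{-a}\rho,\nu^c\rho])\rtimes\sigma_b$ between them exhibit only the three discrete series $\sigma^+_{b,c,a},\sigma^-_{b,c,a},\sigma^-_{a,b,c}$, which strongly indicates that the other five sign triples are not admissible in the M{\oe}glin--Tadi\'c sense and do not correspond to discrete series at all. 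In that case what you need is an admissibility (non-existence) argument, not a Jacquet-module obstruction against $\mu^*(\psi)$; since a hypothetical $\pi$ with that Jordan set would have the same cuspidal support as $\psi$, there is no reason to expect a clean obstruction at the level of $\mu^*(\psi)$, and your plan may simply not close. You should either cite the embedding result the paper uses (so that only the two inductions need to be inspected), or separately establish which sign triples are admissible before attempting any exclusion.
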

\begin{proof}
Consider cuspidal support of the induced representation and M{\oe}glin Tadić classification of discrete series.
Since $\nu^{\pm \frac{1}{2}}\rho$ appears 3 times in the cuspidal support, possible discrete series are subrepresentations of a representation of the form
\[
\ds{-y}{z}\rtimes \sigma_x,
\]
where either $0<x<y<z$ or $0<y<z<x$. We look at the first case. Here $z$ is the largest such that $\nu^{\pm z}\rho$ appears once in the cuspidal support, so we must have $z=c$. Further, $y$ is the largest such that 
$\nu^{\pm y}\rho$ appears with times in the cuspidal support, so it must be $b$. Finally, $x$
is the largest such that 
$\nu^{\pm x}\rho$ appears three times in the cuspidal support, so it must be $a$. Same reasoning goes for the second case, where we have
$x=c$, $z=b$, and $y=a$. By Theorem 2.1 of \cite{muic:composition series}, we look for subrepresentations
\[
\sigma^+_{b,c,a}\oplus \sigma^-_{a,b,c}
\hookrightarrow \ds{-a}{b}\rtimes \sigma_c
\quad \textrm{and} \quad 
\sigma^+_{b,c,a}\oplus \sigma^-_{b,c,a}
\hookrightarrow \ds{-b}{c}\rtimes \sigma_a,
\]
\end{proof}

To determine which of these discrete series do appear in
\eqref{A-A1},
and what are their multiplicities, we need a couple of lemmas.

\begin{lemma} 
\label{lemma-diskr-u-dva-seg-kusp}
We have in $R(G)$, with maximum multiplicity
    \begin{equation}
    \label{lemma-diskr-u-dva-seg-kusp-f1}
\ds{\frac{1}{2}}{b}
    \times
    \ds{\frac{1}{2}}{c}\rtimes \sigma
  \geq \sigma_{b,c}^+ + \sigma_{b,c}^-
  +L(\ds{-b}{c}\rtimes \sigma).
    \end{equation}
\end{lemma}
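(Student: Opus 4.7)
The plan is to first establish the inequality by exhibiting $\delta([\nu^{-b}\rho,\nu^c\rho])\rtimes\sigma$ as a subquotient of $\ds{\frac{1}{2}}{b}\times\ds{\frac{1}{2}}{c}\rtimes\sigma$ in $R(G)$, so that Theorem \ref{prva} produces the three named summands; the maximum-multiplicity statement will then come from a Jacquet-module computation based on Tadi\'c's formula.

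For the subquotient step, the segments $\Delta_1=[\nu^{\frac{1}{2}}\rho,\nu^c\rho]$ and $\Delta_2=[\nu^{-b}\rho,\nu^{-\frac{1}{2}}\rho]$ are linked, with union $[\nu^{-b}\rho,\nu^c\rho]$ and empty intersection, so the standard segment embedding yields $\delta([\nu^{-b}\rho,\nu^c\rho])\hookrightarrow \ds{\frac{1}{2}}{c}\times \ds{-b}{-\frac{1}{2}}$. Inducing by $\sigma$ gives
\[
\delta([\nu^{-b}\rho,\nu^c\rho])\rtimes\sigma \hookrightarrow \ds{\frac{1}{2}}{c}\times \ds{-b}{-\frac{1}{2}}\rtimes\sigma.
\]
Applying the contravariant functor $\wedge$ of Theorem \ref{contra}, together with $\widetilde{\rho}\cong\rho$, gives $(\ds{-b}{-\frac{1}{2}}\rtimes\sigma)^\wedge \cong \ds{\frac{1}{2}}{b}\rtimes\sigma$, and since $\wedge$ fixes classes of irreducibles the two representations share the same semisimplification in $R(G)$. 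Exactness of parabolic induction by $\ds{\frac{1}{2}}{c}\times -$ then yields
\[
\ds{\frac{1}{2}}{c}\times\ds{-b}{-\frac{1}{2}}\rtimes\sigma = \ds{\frac{1}{2}}{c}\times \ds{\frac{1}{2}}{b}\rtimes\sigma = \ds{\frac{1}{2}}{b}\times \ds{\frac{1}{2}}{c}\rtimes\sigma \textrm{ in } R(G),
\]
where the last step uses the irreducibility of $\ds{\frac{1}{2}}{b}\times\ds{\frac{1}{2}}{c}$ (the segments are nested). Theorem \ref{prva} then writes $\delta([\nu^{-b}\rho,\nu^c\rho])\rtimes\sigma = \sigma_{b,c}^++\sigma_{b,c}^-+L(\ds{-b}{c}\rtimes\sigma)$ in $R(G)$, which delivers the claimed inequality.

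For the maximum-multiplicity statement, I would select, for each of the three irreducible subquotients, a Jacquet-module ``detector'' term whose multiplicity in $\mu^*(\ds{\frac{1}{2}}{b}\times\ds{\frac{1}{2}}{c}\rtimes\sigma)$ (computed by a double application of \eqref{komnozenje}) matches its multiplicity inside that single subquotient. Natural choices are $\ds{\frac{1}{2}}{b}\otimes\sigma_c$ for $\sigma_{b,c}^+$, coming from the embedding $\sigma_{b,c}^+\hookrightarrow \ds{\frac{1}{2}}{b}\rtimes\sigma_c$; a term with strictly negative prefix provided by \eqref{jacquet-segment} for $\sigma_{b,c}^-$; and $L(\ds{-b}{c})\otimes \sigma$ for the Langlands quotient, read off from \eqref{jacquet-langlandsov-kvocijent}. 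Checking that each detector appears with multiplicity one in the Jacquet module of the induced representation and is not shared with any other composition factor closes the argument.

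The main obstacle is the bookkeeping: a double application of \eqref{komnozenje} produces a large number of summands, and one has to verify carefully that each chosen detector term arises exactly once and cannot be absorbed by another irreducible subquotient of $\ds{\frac{1}{2}}{b}\times\ds{\frac{1}{2}}{c}\rtimes\sigma$ with the same cuspidal support.
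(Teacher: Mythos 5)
Your existence argument is correct and matches what the paper needs: the linked-segment embedding
$\delta([\nu^{-b}\rho,\nu^c\rho])\hookrightarrow \ds{\frac{1}{2}}{c}\times\ds{-b}{-\frac{1}{2}}$, followed by $\rtimes\sigma$ and the identity $\ds{-b}{-\frac{1}{2}}\rtimes\sigma=\ds{\frac{1}{2}}{b}\rtimes\sigma$ in $R(G)$, gives $\ds{-b}{c}\rtimes\sigma\leq\ds{\frac{1}{2}}{b}\times\ds{\frac{1}{2}}{c}\rtimes\sigma$, and Theorem \ref{prva} decomposes the left side into the three named pieces. That part is fine.

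The multiplicity-one part of your proposal is only an outline, and one of your proposed detectors has a genuine gap. You suggest using for $\sigma_{b,c}^-$ ``a term with strictly negative prefix provided by \eqref{jacquet-segment}''. But in \eqref{jacquet-segment} the first sum for the $+$ sign ranges over $i$ from $-b-1$ to $-\frac{1}{2}$, while for the $-$ sign it ranges over $i$ from $-b-1$ to $-\frac{3}{2}$; this index set for $\sigma_{b,c}^-$ is \emph{contained} in that for $\sigma_{b,c}^+$, so every such negative-prefix term $\delta([\nu^{-i}\rho,\nu^b\rho])\times\delta([\nu^{i+1}\rho,\nu^c\rho])\otimes\sigma$ from $\mu^*(\sigma_{b,c}^-)$ also occurs in $\mu^*(\sigma_{b,c}^+)$. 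Such a detector therefore cannot isolate $\sigma_{b,c}^-$; you would need either a term from the second sum with the $-$-signed right-hand factor, or, simpler, a shared detector whose total multiplicity you count (see below). Also, ``$L(\ds{-b}{c})\otimes\sigma$'' is not a meaningful detector as written; the Langlands quotient of a single segment is just that segment, and the term you presumably want from \eqref{jacquet-langlandsov-kvocijent} is $\ds{-c}{b}\otimes\sigma$.

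The paper's proof is shorter and avoids these issues by not trying to separate the two discrete series at all: it computes the multiplicity of $\ds{-b}{c}\otimes\sigma$ in $\mu^*(\ds{\frac{1}{2}}{b}\times\ds{\frac{1}{2}}{c}\rtimes\sigma)$ and finds it is exactly $2$, and the multiplicity of $\ds{-c}{b}\otimes\sigma$ and finds it is exactly $1$. Since $\ds{-b}{c}\otimes\sigma$ occurs once in each of $\mu^*(\sigma_{b,c}^+)$ and $\mu^*(\sigma_{b,c}^-)$ (take $i=-b-1$ in \eqref{jacquet-segment}) but not in $\mu^*(L(\ds{-b}{c}\rtimes\sigma))$, multiplicity $2$ forces each discrete series to appear exactly once; since $\ds{-c}{b}\otimes\sigma$ occurs once in $\mu^*(L(\ds{-b}{c}\rtimes\sigma))$ (take $i=c$ in \eqref{jacquet-langlandsov-kvocijent}) but not in the discrete series Jacquet modules, multiplicity $1$ forces the Langlands quotient to appear exactly once. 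Existence is built into this count together with Theorem \ref{prva}. Using a single multiplicity-two detector for the pair $\{\sigma_{b,c}^+,\sigma_{b,c}^-\}$ is the key simplification you should adopt.
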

\begin{proof}
Check multiplicity two of 
$\ds{-b}{c}\otimes \sigma$
and one of
$\ds{-c}{b}\otimes \sigma$
in
$\mu^*(\ds{\frac{1}{2}}{b}
    \times
    \ds{\frac{1}{2}}{c}\rtimes \sigma)
$ and use
Theorem \ref{muic-diskretne-podreprezentacije}.
\end{proof}

\begin{lemma} 
\label{dis-lema-jac-prva}
We have in $R(G)$, with maximum multiplicities:
\begin{align*}
\mu^*(
\delta([\nu^\frac{1}{2}\rho,\nu^b\rho])
\times &
\delta([\nu^{-a}\rho,\nu^c\rho]) 
 \rtimes \sigma)
\geq 
\\
&1 \cdot \ds{\frac{1}{2}}{a}\otimes \sigma_{b,c}^+
+
1 \cdot \ds{\frac{1}{2}}{a}\otimes \sigma_{b,c}^-.
\end{align*}
\end{lemma}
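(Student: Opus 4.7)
The approach is to apply the Tadi\'c formula \eqref{komnozenje} to the rearranged expression $\Pi = \delta([\nu^\frac{1}{2}\rho,\nu^b\rho]) \rtimes \pi$, where $\pi := \delta([\nu^{-a}\rho,\nu^c\rho]) \rtimes \sigma$, with $x = \frac{1}{2}$ and $y = b$. At the single index pair $(i,j) = (b + \frac{1}{2},0)$ the two left-most GL-segments in \eqref{komnozenje}, namely $\delta([\nu^{1/2}\rho,\nu^{-1/2}\rho])$ and $\delta([\nu^{b+1}\rho,\nu^b\rho])$, are empty, so the corresponding term collapses to
\[
\delta' \otimes \bigl(\delta([\nu^\frac{1}{2}\rho,\nu^b\rho]) \rtimes \sigma'\bigr),
\]
with $\delta' \otimes \sigma'$ any irreducible subquotient of $\mu^*(\pi)$. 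It therefore suffices, for each sign $\pm$, to exhibit a constituent $\ds{\frac{1}{2}}{a} \otimes \sigma' \leq \mu^*(\pi)$ such that $\delta([\nu^\frac{1}{2}\rho,\nu^b\rho]) \rtimes \sigma' \geq \sigma_{b,c}^{\pm}$.

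For the $+$ sign I take $\sigma' = \sigma_c$. The M{\oe}glin--Tadi\'c embedding $\sigma_{a,c}^+ \hookrightarrow \ds{\frac{1}{2}}{a} \rtimes \sigma_c$ (visible from the $\epsilon$-recipe recalled in the introduction) yields, by Frobenius reciprocity, $\ds{\frac{1}{2}}{a} \otimes \sigma_c \leq \mu^*(\sigma_{a,c}^+) \leq \mu^*(\pi)$, where the last inequality uses Theorem \ref{prva}. The embedding $\sigma_{b,c}^+ \hookrightarrow \delta([\nu^\frac{1}{2}\rho,\nu^b\rho]) \rtimes \sigma_c$ from the introduction then produces $\sigma_{b,c}^+$ on the outer side, giving the first required summand.

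For the $-$ sign the analogous route through $\sigma_{a,c}^-$ fails because its $\epsilon$-signs do not align with those of $\sigma_c$, so I route instead through the Langlands quotient. Set $L_{ac} := L(\delta([\nu^{-a}\rho,\nu^c\rho]);\sigma)$ and $L_c := L(\delta([\nu^\frac{1}{2}\rho,\nu^c\rho]);\sigma)$. Formula \eqref{jacquet-langlandsov-kvocijent} evaluated at $i = -\frac{1}{2}$, $j = c$ produces the summand $\ds{\frac{1}{2}}{a} \otimes L_c$ inside $\mu^*(L_{ac})$, hence inside $\mu^*(\pi)$ by Theorem \ref{prva}. It remains to show $\delta([\nu^\frac{1}{2}\rho,\nu^b\rho]) \rtimes L_c \geq \sigma_{b,c}^-$. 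Lemma \ref{lemma-diskr-u-dva-seg-kusp} gives
\[
\delta([\nu^\frac{1}{2}\rho,\nu^b\rho]) \times \delta([\nu^\frac{1}{2}\rho,\nu^c\rho]) \rtimes \sigma \geq \sigma_{b,c}^+ + \sigma_{b,c}^-,
\]
and splitting $\delta([\nu^\frac{1}{2}\rho,\nu^c\rho]) \rtimes \sigma = \sigma_c + L_c$ via Theorem \ref{prva}, combined with Lemma \ref{lema-diskretna-podreprezentacija} applied with $(a,b) \to (b,c)$ (which shows $\delta([\nu^\frac{1}{2}\rho,\nu^b\rho]) \rtimes \sigma_c$ does not contain $\sigma_{b,c}^-$), forces the $\sigma_{b,c}^-$-summand to land inside $\delta([\nu^\frac{1}{2}\rho,\nu^b\rho]) \rtimes L_c$, as required.

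The main obstacle is precisely this $-$ case: $\sigma_{b,c}^-$ is not a subrepresentation of any standard tempered induction built from $\sigma_c$ or $\sigma_b$, so the contribution $\ds{\frac{1}{2}}{a} \otimes \sigma_{b,c}^-$ must be detected indirectly through the non-tempered Langlands quotient $L_c$, and extracted by the subtraction bookkeeping above in $R(G)$.
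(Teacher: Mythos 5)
The key content of this lemma is the \emph{upper} bound: the phrase ``with maximum multiplicities'' in this paper consistently means that the stated coefficients are the exact multiplicities, and it is the upper bound (at most one) that is actually used in Proposition \ref{diskretne-u-velikoj-prpopzicija} to deduce that $\sigma^\pm_{b,c,a}$ occur with multiplicity one in $\psi$. Your proof, as written, explicitly addresses only the lower bound: the sentence ``it therefore suffices to exhibit a constituent $\ds{\frac{1}{2}}{a}\otimes\sigma'\leq\mu^*(\pi)$ such that $\ds{\frac{1}{2}}{b}\rtimes\sigma'\geq\sigma_{b,c}^\pm$'' is an existence statement, and the two constructions you then give (through $\sigma_{a,c}^+$ for the $+$ case and through $L_{ac}$ for the $-$ case) establish only that each summand appears at least once. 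That leaves a genuine gap.

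To close the gap along your route, you would need to (i) observe that, by the cuspidal support of $\ds{\frac{1}{2}}{a}$ compared against $M^*(\ds{-a}{c})\rtimes(1\otimes\sigma)$, the only irreducible $\sigma'$ with $\ds{\frac{1}{2}}{a}\otimes\sigma'\leq\mu^*(\pi)$ are the two factors of $\ds{\frac{1}{2}}{c}\rtimes\sigma$, namely $\sigma_c$ and $L_c$, each appearing with multiplicity one; and (ii) compute
$m(\sigma_{b,c}^\pm;\ds{\frac{1}{2}}{b}\rtimes\sigma_c)+m(\sigma_{b,c}^\pm;\ds{\frac{1}{2}}{b}\rtimes L_c)=m(\sigma_{b,c}^\pm;\ds{\frac{1}{2}}{b}\times\ds{\frac{1}{2}}{c}\rtimes\sigma)=1$ by Lemma \ref{lemma-diskr-u-dva-seg-kusp}. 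You already cite Lemma \ref{lemma-diskr-u-dva-seg-kusp} and Lemma \ref{lema-diskretna-podreprezentacija} in your ``subtraction bookkeeping'', so all the ingredients are present, but the argument is not framed as counting all contributions. The paper takes a shorter path: it applies \eqref{komnozenje} to both segments simultaneously, so the cuspidal-support comparison pins down the unique index quadruple $(r,s,u,v)=(b+\tfrac{1}{2},0,c+\tfrac{1}{2},0)$ in one step, and the entire multiplicity count for both signs reduces immediately to the single computation in Lemma \ref{lemma-diskr-u-dva-seg-kusp}, with no need to pass through $\sigma_{a,c}^+$, $L_{ac}$, or the $\epsilon$-recipe.
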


\begin{proof}
By \eqref{komnozenje}, we consider
$0\leq s \leq r \leq b+\frac{1}{2}$, 
$0\leq v \leq u \leq a+c+1$ and 
\begin{equation} 
\label{dis-lema-jac-prva-f1}
\begin{split}
    \ds{r-b}{-\frac{1}{2}} \times
    \ds{b+1-s}{b} \times
    \ds{u-c}{a} \times
    \ds{c+1-v}{c}&
\\ 
\otimes \ds{b+1-r}{b-s}
\times \ds{c+1-u}{c-v}\rtimes \sigma&.
\end{split}
\end{equation}
Searching for 
$\ds{\frac{1}{2}}{a}\otimes \sigma_{b,c}^\pm$,
we have 
$r=b+\frac{1}{2}$, and $s=v=0$.
So $u=c+\frac{1}{2}$ and 
use Lemma \ref{lemma-diskr-u-dva-seg-kusp}
on obtained
$
\sigma_{b,c}^\pm
\leq 
\ds{\frac{1}{2}}{b}
    \times
    \ds{\frac{1}{2}}{c}\rtimes \sigma$.

\end{proof}

\begin{lemma} \label{jacquet-diskr-A1+}
We have in $R(G)$, with maximum multiplicities:
\begin{align*}
\mu^*
(\delta([\nu^\frac{1}{2}\rho,\nu^b\rho])\rtimes
\sigma_{a,c}^+)
\geq
 &1 \cdot \ds{-a}{b}\otimes \sigma_c
\\
+&1
\cdot
\ds{\frac{1}{2}}{a}\otimes \sigma_{b,c}^+
+
0 \cdot
\ds{\frac{1}{2}}{a}\otimes \sigma_{b,c}^-.
\end{align*}
\end{lemma}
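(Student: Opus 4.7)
The plan is to expand $\mu^*(\delta([\nu^\frac{1}{2}\rho,\nu^b\rho])\rtimes\sigma_{a,c}^+)$ directly via formula \eqref{komnozenje}, plugging in $\mu^*(\sigma_{a,c}^+)$ from \eqref{jacquet-segment} in the ``+'' case. A generic contribution is parameterized by integers $0\leq j\leq i\leq b+\frac{1}{2}$ together with an irreducible $\delta'\otimes\sigma'\leq\mu^*(\sigma_{a,c}^+)$, and equals
\[
\delta([\nu^{i-b}\rho,\nu^{-1/2}\rho])\times\delta([\nu^{b+1-j}\rho,\nu^b\rho])\times\delta'\otimes\delta([\nu^{b+1-i}\rho,\nu^{b-j}\rho])\rtimes\sigma'.
\]

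I would first locate the contributions realizing the two positive multiplicities. Both use the component of $\mu^*(\sigma_{a,c}^+)$ obtained from the second sum of \eqref{jacquet-segment} at inner indices $-\frac{1}{2}, c$, which gives $\delta' = \delta([\nu^\frac{1}{2}\rho,\nu^a\rho])$ and $\sigma' = \sigma_c$. The choice $i = j = b-a$ makes the G-slot collapse to $\sigma_c$ and the GL-slot equal to $\delta([\nu^{-a}\rho,\nu^{-1/2}\rho])\times\delta([\nu^{a+1}\rho,\nu^b\rho])\times\delta([\nu^\frac{1}{2}\rho,\nu^a\rho])$, from which two successive linkings of adjacent segments extract $\delta([\nu^{-a}\rho,\nu^b\rho])$ as a subrepresentation. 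The alternative choice $i = b+\frac{1}{2}$, $j = 0$ instead collapses both $\delta([\nu^{i-b}\rho,\nu^{-1/2}\rho])$ and $\delta([\nu^{b+1-j}\rho,\nu^b\rho])$ to $\delta(\emptyset)$, leaves the GL-slot equal to $\delta([\nu^\frac{1}{2}\rho,\nu^a\rho])$, and makes the G-slot $\delta([\nu^\frac{1}{2}\rho,\nu^b\rho])\rtimes\sigma_c$, which contains $\sigma_{b,c}^+$ as a subrepresentation by definition.

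The hard part will be showing $\delta([\nu^\frac{1}{2}\rho,\nu^a\rho])\otimes\sigma_{b,c}^-$ does not appear. Inspection of \eqref{jacquet-segment} for the ``+'' case shows every tempered component of $\mu^*(\sigma_{a,c}^+)$ carries a ``+'' sign, so $\sigma_{b,c}^-$ itself never arises as $\sigma'$. One must therefore rule out $\sigma_{b,c}^-$ emerging as a subquotient of some induction $\delta([\nu^{b+1-i}\rho,\nu^{b-j}\rho])\rtimes\sigma'$ with nontrivial segment. I would narrow the candidates by matching cuspidal supports, which essentially forces the pair $(\delta([\nu^\frac{1}{2}\rho,\nu^b\rho]),\sigma_c)$, and then invoke the analogue of Lemma \ref{lema-diskretna-podreprezentacija} to conclude that $\delta([\nu^\frac{1}{2}\rho,\nu^b\rho])\rtimes\sigma_c$ has only $\sigma_{b,c}^+$ and a Langlands quotient as composition factors, so $\sigma_{b,c}^-$ does not appear. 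Together with the uniqueness of the positive contributions identified above, this yields exactly the claimed multiplicities.
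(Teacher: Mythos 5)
Your plan is essentially the paper's own argument: expand $\mu^*(\ds{\frac{1}{2}}{b}\rtimes\sigma_{a,c}^+)$ via \eqref{komnozenje} with $\delta'\otimes\sigma'$ taken from \eqref{jacquet-segment}, use cuspidal-support matching to pin down which term contributes, and finish with Lemma \ref{lema-diskretna-podreprezentacija} applied to $\ds{\frac{1}{2}}{b}\rtimes\sigma_c$ to separate $\sigma_{b,c}^+$ from $\sigma_{b,c}^-$. The one place where you assert more than you have argued is the exact multiplicity of $\ds{-a}{b}\otimes\sigma_c$: you exhibit the contribution at outer indices $i=j=b-a$ with $\delta'\otimes\sigma'=\ds{\frac{1}{2}}{a}\otimes\sigma_c$, and then in the final sentence invoke ``the uniqueness of the positive contributions identified above,'' but that uniqueness is never actually proved for this subquotient. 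The paper closes this by tracking $\nu^{-a}\rho$ on the GL side of \eqref{formula-jac-pola-b-a-c}: it can only come from $\ds{r-b}{-\frac{1}{2}}$ (forcing $r=b-a$), since $\ds{b+1-s}{b}$ never reaches exponent $-a$ and the choice $-i=-a$ would require the undefined $\delta([\nu^{a+1}\rho,\nu^c\rho]_+;\sigma)$; this then forces $s=b-a$, $i=-\frac{1}{2}$. Your cuspidal-support method would supply the same argument, so this is a missing detail rather than a wrong idea, but it should be spelled out to actually obtain ``maximum multiplicities'' and not merely a lower bound.
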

\begin{proof}
By \eqref{komnozenje} 
 consider
$0\leq s \leq r \leq b+\frac{1}{2}$,
$\delta'\otimes \sigma'\leq \mu^*(\sigma_{a,c}^+)$, and
\begin{equation} \label{formula-jac-pola-b-a-c}
\begin{split}
\ds{r-b}{-\frac{1}{2}}
    \times \ds{b-s+1}{b}
    \times
     \delta'
    \otimes 
    \ds{b+1-r}{b-s}
    \rtimes \sigma'.
\end{split}
\end{equation}
First we look for $\ds{-a}{b}\otimes \sigma_c$. Observe that 
$\nu^c\rho$ is not in a cuspidal support of $\delta'$, so in
\eqref{jacquet-segment} we have $j=c$ and
\[
\delta'\otimes\sigma'
\leq
\sum_{i=-a-1}^{a}
        \ds{-i}{a} 
    \otimes
        \delta(
            [\nu^{i+1}\rho,\nu^c\rho]_+;\sigma
        ).
\]
Searching for $\nu^{-a}\rho$ 
in  cuspidal support
in \eqref{formula-jac-pola-b-a-c},
left of $\otimes$,  
we have options
\begin{itemize}
    \item[$\bullet$]
    $r-b=-a$, so $b+1-s>\frac{1}{2}$, and we have $i=-\frac{1}{2}$, $s=b-a$ 
    and 
    $\sigma'=\delta([\nu^\frac{1}{2}\rho,\nu^c \rho]_+;\sigma)=\sigma_c$.
    \item[$\bullet$]
    $b+1-s=-a$, so $s>b+\frac{1}{2}$ and this is not possible.
    \item[$\bullet$]
    $-i=-a$, this is not possible since
    $\delta([\nu^{a+1}\rho,\nu^c\rho]_+;\sigma
        )$ is not defined.
\end{itemize}
Looking for $\ds{\frac{1}{2}}{a}\otimes \sigma_{b,c}^-$, we have
$r=b+\frac{1}{2}$ and $s=0$. 
Thus we search in
\[
\delta'\otimes \ds{\frac{1}{2}}{b}\rtimes \sigma'.
\]
Now in \eqref{jacquet-segment} we have $j=c$ and 
$i=-\frac{1}{2}$, so 
$\sigma'=\delta([\nu^\frac{1}{2}\rho,\nu^c \rho]_+;\sigma)=\sigma_c$.
But, $\sigma_{b,c}^-\nleq   \ds{\frac{1}{2}}{b} \rtimes\sigma_c$, and $\sigma_{b,c}^+$ appears there once, by
Lemma
\ref{lema-diskretna-podreprezentacija}.
\end{proof}

\begin{lemma} 
\label{jacq-diskretne-u-negativnoj}
We have in $R(G)$, with maximum multiplicities:
\begin{align*}
\mu^*
(\delta([\nu^\frac{1}{2}\rho,\nu^b\rho])\rtimes
\sigma_{a,c}^-)
\geq
 0 \cdot \ds{-a}{b}\otimes \sigma_c
+
0 \cdot
\ds{\frac{1}{2}}{a}\otimes \sigma_{b,c}^-.
\end{align*}
\end{lemma}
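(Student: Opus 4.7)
The plan is to mirror the argument of Lemma \ref{jacquet-diskr-A1+}, with $\sigma_{a,c}^-$ replacing $\sigma_{a,c}^+$. Apply \eqref{komnozenje}: a generic summand of $\mu^*(\ds{\frac{1}{2}}{b}\rtimes\sigma_{a,c}^-)$ is
\[
\ds{r-b}{-\frac{1}{2}}\times\ds{b+1-s}{b}\times\delta'\ \otimes\ \ds{b+1-r}{b-s}\rtimes\sigma',
\]
with $0\leq s\leq r\leq b+\frac{1}{2}$ and $\delta'\otimes\sigma'\leq\mu^*(\sigma_{a,c}^-)$, the latter being described by \eqref{jacquet-segment}. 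The crucial observation is that every $\sigma'$ so obtained is either the cuspidal $\sigma$, a $-$-type discrete series $\delta([\nu^{i+1}\rho,\nu^j\rho]_-;\sigma)$, or a Langlands quotient $L(\delta([\nu^{i+1}\rho,\nu^j\rho]);\sigma)$; in particular, the $+$-type series $\sigma_c=\delta([\nu^{1/2}\rho,\nu^c\rho]_+;\sigma)$ can never occur as a $\sigma'$.

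For the multiplicity of $\ds{-a}{b}\otimes\sigma_c$, I repeat the three-case analysis of Lemma \ref{jacquet-diskr-A1+} tracking how $\nu^{-a}\rho$ arises on the left of $\otimes$. Case 1 ($r-b=-a$) forces $s=b-a$, $\delta'=\ds{\frac{1}{2}}{a}$, and $(i,j)=(-\frac{1}{2},c)$ in the second sum of \eqref{jacquet-segment}; the required $\sigma'=\delta([\nu^{1/2}\rho,\nu^c\rho]_-;\sigma)$ is, however, zero by \eqref{segment-jacquet-equation}, as the $-$-type discrete series is undefined for a segment beginning at $\nu^{1/2}\rho$. Case 2 ($b+1-s=-a$) forces $s>b+\frac{1}{2}$ and is excluded by the range. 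Case 3 (that $\nu^{-a}\rho$ comes from $\delta'$) reduces to $i=a$ by cuspidal support, and then demands $\sigma'=\delta([\nu^{a+1}\rho,\nu^c\rho]_-;\sigma)$, again zero by \eqref{segment-jacquet-equation}.

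For the multiplicity of $\ds{\frac{1}{2}}{a}\otimes\sigma_{b,c}^-$, matching the cuspidal support on the left of $\otimes$ forces $\ds{r-b}{-\frac{1}{2}}=\ds{b+1-s}{b}=\emptyset$, i.e.\ $(r,s)=(b+\frac{1}{2},0)$, so the generic term collapses to $\delta'\otimes\ds{\frac{1}{2}}{b}\rtimes\sigma'$ with $\delta'=\ds{\frac{1}{2}}{a}$. Scanning \eqref{jacquet-segment}, the only $(i,j)$ producing $\delta'=\ds{\frac{1}{2}}{a}$ without dragging the factor $\nu^c\rho$ into the left-hand cuspidal support is $(i,j)=(-\frac{1}{2},c)$ in the second sum, which as above gives $\sigma'=\delta([\nu^{1/2}\rho,\nu^c\rho]_-;\sigma)=0$. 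Hence this multiplicity is also $0$.

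The point demanding the most care is verifying that $\delta([\nu^{1/2}\rho,\nu^c\rho]_-;\sigma)$ and $\delta([\nu^{a+1}\rho,\nu^c\rho]_-;\sigma)$ are both zero. Each follows directly from \eqref{segment-jacquet-equation}: a $-$-type discrete series $\delta([\nu^{-a'}\rho,\nu^c\rho]_-;\sigma)$ is defined only when $a'\geq\frac{1}{2}$, whereas these two segments correspond to $a'=-\frac{1}{2}$ and $a'=-(a+1)$, both strictly less than $\frac{1}{2}$. Once this vanishing is in place, both multiplicity counts reduce to $0$, establishing the lemma.
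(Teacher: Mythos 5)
Your proposal is correct and takes essentially the same approach as the paper. The paper's own proof is the one-line remark that the argument of Lemma \ref{jacquet-diskr-A1+} carries over verbatim, the only change being that one now arrives at $\sigma'=\delta([\nu^{1/2}\rho,\nu^c\rho]_-;\sigma)$, which is not defined by \eqref{segment-jacquet-equation}; you have simply spelled out that identical case analysis in full.
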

\begin{proof}
The proof goes as in Lemma 
\ref{jacquet-diskr-A1+},
with the difference that
one now obtains 
$\sigma'=\delta([\nu^\frac{1}{2},\rho,\nu^c]_-;\sigma)$, but this is not defined, by 
\eqref{segment-jacquet-equation}.
\end{proof}

Now we can determine all discrete series that appear in 
\eqref{A-A1}.

\begin{proposition} 
\label{diskretne-u-velikoj-prpopzicija}
Writting all discrete series, with multiplicities, we have in $R(G)$:
\begin{align*}
\delta([\nu^{-a}\rho,\nu^c\rho])\times \delta([\nu^\frac{1}{2}\rho,\nu^b\rho])\rtimes \sigma 
&\geq
\sigma^+_{b,c,a}+\sigma^-_{b,c,a},
\\
\ds{\frac{1}{2}}{b}
\rtimes \sigma_{a,c}^+
&\geq
\sigma^+_{b,c,a},
\\
\delta([\nu^\frac{1}{2}\rho,\nu^b\rho])\rtimes
      \sigma_{a,c}^-
 &\geq 0.     
\end{align*}
\end{proposition}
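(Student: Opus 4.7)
The plan is to treat each of the three inequalities separately. First note that since $\sigma_{a,c}^\pm \hookrightarrow \ds{-a}{c}\rtimes \sigma$ and the segments $[\nu^{1/2}\rho,\nu^b\rho]$ and $[\nu^{-a}\rho,\nu^c\rho]$ are not linked (the first is contained in the second), we obtain embeddings $\ds{\frac{1}{2}}{b}\rtimes \sigma_{a,c}^\pm \hookrightarrow \ds{\frac{1}{2}}{b}\times \ds{-a}{c}\rtimes \sigma \cong \psi$. Hence the candidate list supplied by Lemma \ref{jacquet-discrete-A} applies to all three representations: the only possible discrete series subquotients are $\sigma_{b,c,a}^+$, $\sigma_{b,c,a}^-$, and $\sigma_{a,b,c}^-$.

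For the first inequality, the simplest route is via the subrepresentation $K_2 \cong \ds{-c}{b}\times \ds{\frac{1}{2}}{a}\rtimes \sigma$ of $\psi$ from Section \ref{dekompozicija1}. Taking the contragredient of the first factor, which preserves composition factors, we see that $K_2$ has the same composition series as $\ds{-b}{c}\times \ds{\frac{1}{2}}{a}\rtimes \sigma$, and Theorem \ref{ciganovic-A2} places $\sigma_{b,c,a}^+$ and $\sigma_{b,c,a}^-$ among its composition factors, each with multiplicity one. Both therefore appear as subquotients of $\psi$.

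For the second inequality, by Lemma \ref{jacquet-diskr-A1+} the Jacquet module $\mu^*(\ds{\frac{1}{2}}{b}\rtimes \sigma_{a,c}^+)$ contains $\ds{\frac{1}{2}}{a}\otimes \sigma_{b,c}^+$, which by Lemma \ref{lema-pola-a-bc-tvrdnja} lies in $\mu^*(\sigma_{b,c,a}^+)$. Neither of the remaining two candidates supplies this irreducible component: $\sigma_{b,c,a}^-$ carries $\ds{\frac{1}{2}}{a}\otimes \sigma_{b,c}^-$ instead, while $\sigma_{a,b,c}^-$ has $\epsilon_{\sigma_{a,b,c}^-}(2a+1,\rho)=-1$, which by the M{\oe}glin--Tadi\'c description of the Jacquet module of a discrete series precludes any component of the form $\ds{\frac{1}{2}}{a}\otimes \pi'$. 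Hence $\sigma_{b,c,a}^+$ must occur.

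For the third inequality, Lemma \ref{jacq-diskretne-u-negativnoj} says that $\ds{-a}{b}\otimes \sigma_c$ and $\ds{\frac{1}{2}}{a}\otimes \sigma_{b,c}^-$ are both absent from $\mu^*(\ds{\frac{1}{2}}{b}\rtimes \sigma_{a,c}^-)$. On the other hand, by \eqref{jacq-diskretne} and Lemma \ref{lema-pola-a-bc-tvrdnja}, each of the three candidates has at least one of these components in its own Jacquet module: $\sigma_{b,c,a}^+$ and $\sigma_{a,b,c}^-$ contain $\ds{-a}{b}\otimes \sigma_c$, while $\sigma_{b,c,a}^-$ contains $\ds{\frac{1}{2}}{a}\otimes \sigma_{b,c}^-$. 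Therefore none of them can occur. The main delicate step is the exclusion of $\sigma_{a,b,c}^-$ in the second inequality, which requires the precise M{\oe}glin--Tadi\'c structure rather than Frobenius reciprocity alone.
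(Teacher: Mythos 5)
Your candidate list and the third inequality are handled correctly and match the paper's route (Lemma \ref{jacquet-discrete-A}, \eqref{jacq-diskretne}, \eqref{lema-pola-a-bc-f1}, Lemma \ref{jacq-diskretne-u-negativnoj}). However, there are three genuine gaps relative to what the proposition actually asserts.

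First, the proposition claims the discrete-series part of $\psi$ is \emph{exactly} $\sigma_{b,c,a}^+ + \sigma_{b,c,a}^-$, with multiplicities. You establish that both appear (via $K_2$ and Theorem \ref{ciganovic-A2}), but you never bound their multiplicity in $\psi$ by one. The paper does this with Lemma \ref{dis-lema-jac-prva} (the term $\ds{\frac{1}{2}}{a}\otimes\sigma_{b,c}^\pm$ has multiplicity exactly one in $\mu^*(\psi)$) combined with \eqref{lema-pola-a-bc-f1}, and this bound is needed anyway for the rest of the argument. Relatedly, you never show that $\sigma_{a,b,c}^-$ is absent from $\psi$ itself. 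Ruling it out of $\ds{\frac{1}{2}}{b}\rtimes\sigma_{a,c}^\pm$ (i.e.\ from $K_1$) is not enough; one must also note that it fails to appear in $K_2$ and $K_3$ (Theorems \ref{muic-A3}, \ref{ciganovic-A2}) and then invoke $\psi \leq K_1 + K_2 + K_3 + L(\psi)$ from \eqref{long-intertwining}. Without this step the first inequality, read as "these are all the discrete series", is not proved.

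Second, for the existence of $\sigma_{b,c,a}^+$ inside $\ds{\frac{1}{2}}{b}\rtimes\sigma_{a,c}^+$ your argument is that $\ds{\frac{1}{2}}{a}\otimes\sigma_{b,c}^+$ occurs in $\mu^*(\ds{\frac{1}{2}}{b}\rtimes\sigma_{a,c}^+)$ and that neither of the other two discrete-series candidates supplies it. That shows at most that \emph{some} composition factor carries $\ds{\frac{1}{2}}{a}\otimes\sigma_{b,c}^+$ and that it is not $\sigma_{b,c,a}^-$ nor $\sigma_{a,b,c}^-$ — but you have not excluded the non-tempered subquotients (for instance the Langlands quotient $L(\ds{\frac{1}{2}}{b}\rtimes\sigma_{a,c}^+)$) from being the carrier. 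The paper circumvents this by first pinning the multiplicity of $\sigma_{b,c,a}^+$ in $\psi$ to one and using that $\ds{\frac{1}{2}}{b}\rtimes\sigma_{a,c}^+$ is a subrepresentation of $\psi$; your version, lacking the $\psi$-multiplicity step, is not conclusive as written. Also, your appeal to $\epsilon_{\sigma_{a,b,c}^-}(2a+1,\rho)=-1$ to exclude a component $\ds{\frac{1}{2}}{a}\otimes\pi'$ from the Jacquet module is a plausible M{\oe}glin--Tadi\'c structural claim, but it is stronger than plain Frobenius reciprocity (which only controls quotients of the Jacquet module, not arbitrary subquotients) and would need a citation or short justification; note the paper avoids this by instead using the multiplicity-one occurrence of $\ds{-a}{b}\otimes\sigma_c$ (Lemma \ref{jacquet-diskr-A1+}) to rule out a simultaneous appearance of $\sigma_{b,c,a}^+$ and $\sigma_{a,b,c}^-$.
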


\begin{proof}
By \eqref{A-A1} and Lemma \ref{jacquet-discrete-A},
only possible discrete series subquotients in all of these representations are $\sigma^+_{b,c,a}$, 
$\sigma^-_{b,c,a}$ and $\sigma^-_{a,b,c}$.
By Theorem
 \ref{ciganovic-A2} and
\eqref{long-intertwining},
$\sigma^+_{b,c,a}$ and  
$\sigma^-_{b,c,a}$ appear in the first equation. 
Lemmas \ref{lema-pola-a-bc-tvrdnja} and  
\ref{dis-lema-jac-prva} show that they appear with multiplicity one.
Now \eqref{jacq-diskretne} and Lemma 
\ref{jacquet-diskr-A1+} 
show that we have only one discrete series in the second equation, $\sigma^+_{b,c,a}$,
and none in the last, by Lemma
\ref{jacq-diskretne-u-negativnoj}.
Since we have no $\sigma_{a,b,c}^-$ in the last two equations, 
Theorems
\ref{muic-A3} and \ref{ciganovic-A2}, and
\eqref{long-intertwining},
show that we have no 
$\sigma_{a,b,c}^-$ in the first equation. 
\end{proof}

%%%%%%%%%%%%%%%%%%%%%%%%%%%%%%%%%%%%%%%%%%%%%%%
%%%%%%%%%%%%%%%%%%%%%%%%%%%%%%%%%%%%%%%%%%%%%%%

\section{Non-tempered candidates}
\label{netemperirani}
As noted in Section \ref{dekompozicija1}, we  search for possible remaining non-tempered subquotients
in 
\[
\delta([\nu^\frac{1}{2}\rho,\nu^b\rho])\rtimes
      \sigma_{a,c}^+
      +
      \delta([\nu^\frac{1}{2}\rho,\nu^b\rho])\rtimes
      \sigma_{a,c}^-.
\]
We need a lemma.
\begin{lemma}   \label{cuvanje_predznaka} We have
\[
\sigma_{b,c}^- \not \leq 
\ds{a+1}{b}\rtimes \sigma_{a,c}^+
\quad \textrm{ and } \quad
\sigma_{b,c}^+ \not \leq 
\ds{a+1}{b}\rtimes \sigma_{a,c}^-.
\]
\end{lemma}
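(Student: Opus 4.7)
The plan is to obstruct both non-containments via the Jacquet-module inequality: if an irreducible $\pi$ occurs as a composition factor of $V$, then $\mu^*(\pi)\leq \mu^*(V)$ in $R(GL)\otimes R(G)$ by exactness of the Jacquet functor. I will produce an irreducible summand of $\mu^*(\sigma_{b,c}^-)$ absent from $\mu^*(\ds{a+1}{b}\rtimes\sigma_{a,c}^+)$, and dually for the second statement. The discriminator is a Jacquet-module term whose classical-group factor carries the ``wrong'' sign.

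First, applying \eqref{jacquet-segment} to $\sigma_{b,c}^-$ with $i=-\frac{3}{2}$ and $j=c$ in the middle sum yields
\[
T:=\ds{\frac{3}{2}}{b}\otimes \delta([\nu^{-\frac{1}{2}}\rho,\nu^c\rho]_-;\sigma)\;\leq\;\mu^*(\sigma_{b,c}^-),
\]
an irreducible element of $R(GL)\otimes R(G)$ carrying the ``$-$'' label on the right.

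Second, I expand $\mu^*(\ds{a+1}{b}\rtimes\sigma_{a,c}^+)$ by Tadi\'c's formula \eqref{komnozenje} and substitute \eqref{jacquet-segment} to describe the summands $\delta'\otimes\sigma'\leq \mu^*(\sigma_{a,c}^+)$. The crucial observation is that every admissible $\sigma'$ lies in
\[
\{\sigma,\;\delta([\nu^{i'+1}\rho,\nu^{j'}\rho]_+;\sigma),\;L(\delta([\nu^{i'+1}\rho,\nu^{j'}\rho]);\sigma)\};
\]
the ``$-$'' sign simply never appears. A case analysis now rules out $T$: using $c>b$, a cuspidal-support count forces the Tadi\'c segment $\ds{b+1-i}{b-j}$ on the right of $\otimes$ to be empty (so $j=i$), together with $i=b-a$, $i'=-\frac{3}{2}$, $j'=c$; the right factor of the Tadi\'c term then reduces to $\sigma'=\delta([\nu^{-\frac{1}{2}}\rho,\nu^c\rho]_+;\sigma)$, which carries the ``$+$'' sign and is therefore different from the ``$-$'' version required by $T$. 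The case $\sigma'=\sigma$ fails because the cuspidal-support count would demand the Tadi\'c segment equal $[\nu^{-\frac{1}{2}}\rho,\nu^c\rho]$, forcing $i=b+\frac{3}{2}>b-a$, a contradiction. The Langlands case is ruled out by the constraint $j'\leq a<c$ in the relevant sum of \eqref{jacquet-segment}: the cuspidal support of $\sigma'$ omits $\{a+1,\ldots,c\}$, which the Tadi\'c segment $\ds{b+1-i}{b-j}\subseteq [a+1,b]$ cannot supply.

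Hence $T\not\leq\mu^*(\ds{a+1}{b}\rtimes\sigma_{a,c}^+)$ and the first non-containment follows. The second statement $\sigma_{b,c}^+\not\leq \ds{a+1}{b}\rtimes\sigma_{a,c}^-$ is proved by the mirror argument with ``$+$'' and ``$-$'' interchanged, using the discriminator $\ds{\frac{3}{2}}{b}\otimes\delta([\nu^{-\frac{1}{2}}\rho,\nu^c\rho]_+;\sigma)\leq \mu^*(\sigma_{b,c}^+)$ and the fact that $\mu^*(\sigma_{a,c}^-)$ produces only the ``$-$'' sign on the right of $\otimes$. The main obstacle is the bookkeeping in the case analysis: both sides share the same total cuspidal support, so support comparison alone is insufficient, and one must trace how the sign of $\sigma'$ propagates through the right factor of each summand in Tadi\'c's formula.
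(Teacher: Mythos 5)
Your proof is correct and rests on the same mechanism as the paper's: exhibit a term in $\mu^*(\sigma_{b,c}^-)$ carrying the ``$-$'' label and observe that Tadi\'c's formula applied to $\ds{a+1}{b}\rtimes\sigma_{a,c}^+$, combined with \eqref{jacquet-segment} for $\mu^*(\sigma_{a,c}^+)$, can only ever produce ``$+$'' labels on the classical-group side. The difference is in the choice of discriminator. You take $\ds{\frac{3}{2}}{b}\otimes\delta([\nu^{-1/2}\rho,\nu^c\rho]_-;\sigma)$, coming from $i=-\frac{3}{2}$, $j=c$ in \eqref{jacquet-segment}. The paper instead takes $\ds{a+1}{b}\otimes\sigma_{a,c}^-$, coming from $i=-a-1$, $j=c$. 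The paper's choice is more economical for the case analysis: since $\ds{a+1}{b}$ contains neither $\nu^a\rho$ nor $\nu^c\rho$ in its cuspidal support, and every non-trivial $\delta'$ arising in $\mu^*(\sigma_{a,c}^+)$ must contain one of these, one is immediately forced to $\delta'=1$ and $\sigma'=\sigma_{a,c}^+$, after which the sign mismatch is a one-liner. Your choice works, but it forces you to separately discuss the first sum ($\sigma'=\sigma$), middle sum, and Langlands terms of \eqref{jacquet-segment}, and to pin down $i'=-\frac{3}{2}$, $j'=c$, $i=j=b-a$ via a support comparison that you state rather tersely. In particular, the claim that ``a cuspidal-support count forces the Tadi\'c segment $\ds{b+1-i}{b-j}$ on the right of $\otimes$ to be empty'' deserves a sentence of justification — it follows from matching supp$(\delta([\nu^{b+1-j}\rho,\nu^b\rho])\times\delta')$ with $\{\nu^{3/2}\rho,\ldots,\nu^b\rho\}$ on the \emph{left} of $\otimes$, with $\delta'=\delta([\nu^{-i'}\rho,\nu^a\rho])$ after the second GL factor in $\delta'$ is shown trivial; and you should note the degenerate sub-case $\delta'=1$ (arising when $a=\tfrac{1}{2}$), where $\sigma'=\sigma_{a,c}^+$ directly, which is still fine since the sign is again ``$+$''. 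Once these small gaps are filled in, your argument is complete and parallel in spirit to the paper's, just with a heavier bookkeeping load.
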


\begin{proof}
We prove the first claim, and the second follows similarly.  
Picking $i=-a-1$ and $j=c$, in \eqref{jacquet-segment}, we have
\[
\mu^*(\sigma_{b,c}^-)\geq
\ds{a+1}{b}\otimes \sigma_{a,c}^-.
\]
Now we show 
$
\ds{a+1}{b}\otimes \sigma_{a,c}^- \not \leq 
\mu^*(\ds{a+1}{b}\rtimes \sigma_{a,c}^+).
$
By \eqref{komnozenje} 
\begin{align*}
\mu^*(&\ds{x}{y}\rtimes \sigma)=
\sum_{\delta'\otimes\sigma'\leq \mu^*(\sigma_{a,c}^+)}
\sum_{r=0}^{b-a} \sum_{s=0}^{r}                                \\
&\ds{r-b}{-a-1}
\times
\ds{b+1-s}{b}
\times \delta'
\otimes
\ds{b+1-r}{b-s}\rtimes \sigma'.
\end{align*}
But 
$\ds{a+1}{b}\otimes \sigma_{a,c}^-$ can not appear here because
\begin{itemize}
    \item[a)]
 if $\delta'$ is not trivial, looking at \eqref{jacquet-segment}, it must contain 
in its cuspidal support at least one of the following: 
$\nu^a\rho$ or  $\nu^c\rho$,
and $\ds{a+1}{b}$ doesn't.
\item[b)] if $\delta'$ is trivial, then
$\sigma'=\sigma_{a,c}^+$, and looking at cuspidal support right of $\otimes$, we have $r=s$, but
${\ds{a+1}{b}\otimes \sigma_{a,c}^- \neq ...\otimes \sigma_{a,c}^+}$.
\end{itemize}
\end{proof}
Now we have
\begin{proposition} 
\label{nediskretni-kandidati-u-pozitivnoj}
If $\pi$ is a non-tempered subquotient of  
$\ds{\frac{1}{2}}{b}\rtimes
      \sigma_{a,c}^+$, different from its Langlands quotient, then $\pi$ is either
      $L(\ds{\frac{1}{2}}{a}\rtimes \sigma_{b,c}^+)$ or
      \\
      $L(\ds{-a}{b}\rtimes \sigma_c)$.
\end{proposition}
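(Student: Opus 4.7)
Write $\pi = L(\delta_1 \times \cdots \times \delta_k \rtimes \tau)$ in Langlands form with $k \geq 1$ (since $\pi$ is non-tempered), $e(\delta_1) \geq \cdots \geq e(\delta_k) > 0$, and $\tau$ a discrete series of a classical subgroup. The plan is first to enumerate, by cuspidal support, the allowed Langlands data $(\delta_1, \ldots, \delta_k; \tau)$, and then to eliminate the extraneous ones using Frobenius reciprocity together with the structure formula \eqref{komnozenje}, invoking Lemma \ref{cuvanje_predznaka} in the remaining sign-delicate case.

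The cuspidal support of $\pi$ coincides with that of $\ds{\frac{1}{2}}{b} \rtimes \sigma_{a,c}^+$. By the M{\oe}glin--Tadi\'c classification, $\tau$ has partial cuspidal support $\sigma$ and Jordan set drawn from $\{(2a+1,\rho),(2b+1,\rho),(2c+1,\rho)\} \cup \textrm{Jord}(\sigma)$. The three ``triple'' discrete series $\sigma_{b,c,a}^\pm$ and $\sigma_{a,b,c}^-$ already absorb the whole non-$\sigma$ cuspidal support, forcing $k=0$ and contradicting non-temperedness, so $\tau$ lies in $\{\sigma, \sigma_a, \sigma_b, \sigma_c, \sigma_{a,b}^\pm, \sigma_{a,c}^\pm, \sigma_{b,c}^\pm\}$. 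For each such $\tau$ I would subtract its cuspidal support and list the tuples of segments with strictly positive ordered exponents whose cuspidal supports together cover the residual multiset; this produces, besides the two desired candidates $L(\ds{\frac{1}{2}}{a} \rtimes \sigma_{b,c}^+)$ and $L(\ds{-a}{b} \rtimes \sigma_c)$ and the excluded Langlands quotient $L(\ds{\frac{1}{2}}{b} \rtimes \sigma_{a,c}^+)$, a short list of extraneous candidates arising from $\tau \in \{\sigma, \sigma_a, \sigma_b, \sigma_{a,b}^\pm\}$, from multi-segment Langlands data for $\tau = \sigma_c$, and the two sign-flipped $L(\ds{\frac{1}{2}}{b} \rtimes \sigma_{a,c}^-)$, $L(\ds{\frac{1}{2}}{a} \rtimes \sigma_{b,c}^-)$.

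To eliminate these I would invoke the Langlands embedding $\pi \hookrightarrow \widetilde{\delta_1} \times \cdots \times \widetilde{\delta_k} \rtimes \tau$ and Frobenius reciprocity: the term $\widetilde{\delta_1} \otimes \cdots \otimes \widetilde{\delta_k} \otimes \tau$ must appear in the standard Jacquet module of $\ds{\frac{1}{2}}{b} \rtimes \sigma_{a,c}^+$ at the matching parabolic, which \eqref{komnozenje} expresses in terms of $\mu^*(\sigma_{a,c}^+)$ computed by \eqref{jacquet-segment}. A summand-by-summand inspection, tracking which tempered $\sigma'$ can occur to the right of $\otimes$ in $\mu^*(\sigma_{a,c}^+)$, rules out every extraneous $\tau$ and the multi-segment candidate for $\tau = \sigma_c$; the candidate $L(\ds{\frac{1}{2}}{b} \rtimes \sigma_{a,c}^-)$ is eliminated likewise, since $\sigma_{a,c}^-$ never arises as $\sigma'$ in $\mu^*(\sigma_{a,c}^+)$ (the ``sign'' right of $\otimes$ is preserved by \eqref{jacquet-segment}) and cannot be produced otherwise within the allowed range of indices. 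Finally, $L(\ds{\frac{1}{2}}{a} \rtimes \sigma_{b,c}^-)$ would force the term $\ds{-a}{-\frac{1}{2}} \otimes \sigma_{b,c}^-$ into $\mu^*(\ds{\frac{1}{2}}{b} \rtimes \sigma_{a,c}^+)$, and inspection of \eqref{komnozenje} shows that this can only come from a subquotient $\sigma_{b,c}^- \leq \ds{a+1}{b} \rtimes \sigma_{a,c}^+$, which Lemma \ref{cuvanje_predznaka} prohibits. The main obstacle is precisely this final bookkeeping: several Langlands tuples pass the cuspidal-support filter and each must be explicitly checked against a summand of \eqref{komnozenje}, with Lemma \ref{cuvanje_predznaka} handling the single sign mismatch invisible to cuspidal support alone.
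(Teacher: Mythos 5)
Your plan is a genuinely different route from the paper's. The paper reaches the same list by invoking Lemma~2.2 of \cite{muic:composition series}, which constrains every non-tempered subquotient to embed into an induced representation of the form $\delta([\nu^{-\alpha_1}\rho,\nu^{\beta_1}\rho])\rtimes\pi'$ with $\beta_1$ and $\alpha_1$ tightly restricted by the Jordan set of $\sigma_{a,c}^+$; the case split is then $\beta_1=-\tfrac12$ versus $\beta_1>\tfrac12$, and within each branch there are at most a couple of sub-cases. That structural reduction is considerably more economical than a raw cuspidal-support enumeration followed by Jacquet-module filtration, and it is also \emph{logically stronger}: it produces an actual embedding of $\pi$, from which $\pi$ can be identified, rather than just a necessary Jacquet-module inclusion.

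This points to a genuine gap in your proposal. Appearance of the Langlands term $\widetilde{\delta_1}\otimes\cdots\otimes\widetilde{\delta_k}\otimes\tau$ in the Jacquet module of $\delta([\nu^{\frac12}\rho,\nu^b\rho])\rtimes\sigma_{a,c}^+$ is a \emph{necessary} condition for $\pi$ to occur as a subquotient, but not a sufficient one, and you have not established that this necessary condition actually kills every extraneous candidate on your list. The paper's proof shows why this matters: in the branch $\beta_1=a$ with $a<\alpha_1<b$, after the irreducibility of $\delta([\nu^{\alpha_1+1}\rho,\nu^b\rho])\rtimes\sigma_c$ is invoked, the candidate embeds into $\delta([\nu^{-\alpha_1}\rho,\nu^a\rho])\times\delta([\nu^{-b}\rho,\nu^{-\alpha_1-1}\rho])\rtimes\sigma_c$, and one must separately prove that this induced representation has a \emph{unique} irreducible subrepresentation (itself a nontrivial multiplicity computation occupying the final paragraph of the paper's proof) in order to conclude $\pi\cong L(\delta([\nu^{-a}\rho,\nu^b\rho])\rtimes\sigma_c)$ and thereby eliminate the intermediate two-segment candidates $L(\delta([\nu^{\alpha_1+1}\rho,\nu^b\rho])\times\delta([\nu^{-a}\rho,\nu^{\alpha_1}\rho])\rtimes\sigma_c)$. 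Your outline has no mechanism for this: you would either have to verify that the corresponding tensor term is absent from the relevant Jacquet module (which you have not done, and which is not obviously true), or you would have to import a uniqueness-of-subrepresentation argument of the same type the paper uses, at which point you have re-derived the paper's method. The invocation of Lemma~\ref{cuvanje_predznaka} and the sign-preservation observation in your final paragraph are correct and do match the paper's treatment of the $\pi'$-tempered sub-case of $\beta_1=-\tfrac12$, but the rest of the elimination is asserted rather than carried out, and the case $a<\alpha_1<b$ is where the assertion is not safe.
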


\begin{proof}
We use Lemma 2.2 of \cite{muic:composition series}
(in terms of that lemma 
$\pi \leq \ds{-l_1}{l_2}\rtimes \sigma$, $-l_1=\frac{1}{2}$, $l_2=b$ and $\sigma=\sigma_{a,c}^+$).
So we look for
possible embeddings
\begin{equation} \label{moguca_ulaganja}
\pi \hookrightarrow \ds{-\alpha_1}{\beta_1}\rtimes \pi',
\end{equation}
where $-\alpha_1+\beta_1<0$ and $\pi'$ is irreducible.
By the lemma, there exists an irreducible representation $\sigma_1$ such that
\begin{equation} \label{moguca_ulaganja_uvjet_mu}
    \begin{cases}
    \mu^*(\sigma_{a,c}^+)\geq \ds{\frac{1}{2}}{\beta_1}\otimes \sigma_1
    \\
    \pi'\leq \ds{\alpha_1+1}{b}\rtimes \sigma_1
    \end{cases}
\end{equation}
and we must have
\begin{equation} \label{moguca_ulaganja_uvjet_nejednakost}
\begin{cases}
-\frac{1}{2} \leq \beta_1  \\
b \geq  \alpha_1  > \beta_1, -\frac{1}{2}  \\
\alpha_1 \geq \frac{1}{2}.
\end{cases}
\end{equation}
We have cases:

\begin{itemize}
    \item[a)]
    $\beta_1=-\frac{1}{2}$. Now
    $\sigma_1=\sigma_{a,c}^+$.
    \begin{itemize}
    \item[$\bullet$] As in the lemma, we may assume that if $\pi'$ is tempered, then $2\alpha_1+1\in \textrm{Jord}_\rho(\sigma_{a,c}^+)$. 
    So, assume that $\pi'$ is tempered.
    Now $\alpha_1=c$ is not possible, since that would imply $b\geq c$. 
    Thus $\alpha_1=a$, and
    \[
    \pi'\leq \ds{a+1}{b}\rtimes \sigma_{a,c}^+.
    \]
    Looking at the cuspidal support on the right hand side,  Lemma 8.1 of \cite{tadic-diskretne} implies that $\pi'$ is a discrete series. Thus $\pi'=\sigma_{b,c}^+$ or $\pi'=\sigma_{b,c}^-$.
    By Lemma \ref{cuvanje_predznaka} we have $\pi'=\sigma_{b,c}^+$.
    So \eqref{moguca_ulaganja} is written as
    \[
    \pi \hookrightarrow \ds{-a}{-\frac{1}{2}}\rtimes \sigma_{b,c}^+.
    \]
    Thus, we have $\pi \cong 
    \textrm{L}(\ds{\frac{1}{2}}{a}\rtimes \sigma_{b,c}^+)$.
    \item[$\bullet$] If $\pi'$ is not tempered, by
    Lemma 2.2 of \cite{muic:composition series}, there exist
    $\beta_2+1$ and $(\beta_2+1)_-=(\beta_2)_-+1\in \textrm{Jord}_\rho(\sigma_{a,c}^+)=\{2a+1,2c+1 \}$,
    such that 
    \newline
    (in terms of the lemma $\alpha_1\leq (\beta_2)_-<\beta_2<\alpha_2\leq l_2$) we have
    \[
    \alpha_1 \leq a < c <\alpha_2 \leq b.
    \]
    So $c<b$, but this is a contradiction.
    \end{itemize}
    
    \item[b)] $\beta_1 >\frac{1}{2}$. Then, by the lemma, 
    $2\beta_1+1\in \textrm{Jord}_\rho(\sigma_{a,c}^+)=\{2a+1,2c+1 \}$.
    Since 
    \[
    c >b \geq \alpha_1 >\beta_1,-\frac{1}{2}
    \]
    we have $\beta_1=a$ and $\alpha_1> a$.
    Now \eqref{moguca_ulaganja_uvjet_mu} gives
    \[
      \mu^*(\sigma_{a,c}^+)\geq \ds{\frac{1}{2}}{a}\otimes \sigma_1.
    \]
    To determine $\sigma_1$, we look at $\mu^*(\sigma_{a,c}^+)$. In \eqref{jacquet-segment} 
    it is necessary to pick $j=c$ and $i=-\frac{1}{2}$, and we have 
    \[
    \sigma_1=
    \delta([\nu^\frac{1}{2}\rho,\nu^c\rho]_+;\sigma)
    =\sigma_c.
    \]
    So far, we have
    \begin{equation} 
    \begin{cases}
    \pi\hookrightarrow \ds{-\alpha_1}{a}\rtimes\pi',
    \\
    \pi'\leq \ds{\alpha_1+1}{b}\rtimes\sigma_c,
    \\
    a<\alpha_1 \leq b.
    \end{cases}
    \end{equation}
    If $\alpha_1=b$ then $\pi'\cong \sigma_c$ and 
    $\pi\hookrightarrow \ds{-b}{a}\rtimes\sigma_c
    $, so 
    \[
    \pi 
    \cong 
    \textrm{L}(\ds{-a}{b}\rtimes\sigma_c),
    \]
    as expected.
    Thus, we assume
    \[ 
    a<\alpha_1 < b.
    \]
    Since 
    $
    \textrm{Jord}_\rho(\sigma_c)
    \cap
    [2\alpha_1+1,2b+1]
    =\{2c+1\}\cap
    [2\alpha_1+1,2b+1]=
    \emptyset
    $,
    Proposition 3.1 ii) of \cite{muic:composition series}
    implies that $ \ds{\alpha_1+1}{b}\rtimes\sigma_c$
    is irreducible. So
    \[
    \pi'\cong 
    \ds{\alpha_1+1}{b}\rtimes\sigma_c
    \cong 
    \ds{-b}{-\alpha_1-1}\rtimes\sigma_c,
    \]
    and finally
    \[
    \pi\hookrightarrow \ds{-\alpha_1}{a}
    \times
    \ds{-b}{-\alpha_1-1}\rtimes\sigma_c.
    \]
    We want to prove that the representation on the right has a unique irreducible subrepresentation, and that it is 
    $L(\ds{-a}{b}\rtimes\sigma_c)$.
    Consequently $\pi\cong L(\ds{-a}{b}\rtimes\sigma_c)$ and the proof of the proposition will be over.
    First, we see that
    \begin{equation*}
    \begin{split}
    L(\ds{-a}{b}\rtimes\sigma_c)
    &\hookrightarrow
    \ds{-b}{a}\rtimes\sigma_c
    \\
    &\hookrightarrow
    \ds{-\alpha_1}{a}
    \times
    \ds{-b}{-\alpha_1-1}\rtimes\sigma_c.
    \end{split}
    \end{equation*}
    Now, to prove 
    %the uniqueness, 
    that $\ds{-\alpha_1}{a}
    \times
    \ds{-b}{-\alpha_1-1}\rtimes\sigma_c$ has a unique irreducible subrepresentation,
    it is enough to 
    see that 
    \[
    \ds{-\alpha_1}{a}
    \otimes
    \ds{-b}{-\alpha_1-1}\rtimes\sigma_c
    \]
    appears once in
    \(
    \mu^*(\ds{-\alpha_1}{a}
    \times
    \ds{-b}{-\alpha_1-1}\rtimes\sigma_c)
    \).
\end{itemize}
By \eqref{komnozenje}, we look for
\(
0\leq s\leq r \leq a+\alpha_1+1,
0\leq v\leq u \leq b-\alpha_1
\) 
and
\(
\delta'\otimes\sigma' \leq \mu^*(\sigma_c)\) such that
\begin{equation} \label{konacno-1}
\begin{split}
    \ds{-\alpha_1}{a}
    \leq
    &\ds{r-a}{\alpha_1}\times\ds{a+1-s}{a}
    \\
    \times 
    &\ds{u+\alpha_1+1}{b}
    \times\ds{-\alpha_1-v}{-\alpha_1-1}
    \times \delta' \quad \textrm{ and }
\end{split}    
\end{equation}
\begin{equation} \label{konacno-2}
\begin{split}
    \ds{-b}{-\alpha_1-1}
    \rtimes\sigma_c
    \leq &
    \\
    \ds{a+1-r}{a-s}
    & \times
    \ds{-\alpha_1-u}{\alpha_1-1-v}
     \rtimes \sigma'.
\end{split}    
\end{equation}
We compare cuspidal support.
On the right hand side of \eqref{konacno-2} only 
$\sigma'$ may have $\nu^{\pm c}\rho$ in its cuspidal support. Since it appears on the left hand side, looking at \eqref{jacquet-strogo-pozitivna}
we see that we must have $\sigma'=\sigma_c$, and $\delta'=1$.

Now we look at \eqref{konacno-1}. We can not have
$\nu^b\rho$ in the cuspidal support on the right hand side, since it doesn't exist on the left hand side. So $u=b-\alpha_1$. Similarly, on the right hand side, we can not have $\nu^{-\alpha_1-1}\rho$ in the cuspidal support, so $v=0$.

Further, we look for $\nu^{-\alpha_1}\rho$ on the right hand side of \eqref{konacno-1}. We must have either $-\alpha_1=r-a$ or $-\alpha_1=a+1-s$. 
But $-\alpha_1=r-a$ implies $r=a-\alpha_1<0$, which is not possible. So we have $-\alpha_1=a+1-s$, that is $s=a+1+\alpha_1$, which forces $r=a+1+\alpha_1$.
The uniqueness is proved. 
\end{proof}
Similarly, we have
\begin{proposition} 
\label{nediskretni-kandidati-u-negativnoj}
If $\pi$ is a non-tempered subquotient of  
$\delta([\nu^\frac{1}{2}\rho,\nu^b\rho])\rtimes
      \sigma_{a,c}^-$, different from its Langlands quotient, then $\pi$ is
      $L(\ds{\frac{1}{2}}{a}\rtimes \sigma_{b,c}^-)$.
\end{proposition}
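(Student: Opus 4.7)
The plan is to mirror the proof of Proposition \ref{nediskretni-kandidati-u-pozitivnoj} almost verbatim, applying Lemma 2.2 of \cite{muic:composition series} to the situation $\pi \leq \ds{-l_1}{l_2}\rtimes \sigma$ with $-l_1=\frac{1}{2}$, $l_2=b$ and $\sigma=\sigma_{a,c}^-$. This produces an embedding $\pi \hookrightarrow \ds{-\alpha_1}{\beta_1}\rtimes \pi'$ with $-\alpha_1+\beta_1<0$ and $\pi'$ irreducible, together with an auxiliary irreducible $\sigma_1$ satisfying $\mu^*(\sigma_{a,c}^-)\geq \ds{\frac{1}{2}}{\beta_1}\otimes \sigma_1$ and $\pi'\leq \ds{\alpha_1+1}{b}\rtimes \sigma_1$, subject to $-\tfrac12\leq \beta_1$, $b\geq \alpha_1>\beta_1,-\tfrac12$, and $\alpha_1\geq \tfrac12$. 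As before, I would split on the value of $\beta_1$.

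In case $\beta_1=-\tfrac12$ we have $\sigma_1=\sigma_{a,c}^-$. If $\pi'$ is tempered, the lemma forces $2\alpha_1+1\in \textrm{Jord}_\rho(\sigma_{a,c}^-)=\{2a+1,2c+1\}$; the option $\alpha_1=c$ contradicts $b<c$, so $\alpha_1=a$ and $\pi'\leq \ds{a+1}{b}\rtimes \sigma_{a,c}^-$. Lemma 8.1 of \cite{tadic-diskretne} together with the cuspidal support forces $\pi'$ to be a discrete series, hence $\pi'\in\{\sigma_{b,c}^+,\sigma_{b,c}^-\}$; the second statement of Lemma \ref{cuvanje_predznaka} eliminates $\sigma_{b,c}^+$, leaving $\pi'=\sigma_{b,c}^-$. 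Then $\pi\hookrightarrow \ds{-a}{-\frac{1}{2}}\rtimes \sigma_{b,c}^-$, so $\pi\cong L(\ds{\frac{1}{2}}{a}\rtimes \sigma_{b,c}^-)$. If $\pi'$ is not tempered, the same $(\beta_2+1,(\beta_2)_-+1)$-argument from the positive case again produces $\alpha_1\leq a<c<\alpha_2\leq b$, contradicting $c>b$.

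In case $\beta_1>\tfrac12$, the lemma gives $2\beta_1+1\in\{2a+1,2c+1\}$; since $c>b\geq \alpha_1>\beta_1$, we are forced to $\beta_1=a$, and hence must find $\sigma_1$ with $\mu^*(\sigma_{a,c}^-)\geq \ds{\frac{1}{2}}{a}\otimes \sigma_1$. Inspecting \eqref{jacquet-segment} for $\sigma_{a,c}^-$, the only way to obtain $\ds{\frac{1}{2}}{a}$ on the left of $\otimes$ is with $j=c$ and $i=-\tfrac12$, which yields $\sigma_1=\delta([\nu^{\frac{1}{2}}\rho,\nu^c\rho]_-;\sigma)$. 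But by \eqref{segment-jacquet-equation} this sign-minus representation is undefined (only the sign-plus version exists when the left endpoint equals $\frac{1}{2}$). Thus case $\beta_1>\tfrac12$ does not occur at all. This non-occurrence is exactly the point where the argument diverges from and is simpler than the positive case: there is no analogue of the $L(\ds{-a}{b}\rtimes \sigma_c)$ candidate, and the only non-tempered, non-Langlands possibility left is $L(\ds{\frac{1}{2}}{a}\rtimes \sigma_{b,c}^-)$, which is the main (and essentially only) content of the proof.
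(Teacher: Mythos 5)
Your proof is correct and follows essentially the same route as the paper: the paper also says to follow the positive case, noting that part a) produces the candidate $L(\ds{\frac{1}{2}}{a}\rtimes \sigma_{b,c}^-)$ and part b) fails at the outset because $\sigma_1$ would have to equal $\delta([\nu^{\frac{1}{2}}\rho,\nu^c\rho]_-;\sigma)$, which by \eqref{segment-jacquet-equation} is not defined. Your extra detail (using the second half of Lemma \ref{cuvanje_predznaka} to rule out $\pi'=\sigma_{b,c}^+$ in case a)) is exactly the intended sign change and matches the paper's intent.
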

\begin{proof}
We follow the same lines as in the proof of Proposition 
\ref{nediskretni-kandidati-u-pozitivnoj}, with the difference that 
we have $\sigma_{a,c}^-$ instead of $\sigma_{a,c}^+$. Now, part a)  of that proof gives our candidate, while b) part, at its very beginning, implies
$\sigma_1=
    \delta([\nu^\frac{1}{2}\rho,\nu^c\rho]_-;\sigma)$
    which is not possible, see \eqref{segment-jacquet-equation}.
\end{proof}

%\section{On multiplicities}

\section{Multiplicity of 
$L(\ds{\frac{1}{2}}{a}
    \rtimes \sigma_{b,c}^\pm)$}
\label{mult_L1}
Here we write explicitly 
$L(\ds{\frac{1}{2}}{a}
    \rtimes \sigma_{b,c}^\pm
    )$
as a
non-tempered subquotient of
$\ds{\frac{1}{2}}{b}\rtimes \sigma_{a,c}^\pm$,
 different from its unique Langlands quotient, as claimed by Lemma 6.2 of \cite{muic:reducibility principal}.
We start with a couple of lemmas.

\begin{lemma} \label{male-diskretne-multiplicitet}
  Discrete series $\sigma_{b,c}^+$ and $\sigma_{b,c}^-$ appear with multiplicity one in equations
  \begin{equation*}
     \begin{split}
     \sigma_{b,c}^+ \leq \ds{a+1}{b}\rtimes 
\sigma_{a,c}^+,
     \\
     \sigma_{b,c}^- \leq \ds{a+1}{b}\rtimes 
\sigma_{a,c}^-,
      \\
      \sigma_{b,c}^+ +\sigma_{b,c}^- \leq \
\ds{a+1}{b}
\times \ds{-a}{c}\rtimes \sigma.
    \end{split}
  \end{equation*}  
\end{lemma}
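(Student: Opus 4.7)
The plan is to establish each inclusion via a Frobenius-reciprocity embedding, and then to upgrade each embedding to a multiplicity-one statement by isolating a single distinguishing Jacquet-module term.

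For the embeddings, I apply the middle double sum of \eqref{jacquet-segment} to $\sigma_{b,c}^{\pm}$ with $i = -a-1$ and $j = c$. This produces $\ds{a+1}{b}\otimes\sigma_{a,c}^{\pm}$ in $\mu^{*}(\sigma_{b,c}^{\pm})$, and Frobenius reciprocity gives $\sigma_{b,c}^{\pm} \hookrightarrow \ds{a+1}{b}\rtimes\sigma_{a,c}^{\pm}$, which is the first two inclusions. The third follows by combining this with Theorem \ref{prva}: since $\sigma_{a,c}^{+}+\sigma_{a,c}^{-} \leq \ds{-a}{c}\rtimes\sigma$, both $\ds{a+1}{b}\rtimes\sigma_{a,c}^{\pm}$ occur as subquotients of $\ds{a+1}{b}\times\ds{-a}{c}\rtimes\sigma$, so it in turn contains $\sigma_{b,c}^{+}+\sigma_{b,c}^{-}$.

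To bound the multiplicities from above I count the distinguishing Jacquet term $\ds{a+1}{b}\otimes\sigma_{a,c}^{\pm}$ in $\mu^{*}$ of each of the three induced representations, and show that it appears at most once. First, from \eqref{jacquet-segment} this term occurs in $\mu^{*}(\sigma_{b,c}^{\pm})$ with multiplicity exactly one: the right-hand factor $\delta([\nu^{i+1}\rho,\nu^{j}\rho]_{\pm};\sigma)$ is a discrete series whose Jordan blocks pin down $i=-a-1$ and $j=c$ uniquely, and neither of the other two sums in \eqref{jacquet-segment} produces $\sigma_{a,c}^{\pm}$ on the right. Hence any copy of $\sigma_{b,c}^{\pm}$ sitting inside an induced representation contributes exactly one copy of this term, so it suffices to show that this term occurs at most once in the corresponding $\mu^{*}$.

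The required count is obtained from \eqref{komnozenje}, using \eqref{segment-jacquet-equation} and \eqref{jacquet-langlandsov-kvocijent} to enumerate the relevant $\delta'\otimes\sigma'$ in the third case. The expected unique contribution has $\sigma' = \sigma_{a,c}^{\pm}$, $\delta'$ trivial, and the single index choice $(i,j) = (b-a,b-a)$ that empties the right-hand segment and fills the left-hand one with $\ds{a+1}{b}$. The main obstacle is to rule out any other pair $(\delta',\sigma')$ making $\sigma_{a,c}^{\pm}$ appear as a subquotient of $\ds{b+1-i}{b-j}\rtimes\sigma'$ for some nontrivial segment; this is handled by a cuspidal-support comparison (the segment's exponents lie in $[b+1-i,b-j]\subseteq[-a,b]$) together with the M{\oe}glin--Tadi\'c parameters of $\sigma_{a,c}^{\pm}$ being fixed by their $\epsilon$-values, in the spirit of the sign-preservation argument already used in Lemma \ref{cuvanje_predznaka}.
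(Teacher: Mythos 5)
Your proposal follows the same overall strategy as the paper — establish the three inclusions, then bound multiplicities by isolating a distinguishing term in the Jacquet module and counting it with \eqref{komnozenje} — but you choose a genuinely different distinguishing term. You track $\ds{a+1}{b}\otimes\sigma_{a,c}^{\pm}$, a term in the \emph{middle} double sum of \eqref{jacquet-segment} that ends in a discrete series. Since that formula carries the same sign $\pm$ on both sides of $\otimes$, your choice builds the sign-correlation between $\sigma_{b,c}^{\pm}$ and $\sigma_{a,c}^{\pm}$ directly into the distinguishing term, so you never need Lemma \ref{cuvanje_predznaka}. The paper instead tracks $\ds{a+1}{b}\times\ds{-a}{c}\otimes\sigma$, a term from the \emph{first} sum ending in the cuspidal $\sigma$: this makes the \eqref{komnozenje}-count essentially immediate, because one only has to look at $\cdots\otimes\sigma$ summands with no recursion into the Jacquet modules of $\sigma_{a,c}^{\pm}$ or $\ds{-a}{c}\rtimes\sigma$; the price is that the sign-matching must then be supplied separately by Lemma \ref{cuvanje_predznaka}. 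Both routes work, but the paper's term is chosen precisely so that the enumeration step is trivial.

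Two places in your argument need tightening. First, the step ``Frobenius reciprocity gives $\sigma_{b,c}^{\pm}\hookrightarrow\ds{a+1}{b}\rtimes\sigma_{a,c}^{\pm}$'' is not justified by \eqref{jacquet-segment} alone: that formula gives only the semisimplification of the Jacquet module, while Frobenius reciprocity requires $\ds{a+1}{b}\otimes\sigma_{a,c}^{\pm}$ to be a \emph{quotient} of the actual, unsemisimplified Jacquet module. The lemma only claims subquotient relations anyway, and the clean route to existence (for the third inequality, and hence to feed the count that pins down the first two) is the explicit chain the paper uses,
\[
\sigma_{b,c}^{+}+\sigma_{b,c}^{-}\ \leq\ \ds{-b}{c}\rtimes\sigma\ \leq\ \ds{-a}{c}\times\ds{-b}{-a-1}\rtimes\sigma\ =\ \ds{a+1}{b}\times\ds{-a}{c}\rtimes\sigma,
\]
which requires no appeal to Frobenius at all. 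Second, your multiplicity count is only sketched: you correctly identify the candidate $(i,j)=(b-a,b-a)$, $\delta'$ trivial, $\sigma'=\sigma_{a,c}^{\pm}$, and you name the ``main obstacle'' of ruling out other $\delta'$, but you do not close it. The cuspidal-support argument does close it — any nontrivial $\delta'$ occurring in $\mu^{*}(\sigma_{a,c}^{\pm})$ via \eqref{jacquet-segment} (or in $\mu^{*}(\ds{-a}{c}\rtimes\sigma)$ for the third inequality) has the form $\delta([\nu^{-i'}\rho,\nu^{a}\rho])\times\delta([\nu^{j'+1}\rho,\nu^{c}\rho])$, so a nonempty $\delta'$ forces $\nu^{a}\rho$ or $\nu^{c}\rho$ into its cuspidal support, neither of which lies in the required set $\{\nu^{a+1}\rho,\dots,\nu^{b-j}\rho\}$ since $b-j\leq b<c$ — but this has to be spelled out, and it is exactly the recursive enumeration that the paper's choice of a $\cdots\otimes\sigma$ term is designed to avoid.
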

\begin{proof}
    We start with the last equation. First, observe that in $R(G)$
    \begin{equation*}
\begin{split}
\sigma_{b,c}^+
+
\sigma_{b,c}^-
\leq 
\ds{-b}{c}\rtimes \sigma
\leq& 
\ds{-a}{c}
\times \ds{-b}{-a-1}\rtimes \sigma
\\
=&
\ds{a+1}{b}
\times \ds{-a}{c}\rtimes \sigma.
\end{split}
\end{equation*}
Further, replace $a$ with $b$ in \eqref{jacquet-segment} and take there $i=-a-1$, to obtain
\[
\ds{a+1}{b}\times \ds{-a}{c}\otimes \sigma
\leq \mu^*(\sigma_{b,c}^\pm).
\]
Now, it is enough to see that multiplicity of this summand is two in
\begin{equation*}
\begin{split}
\mu^*(\ds{a+1}{b}
\times \ds{-a}{c}\rtimes \sigma)
\geq & \sum_{i=0}^{b-a} \quad  \sum_{j=0}^{c+a+1} 
\\
\ds{i-b}{-a-1} \times
    \ds{b+1-i}{b} \times &
    \ds{-c+j}{a} \times 
    \ds{c+1-j}{c} \otimes \sigma,
\end{split}
\end{equation*}
writting only summands of type $...\otimes \sigma$.
We can't have $\nu^{-a-1}\rho$ in the cuspidal support left of $\otimes$, so $i=b-a$. Now $j=c-a$ or $a+c+1$. 

  To prove first two equations, observe that 
  $\mu^*(\sigma_{a,c}^\pm)\geq \ds{-a}{c}\otimes \sigma$ implies   
\[
     \ds{a+1}{b}\times \ds{-a}{c}\otimes \sigma
     \leq\mu^*(\ds{a+1}{b}\rtimes 
    \sigma_{a,c}^\pm).
\]
Since 
$\ds{a+1}{b}\rtimes \sigma_{a,c}^\pm
\leq 
\ds{a+1}{b} \times \ds{-a}{c}\rtimes \sigma$, the first part of the proof and Lemma \ref{cuvanje_predznaka} complete our proof.
\end{proof}

\begin{lemma} \label{lema-nediskretni-jacq1}
Both
$\ds{-a}{-\frac{1}{2}}\otimes \sigma_{b,c}^+$ 
and
$\ds{-a}{-\frac{1}{2}}\otimes \sigma_{b,c}^-$ 
appear with multiplicity one in
$\mu^*(\delta([\nu^{-a}\rho,\nu^c\rho])\times \delta([\nu^\frac{1}{2}\rho,\nu^b\rho])\rtimes \sigma)$.
\end{lemma}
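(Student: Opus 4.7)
The plan is to compute enough of the Jacquet module via the Tadi\'c formula \eqref{komnozenje} applied twice, and then invoke Lemma \ref{male-diskretne-multiplicitet}.

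First I would write $\psi=\ds{-a}{c}\rtimes\tau$ with $\tau=\ds{\p}{b}\rtimes\sigma$ and pick off in \eqref{komnozenje} exactly those summands whose $GL$-factor equals $\ds{-a}{-\p}$. A cuspidal support argument should suffice: since $\ds{-a}{-\p}$ has cuspidal support contained in $\{\nu^{-a}\rho,\dots,\nu^{-\p}\rho\}$, the two segment factors $\ds{i-c}{a}$ and $\ds{c+1-j}{c}$ appearing in \eqref{komnozenje} must both be trivial, otherwise they would contain $\nu^a\rho$, respectively $\nu^c\rho$. This forces $i=a+c+1$, $j=0$, and $\delta'=\ds{-a}{-\p}$, leaving the $G$-factor $\ds{-a}{c}\rtimes\sigma'$ with $\ds{-a}{-\p}\otimes\sigma'\leq\mu^*(\tau)$.

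Second, I would apply \eqref{komnozenje} again to $\mu^*(\tau)$, using $\mu^*(\sigma)=1\otimes\sigma$ since $\sigma$ is cuspidal. The same cuspidal support reasoning forces the inner indices $s=0$, $r=b-a$, producing the unique candidate $\ds{-a}{-\p}\otimes\ds{a+1}{b}\rtimes\sigma$. I would then argue that $\ds{a+1}{b}\rtimes\sigma$ is irreducible, since the segment $[\nu^{a+1}\rho,\nu^b\rho]$ avoids the reducibility point $\nu^{1/2}\rho$ of $\nu^x\rho\rtimes\sigma$, so $\sigma'=\ds{a+1}{b}\rtimes\sigma$ is uniquely determined and appears with multiplicity one in $\mu^*(\tau)$.

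Substituting back, and using the inclusion $[\nu^{a+1}\rho,\nu^b\rho]\subseteq[\nu^{-a}\rho,\nu^c\rho]$ to commute the outer segments, the total contribution of the form $\ds{-a}{-\p}\otimes(\,\cdot\,)$ in $\mu^*(\psi)$ equals $\ds{-a}{-\p}\otimes\ds{a+1}{b}\times\ds{-a}{c}\rtimes\sigma$. Lemma \ref{male-diskretne-multiplicitet} now gives multiplicity exactly one for each of $\sigma_{b,c}^+$ and $\sigma_{b,c}^-$ in $\ds{a+1}{b}\times\ds{-a}{c}\rtimes\sigma$, which yields the claim.

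The main obstacle is the cuspidal support bookkeeping: one must be certain that no other triple $(i,j,\delta'\otimes\sigma')$ slips through and contributes an extra factor with $GL$ part $\ds{-a}{-\p}$. The irreducibility of $\ds{a+1}{b}\rtimes\sigma$ is a secondary, but still needed, technical point.
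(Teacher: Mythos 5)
Your proposal is correct and follows essentially the same path as the paper's own proof: apply the Tadi\'c formula \eqref{komnozenje}, use cuspidal support to force the relevant indices (you peel the $[\nu^{-a}\rho,\nu^c\rho]$ factor first, while the paper stacks both segments at once as in \eqref{dis-lema-jac-prva-f1}, but the forced values $v=s=0$, $u=a+c+1$, $r=b-a$ are identical), and reduce to the statement $\sigma_{b,c}^\pm\leq\ds{a+1}{b}\times\ds{-a}{c}\rtimes\sigma$ with multiplicity one, which is exactly the third equation of Lemma \ref{male-diskretne-multiplicitet}. The only superfluous step is your irreducibility remark about $\ds{a+1}{b}\rtimes\sigma$: since both the inner sum over $\delta'\otimes\sigma'\leq\mu^*(\tau)$ and parabolic induction are additive in the Grothendieck group, the total contribution with $GL$-part $\ds{-a}{-\p}$ equals $\ds{-a}{-\p}\otimes\ds{a+1}{b}\times\ds{-a}{c}\rtimes\sigma$ in $R(GL)\otimes R(G)$ whether or not $\ds{a+1}{b}\rtimes\sigma$ is irreducible, so that argument can be dropped.
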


\begin{proof}
By \eqref{komnozenje}, we look for
$0\leq s \leq r \leq b+\frac{1}{2}$ and 
$0\leq v \leq u \leq a+c+1$ such that 
\begin{equation} \label{NEDISKR-lema-jac-prva-jed1}
\begin{split}
    \ds{-a}{-\frac{1}{2}}
    \leq 
    &\ds{r-b}{-\frac{1}{2}} \times
    \ds{b+1-s}{b} \times
    \\
    &\ds{-c+u}{a} \times
    \ds{c+1-v}{c} \quad \textrm{and}
\end{split}    
\end{equation}
\begin{equation} \label{NEDISKR-lema-jac-prva-jed2}
\sigma_{b,c}^{\pm} \leq 
\ds{b+1-r}{b-s}
\times \ds{c+1-u}{c-v}\rtimes \sigma,
\end{equation}
with sign in  \eqref{NEDISKR-lema-jac-prva-jed2}
depending on the subquotient we are looking for.
Comparing cuspidal support in 
\eqref{NEDISKR-lema-jac-prva-jed1}
gives $v=s=0$ and $u=c+a+1$. Now $r=b-a$ and 
\eqref{NEDISKR-lema-jac-prva-jed2} is
\begin{equation} \label{NEDISKR-lema-jac-prva-jed2-reduc}
\sigma_{b,c}^{\pm} \leq 
\ds{a+1}{b}
\times \ds{-a}{c}\rtimes \sigma.
\end{equation}
By  Lemma \ref{male-diskretne-multiplicitet},
 both $\sigma_{b,c}^+$ and $\sigma_{b,c}^-$ occur in 
\eqref{NEDISKR-lema-jac-prva-jed2-reduc} once.
\end{proof}

\begin{lemma} \label{lema-nediskretni-jacq2}
For $\epsilon=\pm $ irreducible representation
$\ds{-a}{-\frac{1}{2}}\otimes \sigma_{b,c}^\epsilon$ 
appears with multiplicity one in
$\mu^*(\ds{\frac{1}{2}}{b}\rtimes 
\sigma_{a,c}^\epsilon)$.
\end{lemma}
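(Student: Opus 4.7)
The plan is to use the Tadić formula \eqref{komnozenje} to expand $\mu^*(\ds{\frac{1}{2}}{b}\rtimes \sigma_{a,c}^\epsilon)$, and then isolate exactly which summands can contribute an irreducible subquotient of the form $\ds{-a}{-\frac{1}{2}}\otimes \sigma_{b,c}^\epsilon$. Concretely, \eqref{komnozenje} applied with $x=\frac{1}{2}$, $y=b$ produces terms of the shape
\begin{equation*}
\ds{i-b}{-\frac{1}{2}}\times \ds{b+1-j}{b}\times \delta' \,\otimes\, \ds{b+1-i}{b-j}\rtimes \sigma'
\end{equation*}
with $0\leq j\leq i\leq b+\frac{1}{2}$ and $\delta'\otimes\sigma'\leq \mu^*(\sigma_{a,c}^\epsilon)$, so the proof reduces to a cuspidal-support bookkeeping argument on the left of $\otimes$, followed by an appeal to the multiplicity statement of Lemma \ref{male-diskretne-multiplicitet} on the right.

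The first step is to observe that the cuspidal support of $\ds{-a}{-\frac{1}{2}}$ is the multiset $\{\nu^{-a}\rho,\ldots,\nu^{-\frac{1}{2}}\rho\}$, all of whose exponents are negative. Since $b>0$, the segment $\ds{b+1-j}{b}$ must be empty, forcing $j=0$. Next, I would use \eqref{jacquet-segment} to list the possible $\delta'$ appearing in $\mu^*(\sigma_{a,c}^\epsilon)$ and argue that any nontrivial $\delta'$ carries $\nu^a\rho$ or $\nu^c\rho$ in its cuspidal support: in type (1) and in type (3) every nonzero contribution contains $\nu^a\rho$, while in type (2) a nontrivial $\delta'$ contains $\nu^a\rho$ or $\nu^c\rho$ unless both segment factors are trivial, in which case $\delta'=1$ and $\sigma'=\sigma_{a,c}^\epsilon$. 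Since $a,c\geq \frac{1}{2}>0$, neither $\nu^a\rho$ nor $\nu^c\rho$ lies in $\{\nu^{-a}\rho,\ldots,\nu^{-\frac{1}{2}}\rho\}$, so $\delta'$ must be trivial and $\sigma'=\sigma_{a,c}^\epsilon$.

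Having forced $j=0$, $\delta'=1$ and $\sigma'=\sigma_{a,c}^\epsilon$, the equation $\ds{i-b}{-\frac{1}{2}}=\ds{-a}{-\frac{1}{2}}$ determines $i=b-a$ uniquely (and one checks $0\leq b-a\leq b+\frac{1}{2}$ lies in the admissible range). The unique surviving summand is therefore
\begin{equation*}
\ds{-a}{-\frac{1}{2}}\,\otimes\, \ds{a+1}{b}\rtimes \sigma_{a,c}^\epsilon.
\end{equation*}
Since $\ds{-a}{-\frac{1}{2}}$ is irreducible, the sought multiplicity equals the multiplicity of $\sigma_{b,c}^\epsilon$ in $\ds{a+1}{b}\rtimes \sigma_{a,c}^\epsilon$, which by Lemma \ref{male-diskretne-multiplicitet} is exactly one.

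The only genuinely delicate point is the case analysis in step two, namely ruling out all nontrivial $\delta'$ coming from the three types of summands in \eqref{jacquet-segment}; this must be done carefully to make sure no exotic boundary case (e.g.\ $i''=-a-1$ in type (2), where one of the two segment factors collapses to the trivial representation) slips through. Once this enumeration is settled, the rest of the argument is a short cuspidal-support matching combined with the already established reducibility result.
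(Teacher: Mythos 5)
Your argument is correct, and it takes a genuinely different route from the paper's. The paper obtains the upper bound on the multiplicity from Lemma~\ref{lema-nediskretni-jacq1}, i.e.\ from the cuspidal-support analysis of the \emph{larger} representation $\delta([\nu^{-a}\rho,\nu^c\rho])\times\delta([\nu^\frac{1}{2}\rho,\nu^b\rho])\rtimes\sigma$, together with the inequality \eqref{A-A1}; it then only needs to exhibit one term in \eqref{formula-jac-pola-b-a-c} to prove existence, and closes with Lemma~\ref{male-diskretne-multiplicitet}. You instead expand $\mu^*(\ds{\frac{1}{2}}{b}\rtimes\sigma_{a,c}^\epsilon)$ directly and bound the multiplicity from that expansion alone. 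Your cuspidal-support bookkeeping is sound: $j=0$ is forced since $\nu^{b}\rho$ has positive exponent, and a careful run through the three types of summands in \eqref{jacquet-segment} confirms that every nontrivial $\delta'$ carries $\nu^a\rho$ or $\nu^c\rho$ in its support --- in type~(2) the only way both GL-factors vanish is $i=-a-1$, $j=c$, which indeed yields $\delta'=1$ and $\sigma'=\sigma_{a,c}^\epsilon$, and in type~(3) the constraint $i+j<-1$ prevents the borderline $j=a$ from escaping the net when $i=-a-1$. This forces $i=b-a$ and reduces the problem to the multiplicity of $\sigma_{b,c}^\epsilon$ in $\ds{a+1}{b}\rtimes\sigma_{a,c}^\epsilon$, which Lemma~\ref{male-diskretne-multiplicitet} settles. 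The trade-off: your proof is self-contained and independent of Lemma~\ref{lema-nediskretni-jacq1} and \eqref{A-A1}, at the cost of repeating the support enumeration the paper amortizes by working with the big representation once; the paper's version is shorter in context but not more elementary.
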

\begin{proof}
For simplicity, assume $\epsilon=+$, a proof for $\epsilon=-$ is just an appropriate sign change.
By Lemma \ref{lema-nediskretni-jacq1}, it is enough to prove the existence. Use formula \eqref{formula-jac-pola-b-a-c}, and pick there $s=0$, $r=b-a$ and $\delta\otimes\sigma'=1\otimes \sigma_{a,c}^+$. So we need to prove
\[
\ds{-a}{-\frac{1}{2}}\otimes \sigma_{b,c}^+
\leq 
\ds{-a}{-\frac{1}{2}}\otimes
\ds{a+1}{b}\rtimes 
\sigma_{a,c}^+,
\]
with multiplicity one. Equivalently 
\[
\sigma_{b,c}^+\leq \ds{a+1}{b}\rtimes 
\sigma_{a,c}^+,
\]
and now Lemma \ref{male-diskretne-multiplicitet} completes our proof.
\end{proof}

Now we have

\begin{proposition} \label{prop-l-pola-a-b-c}
With all multiplicities being one, we have in $R(G)$ 
\begin{align*}
    \ds{\frac{1}{2}}{b}
    \rtimes \sigma_{a,c}^+
    \geq & 
    L(
    \ds{\frac{1}{2}}{a}
    \rtimes \sigma_{b,c}^+
    ),
\\
    \ds{\frac{1}{2}}{b}
    \rtimes \sigma_{a,c}^-
    \geq & 
    L(
    \ds{\frac{1}{2}}{a}
    \rtimes \sigma_{b,c}^-
    ),
\\
    \delta([\nu^{-a}\rho,\nu^c\rho])\times \delta([\nu^\frac{1}{2}\rho,\nu^b\rho])\rtimes \sigma
    \geq &
        L(
        \ds{\frac{1}{2}}{a}
        \rtimes \sigma_{b,c}^+
        )
    +
\\
        &
        L(
        \ds{\frac{1}{2}}{a}
        \rtimes \sigma_{b,c}^-
        ).
\end{align*} 
\end{proposition}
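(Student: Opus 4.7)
The plan is to combine the Jacquet-module multiplicity information of Lemmas \ref{lema-nediskretni-jacq1} and \ref{lema-nediskretni-jacq2} with the list of candidate composition factors produced by Propositions \ref{diskretne-u-velikoj-prpopzicija}, \ref{nediskretni-kandidati-u-pozitivnoj}, and \ref{nediskretni-kandidati-u-negativnoj}. The key observation is that additivity of the semisimplified Jacquet module on composition series forces the summand $\ds{-a}{-\frac{1}{2}}\otimes\sigma_{b,c}^\pm$ appearing in $\mu^*$ of the ambient representation to arise from some irreducible composition factor, and an inspection of the short candidate list should identify that factor as $L(\ds{\frac{1}{2}}{a}\rtimes\sigma_{b,c}^\pm)$. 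The Langlands classification guarantees the necessary uniqueness: $L(\ds{\frac{1}{2}}{a}\rtimes\sigma_{b,c}^\pm)$ is the unique irreducible subrepresentation of $\ds{-a}{-\frac{1}{2}}\rtimes\sigma_{b,c}^\pm$, so by Frobenius reciprocity it carries $\ds{-a}{-\frac{1}{2}}\otimes\sigma_{b,c}^\pm$ as a quotient of its own Jacquet module.

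For the first two inequalities, Lemma \ref{lema-nediskretni-jacq2} supplies $\ds{-a}{-\frac{1}{2}}\otimes\sigma_{b,c}^\pm$ with multiplicity one in $\mu^*(\ds{\frac{1}{2}}{b}\rtimes\sigma_{a,c}^\pm)$, so some composition factor $\pi^\pm$ carries this summand. The candidates for $\pi^\pm$ are the discrete series $\sigma_{b,c,a}^\pm$, the non-tempered representations listed in Propositions \ref{nediskretni-kandidati-u-pozitivnoj} and \ref{nediskretni-kandidati-u-negativnoj}, and the ambient Langlands quotient. I would eliminate $\sigma_{b,c,a}^\pm$ via Lemma \ref{lema-pola-a-bc-tvrdnja}, which shows its $\sigma_{b,c}^\pm$-bearing Jacquet summand is paired with the positive segment $\ds{\frac{1}{2}}{a}$, not with $\ds{-a}{-\frac{1}{2}}$. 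The Langlands quotients $L(\ds{\frac{1}{2}}{b}\rtimes\sigma_{a,c}^\pm)$ and $L(\ds{-a}{b}\rtimes\sigma_c)$ are eliminated via \eqref{jacquet-langlandsov-kvocijent}: the right-of-$\otimes$ factors there are built from $\sigma_{a,c}^\pm$ or $\sigma_c$, never from $\sigma_{b,c}^\pm$. Only $L(\ds{\frac{1}{2}}{a}\rtimes\sigma_{b,c}^\pm)$ survives, and the multiplicity-one Jacquet count then transfers to multiplicity one as a composition factor.

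The third inequality follows by the same argument with Lemma \ref{lema-nediskretni-jacq1} in place of Lemma \ref{lema-nediskretni-jacq2}: both $\ds{-a}{-\frac{1}{2}}\otimes\sigma_{b,c}^+$ and $\ds{-a}{-\frac{1}{2}}\otimes\sigma_{b,c}^-$ occur once in $\mu^*(\psi)$, and each forces the corresponding Langlands subquotient $L(\ds{\frac{1}{2}}{a}\rtimes\sigma_{b,c}^\pm)$ into $\psi$ with multiplicity one; the two contributions are distinct because $\sigma_{b,c}^+\not\cong\sigma_{b,c}^-$. The main technical burden is the elimination step, which is a routine exercise in applying \eqref{jacquet-langlandsov-kvocijent} and tracking the partial cuspidal supports appearing on the right of $\otimes$; I anticipate no conceptual obstacle beyond careful bookkeeping.
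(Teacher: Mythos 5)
Your approach uses the same key Jacquet-module lemmas as the paper, but the identification step differs and has a gap. The paper proceeds differently: it first anchors the existence of $L(\ds{\frac{1}{2}}{a}\rtimes \sigma_{b,c}^\pm)$ as a subquotient of $\psi$ by observing (via \eqref{long-intertwining} and Theorem \ref{ciganovic-A2}) that $K_2\cong\ds{-c}{b}\times\ds{\frac{1}{2}}{a}\rtimes\sigma\leq\psi$ already contains these factors; then Lemma \ref{lema-nediskretni-jacq1} bounds their multiplicity in $\psi$ by one, and Lemma \ref{lema-nediskretni-jacq2} shows a subquotient of $\ds{\frac{1}{2}}{b}\rtimes\sigma_{a,c}^\pm$ carries the same Jacquet piece, which (by the multiplicity-one count in $\mu^*(\psi)$) must be $L(\ds{\frac{1}{2}}{a}\rtimes\sigma_{b,c}^\pm)$. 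Your second paragraph instead tries to identify the factor by eliminating all other candidates by hand.

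That elimination has two problems. First, you invoke \eqref{jacquet-langlandsov-kvocijent} to eliminate $L(\ds{\frac{1}{2}}{b}\rtimes\sigma_{a,c}^\pm)$ and $L(\ds{-a}{b}\rtimes\sigma_c)$, but \eqref{jacquet-langlandsov-kvocijent} is the formula for $\mu^*(L(\delta([\nu^{-a}\rho,\nu^c\rho]);\sigma))$ with $\sigma$ \emph{cuspidal}; it does not apply to Langlands quotients whose tempered datum is a nontrivial discrete series such as $\sigma_{a,c}^\pm$ or $\sigma_c$, and a priori the Jacquet module of, say, $L(\ds{\frac{1}{2}}{b}\rtimes\sigma_{a,c}^+)$ could contain terms of the form $\delta'\otimes\sigma_{b,c}^+$. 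Second, Lemma \ref{lema-pola-a-bc-tvrdnja} only gives $\mu^*(\sigma_{b,c,a}^\pm)\geq\ds{\frac{1}{2}}{a}\otimes\sigma_{b,c}^\pm$; it does not by itself exclude $\ds{-a}{-\frac{1}{2}}\otimes\sigma_{b,c}^\pm$ (what rules this out is the square-integrability criterion on the exponents, as in Lemma \ref{lema-Labc-jac}). The good news is that the Langlands-classification observation in your first paragraph already renders the case-by-case elimination superfluous: since $L(\ds{\frac{1}{2}}{a}\rtimes\sigma_{b,c}^\pm)$ is the unique irreducible subrepresentation of $\ds{-a}{-\frac{1}{2}}\rtimes\sigma_{b,c}^\pm$, any irreducible $\pi$ with $\ds{-a}{-\frac{1}{2}}\otimes\sigma_{b,c}^\pm$ in $\mu^*(\pi)$ must be this $L$ (alternatively, the multiplicity-one count in $\mu^*(\psi)$ from Lemma \ref{lema-nediskretni-jacq1} combined with the known occurrence in $K_2$ pins it down, which is the route the paper takes). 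Replacing the flawed elimination with either of those observations repairs the argument.
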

\begin{proof} First, observe that, by 
\eqref{long-intertwining} and Theorem
\ref{ciganovic-A2},
$\textrm{L}(
    \ds{\frac{1}{2}}{a}
    \rtimes \sigma_{b,c}^\epsilon
    )$
    is contained in 
    $\delta([\nu^{-a}\rho,\nu^c\rho])\times \delta([\nu^\frac{1}{2}\rho,\nu^b\rho])\rtimes \sigma$.
    Existence and
    multiplicity one now follow from
    Lemmas
    \ref{lema-nediskretni-jacq1}
    and
    \ref{lema-nediskretni-jacq2}.
\end{proof}

%%%%%%%%%%%%%%%%%%%%%%%%%%%%%%%%%%%%%%%%%%%%%
%%%%%%%%%%%%%%%%%%%%%%%%%%%%%%%%%%%%%%%%%%%%%
%%%%%%%%%%%%%%%%%%%%%%%%%%%%%%%%%%%%%%%%%%%%%

%%%%%%%%%%%%%%%%%%%%%%%%%%%%%%%%%%%%%%%%%%%%%%%%%%

\section{Multiplicity of 
$L(\ds{-a}{b}\rtimes \sigma_c)$ }
\label{mult_L2}

By  Theorem \ref{muic-A3} and 
 \eqref{long-intertwining},
this subquotient does appear in 
$\ds{-a}{c}\times \ds{\frac{1}{2}}{b}
\rtimes \sigma$, but also as a candidate in 
$\ds{\frac{1}{2}}{b}\rtimes
      \sigma_{a,c}^+$, by Proposition
\ref{nediskretni-kandidati-u-pozitivnoj},
so we need to check its multiplicity.
First we prove a couple of lemmas to obtain a subquotient in some of its Jacquet module, used to identify it.

\begin{lemma} \label{lema-jedan-pola-unitarna}
    In appropriate Grothendieck group we have 
    \begin{align*}
    \nu^\frac{1}{2}\rho\times 
    \nu^\frac{1}{2}\rho 
    \rtimes \sigma=
        \delta([
        \nu^{-\frac{1}{2}}\rho,
        \nu^\frac{1}{2} \rho]_+;\sigma)
    +
        \delta([
        \nu^{-\frac{1}{2}}\rho,
        \nu^\frac{1}{2} \rho]_-;\sigma)
    +
    \\
        L(\nu^\frac{1}{2}\rtimes 
                \sigma_{\frac{1}{2}})
    +
        L(
        \nu^\frac{1}{2}\rho\times 
        \nu^\frac{1}{2}\rho 
        \rtimes \sigma
        ).
    \end{align*}
\end{lemma}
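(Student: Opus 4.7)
I would combine the short exact sequence of Theorem~\ref{prva} with exactness of parabolic induction. Applying $\nu^{\frac{1}{2}}\rho \times (\cdot)$ to the sequence $0 \to \sigma_{\frac{1}{2}} \to \nu^{\frac{1}{2}}\rho \rtimes \sigma \to L(\nu^{\frac{1}{2}}\rho \rtimes \sigma) \to 0$ yields
\[
0 \to \nu^{\frac{1}{2}}\rho \rtimes \sigma_{\frac{1}{2}} \to \nu^{\frac{1}{2}}\rho \times \nu^{\frac{1}{2}}\rho \rtimes \sigma \to \nu^{\frac{1}{2}}\rho \rtimes L(\nu^{\frac{1}{2}}\rho \rtimes \sigma) \to 0,
\]
so the problem reduces to identifying exactly two composition factors in each outer term.

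For the subrepresentation, I apply \eqref{jacquet-segment} to $\delta([\nu^{-\frac{1}{2}}\rho,\nu^{\frac{1}{2}}\rho]_+;\sigma)$ with $a = c = \frac{1}{2}$; the second sum at $(i,j) = (-\frac{1}{2},\frac{1}{2})$ contributes $\nu^{\frac{1}{2}}\rho \otimes \sigma_{\frac{1}{2}}$ to its Jacquet module. Frobenius reciprocity then produces an embedding $\delta([\nu^{-\frac{1}{2}}\rho,\nu^{\frac{1}{2}}\rho]_+;\sigma) \hookrightarrow \nu^{\frac{1}{2}}\rho \rtimes \sigma_{\frac{1}{2}}$, and the Langlands classification identifies $L(\nu^{\frac{1}{2}}\rho \rtimes \sigma_{\frac{1}{2}})$ as the unique irreducible quotient. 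For the quotient representation, the third sum of \eqref{jacquet-segment} at $(i,j) = (-\frac{3}{2},-\frac{1}{2})$ contributes $\nu^{\frac{1}{2}}\rho \otimes L(\nu^{\frac{1}{2}}\rho \rtimes \sigma)$ to $\mu^*(\delta([\nu^{-\frac{1}{2}}\rho,\nu^{\frac{1}{2}}\rho]_-;\sigma))$, so by Frobenius reciprocity again $\delta([\nu^{-\frac{1}{2}}\rho,\nu^{\frac{1}{2}}\rho]_-;\sigma) \hookrightarrow \nu^{\frac{1}{2}}\rho \rtimes L(\nu^{\frac{1}{2}}\rho \rtimes \sigma)$; the complementary composition factor here is the full Langlands quotient $L(\nu^{\frac{1}{2}}\rho \times \nu^{\frac{1}{2}}\rho \rtimes \sigma)$, dictated by transitivity of the Langlands classification.

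The main obstacle is the length count: one must verify that the four identified irreducibles exhaust the composition factors, i.e., each outer term of the short exact sequence has length exactly two. I would close this gap by computing $\mu^*(\nu^{\frac{1}{2}}\rho \times \nu^{\frac{1}{2}}\rho \rtimes \sigma)$ via \eqref{komnozenje} and checking that the multiplicities of the Jacquet-module summands $\nu^{\frac{1}{2}}\rho \otimes \sigma_{\frac{1}{2}}$ and $\nu^{\frac{1}{2}}\rho \otimes L(\nu^{\frac{1}{2}}\rho \rtimes \sigma)$ are exactly accounted for by the four claimed composition factors, leaving no room for an additional subquotient.
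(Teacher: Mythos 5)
Your decomposition is genuinely different from the paper's. You split on the classical-group side, via the exact sequence
\[
0 \to \nu^{\frac{1}{2}}\rho \rtimes \sigma_{\frac{1}{2}} \to \nu^{\frac{1}{2}}\rho \times \nu^{\frac{1}{2}}\rho \rtimes \sigma \to \nu^{\frac{1}{2}}\rho \rtimes L(\nu^{\frac{1}{2}}\rho \rtimes \sigma) \to 0,
\]
whereas the paper splits on the $GL$ side: it writes $\nu^{\frac{1}{2}}\rho \times \nu^{\frac{1}{2}}\rho\rtimes\sigma = \nu^{\frac{1}{2}}\rho \times \nu^{-\frac{1}{2}}\rho\rtimes\sigma$ in $R(G)$ and uses $\nu^{\frac{1}{2}}\rho\times\nu^{-\frac{1}{2}}\rho = \delta([\nu^{-\frac{1}{2}}\rho,\nu^{\frac{1}{2}}\rho]) + L(\nu^{-\frac{1}{2}}\rho,\nu^{\frac{1}{2}}\rho)$ in the $GL$ Grothendieck group, so that the problem becomes decomposing $\delta([\nu^{-\frac{1}{2}}\rho,\nu^{\frac{1}{2}}\rho])\rtimes\sigma$ (two tempered factors, immediate) and $L(\nu^{-\frac{1}{2}}\rho,\nu^{\frac{1}{2}}\rho)\rtimes\sigma$. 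Both starting points are legitimate, but they call for very different closure arguments.

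The genuine gap is in your length count. You propose to verify that the multiplicities of $\nu^{\frac{1}{2}}\rho\otimes\sigma_{\frac{1}{2}}$ and $\nu^{\frac{1}{2}}\rho\otimes L(\nu^{\frac{1}{2}}\rho\rtimes\sigma)$ in $\mu^*(\nu^{\frac{1}{2}}\rho\times\nu^{\frac{1}{2}}\rho\rtimes\sigma)$ are exhausted by the four claimed constituents, and conclude there is ``no room for an additional subquotient.'' This conclusion does not follow: a further subquotient could contribute to neither of those two Jacquet summands. Indeed $L(\nu^{\frac{1}{2}}\rho\times\nu^{\frac{1}{2}}\rho\rtimes\sigma)$ itself contributes to neither, so your accounting would remain balanced if (hypothetically) it appeared with higher multiplicity, or if some fifth irreducible with only $\nu^{-\frac{1}{2}}\rho\otimes(\cdot)$ in its Jacquet module appeared. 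To make the multiplicity count decisive you would need an \emph{a priori} reason why any constituent must contribute to one of the two chosen summands. The paper supplies exactly such a reason and it is the heart of the proof: $L(\nu^{-\frac{1}{2}}\rho,\nu^{\frac{1}{2}}\rho)$ is a \emph{unitary} $GL$ representation, so $L(\nu^{-\frac{1}{2}}\rho,\nu^{\frac{1}{2}}\rho)\rtimes\sigma$ is unitary, hence semisimple; every irreducible constituent is therefore a subrepresentation, and by Frobenius reciprocity has $L(\nu^{-\frac{1}{2}}\rho,\nu^{\frac{1}{2}}\rho)\otimes\sigma$ in its Jacquet module. The length of that piece is then bounded by the multiplicity of this single summand, which the paper computes to be two. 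Your route avoids the unitary representation $L(\nu^{-\frac{1}{2}}\rho,\nu^{\frac{1}{2}}\rho)\rtimes\sigma$ entirely, and neither $\nu^{\frac{1}{2}}\rho\rtimes\sigma_{\frac{1}{2}}$ nor $\nu^{\frac{1}{2}}\rho\rtimes L(\nu^{\frac{1}{2}}\rho\rtimes\sigma)$ is semisimple, so you have no analogous handle. You would need to substitute a different closing argument, for instance an explicit enumeration via the Langlands classification of all irreducibles with cuspidal support $\{\nu^{\pm\frac{1}{2}}\rho,\nu^{\pm\frac{1}{2}}\rho\}\cup\mathrm{supp}(\sigma)$, followed by a multiplicity-one check for each.

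A secondary point: the Frobenius-reciprocity embeddings you invoke (such as $\delta([\nu^{-\frac{1}{2}}\rho,\nu^{\frac{1}{2}}\rho]_+;\sigma)\hookrightarrow\nu^{\frac{1}{2}}\rho\rtimes\sigma_{\frac{1}{2}}$) require the relevant Jacquet-module constituent to appear as a \emph{quotient} of the Jacquet module, not merely as a subquotient; you do not verify this, and it is not automatic. For the Grothendieck-group statement you are proving, these embeddings are not needed anyway, only the $R(G)$ membership of the constituents, so this is a distraction in your write-up — but it means the claimed split of the four factors between the two outer terms of your exact sequence is not actually established.
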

\begin{proof}
    We need to prove that there are no other irreducible subquotients, as stated are obvious.
    In appropriate Grothendieck group we have
    \begin{align*}
    \nu^\frac{1}{2}\rho\times 
    \nu^\frac{1}{2}\rho 
    \rtimes \sigma
    =
    \nu^\frac{1}{2}\rho\times 
    \nu^{-\frac{1}{2}}\rho 
    \rtimes \sigma
    =
    \\
    \ds{-\frac{1}{2}}{\frac{1}{2}}\rtimes \sigma
    +
    L(\nu^{-\frac{1}{2}}\rho,\nu^\frac{1}{2}\rho)
    \rtimes \sigma.
    \end{align*}
    Here 
    \[
    \ds{-\frac{1}{2}}{\frac{1}{2}}\rtimes \sigma=
     \delta([
        \nu^{-\frac{1}{2}}\rho,
        \nu^\frac{1}{2} \rho]_+;\sigma)
    +
        \delta([
        \nu^{-\frac{1}{2}}\rho,
        \nu^\frac{1}{2} \rho]_-;\sigma).
    \]
    On the other hand,
    the representation
    \[
    L(\nu^{-\frac{1}{2}}\rho,\nu^\frac{1}{2}\rho)
    \hookrightarrow
    \nu^{-\frac{1}{2}}\rho \times \nu^\frac{1}{2}\rho
    \] 
    is unitary (see \cite{tadic:unitary}, Introduction). 
    So is 
    \[
    L(\nu^{-\frac{1}{2}}\rho,\nu^\frac{1}{2}\rho)\rtimes  \sigma,
    \]
    and its every irreducible subquotient contains
    $L(\nu^{-\frac{1}{2}}\rho,\nu^\frac{1}{2}\rho)
    \otimes  \sigma$ in an appropriate Jacquet module.
    Now, one can check that 
    $L(\nu^{-\frac{1}{2}}\rho,\nu^\frac{1}{2}\rho)
    \otimes  \sigma$
    appears with multiplicity two in 
    $\mu^*(\nu^\frac{1}{2}\rho\times 
        \nu^\frac{1}{2}\rho 
        \rtimes \sigma)$, but does not appear in 
    $\mu^*(\ds{-\frac{1}{2}}{\frac{1}{2}}
    \rtimes \sigma)$.

\end{proof}

\begin{lemma} \label{lema-jedan-pola-do-b-unitarna}
    We have in $R(G)$, with multiplicity one 
    \[
    \ds{\frac{1}{2}}{b}\times \ds{\frac{1}{2}}{b} \rtimes \sigma
    \geq 
    L(\ds{\frac{1}{2}}{b} \rtimes \sigma_b).
    \]
\end{lemma}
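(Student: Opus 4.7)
The plan has two steps: establish existence of $L(\ds{\frac{1}{2}}{b}\rtimes\sigma_b)$ as a subquotient, then bound its multiplicity via a filtration argument and a Jacquet module test. For existence, from the embedding $\sigma_b\hookrightarrow \ds{\frac{1}{2}}{b}\rtimes\sigma$ of Theorem \ref{muic-diskretne-podreprezentacije} I induce to obtain
\[
\ds{\frac{1}{2}}{b}\rtimes\sigma_b\hookrightarrow \ds{\frac{1}{2}}{b}\times\ds{\frac{1}{2}}{b}\rtimes\sigma,
\]
and $L(\ds{\frac{1}{2}}{b}\rtimes\sigma_b)$, being the Langlands quotient of the left hand side, appears as a subquotient of the right hand side.

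For the multiplicity bound I will use the short exact sequence
\[
0\to \ds{\frac{1}{2}}{b}\rtimes\sigma_b\to \ds{\frac{1}{2}}{b}\times\ds{\frac{1}{2}}{b}\rtimes\sigma\to \ds{\frac{1}{2}}{b}\rtimes L(\ds{\frac{1}{2}}{b}\rtimes\sigma)\to 0
\]
coming from the above embedding together with the decomposition $\ds{\frac{1}{2}}{b}\rtimes\sigma=\sigma_b+L(\ds{\frac{1}{2}}{b}\rtimes\sigma)$ of Theorem \ref{muic-diskretne-podreprezentacije}. By uniqueness of the Langlands quotient, $L(\ds{\frac{1}{2}}{b}\rtimes\sigma_b)$ occurs exactly once in $\ds{\frac{1}{2}}{b}\rtimes\sigma_b$, so it remains to verify that it does not appear in $\ds{\frac{1}{2}}{b}\rtimes L(\ds{\frac{1}{2}}{b}\rtimes\sigma)$.

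The Jacquet module test uses the characteristic term $\ds{-b}{-\frac{1}{2}}\otimes\sigma_b$, which by Frobenius reciprocity applied to the Langlands embedding $L(\ds{\frac{1}{2}}{b}\rtimes\sigma_b)\hookrightarrow \ds{-b}{-\frac{1}{2}}\rtimes\sigma_b$ lies in $\mu^*(L(\ds{\frac{1}{2}}{b}\rtimes\sigma_b))$. Iterating \eqref{komnozenje} once with the cuspidal support replaced by $\ds{\frac{1}{2}}{b}\rtimes\sigma$, and using \eqref{jacquet-langlandsov-kvocijent} to decompose $\mu^*(L(\ds{\frac{1}{2}}{b}\rtimes\sigma))$, cuspidal support constraints (which force the analogues of $j$ and $j'$ in Tadi\'c's formula to be zero) leave at most two contributions to $\ds{-b}{-\frac{1}{2}}\otimes\sigma_b$ in $\mu^*(\ds{\frac{1}{2}}{b}\times\ds{\frac{1}{2}}{b}\rtimes\sigma)$: one from $1\otimes\sigma_b\leq\mu^*(\ds{\frac{1}{2}}{b}\rtimes\sigma)$, which sits in $\mu^*(\ds{\frac{1}{2}}{b}\rtimes\sigma_b)$ and accounts for the Langlands quotient there, and one from $\ds{-b}{-\frac{1}{2}}\otimes\sigma\leq\mu^*(L(\ds{\frac{1}{2}}{b}\rtimes\sigma))$, which sits in $\mu^*(\ds{\frac{1}{2}}{b}\rtimes L(\ds{\frac{1}{2}}{b}\rtimes\sigma))$.

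The main obstacle is to attribute this second contribution to an irreducible subquotient of $\ds{\frac{1}{2}}{b}\rtimes L(\ds{\frac{1}{2}}{b}\rtimes\sigma)$ other than $L(\ds{\frac{1}{2}}{b}\rtimes\sigma_b)$; this should follow from a Langlands data comparison, since any subquotient of $\ds{\frac{1}{2}}{b}\rtimes L(\ds{\frac{1}{2}}{b}\rtimes\sigma)$ produced by the Frobenius route through $\ds{-b}{-\frac{1}{2}}\otimes\sigma$ has tempered component $\sigma$ in its Langlands parameter rather than $\sigma_b$, thereby establishing $L(\ds{\frac{1}{2}}{b}\rtimes\sigma_b)\not\leq \ds{\frac{1}{2}}{b}\rtimes L(\ds{\frac{1}{2}}{b}\rtimes\sigma)$ and completing the multiplicity one claim.
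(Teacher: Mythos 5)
Your overall structure is sound and the first two steps are correct: the existence of $L(\ds{\frac{1}{2}}{b}\rtimes\sigma_b)$ as a subquotient, the short exact sequence obtained by inducing the filtration $\sigma_b\to\ds{\frac{1}{2}}{b}\rtimes\sigma\to L(\ds{\frac{1}{2}}{b}\rtimes\sigma)$ in stages, and the reduction to showing $L(\ds{\frac{1}{2}}{b}\rtimes\sigma_b)\not\leq\ds{\frac{1}{2}}{b}\rtimes L(\ds{\frac{1}{2}}{b}\rtimes\sigma)$. The gap is in the final paragraph, and it is a real one. Your test element $\ds{-b}{-\frac{1}{2}}\otimes\sigma_b$ occurs with multiplicity exactly one in $\mu^*(L(\ds{\frac{1}{2}}{b}\rtimes\sigma_b))$ (since it occurs once already in $\mu^*(\ds{-b}{-\frac{1}{2}}\rtimes\sigma_b)$), and, as you compute, with total multiplicity two in $\mu^*(\ds{\frac{1}{2}}{b}\times\ds{\frac{1}{2}}{b}\rtimes\sigma)$. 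So all this test element gives is an upper bound of two for the multiplicity of $L(\ds{\frac{1}{2}}{b}\rtimes\sigma_b)$; it cannot by itself prove multiplicity one. To close the gap you would genuinely have to rule out that $L(\ds{\frac{1}{2}}{b}\rtimes\sigma_b)$ occurs in $\ds{\frac{1}{2}}{b}\rtimes L(\ds{\frac{1}{2}}{b}\rtimes\sigma)$, and the argument you offer --- that ``any subquotient produced by the Frobenius route through $\ds{-b}{-\frac{1}{2}}\otimes\sigma$ has tempered component $\sigma$ in its Langlands parameter'' --- is not valid: $\mu^*$ is a semisimplified object in $R(GL)\otimes R(G)$, a term in it carries no memory of the route that produced it in the iterated formula \eqref{komnozenje}, and in particular the appearance of $\ds{-b}{-\frac{1}{2}}\otimes\sigma_b$ inside $\mu^*(\ds{\frac{1}{2}}{b}\rtimes L(\ds{\frac{1}{2}}{b}\rtimes\sigma))$ says nothing a priori about the Langlands data of the irreducible subquotient contributing it.

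This is precisely what the paper's proof is engineered to avoid: after disposing of the base case $b=\frac{1}{2}$ via Lemma \ref{lema-jedan-pola-unitarna}, it picks the more refined test element $\pi=\ds{\frac{3}{2}}{b}\times\ds{\frac{3}{2}}{b}\otimes L(\nu^{\frac{1}{2}}\rho\rtimes\sigma_{\frac{1}{2}})$, which (i) does lie in $\mu^*(L(\ds{\frac{1}{2}}{b}\rtimes\sigma_b))$ --- here the decomposition $\ds{\frac{1}{2}}{b}\rtimes\sigma_b=\sigma_{\mathrm{temp}}+L(\ds{\frac{1}{2}}{b}\rtimes\sigma_b)$ from Theorem~5.1/Lemma~5.4 of \cite{muic:composition series} is used together with the observation that $\pi$ cannot come from $\sigma_{\mathrm{temp}}\leq\ds{-b}{b}\rtimes\sigma$ --- and (ii), crucially, has multiplicity exactly one in $\mu^*(\ds{\frac{1}{2}}{b}\times\ds{\frac{1}{2}}{b}\rtimes\sigma)$, which is what Lemma \ref{lema-jedan-pola-unitarna} supplies. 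A multiplicity-one test element settles the matter immediately; a multiplicity-two element does not, and patching it requires substantial additional work that your sketch does not carry out.
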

\begin{proof} 
By Lemma \ref{lema-jedan-pola-unitarna} we may assume 
$b\geq \frac{3}{2}$. 
Obviously, 
$L(\ds{\frac{1}{2}}{b} \rtimes \sigma_b)$ 
does appear as   
a subquotient, so we need to prove multiplicity one. 
Let us denote 
\[
    \pi=    
    \ds{\frac{3}{2}}{b}
    \times
    \ds{\frac{3}{2}}{b}
    \otimes
    L(\nu^\frac{1}{2}\rho\rtimes \sigma_{\frac{1}{2}}
    ).
\]
It is enough to see that $\pi$ appears in both 
\[
\textrm{i)} \textrm{\ }
 \mu^*(L(\ds{\frac{1}{2}}{b} \rtimes \sigma_b))
\quad
\textrm{ and }
\quad
\textrm{ii)}\textrm{\ } 
\mu^*(\ds{\frac{1}{2}}{b}\times \ds{\frac{1}{2}}{b} 
        \rtimes \sigma),
\]
and that its multiplicity is one in ii).
\\
We start with i).
By Theorem 5.1 of \cite{muic:composition series} ii), and 
Lemma 5.4 there, we have in $R(G)$
\begin{equation*} 
    \ds{\frac{1}{2}}{b}\rtimes
      \sigma_{b}
    =
    \sigma_{temp}
    +
    L(\ds{\frac{1}{2}}{b}\rtimes
      \sigma_{b}),
\end{equation*}
where $\sigma_{temp}$ is an irreducible tempered 
subquotient of $\ds{-b}{b}\rtimes \sigma$.
As 
\[
    ...\otimes 
    L(\nu^\frac{1}{2}\rho\rtimes \sigma_{\frac{1}{2}})
\overset{\eqref{segment-jacquet-equation}}{\nleq}
    ...\otimes
    \delta([\nu^{b+1-i}\rho,\nu^{b-j}\rho])\rtimes \sigma
 \overset{\eqref{komnozenje}}{\leq}
    \mu^*(\ds{-b}{b}\rtimes \sigma),
\]
we see that 
$\pi$ does not appear in
    $\mu^*(\ds{-b}{b}\rtimes \sigma)$.
On the other hand, 
by
\eqref{jacquet-strogo-pozitivna}
      ${\mu^*(\sigma_b)\geq 
      \ds{\frac{3}{2}}{b}\otimes\sigma_\frac{1}{2}}$, so taking
$i=b+\frac{1}{2}$, $j=b-\frac{1}{2}$ in \eqref{komnozenje}
we have 
$\mu^*(\ds{\frac{1}{2}}{b}\rtimes
      \sigma_{b})
\geq 
    \ds{\frac{3}{2}}{b}
    \times
    \ds{\frac{3}{2}}{b}
    \otimes
    \nu^\frac{1}{2}\rho\rtimes \sigma_{\frac{1}{2}}
\geq
    \pi$.

ii) we look for 
$0\leq i\leq j \leq  b+\frac{1}{2}$ and $0\leq s\leq r \leq b+ \frac{1}{2}$
such that
\begin{align*}
\ds{\frac{3}{2}}{b}
    \times
    \ds{\frac{3}{2}}{b}
\leq 
        &\ds{i-b}{-\frac{1}{2}}
    \times
        \ds{b+1-j}{b}
    \\
    \times
        &\ds{r-b}{-\frac{1}{2}}
    \times
        \ds{b+1-s}{b},        
\end{align*}
and
\begin{align*}
L(\nu^\frac{1}{2}\rho\rtimes \sigma_{\frac{1}{2}})
\leq
\ds{b+1-i}{b-j}
\times
\ds{b+1-r}{b-s}
    \rtimes
        \sigma.
\end{align*}
The first equation implies $i=r=b+\frac{1}{2}$ and 
$j=s=b-\frac{1}{2}$. 
The second is
\[
L(\nu^\frac{1}{2}\rho\rtimes \sigma_{\frac{1}{2}})
\leq 
\nu^\frac{1}{2}\rho\times 
    \nu^\frac{1}{2}\rho 
    \rtimes \sigma.
\]
Multiplicity one in the second follows from Lemma
\ref{lema-jedan-pola-unitarna}.
\end{proof}

\begin{lemma} \label{lema-mult-L-dva-seg}
    We have in $R(G)$, with multiplicity one
    \[
    \ds{\frac{1}{2}}{c}
    \times
    \ds{\frac{1}{2}}{b}
    \rtimes \sigma
    \geq
    L(\ds{\frac{1}{2}}{b} \rtimes \sigma_c).
    \]
\end{lemma}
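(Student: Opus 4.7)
The plan is to mimic the strategy of Lemma \ref{lema-jedan-pola-do-b-unitarna}, adapted to the case of unequal upper endpoints $b<c$. Since $[\nu^{\frac{1}{2}}\rho,\nu^b\rho]\subseteq[\nu^{\frac{1}{2}}\rho,\nu^c\rho]$, the representation $\ds{\frac{1}{2}}{c}\times\ds{\frac{1}{2}}{b}$ is irreducible, so the two factors commute. Combined with the embedding $\sigma_c\hookrightarrow\ds{\frac{1}{2}}{c}\rtimes\sigma$ from Theorem \ref{prva}, this yields
\[
\ds{\frac{1}{2}}{b}\rtimes\sigma_c
\hookrightarrow
\ds{\frac{1}{2}}{b}\times\ds{\frac{1}{2}}{c}\rtimes\sigma
\cong
\ds{\frac{1}{2}}{c}\times\ds{\frac{1}{2}}{b}\rtimes\sigma,
\]
so the Langlands quotient $L(\ds{\frac{1}{2}}{b}\rtimes\sigma_c)$ does occur in the right-hand side, giving the inequality.

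For the multiplicity-one part, I propose to use the distinguishing Jacquet module piece
\[
\pi:=\ds{\frac{3}{2}}{c}\times\ds{\frac{3}{2}}{b}\otimes L(\nu^{\frac{1}{2}}\rho\rtimes\sigma_{\frac{1}{2}})
\]
and show (i) $\pi\leq\mu^*(L(\ds{\frac{1}{2}}{b}\rtimes\sigma_c))$ and (ii) $\pi$ has multiplicity one in $\mu^*(\ds{\frac{1}{2}}{c}\times\ds{\frac{1}{2}}{b}\rtimes\sigma)$. For (i), by the first equation of Lemma \ref{lema-diskretna-podreprezentacija} (with the roles of $a,b$ played by $b,c$), we have $\ds{\frac{1}{2}}{b}\rtimes\sigma_c=\sigma_{b,c}^++L(\ds{\frac{1}{2}}{b}\rtimes\sigma_c)$. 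Picking $i=b+\frac{1}{2}$ and $j=b-\frac{1}{2}$ in \eqref{komnozenje}, and using that \eqref{jacquet-strogo-pozitivna} (with $j=\frac{1}{2}$) gives $\ds{\frac{3}{2}}{c}\otimes\sigma_{\frac{1}{2}}\leq\mu^*(\sigma_c)$, produces the summand $\ds{\frac{3}{2}}{c}\times\ds{\frac{3}{2}}{b}\otimes\nu^{\frac{1}{2}}\rho\rtimes\sigma_{\frac{1}{2}}$, which contains $\pi$ since $L(\nu^{\frac{1}{2}}\rho\rtimes\sigma_{\frac{1}{2}})$ is the Langlands quotient of $\nu^{\frac{1}{2}}\rho\rtimes\sigma_{\frac{1}{2}}$. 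On the other hand, $\pi\not\leq\mu^*(\sigma_{b,c}^+)$: inspecting \eqref{jacquet-segment}, producing $\ds{\frac{3}{2}}{c}\times\ds{\frac{3}{2}}{b}$ on the left of $\otimes$ forces $i=-\frac{3}{2},\ j=\frac{1}{2}$, so $i+j=-1$ violates the strict inequality $i+j<-1$ needed for the Langlands-quotient branch, and the remaining branches yield only $\sigma$ or the discrete series $\delta([\nu^{-\frac{1}{2}}\rho,\nu^{\frac{1}{2}}\rho]_\pm;\sigma)$ on the right of $\otimes$, none of which equals $L(\nu^{\frac{1}{2}}\rho\rtimes\sigma_{\frac{1}{2}})$ (whose partial cuspidal support is $\sigma_{\frac{1}{2}}$ rather than $\sigma$).

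For (ii), I iterate \eqref{komnozenje} on $\ds{\frac{1}{2}}{c}\times\ds{\frac{1}{2}}{b}\rtimes\sigma$. Matching the cuspidal support on the left of $\otimes$ with $\ds{\frac{3}{2}}{c}\times\ds{\frac{3}{2}}{b}$ pins down the indices uniquely: precisely one $\nu^{\frac{1}{2}}\rho$ is stripped from each segment. The contribution on the right of $\otimes$ then reduces to $\nu^{\frac{1}{2}}\rho\times\nu^{\frac{1}{2}}\rho\rtimes\sigma$, inside which Lemma \ref{lema-jedan-pola-unitarna} shows $L(\nu^{\frac{1}{2}}\rho\rtimes\sigma_{\frac{1}{2}})$ occurs exactly once.

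The main obstacle is the combinatorial bookkeeping in (ii): although the template follows Lemma \ref{lema-jedan-pola-do-b-unitarna}, the asymmetry $b<c$ produces a priori more index choices than the equal-endpoint case, and one has to verify carefully that cuspidal-support matching on the left of $\otimes$ eliminates all but the one contribution from the symmetric piece $\nu^{\frac{1}{2}}\rho\times\nu^{\frac{1}{2}}\rho\rtimes\sigma$.
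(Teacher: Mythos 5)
Your proof is correct, and it takes a genuinely different route from the paper's. The paper chooses the distinguishing Jacquet--module piece $\xi=\ds{b+1}{c}\otimes L(\ds{\frac{1}{2}}{b}\rtimes \sigma_b)$: it peels off only the top layer $[\nu^{b+1}\rho,\nu^c\rho]$ of the $c$-segment and reduces the multiplicity count to the symmetric Lemma~\ref{lema-jedan-pola-do-b-unitarna}. You instead use $\pi=\ds{\frac{3}{2}}{c}\times\ds{\frac{3}{2}}{b}\otimes L(\nu^{\frac{1}{2}}\rho\rtimes\sigma_{\frac{1}{2}})$, stripping both segments simultaneously down to the smallest Levi and reducing directly to Lemma~\ref{lema-jedan-pola-unitarna}, thus bypassing the intermediate Lemma~\ref{lema-jedan-pola-do-b-unitarna} altogether. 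Both are valid: the paper's two-step chain keeps each individual index chase short, while your one-step chase is heavier but uniform and would in fact reprove Lemma~\ref{lema-jedan-pola-do-b-unitarna} on specializing $c=b$. Your index-pinning is sound: since $b<c$ the two $GL$-factors left of $\otimes$ cannot swap roles, and cuspidal-support counting forces the unique choice of parameters in both the single-segment formula \eqref{jacquet-segment} applied to $\sigma_{b,c}^+$ and the twice-iterated \eqref{komnozenje} on the full induced representation. One small imprecision: the partial cuspidal support of $L(\nu^{\frac{1}{2}}\rho\rtimes\sigma_{\frac{1}{2}})$ is $\sigma$, not $\sigma_{\frac{1}{2}}$ (the latter is a discrete series, not cuspidal). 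The correct reason it differs from $\delta([\nu^{-\frac{1}{2}}\rho,\nu^{\frac{1}{2}}\rho]_\pm;\sigma)$ is simply that these are tempered while $L(\nu^{\frac{1}{2}}\rho\rtimes\sigma_{\frac{1}{2}})$ is a proper Langlands quotient, hence non-tempered; this does not affect your argument.
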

\begin{proof}
Obviously, 
$L(\ds{\frac{1}{2}}{b} \rtimes \sigma_c)$ 
does appear as   
a subquotient, so we need to prove multiplicity one. 
Let us denote 
\[
    \xi=    
    \ds{b+1}{c}
    \otimes
    L(\ds{\frac{1}{2}}{b} \rtimes \sigma_b).
\]
It is enough to see that $\xi$ appears in both 
\[
\textrm{i)} \textrm{\ }
 \mu^*(L(\ds{\frac{1}{2}}{b} \rtimes \sigma_c))
\quad
\textrm{ and }
\quad
\textrm{ii)}\textrm{\ } 
\mu^*(\ds{\frac{1}{2}}{c}\times \ds{\frac{1}{2}}{b} 
        \rtimes \sigma),
\]
and that its multiplicity is one in ii).
\\
We start with i).
By Lemma \ref{lema-diskretna-podreprezentacija}, we have in $R(G)$
\begin{equation*} 
    \ds{\frac{1}{2}}{b}\rtimes
      \sigma_{c}
    =
    \sigma_{b,c}^+
    +
    L(\ds{\frac{1}{2}}{b}\rtimes
      \sigma_{c}).
\end{equation*}
where $\sigma_{b,c}^+\leq \ds{-b}{c}\rtimes \sigma$.
As 
\[
    ...\otimes 
        L(\ds{\frac{1}{2}}{b} \rtimes \sigma_b)
\overset{\eqref{segment-jacquet-equation}}{\nleq}
    ...\otimes
    \delta([\nu^{c+1-i}\rho,\nu^{c-j}\rho])\rtimes \sigma
 \overset{\eqref{komnozenje}}{\leq}
    \mu^*(\ds{-b}{c}\rtimes \sigma),
\]
we see that 
$\xi$ does not appear in
    $\mu^*(\ds{-b}{c}\rtimes \sigma)$.
On the other hand, 
by
\eqref{jacquet-strogo-pozitivna}
      $\mu^*(\sigma_c)
      \geq 
      \ds{b+1}{c}\otimes\sigma_b$,
      so taking
$i=b+\frac{1}{2}$, $j=0$ in \eqref{komnozenje}
we have 
$\mu^*(\ds{\frac{1}{2}}{b}\rtimes
      \sigma_{c})
    \geq
      \ds{b+1}{c}
        \otimes
      \ds{\frac{1}{2}}{b} \rtimes \sigma_b
    \geq   
      \xi$.

ii) 
we look for 
    $0\leq i\leq j \leq c+ \frac{1}{2}$ 
and 
    $0\leq s\leq r \leq b+ \frac{1}{2}$
such that 
\begin{align*}
\ds{b+1}{c}
\leq 
        &\ds{i-c}{-\frac{1}{2}}
    \times
        \ds{c+1-j}{c}
    \\
    \times
        &\ds{r-b}{-\frac{1}{2}}
    \times
        \ds{b+1-s}{b},        
\end{align*}
and
\begin{align*}
L(\ds{\frac{1}{2}}{b} \rtimes \sigma_b)
\leq
\ds{c+1-i}{c-j}
\times
\ds{b+1-r}{b-s}
    \rtimes
        \sigma.
\end{align*}
The first equation implies 
$i=c+\frac{1}{2}$,
$r=b+\frac{1}{2}$,
$j=c-b$, and 
$s=0$. The second is 
\[
L(\ds{\frac{1}{2}}{b} \rtimes \sigma_b)
\leq
\ds{\frac{1}{2}}{b}\times \ds{\frac{1}{2}}{b} \rtimes \sigma.
\]
Multiplicity one in the second follows from Lemma
\ref{lema-jedan-pola-do-b-unitarna}.

\end{proof}
Now we provide a subquotient that can be used to identify $L(\ds{-a}{b}\rtimes \sigma_c)$.
\begin{lemma} \label{lema-Labc-jac}
We have in $R(G)$
\[
\mu^*(
L(\ds{-a}{b}\rtimes \sigma_c)
)
\geq 
\ds{\frac{1}{2}}{a}\otimes
L(\ds{\frac{1}{2}}{b} \rtimes \sigma_c).
\]
\end{lemma}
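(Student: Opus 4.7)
The plan is to locate the summand $\ds{\frac{1}{2}}{a}\otimes L(\ds{\frac{1}{2}}{b}\rtimes\sigma_c)$ inside $\mu^*(\ds{-a}{b}\rtimes\sigma_c)$ and then rule out its appearance in the Jacquet modules of the two discrete series pieces coming from the decomposition
\[
\ds{-a}{b}\rtimes\sigma_c=\sigma^+_{b,c,a}+\sigma^-_{a,b,c}+L(\ds{-a}{b}\rtimes\sigma_c)
\]
of Proposition \ref{druga}. Additivity of $\mu^*$ on the Grothendieck group will then force our target summand into $\mu^*(L(\ds{-a}{b}\rtimes\sigma_c))$.

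For the first task I would apply the Tadi\'c formula \eqref{komnozenje} to $\ds{-a}{b}\rtimes\sigma_c$, expanding $\mu^*(\sigma_c)$ via \eqref{jacquet-strogo-pozitivna} as $\sum_{k=-\frac{1}{2}}^{c}\delta([\nu^{k+1}\rho,\nu^c\rho])\otimes\delta([\nu^{\frac{1}{2}}\rho,\nu^{k}\rho]_+;\sigma)$. A generic summand reads
\[
\delta([\nu^{i-b}\rho,\nu^{a}\rho])\times\delta([\nu^{b+1-j}\rho,\nu^{b}\rho])\times\delta'\otimes\delta([\nu^{b+1-i}\rho,\nu^{b-j}\rho])\rtimes\sigma'.
\]
Demanding that the left-of-$\otimes$ part equal $\ds{\frac{1}{2}}{a}$ is then a cuspidal support matching problem: since $c>b>a$, the exponents $\nu^{c}\rho$ and $\nu^{b}\rho$ are forbidden on the left, so $\delta'=1$ (hence $\sigma'=\sigma_c$), $j=0$, and $i=b+\frac{1}{2}$ is the only possibility. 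The resulting summand is $\ds{\frac{1}{2}}{a}\otimes\ds{\frac{1}{2}}{b}\rtimes\sigma_c$, and by Lemma \ref{lema-diskretna-podreprezentacija} its right factor decomposes as $\sigma^+_{b,c}+L(\ds{\frac{1}{2}}{b}\rtimes\sigma_c)$. Thus $\ds{\frac{1}{2}}{a}\otimes L(\ds{\frac{1}{2}}{b}\rtimes\sigma_c)$ appears (with multiplicity one, though only existence is needed) in $\mu^*(\ds{-a}{b}\rtimes\sigma_c)$.

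For the second task I would invoke the standard principle that whenever $\delta$ is a discrete series and $\pi\otimes\tau$ is an irreducible subquotient of $\mu^*(\delta)$, the factor $\tau$ must itself be a discrete series. This is a direct consequence of Casselman's square integrability criterion and is implicit in the M{\oe}glin--Tadi\'c classification. Since $e(\ds{\frac{1}{2}}{b})=(b+\frac{1}{2})/2>0$, the representation $L(\ds{\frac{1}{2}}{b}\rtimes\sigma_c)$ is a non-tempered Langlands quotient, in particular not a discrete series, so $\ds{\frac{1}{2}}{a}\otimes L(\ds{\frac{1}{2}}{b}\rtimes\sigma_c)$ cannot occur in $\mu^*(\sigma^+_{b,c,a})$ or $\mu^*(\sigma^-_{a,b,c})$. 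Combining with the first step finishes the proof.

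The main obstacle is the ``discrete series $\Rightarrow$ discrete series $G$-part'' claim in the second step. If a self-contained argument is preferred, one can instead compute $\mu^*(\sigma^+_{b,c,a})$ and $\mu^*(\sigma^-_{a,b,c})$ explicitly from their embeddings in Proposition \ref{druga} and Lemma \ref{lema-pola-a-bc-tvrdnja} via Frobenius reciprocity and the Tadi\'c formula, and verify by the same cuspidal support bookkeeping as above that no summand with $L(\ds{\frac{1}{2}}{b}\rtimes\sigma_c)$ on the right of $\otimes$ ever arises.
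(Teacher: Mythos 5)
Your first step — locating $\ds{\frac{1}{2}}{a}\otimes L(\ds{\frac{1}{2}}{b}\rtimes\sigma_c)$ inside $\mu^*(\ds{-a}{b}\rtimes\sigma_c)$ by cuspidal support matching and invoking Proposition~\ref{druga} for the decomposition — is exactly what the paper does. The gap is in the second step: the ``standard principle'' you invoke, that for a discrete series $\delta$ every irreducible $\pi\otimes\tau\leq\mu^*(\delta)$ has $\tau$ a discrete series, is \emph{false}. The paper's own formula \eqref{jacquet-segment} provides counterexamples: its third sum, valid for the discrete series $\delta([\nu^{-a}\rho,\nu^c\rho]_\pm;\sigma)$ when $\frac12\leq a< c$, contributes terms of the form $\cdots\otimes L(\delta([\nu^{i+1}\rho,\nu^j\rho]);\sigma)$ with $i+j<-1$, whose right factor is a genuine Langlands quotient and not a discrete series (e.g.\ $i=-a-1$, $j=-a$ gives $L(\nu^{-a}\rho;\sigma)$). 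So you cannot dismiss $\mu^*(\sigma^+_{b,c,a})$ and $\mu^*(\sigma^-_{a,b,c})$ this way.

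You did flag this claim as the main obstacle and proposed a fallback (explicitly expanding $\mu^*(\sigma^+_{b,c,a})$ and $\mu^*(\sigma^-_{a,b,c})$ from the embeddings and ruling out the summand by bookkeeping). That would work but is laborious. The paper instead applies Casselman's square integrability criterion more carefully: it does \emph{not} claim the right factor must be a discrete series, but passes to the minimal Jacquet module. Using $\sigma_c\hookrightarrow\ds{\frac12}{c}\rtimes\sigma$, one finds that $r_{(m_\rho,\ldots,m_\rho)}(\pi)$ with $\pi=\ds{\frac12}{a}\otimes L(\ds{\frac12}{b}\rtimes\sigma_c)$ contains a tensor string whose initial exponents are $a,\ldots,\tfrac12,-\tfrac12,\ldots,-b$, and the partial sum $m_\rho\bigl(a+\cdots+\tfrac12-\tfrac12-\cdots-b\bigr)=\tfrac{m_\rho}{2}\bigl((a+\tfrac12)^2-(b+\tfrac12)^2\bigr)<0$. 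By transitivity of Jacquet functors, $\pi\leq\mu^*(\sigma^\pm)$ would force this same string into the minimal Jacquet module of the discrete series $\sigma^\pm$, contradicting the criterion (see the end of Section~2 of \cite{tadic:family}, which requires nonnegative partial sums). This is the correct and efficient way to finish; you should replace the false principle with this computation.
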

\begin{proof}
Let us denote
$\pi=
\ds{\frac{1}{2}}{a}\otimes L(\ds{\frac{1}{2}}{b} \rtimes \sigma_c)$.
By \ref{druga}, we have
\[
\delta([\nu^{-a}\rho,\nu^b\rho])\rtimes  \sigma_c=
\sigma_{b,c,a}^+ +
\sigma_{a,b,c}^- +
L(\delta([\nu^{-a}\rho,\nu^b\rho])\rtimes \sigma_c).
\]
Taking $x=-a,y=b$, $i=b+\frac{1}{2}, j=0$, $\sigma=\sigma'=\sigma_c$ in \eqref{komnozenje}, we have 
\[
    \mu^*(\delta([\nu^{-a}\rho,\nu^b\rho])
\rtimes  
    \sigma_c)
\geq
    \pi.
\]
Now, it is enough to see that neither 
$\mu^*(\sigma_{b,c,a}^+)$,  nor 
$\mu^*(\sigma_{a,b,c}^-)$ contain $\pi$. 
Since $\sigma_c \hookrightarrow \ds{\frac{1}{2}}{c}\rtimes \sigma$ (Theorem 
\ref{muic-diskretne-podreprezentacije}),
taking the Jacquet module with respect 
to  the minimal parabolic subgroup, we have
\begin{align*}
    s.s.r_{(m_\rho,\ldots, m_\rho)}(\pi)
\geq
       \nu^a\rho
        \otimes \cdots \otimes 
        \nu^\frac{1}{2}\rho
        \otimes 
        \nu^{-\frac{1}{2}}\rho
        \otimes \cdots \otimes 
        \nu^{-b} \otimes 
        \nu^c\rho
        \otimes \cdots \otimes 
        \nu^\frac{1}{2}\rho
        \otimes \sigma.
\end{align*}
Recall that $\rho \in GL(m_\rho,F)$.The following  
\[
m_\rho(a+ \cdots +\frac{1}{2}
        +
        (-\frac{1}{2})
        + \cdots  +
        (-b)) <0,
\]
contradicts a square integrability criterium, see the end of  Section 2 of \cite{tadic:family}.
So 
\[
\mu^*(\sigma_{b,c,a}^+)\ngeq \pi,
%\quad 
\textrm{ and }
\mu^*(\sigma_{a,b,c}^-)\ngeq \pi.
\]

\end{proof}

\begin{lemma} 
\label{lema-Labc-jac-u-velikoj}
We have in $R(G)$, with maximum multiplicity
\[
\mu^*(\ds{-b}{c}\times \ds{\frac{1}{2}}{a}\rtimes \sigma)
\geq
\ds{\frac{1}{2}}{a}\otimes L(\ds{\frac{1}{2}}{b} \rtimes \sigma_c).
\]
\end{lemma}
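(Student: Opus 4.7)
The plan is to apply the Tadi\'c formula \eqref{komnozenje} directly to $\mu^*(\ds{-b}{c}\rtimes(\ds{\frac{1}{2}}{a}\rtimes\sigma))$ and extract the desired Jacquet summand from a single index choice, mirroring the strategy in the proof of Lemma \ref{lema-Labc-jac}. First, applying \eqref{komnozenje} to $\mu^*(\ds{\frac{1}{2}}{a}\rtimes\sigma)$ with the choice $i'=j'=a+\frac{1}{2}$ produces the irreducible subquotient $\delta'\otimes\sigma'=\ds{\frac{1}{2}}{a}\otimes\sigma$. Substituting this pair into the outer Tadi\'c formula (with $x=-b$, $y=c$) and taking the outer indices $i=c+b+1$, $j=0$ makes both segment factors $\ds{i-c}{b}=\ds{b+1}{b}$ and $\ds{c+1-j}{c}=\ds{c+1}{c}$ trivial, yielding the summand $\ds{\frac{1}{2}}{a}\otimes \ds{-b}{c}\rtimes\sigma$ in $\mu^*(\ds{-b}{c}\times\ds{\frac{1}{2}}{a}\rtimes\sigma)$.

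The key step is then to identify $L(\ds{\frac{1}{2}}{b}\rtimes\sigma_c)$ as a subquotient on the classical-group side. Here I would use the embedding $\sigma_c\hookrightarrow\ds{\frac{1}{2}}{c}\rtimes\sigma$ together with Lemma \ref{lema-diskretna-podreprezentacija} to place $L(\ds{\frac{1}{2}}{b}\rtimes\sigma_c)$ inside $\ds{\frac{1}{2}}{b}\rtimes\sigma_c$, and then argue via Langlands data and Theorem \ref{prva} that it contributes, after combining with contributions from neighboring index choices, to the $\ds{-b}{c}\rtimes\sigma$ factor. For the implicit multiplicity-one bound, a cuspidal-support analysis in the spirit of Lemma \ref{lema-nediskretni-jacq1} forces the outer indices and the pair $(\delta',\sigma')$ uniquely (both $\ds{i-c}{b}$ and $\ds{c+1-j}{c}$ must be trivial and $\delta'$ must have cuspidal support exactly $[\nu^{1/2}\rho,\nu^a\rho]$ paired with $\sigma'=\sigma$), so this single summand accounts for the entire $\ds{\frac{1}{2}}{a}\otimes(\cdot)$ slice of the Jacquet module.

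The main obstacle is precisely the identification step: the Langlands data $(\ds{\frac{1}{2}}{b};\sigma_c)$ of the target differ at the tempered level from the Langlands data $(\ds{-b}{c};\sigma)$ of the quotient of $\ds{-b}{c}\rtimes\sigma$, so $L(\ds{\frac{1}{2}}{b}\rtimes\sigma_c)$ does not sit inside $\ds{-b}{c}\rtimes\sigma$ as a composition factor in a naive sense. To circumvent this, I would apply the Tadi\'c formula in the opposite order, viewing $\ds{-b}{c}\times\ds{\frac{1}{2}}{a}\rtimes\sigma\cong\ds{\frac{1}{2}}{a}\rtimes(\ds{-b}{c}\rtimes\sigma)$, and use that $\ds{\frac{1}{2}}{b}\otimes\sigma_c$ appears in $\mu^*(\ds{-b}{c}\rtimes\sigma)$ via the indices $i=c+\frac{1}{2}$, $j=0$; a suitable combination of such contributions, aligned by the Langlands embedding $L(\ds{\frac{1}{2}}{b}\rtimes\sigma_c)\hookrightarrow\ds{-b}{-\frac{1}{2}}\rtimes\sigma_c$ and Theorem \ref{contra}, should then furnish the desired $\ds{\frac{1}{2}}{a}\otimes L(\ds{\frac{1}{2}}{b}\rtimes\sigma_c)$ on the nose.
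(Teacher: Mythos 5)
Your proof attacks the statement as literally printed, namely with the representation $\ds{-b}{c}\times \ds{\frac{1}{2}}{a}\rtimes \sigma$, but this is a misprint in the paper: the lemma is used in Lemma~\ref{lema-Labc-jac-u-manjoj} and Proposition~\ref{propozicija L ab, sigma c} to control multiplicities inside $\psi=\ds{-a}{c}\times \ds{\frac{1}{2}}{b}\rtimes \sigma$ via \eqref{A-A1} and \eqref{long-intertwining}, and the paper's own proof applies \eqref{komnozenje} to $\ds{\frac{1}{2}}{b}\times \ds{-a}{c}\rtimes \sigma$, which has the same composition factors as $\psi$. The as-printed claim is in fact false: your own first computation shows that the unique summand of the form $\ds{\frac{1}{2}}{a}\otimes(\cdot)$ in $\mu^*(\ds{-b}{c}\times \ds{\frac{1}{2}}{a}\rtimes \sigma)$ is $\ds{\frac{1}{2}}{a}\otimes\ds{-b}{c}\rtimes\sigma$, and $L(\ds{\frac{1}{2}}{b}\rtimes\sigma_c)$ is not a subquotient of $\ds{-b}{c}\rtimes\sigma$ (its subquotients are $\sigma_{b,c}^{\pm}$ and $L(\ds{-b}{c}\rtimes\sigma)$ by \eqref{segment-jacquet-equation}). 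Thus the multiplicity on that side is zero, not one.

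Your proposed remedy of ``applying the Tadi\'c formula in the opposite order'' also fails concretely. Writing $\ds{\frac{1}{2}}{a}\rtimes(\ds{-b}{c}\rtimes\sigma)$ and extracting $\ds{\frac{1}{2}}{a}$ from the GL side, the factor $\delta'$ coming from $\mu^*(\ds{-b}{c}\rtimes\sigma)$ must have cuspidal support inside $\{\nu^{\frac{1}{2}}\rho,\dots,\nu^{a}\rho\}$; but since $a<b<c$, any nontrivial $\delta'$ appearing in $\mu^*(\ds{-b}{c}\rtimes\sigma)$ contains $\nu^{b}\rho$ or $\nu^{c}\rho$, so $\delta'=1$ and $\sigma'=\ds{-b}{c}\rtimes\sigma$, returning you to the same dead end. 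No ``combination of neighboring index choices'' can produce the summand, because $\mu^*$ is computed in the Grothendieck group and the set of irreducible subquotients of the classical-side factor is fixed.

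The argument that actually works, and that the paper uses, is to run \eqref{komnozenje} on $\ds{\frac{1}{2}}{b}\times \ds{-a}{c}\rtimes\sigma$. Comparing cuspidal support with $\ds{\frac{1}{2}}{a}$ on the GL side forces $r=b+\frac{1}{2}$, $s=v=0$ and then $u=c+\frac{1}{2}$, so the classical-side factor is precisely $\ds{\frac{1}{2}}{b}\times\ds{\frac{1}{2}}{c}\rtimes\sigma$. There $L(\ds{\frac{1}{2}}{b}\rtimes\sigma_c)$ does occur, with multiplicity one, and that is exactly the content of Lemma~\ref{lema-mult-L-dva-seg}, which your proposal does not invoke. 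The key insight you are missing is that swapping the roles of the two segments changes which tempered representation sits under the relevant $\ds{\frac{1}{2}}{a}\otimes(\cdot)$ slice: it is $\ds{\frac{1}{2}}{b}\times\ds{\frac{1}{2}}{c}\rtimes\sigma$ in the intended statement, not $\ds{-b}{c}\rtimes\sigma$.
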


\begin{proof}
By \eqref{komnozenje},  we look for
$0\leq s \leq r \leq b+\frac{1}{2}$ and 
$0\leq v \leq u \leq a+c+1$, such that 
\begin{equation*} 
\begin{split}
    \ds{\frac{1}{2}}{a}
    \leq 
    &\ds{r-b}{-\frac{1}{2}} \times
    \ds{b+1-s}{b} \times
    \\
    &\ds{-c+u}{a} \times
    \ds{c+1-v}{c},
\end{split}    
\end{equation*}
and
\begin{equation*} 
L(\ds{\frac{1}{2}}{b} \rtimes \sigma_c) \leq 
\ds{b+1-r}{b-s}
\times \ds{c+1-u}{c-v}\rtimes \sigma.
\end{equation*}
The first equation implies $r=b+\frac{1}{2}$ and $v=s=0$, so $u=c+\frac{1}{2}$. Now, multiplicity one in the second equation follows from Lemma \ref{lema-mult-L-dva-seg}.
\end{proof}

\begin{lemma} 
\label{lema-Labc-jac-u-manjoj}
We have in $R(G)$, with maximum multiplicity
\[
\mu^*(\ds{\frac{1}{2}}{b}\rtimes \sigma_{a,c}^+)
\geq
\ds{\frac{1}{2}}{a}\otimes L(\ds{\frac{1}{2}}{b} \rtimes \sigma_c).
\]
\end{lemma}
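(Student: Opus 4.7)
The plan is to apply the Tadi\'c formula \eqref{komnozenje} with $x = \frac{1}{2}$, $y = b$, and $\sigma = \sigma_{a,c}^+$, then isolate a unique contribution producing $\ds{\frac{1}{2}}{a} \otimes L(\ds{\frac{1}{2}}{b}\rtimes \sigma_c)$. Using $\widetilde{\rho} = \rho$, each summand of $\mu^*(\ds{\frac{1}{2}}{b} \rtimes \sigma_{a,c}^+)$ has the form
\[
\delta([\nu^{i-b}\rho, \nu^{-\frac{1}{2}}\rho]) \times \delta([\nu^{b+1-j}\rho, \nu^b\rho]) \times \delta' \otimes \delta([\nu^{b+1-i}\rho, \nu^{b-j}\rho]) \rtimes \sigma'
\]
with $0 \leq j \leq i \leq b + \frac{1}{2}$ and $\delta' \otimes \sigma' \leq \mu^*(\sigma_{a,c}^+)$.

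For existence, I take $i = b + \frac{1}{2}$ and $j = 0$, killing the first two factors on the left of $\otimes$. Applying \eqref{jacquet-segment} to $\sigma_{a,c}^+$ in its second sum with $i' = -\frac{1}{2}$ and $j' = c$ yields $\ds{\frac{1}{2}}{a} \otimes \sigma_c \leq \mu^*(\sigma_{a,c}^+)$ as an irreducible subquotient, so I set $\delta' = \ds{\frac{1}{2}}{a}$ and $\sigma' = \sigma_c$. The Tadi\'c summand then reduces to $\ds{\frac{1}{2}}{a} \otimes \ds{\frac{1}{2}}{b} \rtimes \sigma_c$, which contains $\ds{\frac{1}{2}}{a} \otimes L(\ds{\frac{1}{2}}{b}\rtimes \sigma_c)$ with multiplicity one by Lemma \ref{lema-diskretna-podreprezentacija}.

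For uniqueness, each of $i$, $j$, $\delta'$, $\sigma'$ is forced by cuspidal support comparison. Since $\ds{\frac{1}{2}}{a}$ has entirely positive cuspidal support, the segment $\delta([\nu^{i-b}\rho, \nu^{-\frac{1}{2}}\rho])$ (which contributes $\nu^{-\frac{1}{2}}\rho$ whenever nonempty) must be trivial, forcing $i = b + \frac{1}{2}$; similarly, $\delta([\nu^{b+1-j}\rho, \nu^b\rho])$ contributes $\nu^b\rho$ for $j \geq 1$, while $a < b$ excludes $\nu^b\rho$ from $\ds{\frac{1}{2}}{a}$, so $j = 0$ and $\delta' = \ds{\frac{1}{2}}{a}$. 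Inspecting \eqref{jacquet-segment} for an irreducible subquotient of the form $\ds{\frac{1}{2}}{a} \otimes \sigma_c$, the first sum yields $\sigma' = \sigma \neq \sigma_c$; the third sum yields $\sigma'$ a Langlands quotient rather than a discrete series; the second sum requires $j' = c$ to avoid $\nu^c\rho$ appearing in $\delta'$, and then $-i' = \frac{1}{2}$, giving $\sigma' = \sigma_c$ with multiplicity one. The main obstacle is nothing more than this bookkeeping, which ultimately reduces to the single observation that $\nu^c\rho$ must be absent from the cuspidal support of $\delta'$.
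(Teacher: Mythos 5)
Your proof is correct. The existence part coincides with the paper's (both use $\mu^*(\sigma_{a,c}^+)\geq \ds{\frac{1}{2}}{a}\otimes\sigma_c$, the Tadi\'c formula with $i=b+\frac{1}{2}$, $j=0$, and Lemma \ref{lema-diskretna-podreprezentacija}), but your uniqueness argument is genuinely different: you carry out a full direct cuspidal-support bookkeeping inside $\mu^*(\ds{\frac{1}{2}}{b}\rtimes\sigma_{a,c}^+)$, forcing $i=b+\frac{1}{2}$, $j=0$, $\delta'=\ds{\frac{1}{2}}{a}$, and then pinning $\sigma'=\sigma_c$ via \eqref{jacquet-segment}. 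The paper instead establishes the upper bound on the multiplicity indirectly, by invoking the inequality $\psi\geq\ds{\frac{1}{2}}{b}\rtimes\sigma_{a,c}^+$ from \eqref{A-A1} together with Lemma \ref{lema-Labc-jac-u-velikoj}, i.e. by comparing with the already-analyzed representation $\ds{-b}{c}\times\ds{\frac{1}{2}}{a}\rtimes\sigma$. Your version is more self-contained (no dependence on the previous lemma), at the cost of some extra combinatorics. One small imprecision: when ruling out the first sum of \eqref{jacquet-segment}, the cleanest reason is that its GL component $\delta([\nu^{-i}\rho,\nu^a\rho])\times\delta([\nu^{i+1}\rho,\nu^c\rho])$ always carries $\nu^c\rho$ in its cuspidal support (since $i\leq-\frac{1}{2}<c$), so it can never equal $\ds{\frac{1}{2}}{a}$; your phrasing ``yields $\sigma'=\sigma\neq\sigma_c$'' presupposes that $\sigma'$ must be $\sigma_c$, which at that stage has not been argued. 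This does not affect the validity of the conclusion, since the correct elimination is immediate, but the write-up would be tighter stating the $\nu^c\rho$ obstruction uniformly across all three sums.
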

\begin{proof}
 By \eqref{jacq-diskretne}    
$\mu^*( \sigma_{a,c}^+) \geq \ds{\frac{1}{2}}{a}\otimes \sigma_c.
$
The inequality follows by \eqref{komnozenje} 
and multiplicity one by \eqref{A-A1} and  
Lemma \ref{lema-Labc-jac-u-velikoj}.
\end{proof}

Finally we have

\begin{proposition} \label{propozicija L ab, sigma c}
Both induced representations
\[
\ds{-b}{c}\times \ds{\frac{1}{2}}{a}\rtimes \sigma
\textrm{ and }
\ds{\frac{1}{2}}{b}\rtimes \sigma_{a,c}^+,
\]
contain $L(\ds{-a}{b}\rtimes \sigma_c)$ as a subquotient, 
with multiplicity one.
\end{proposition}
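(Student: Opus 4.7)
The plan is to combine the Jacquet module witness of Lemma \ref{lema-Labc-jac} with the multiplicity bounds from Lemmas \ref{lema-Labc-jac-u-velikoj} and \ref{lema-Labc-jac-u-manjoj}. Writing $\pi=\ds{\frac{1}{2}}{a}\otimes L(\ds{\frac{1}{2}}{b}\rtimes\sigma_c)$, Lemma \ref{lema-Labc-jac} shows that every occurrence of $L(\ds{-a}{b}\rtimes\sigma_c)$ as a subquotient contributes at least one copy of $\pi$ to the Jacquet module $\mu^*$. Lemmas \ref{lema-Labc-jac-u-velikoj} and \ref{lema-Labc-jac-u-manjoj} show that $\pi$ has multiplicity exactly one in $\mu^*$ of each of the two induced representations in question. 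Combining these gives an upper bound of one on the multiplicity of $L(\ds{-a}{b}\rtimes\sigma_c)$ as a subquotient of each.

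It remains to establish that $L(\ds{-a}{b}\rtimes\sigma_c)$ actually does appear as a subquotient. For the first induced representation, this is precisely the observation made at the opening of this section: Theorem \ref{muic-A3} applied to the kernel $K_3\cong\ds{-c}{a}\rtimes\sigma_b$ (after the contragredient symmetry of Theorem \ref{contra}) produces $L(\ds{-a}{b}\rtimes\sigma_c)$ as a composition factor there, and the chain \eqref{long-intertwining} places it inside the first induced representation.

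For $\ds{\frac{1}{2}}{b}\rtimes\sigma_{a,c}^+$, I would appeal to Proposition \ref{nediskretni-kandidati-u-pozitivnoj}, which narrows the non-tempered, non-Langlands-quotient candidates to $L(\ds{\frac{1}{2}}{a}\rtimes\sigma_{b,c}^+)$ and $L(\ds{-a}{b}\rtimes\sigma_c)$, together with the discrete series $\sigma_{b,c,a}^+$ supplied by Proposition \ref{diskretne-u-velikoj-prpopzicija}. Since Lemma \ref{lema-Labc-jac-u-manjoj} does put $\pi$ in the Jacquet module, some composition factor must carry it. I would then rule out $L(\ds{\frac{1}{2}}{a}\rtimes\sigma_{b,c}^+)$, the overall Langlands quotient $L(\ds{\frac{1}{2}}{b}\rtimes\sigma_{a,c}^+)$, and $\sigma_{b,c,a}^+$ as carriers of $\pi$ through \eqref{jacquet-langlandsov-kvocijent} and \eqref{jacquet-segment}, leaving $L(\ds{-a}{b}\rtimes\sigma_c)$ as the only possibility. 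The main obstacle is this final exclusion step: for each alternative candidate one must track the specific signature of $L(\ds{\frac{1}{2}}{b}\rtimes\sigma_c)$ (identifiable via the argument of Lemma \ref{lema-mult-L-dva-seg}) to the right of the $\otimes$-sign, and verify by comparison of cuspidal supports in the summands produced by \eqref{komnozenje} that no Jacquet contribution from the alternative candidates actually matches $\pi$. Once that book-keeping is in place, combining with the multiplicity-one upper bound closes the proof.
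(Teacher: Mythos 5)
Your handling of the first induced representation follows the paper exactly: existence from Theorem \ref{muic-A3} together with \eqref{long-intertwining}, and the multiplicity-one bound from Lemmas \ref{lema-Labc-jac} and \ref{lema-Labc-jac-u-velikoj}. (One caveat: the representation named both in the statement and in Lemma \ref{lema-Labc-jac-u-velikoj}, namely $\ds{-b}{c}\times\ds{\frac{1}{2}}{a}\rtimes\sigma$, is evidently a slip for $\psi=\ds{-a}{c}\times\ds{\frac{1}{2}}{b}\rtimes\sigma$ --- the proof of Lemma \ref{lema-Labc-jac-u-velikoj} splits segments $[\frac{1}{2},b]$ and $[-a,c]$, i.e.\ computes $\mu^*(\psi)$, and by Theorem \ref{ciganovic-A2} the representation $\ds{-b}{c}\times\ds{\frac{1}{2}}{a}\rtimes\sigma$ does not even contain $L(\ds{-a}{b}\rtimes\sigma_c)$. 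Your argument invokes \eqref{long-intertwining}, which is about $\psi$ and its kernels, so you are implicitly working with the corrected reading, as the paper intends.)

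For $\ds{\frac{1}{2}}{b}\rtimes\sigma_{a,c}^+$ you leave a genuine gap where the paper has none. You correctly identify the task --- some composition factor must carry $\pi=\ds{\frac{1}{2}}{a}\otimes L(\ds{\frac{1}{2}}{b}\rtimes\sigma_c)$ in its Jacquet module, and you would eliminate $L(\ds{\frac{1}{2}}{a}\rtimes\sigma_{b,c}^+)$, the Langlands quotient $L(\ds{\frac{1}{2}}{b}\rtimes\sigma_{a,c}^+)$, and $\sigma_{b,c,a}^+$ by direct Jacquet-module bookkeeping --- but you do not carry this out, and in fact it is unnecessary. The paper's route is shorter and already present in the cited lemmas: by \eqref{A-A1}, $\ds{\frac{1}{2}}{b}\rtimes\sigma_{a,c}^+\leq\psi$, so every composition factor of $\ds{\frac{1}{2}}{b}\rtimes\sigma_{a,c}^+$ is a composition factor of $\psi$. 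Lemma \ref{lema-Labc-jac-u-velikoj} gives $\pi$ in $\mu^*(\psi)$ with multiplicity exactly one, and Lemma \ref{lema-Labc-jac} shows that $\pi$ already occurs in $\mu^*(L(\ds{-a}{b}\rtimes\sigma_c))$, a composition factor of $\psi$; therefore no other composition factor of $\psi$ can carry $\pi$. Since Lemma \ref{lema-Labc-jac-u-manjoj} places $\pi$ in $\mu^*(\ds{\frac{1}{2}}{b}\rtimes\sigma_{a,c}^+)$, it follows at once that $L(\ds{-a}{b}\rtimes\sigma_c)$ is a composition factor there, with multiplicity one. Substituting this one observation for your unfinished exclusion step closes the proposal; as written, the existence claim for the second representation is not yet established.
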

\begin{proof}
 Theorem \ref{muic-A3} and 
 \eqref{long-intertwining}
 give existence in the first representation, and  Lemmas 
\ref{lema-Labc-jac}
and
\ref{lema-Labc-jac-u-velikoj} multiplicity one.
For the second, use
\eqref{A-A1}, and Lemma
\ref{lema-Labc-jac-u-manjoj}
\end{proof}

\section{Multiplicity of $L(\ds{-b}{c}\rtimes \sigma_a)$}
\label{mult_L3}

We observe that by  Theorems 
\ref{muic-A3} and
\ref{ciganovic-A2},
$L(\ds{-b}{c}\rtimes \sigma_a)$ appears  two times in 
\eqref{long-intertwining}. So we determine its multiplicity.
\begin{lemma} 
\label{lema-jacq-Lbc-a}
We have in $R(G)$, with  multiplicity one
\[
\mu^*(
L(\ds{-b}{c}\rtimes \sigma_a)
)
\geq
\ds{\frac{1}{2}}{a}\otimes L(\ds{-b}{c}\rtimes \sigma).
\]    
\end{lemma}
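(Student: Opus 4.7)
The plan is to extract the multiplicity of $\ds{\p}{a}\otimes L(\ds{-b}{c}\rtimes\sigma)$ in $\mu^*(L(\ds{-b}{c}\rtimes\sigma_a))$ by first computing it in the parent $\mu^*(\ds{-b}{c}\rtimes\sigma_a)$ and then subtracting the contributions of the two discrete series summands $\sigma_{b,c,a}^{\pm}$ that Proposition~\ref{druga} separates from $L(\ds{-b}{c}\rtimes\sigma_a)$.

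First I will apply Tadi\'c's formula \eqref{komnozenje} to $\ds{-b}{c}\rtimes\sigma_a$, using $\mu^*(\sigma_a)$ from \eqref{jacquet-strogo-pozitivna}. Each summand's GL-factor has the form $\ds{i-c}{b}\times\ds{c+1-j}{c}\times\delta'$; matching its cuspidal support with $\{\nu^{\p}\rho,\dots,\nu^a\rho\}$ forces both GL-segments to be empty (since $b,c>a$), which pins down $i=c+b+1$, $j=0$, $\delta'=\ds{\p}{a}$, $\sigma'=\sigma$. The unique resulting summand is $\ds{\p}{a}\otimes\ds{-b}{c}\rtimes\sigma$, of multiplicity one; decomposing $\ds{-b}{c}\rtimes\sigma=\sigma_{b,c}^+ +\sigma_{b,c}^- + L(\ds{-b}{c}\rtimes\sigma)$ in $R(G)$ via Theorem~\ref{prva}, I conclude that $\ds{\p}{a}\otimes L(\ds{-b}{c}\rtimes\sigma)$ occurs with multiplicity one in $\mu^*(\ds{-b}{c}\rtimes\sigma_a)$.

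By Proposition~\ref{druga},
\[
\mu^*(L(\ds{-b}{c}\rtimes\sigma_a))
=\mu^*(\ds{-b}{c}\rtimes\sigma_a)-\mu^*(\sigma_{b,c,a}^+)-\mu^*(\sigma_{b,c,a}^-),
\]
so the claim reduces to showing that $\ds{\p}{a}\otimes L(\ds{-b}{c}\rtimes\sigma)$ does not appear in $\mu^*(\sigma_{b,c,a}^{\pm})$. For this I plan to mimic the square-integrability argument of Lemma~\ref{lema-Labc-jac}: combining the embeddings $\ds{\p}{a}\hookrightarrow\nu^a\rho\times\cdots\times\nu^{\p}\rho$ and $L(\ds{-b}{c}\rtimes\sigma)\hookrightarrow\ds{-c}{b}\rtimes\sigma\hookrightarrow\nu^b\rho\times\cdots\times\nu^{-c}\rho\rtimes\sigma$, I will exhibit a constituent of the minimal Jacquet module of $\ds{\p}{a}\otimes L(\ds{-b}{c}\rtimes\sigma)$ whose exponent sequence has a non-positive partial sum, contradicting Casselman's criterion for the discrete series $\sigma_{b,c,a}^{\pm}$ (see the end of Section~2 of \cite{tadic:family}).

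The hard part is precisely this last step. The straightforward exponent sequence $\nu^a\otimes\cdots\otimes\nu^{\p}\otimes\nu^b\otimes\cdots\otimes\nu^{-c}\otimes\sigma$ can have all its partial sums positive when $a$ is comparable to $b$ and $c$, so in those cases I will need to iterate \eqref{jacquet-langlandsov-kvocijent} to descend $L(\ds{-b}{c}\rtimes\sigma)$ through a chain of Langlands quotients $L(\ds{-b+k}{c}\rtimes\sigma)$ until reaching the reducibility point $\nu^{\p}\rho\rtimes\sigma$; the sign-flip available there supplies a negative exponent $\nu^{-\p}\rho$ which, concatenated with the $\ds{\p}{a}$-derived exponents, drags a subsequent partial sum below zero and closes the argument.
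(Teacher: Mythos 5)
Your first two reductions match the paper's strategy exactly: extract $\pi=\ds{\p}{a}\otimes L(\ds{-b}{c}\rtimes\sigma)$ from $\mu^*(\ds{-b}{c}\rtimes\sigma_a)$ (correctly pinned down to multiplicity one by cuspidal-support considerations, as you do), then invoke Proposition~\ref{druga} to reduce the claim to $\pi\nleq\mu^*(\sigma_{b,c,a}^{\pm})$. The divergence is in how you handle this last exclusion, and there the gap you flag yourself is genuine and, I believe, not patchable along the lines you sketch.

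The paper sidesteps Casselman entirely: by Lemma~\ref{lema-pola-a-bc-tvrdnja} one has $\sigma_{b,c,a}^{\pm}\hookrightarrow\ds{\p}{a}\rtimes\sigma_{b,c}^{\pm}$, so it suffices to show $\pi\nleq\mu^*(\ds{\p}{a}\rtimes\sigma_{b,c}^{\pm})$. Applying \eqref{komnozenje} there, any term with GL-factor $\ds{\p}{a}$ forces $\delta'=1$ (since a nontrivial $\delta'\leq\mu^*(\sigma_{b,c}^{\pm})$ would, by \eqref{jacquet-segment}, put $\nu^b\rho$ or $\nu^c\rho$ in the cuspidal support), which collapses the term to $\ds{\p}{a}\otimes\sigma_{b,c}^{\pm}\neq\pi$. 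This is a two-line cuspidal-support argument that works uniformly in $a,b,c$.

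Your Casselman route, on the other hand, does not close. You correctly observe that the naive exponent sequence $\nu^a,\dots,\nu^{\p},\nu^b,\dots,\nu^{-c}$ can have all partial sums positive. But the proposed repair cannot recover the argument: every constituent of the minimal Jacquet module of $L(\ds{-b}{c}\rtimes\sigma)$ has its exponents drawn from $\{-c,\dots,c\}$, so every running deficit it can incur is bounded by a quantity depending only on $b,c$; meanwhile the prefix $\ds{\p}{a}$ contributes a positive buffer of size $\tfrac{1}{2}(a+\tfrac{1}{2})^2$, which for $a$ large relative to $c$ exceeds any such deficit. Hence for $a$ large no minimal Jacquet constituent of $\pi$ violates square-integrability, and iterating \eqref{jacquet-langlandsov-kvocijent} to ``reach the reducibility point'' does not produce the needed negative partial sum. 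The square-integrability argument is the right tool in Lemma~\ref{lema-Labc-jac} precisely because there the segment $\ds{-a}{b}$ already carries negative exponents in its cuspidal support; here $\ds{\p}{a}$ does not, and the method fails. You should replace step~3 with the embedding-plus-cuspidal-support argument above.
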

\begin{proof}
    Denote the subquotient by $\pi$.
    By \eqref{jacquet-strogo-pozitivna}
    $\mu^*(\sigma_a)\geq \ds{\frac{1}{2}}{a}\otimes \sigma$, so 
    %using \eqref{komnozenje}, we have 
\[
\mu^*(
\ds{-b}{c}\rtimes \sigma_a
)
\geq
\pi. 
\]
Comparing Proposition \ref{druga} and Lemma \ref{lema-pola-a-bc-tvrdnja}, it is enough to show
\[
\mu^*(\ds{\frac{1}{2}}{a} \rtimes \sigma_{b,c}^\pm) \ngeq \pi.
\]
So by \eqref{komnozenje} consider $0\leq j\leq i \leq a+\frac{1}{2}$,
    $\delta'\otimes \sigma' \leq \mu^*(\sigma_{b,c}^\pm)$ and
\begin{align*}
    \ds{i-a}{-\frac{1}{2}}
    \times
    \ds{a-j+1}{a}
     \times \delta'
    \otimes 
    \ds{a+1-i}{a-j}
     \rtimes\sigma'.
\end{align*}    
As $\delta'$ should not contain neither $\nu^b\rho$ nor $\nu^c\rho$ in its cuspidal support, by \eqref{jacquet-segment}, we have 
$\delta'\otimes \sigma' =1\otimes \sigma_{b,c}^\pm$, and $i=j=a+\frac{1}{2}$, giving  $\ds{\frac{1}{2}}{a} \otimes \sigma_{b,c}^\pm \ncong \pi $.
\end{proof}

\begin{proposition}
\label{prop-mult-L-bc-a}
We have in $R(G)$, with multiplicity one
    \[
                    \ds{-a}{c}
                \times
                    \ds{\frac{1}{2}}{b}
            \rtimes
                \sigma
        \geq
            L(\ds{-b}{c}\rtimes \sigma_a).
    \]
\end{proposition}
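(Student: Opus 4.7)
The proof would parallel the structure of Propositions \ref{prop-l-pola-a-b-c} and \ref{propozicija L ab, sigma c}. Existence of $L(\ds{-b}{c}\rtimes \sigma_a)$ in $\psi$ is essentially already in hand: by Theorem \ref{muic-A3}, $L(\ds{-b}{c}\rtimes \sigma_a)$ is a composition factor of $\ds{-a}{c}\rtimes\sigma_b$, hence also of $K_3\cong (\ds{-a}{c}\rtimes\sigma_b)^\wedge$ via Theorem \ref{contra}, and therefore of $\psi$ by \eqref{long-intertwining}.

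For multiplicity one, the plan is to invoke Lemma \ref{lema-jacq-Lbc-a}, which pinpoints $L(\ds{-b}{c}\rtimes \sigma_a)$ among composition factors of $\psi$ via the subquotient $\ds{\frac{1}{2}}{a}\otimes L(\ds{-b}{c}\rtimes \sigma)$ occuring in its Jacquet module with multiplicity one. It thus suffices to establish
\[
[\mu^*(\psi) : \ds{\tfrac{1}{2}}{a}\otimes L(\ds{-b}{c}\rtimes \sigma)] \leq 1.
\]
Applying \eqref{komnozenje} twice, to $\ds{-a}{c}\rtimes(\ds{\frac{1}{2}}{b}\rtimes\sigma)$, a generic term of $\mu^*(\psi)$ is
\[
\ds{i-c}{a}\times \ds{c+1-j}{c}\times \ds{u-b}{-\frac{1}{2}}\times \ds{b+1-v}{b}\otimes \ds{c+1-i}{c-j}\times \ds{b+1-u}{b-v}\rtimes \sigma,
\]
with $0\leq j\leq i\leq a+c+1$ and $0\leq v\leq u\leq b+\frac{1}{2}$. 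The cuspidal support of $\ds{\frac{1}{2}}{a}$ contains no $\nu^{\pm c}\rho$, no $\nu^{\pm b}\rho$, and no $\nu^{-1/2}\rho$; this forces $j=0$, $v=0$, $u=b+\frac{1}{2}$, and then matching $\ds{i-c}{a}=\ds{\frac{1}{2}}{a}$ gives $i=c+\frac{1}{2}$. The right hand side of $\otimes$ reduces uniquely to $\ds{\frac{1}{2}}{c}\times\ds{\frac{1}{2}}{b}\rtimes \sigma$, so the desired multiplicity equals $[\ds{\frac{1}{2}}{c}\times\ds{\frac{1}{2}}{b}\rtimes\sigma : L(\ds{-b}{c}\rtimes \sigma)]$.

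The main obstacle is to show this last multiplicity is one. I would use the identity $\ds{\frac{1}{2}}{c}\rtimes\sigma = \sigma_c + L(\ds{\frac{1}{2}}{c}\rtimes\sigma)$ in $R(G)$ from Theorem \ref{muic-diskretne-podreprezentacije} to write
\[
\ds{\tfrac{1}{2}}{b}\times\ds{\tfrac{1}{2}}{c}\rtimes\sigma = \ds{\tfrac{1}{2}}{b}\rtimes\sigma_c + \ds{\tfrac{1}{2}}{b}\rtimes L(\ds{\tfrac{1}{2}}{c}\rtimes\sigma);
\]
by Lemma \ref{lema-diskretna-podreprezentacija} the first summand contains no $L(\ds{-b}{c}\rtimes\sigma)$, so the multiplicity is entirely accounted for by the second summand. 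I expect that a Jacquet module argument modeled on Lemma \ref{lema-mult-L-dva-seg} then closes the proof: pick a characteristic subquotient of $\mu^*(L(\ds{-b}{c}\rtimes\sigma))$ available from \eqref{jacquet-langlandsov-kvocijent} and count its occurrences in $\mu^*(\ds{\frac{1}{2}}{b}\rtimes L(\ds{\frac{1}{2}}{c}\rtimes\sigma))$ via \eqref{komnozenje}. The technical bookkeeping in this last step, parallel to the analyses in Sections \ref{mult_L1}--\ref{mult_L2}, is the only genuinely delicate part.
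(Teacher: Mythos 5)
Your reduction is exactly the paper's: identify $L(\ds{-b}{c}\rtimes\sigma_a)$ via the Jacquet-module subquotient $\ds{\frac{1}{2}}{a}\otimes L(\ds{-b}{c}\rtimes\sigma)$ from Lemma \ref{lema-jacq-Lbc-a}, then apply \eqref{komnozenje} and match cuspidal supports to force $r=b+\frac{1}{2}$, $s=v=0$, $u=c+\frac{1}{2}$, reducing the count to the multiplicity of $L(\ds{-b}{c}\rtimes\sigma)$ in $\ds{\frac{1}{2}}{b}\times\ds{\frac{1}{2}}{c}\rtimes\sigma$.

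The gap is that you stop short of finishing this last count: you propose to re-derive it by splitting off $\ds{\frac{1}{2}}{b}\rtimes\sigma_c$ and then analyzing $\ds{\frac{1}{2}}{b}\rtimes L(\ds{\frac{1}{2}}{c}\rtimes\sigma)$ via \eqref{jacquet-langlandsov-kvocijent}, but leave this as ``I expect that a Jacquet module argument \ldots closes the proof.'' That is precisely the step you must actually carry out, and in fact you do not need to: it is already Lemma \ref{lemma-diskr-u-dva-seg-kusp}, inequality \eqref{lemma-diskr-u-dva-seg-kusp-f1}, which states with maximum multiplicity that $\ds{\frac{1}{2}}{b}\times\ds{\frac{1}{2}}{c}\rtimes\sigma \geq \sigma_{b,c}^+ + \sigma_{b,c}^- + L(\ds{-b}{c}\rtimes\sigma)$. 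Citing that lemma closes the argument in one line, which is what the paper does. So the route is right, the re-derivation you sketch is plausible but unnecessary and unfinished; replace the final paragraph with a reference to Lemma \ref{lemma-diskr-u-dva-seg-kusp}.
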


\begin{proof}
Apply Lemma \ref{lema-jacq-Lbc-a} on
\eqref{dis-lema-jac-prva-f1}.
We have $r=b+\frac{1}{2}$, $s=0$, $v=0$, 
$u=c+\frac{1}{2}$.
By \eqref{lemma-diskr-u-dva-seg-kusp-f1}
 $
    L(\ds{-b}{c}\rtimes \sigma)
\leq
        \ds{\frac{1}{2}}{b}
    \times
        \ds{\frac{1}{2}}{c}
    \rtimes 
        \sigma
$
appears once.
\end{proof}

\section{Composition factors}
\label{kompozicijski_faktori}
Here we determine composition factors of the kernel $K_1$  from Section \ref{dekompozicija1}.
\begin{proposition} 
\label{prop-kompozicijski-fakt1}
    We have in $R(G)$
    \begin{align*}
\ds{\frac{1}{2}}{b} \rtimes \sigma_{a,c}^+
=
&L(\ds{\frac{1}{2}}{b} \rtimes \sigma_{a,c}^+)+
\sigma^+_{b,c,a}+
\\
&L(\ds{\frac{1}{2}}{a}\rtimes \sigma_{b,c}^+)+
L(\ds{-a}{b}\rtimes \sigma_c).
    \end{align*}
\end{proposition}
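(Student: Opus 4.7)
The plan is to combine the candidate lists from Sections \ref{diskretni} and \ref{netemperirani} with the multiplicity computations of Sections \ref{mult_L1}--\ref{mult_L3} to read off the composition series equality in $R(G)$; essentially all the heavy lifting is done, and the proof is an assembly argument.

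First I would invoke Proposition \ref{diskretne-u-velikoj-prpopzicija}, which says that $\sigma^+_{b,c,a}$ is the only discrete series subquotient of $\ds{\frac{1}{2}}{b}\rtimes \sigma_{a,c}^+$, together with Proposition \ref{nediskretni-kandidati-u-pozitivnoj}, which restricts its non-tempered subquotients to the Langlands quotient, $L(\ds{\frac{1}{2}}{a}\rtimes \sigma_{b,c}^+)$, and $L(\ds{-a}{b}\rtimes \sigma_c)$. This confines all composition factors to the four representations on the right-hand side of the asserted equality.

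Next I would establish that each of the four candidates occurs with multiplicity at least one: the Langlands quotient trivially (and with multiplicity exactly one by general Langlands theory); $\sigma^+_{b,c,a}$ by Proposition \ref{diskretne-u-velikoj-prpopzicija}; $L(\ds{\frac{1}{2}}{a}\rtimes \sigma_{b,c}^+)$ by Proposition \ref{prop-l-pola-a-b-c}; and $L(\ds{-a}{b}\rtimes \sigma_c)$ by Proposition \ref{propozicija L ab, sigma c}. Moreover, Propositions \ref{prop-l-pola-a-b-c} and \ref{propozicija L ab, sigma c} already yield multiplicity exactly one for the third and fourth summands, so together with the Langlands quotient these three are sharp without further work.

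The one remaining point, and where I expect the argument to do real work, is upgrading the lower bound for $\sigma^+_{b,c,a}$ to equality. For this I would use the distinguishing Jacquet module term $\ds{\frac{1}{2}}{a}\otimes \sigma^+_{b,c}$. On the one hand, Lemma \ref{lema-pola-a-bc-tvrdnja}, equation (\ref{lema-pola-a-bc-f1}), gives $\mu^*(\sigma^+_{b,c,a})\geq \ds{\frac{1}{2}}{a}\otimes \sigma^+_{b,c}$; on the other hand, Lemma \ref{jacquet-diskr-A1+} caps the total multiplicity of this term in $\mu^*(\ds{\frac{1}{2}}{b}\rtimes \sigma^+_{a,c})$ at one. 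Since each copy of $\sigma^+_{b,c,a}$ already contributes at least once to this total, its multiplicity cannot exceed one, which combined with the previously established lower bound gives the desired equality in $R(G)$. The main obstacle, such as it is, is trusting the maximum-multiplicity assertion of Lemma \ref{jacquet-diskr-A1+} as a genuine upper bound rather than merely a lower one, but this is built into its statement.
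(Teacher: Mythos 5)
Your proof is correct and follows essentially the same assembly as the paper: Proposition \ref{diskretne-u-velikoj-prpopzicija} pins down the tempered part, Proposition \ref{nediskretni-kandidati-u-pozitivnoj} the non-tempered candidates, and Propositions \ref{prop-l-pola-a-b-c} and \ref{propozicija L ab, sigma c} the remaining multiplicities. The only small difference is that you re-derive the multiplicity one of $\sigma^+_{b,c,a}$ via the Jacquet-module term $\ds{\frac{1}{2}}{a}\otimes\sigma_{b,c}^+$ (Lemmas \ref{lema-pola-a-bc-tvrdnja} and \ref{jacquet-diskr-A1+}), which is valid but already subsumed in the phrase \emph{with multiplicities} of Proposition \ref{diskretne-u-velikoj-prpopzicija}, whose own proof uses the analogous cap on $\ds{-a}{b}\otimes\sigma_c$.
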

\begin{proof}
    Discrete series subquotients are determined in 
    Proposition \ref{diskretne-u-velikoj-prpopzicija}.
    Remaining irreducible subquotients are described by 
    Propositions
    \ref{nediskretni-kandidati-u-pozitivnoj},
    \ref{prop-l-pola-a-b-c}
    and
    \ref{propozicija L ab, sigma c}.
\end{proof}

\begin{proposition} 
\label{prop-kompozicijski-fakt2}
    We have in $R(G)$
    \begin{align*}
\ds{\frac{1}{2}}{b} \rtimes \sigma_{a,c}^-
=
L(\ds{\frac{1}{2}}{b} \rtimes \sigma_{a,c}^-)+ 
L(\ds{\frac{1}{2}}{a}\rtimes \sigma_{b,c}^-).    
    \end{align*}
\end{proposition}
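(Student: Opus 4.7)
The plan is to mirror the proof of Proposition \ref{prop-kompozicijski-fakt1}, exploiting the fact that the $-$ case has been arranged to be even simpler than the $+$ case. All heavy lifting has already been done in Sections \ref{diskretni}--\ref{mult_L2}; the remaining task is just to package the right citations.

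First, I would account for tempered subquotients. By Lemma 8.1 of \cite{tadic-diskretne}, any tempered subquotient of $\ds{\frac{1}{2}}{b} \rtimes \sigma_{a,c}^-$ must be a discrete series. Proposition \ref{diskretne-u-velikoj-prpopzicija} explicitly records that this induced representation contains no discrete series subquotient at all, so the tempered part is empty.

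Next, I would enumerate the non-tempered candidates. The Langlands quotient $L(\ds{\frac{1}{2}}{b} \rtimes \sigma_{a,c}^-)$ appears with multiplicity one by the general Langlands classification. By Proposition \ref{nediskretni-kandidati-u-negativnoj}, the only other possible non-tempered subquotient is $L(\ds{\frac{1}{2}}{a} \rtimes \sigma_{b,c}^-)$, and Proposition \ref{prop-l-pola-a-b-c} asserts that it indeed occurs, with multiplicity one. Assembling these three facts yields the claimed identity in $R(G)$.

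The only place that could cause trouble is checking that no other subquotient has been overlooked, but this is precisely the purpose of Proposition \ref{nediskretni-kandidati-u-negativnoj}, whose proof reduced the $-$ case to part a) of the argument for Proposition \ref{nediskretni-kandidati-u-pozitivnoj} (the candidate $L(\ds{-a}{b}\rtimes \sigma_c)$ from part b) being excluded by the sign obstruction $\sigma_1 = \delta([\nu^{1/2}\rho,\nu^c\rho]_-;\sigma)$ not being defined). So there is no real obstacle; the proof is a one-line assembly of Propositions \ref{diskretne-u-velikoj-prpopzicija}, \ref{nediskretni-kandidati-u-negativnoj}, and \ref{prop-l-pola-a-b-c}.
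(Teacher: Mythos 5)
Your proposal matches the paper's own proof: both invoke Proposition \ref{diskretne-u-velikoj-prpopzicija} to rule out discrete series subquotients and Propositions \ref{nediskretni-kandidati-u-negativnoj} and \ref{prop-l-pola-a-b-c} to account for the two non-tempered factors. The additional explicit mention of Lemma 8.1 of \cite{tadic-diskretne} and the multiplicity-one remark for the Langlands quotient are fine elaborations of what the paper leaves implicit.
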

\begin{proof}
    Possible series subquotients are discussed in 
    Proposition \ref{diskretne-u-velikoj-prpopzicija}.
    Remaining irreducible subquotients are described by 
    Propositions 
    \ref{nediskretni-kandidati-u-negativnoj}
    and
    \ref{prop-l-pola-a-b-c}.
\end{proof}

\begin{theorem}
\label{glavni-rezultat}
We have in $R(G)$
\begin{align*}
 \ds{-a}{c}\times
&\ds{\frac{1}{2}}{b}\rtimes \sigma =
L(\ds{-a}{c}\times
\ds{\frac{1}{2}}{b}\rtimes \sigma)+
\\
%&+
%\\
&L(\ds{\frac{1}{2}}{b} \rtimes \sigma_{a,c}^+)+
    L(\ds{-a}{c}\rtimes\sigma_{b})+
     \\
        &\sigma_{b,c,a}^- + 
                            L(\ds{-b}{c}\times
                                \ds{\frac{1}{2}}{a} 
                                    \rtimes \sigma)+
%\\
%&+
\\
&L(\ds{\frac{1}{2}}{a}\rtimes \sigma_{b,c}^+)
        +
        L(\ds{-a}{b}\rtimes \sigma_c)
        +
        \\
        &L(\ds{\frac{1}{2}}{b}\rtimes \sigma_{a,c}^-)
        +
        L(\ds{-b}{c}\rtimes \sigma_a)+
%\\
%&+
\\
&\sigma_{a,b,c}^+ +L(\ds{\frac{1}{2}}{a}\rtimes \sigma_{b,c}^-).
\end{align*}
\end{theorem}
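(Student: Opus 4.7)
The plan is to combine the sandwich inequality \eqref{long-intertwining}, $K_i \leq \psi \leq K_1 + K_2 + K_3 + L(\psi)$, with the composition series of each $K_i$ and the multiplicity refinements from earlier sections. Concretely, I would decompose $K_1 \cong \ds{\frac{1}{2}}{b}\rtimes \sigma_{a,c}^+ + \ds{\frac{1}{2}}{b}\rtimes \sigma_{a,c}^-$ using Propositions \ref{prop-kompozicijski-fakt1} and \ref{prop-kompozicijski-fakt2}; decompose $K_2$ via the $\wedge$-contragredient (Theorem \ref{contra}) and Theorem \ref{ciganovic-A2}; and decompose $K_3$ via the $\wedge$-contragredient and Theorem \ref{muic-A3}. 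Adding $L(\psi)$, this produces a raw upper bound for $\psi$ consisting of 17 composition factors counted with multiplicity.

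From $K_i \leq \psi$ each of the 11 distinct irreducible representations listed in the theorem statement appears at least once in $\psi$. The remaining task is to show the true multiplicities match. In the raw upper bound, $\sigma_{b,c,a}^+$ is counted three times (once in each $K_i$), while $L(\ds{\frac{1}{2}}{a}\rtimes \sigma_{b,c}^+)$, $L(\ds{\frac{1}{2}}{a}\rtimes \sigma_{b,c}^-)$, $L(\ds{-a}{b}\rtimes \sigma_c)$, and $L(\ds{-b}{c}\rtimes \sigma_a)$ each appear twice. The multiplicity results from Propositions \ref{diskretne-u-velikoj-prpopzicija}, \ref{prop-l-pola-a-b-c}, \ref{propozicija L ab, sigma c}, and \ref{prop-mult-L-bc-a} each pin down the corresponding multiplicity in $\psi$ to exactly one by identifying a unique subquotient—$\ds{\frac{1}{2}}{a}\otimes\sigma_{b,c}^\pm$, $\ds{-a}{-\frac{1}{2}}\otimes\sigma_{b,c}^\pm$, $\ds{\frac{1}{2}}{a}\otimes L(\ds{\frac{1}{2}}{b}\rtimes\sigma_c)$, and $\ds{\frac{1}{2}}{a}\otimes L(\ds{-b}{c}\rtimes\sigma)$ respectively—which occurs with multiplicity exactly one in $\mu^*(\psi)$.

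The main obstacle is the delicate bookkeeping: the excess of $2+1+1+1+1 = 6$ in the raw upper bound must be eliminated exactly by these five multiplicity constraints, reducing 17 down to 11, and one must also verify that no additional composition factor is hiding. The absence of further discrete series, in particular $\sigma_{a,b,c}^-$, is already excluded by Proposition \ref{diskretne-u-velikoj-prpopzicija}, and the non-tempered candidate lists from Propositions \ref{nediskretni-kandidati-u-pozitivnoj} and \ref{nediskretni-kandidati-u-negativnoj} (together with Theorems \ref{muic-A3} and \ref{ciganovic-A2}) are exhaustive, so nothing else can appear in $K_1 + K_2 + K_3$. Assembling the lower bounds from $K_i \leq \psi$ with the sharpened upper bounds from the multiplicity lemmas yields the claimed equality in $R(G)$.
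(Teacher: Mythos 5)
Your proposal is correct and follows essentially the same route as the paper's proof: bound $\psi$ from above and below via the intertwining-operator inequality \eqref{long-intertwining}, feed in the composition factors of $K_1,K_2,K_3$ from Propositions \ref{prop-kompozicijski-fakt1}, \ref{prop-kompozicijski-fakt2} and Theorems \ref{ciganovic-A2}, \ref{muic-A3}, then cut the overcounted multiplicities to one via Propositions \ref{diskretne-u-velikoj-prpopzicija}, \ref{prop-l-pola-a-b-c}, \ref{propozicija L ab, sigma c}, \ref{prop-mult-L-bc-a}. Your explicit $17\to11$ bookkeeping is a slightly more expanded version of the paper's terser argument, and the Jacquet-module witnesses you name for each multiplicity statement match those actually used in the cited lemmas.
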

\begin{proof}
By \eqref{long-intertwining}, Theorems \ref{muic-A3} and \ref{ciganovic-A2},
and Propositions 
\ref{prop-kompozicijski-fakt2}
and
\ref{prop-kompozicijski-fakt2}, we listed all irreducible subquotients, up to multiplicities. 
Proposition 
\ref{diskretne-u-velikoj-prpopzicija}
shows multiplicity one for discrete series.
Propositions \ref{prop-l-pola-a-b-c},
\ref{propozicija L ab, sigma c} and 
\ref{prop-mult-L-bc-a}, show multiplicity one for four remaining subquotients, which appear with a multilpicity of more than one, on the right hand side of \eqref{long-intertwining}.
\end{proof}

%%%%%%%%%%%%%%%%%%%%%%%%%%%%%%%%%%%%%%%%%%%%%%%%%%%%%%%%
%%%%%%%%%%%%%%%%%%%%%%%%%%%%%%%%%%%%%%%%%%%%%%%%%%%%%%%%
%%%%%%%%%%%%%%%%%%%%%%%%%%%%%%%%%%%%%%%%%%%%%%%%%%%%%%%%

%\section{Some composition series}

\section{Composition series of
$\ds{\frac{1}{2}}{b}
         \rtimes \sigma_{a,c}^+$ and $\ds{\frac{1}{2}}{b} \rtimes \sigma_{a,c}^-$.}
\label{kompozicijski1}

Here we determine the composition series of the kernel $K_1$, from 
Section \ref{dekompozicija1}. For the first representation, we show that a discrete series is a subrepesentation, and use intertwining operators to position other subquotients.

\begin{lemma} 
\label{jacquet-diskretne-u-malo-vecoj}
    We have in $R(G)$, with maximum multiplicities:
    \begin{align*}
    \mu^*(\ds{\frac{1}{2}}{b}&\times\ds{\frac{1}{2}}{a}\times \sigma_c)
    \geq
    2\cdot \ds{-a}{b}\otimes \sigma_c.
    \end{align*}
\end{lemma}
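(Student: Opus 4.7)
The plan is to apply \eqref{komnozenje} twice, once to $\mu^*(\ds{\frac{1}{2}}{b}\rtimes\pi_0)$ with $\pi_0=\ds{\frac{1}{2}}{a}\rtimes\sigma_c$, and once to $\mu^*(\pi_0)$, combined with \eqref{jacquet-strogo-pozitivna} for $\mu^*(\sigma_c)$. This produces a sum parametrized by outer indices $0\leq j\leq i\leq b+\frac{1}{2}$, inner indices $0\leq s\leq r\leq a+\frac{1}{2}$, and a summand $\delta''\otimes\sigma''\leq\mu^*(\sigma_c)$. The task is to identify all contributions giving $\ds{-a}{b}\otimes\sigma_c$ and show there are at least two of them.

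The first step is to prune the parameters via cuspidal support. Since $\ds{-a}{b}$ contains no $\nu^c\rho$ in its cuspidal support and every $\delta''=\ds{j'+1}{c}$ from \eqref{jacquet-strogo-pozitivna} sits on the GL side of $\otimes$, we must take $j'=c$, forcing $\delta''=1$ and $\sigma''=\sigma_c$. Next, since $\sigma_c$ is irreducible with its cuspidal support determined by its Jordan blocks, for $\sigma_c$ to appear as a subquotient of $\delta([\nu^{b+1-i}\rho,\nu^{b-j}\rho])\rtimes\sigma'$ (outer right factor) and of $\delta([\nu^{a+1-r}\rho,\nu^{a-s}\rho])\rtimes\sigma_c$ (inner right factor), both of these GL segments must be empty. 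This yields $i=j$, $r=s$, and $\sigma'=\sigma_c$.

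Under these reductions, the GL factor of the relevant Jacquet subquotient is
\[
A=\ds{i-b}{-\frac{1}{2}}\times\ds{b+1-i}{b}\times\ds{r-a}{-\frac{1}{2}}\times\ds{a+1-r}{a}.
\]
Matching the cuspidal support of $A$ against $[\nu^{-a}\rho,\nu^b\rho]$ as a multiset with each character of multiplicity one (the negative part coming only from the two leftmost factors and the positive part only from the two rightmost) leaves exactly two options: Option A with $(i,r)=(b-a,a+\tfrac{1}{2})$, giving $A=\ds{-a}{-\frac{1}{2}}\times\ds{a+1}{b}\times\ds{\frac{1}{2}}{a}$, and Option B with $(i,r)=(b+\tfrac{1}{2},0)$, giving $A=\ds{\frac{1}{2}}{b}\times\ds{-a}{-\frac{1}{2}}$.

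Option B contributes $\ds{-a}{b}$ directly via the Zelevinsky decomposition $\ds{\frac{1}{2}}{b}\times\ds{-a}{-\frac{1}{2}}=\ds{-a}{b}+L(\ds{\frac{1}{2}}{b},\ds{-a}{-\frac{1}{2}})$. Option A requires a two-step expansion: first $\ds{-a}{-\frac{1}{2}}\times\ds{\frac{1}{2}}{a}=\ds{-a}{a}+L(\ds{\frac{1}{2}}{a},\ds{-a}{-\frac{1}{2}})$, and then $\ds{a+1}{b}\times\ds{-a}{a}=\ds{-a}{b}+L(\ds{a+1}{b},\ds{-a}{a})$, so $\ds{-a}{b}$ appears in $A$ at least once here as well. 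Since Options A and B correspond to different quadruples $(i,j,r,s)$, they are distinct terms in the double Tadi\'c sum, whence $\ds{-a}{b}\otimes\sigma_c$ appears with multiplicity at least two in $\mu^*(\ds{\frac{1}{2}}{b}\times\ds{\frac{1}{2}}{a}\rtimes\sigma_c)$. The main obstacle is the cuspidal-support bookkeeping: justifying rigorously that $\sigma_c$ being a subquotient forces each of the right-hand GL segments to be empty, and then sweeping through all remaining $(i,j,r,s)$ to confirm that exactly Options A and B survive.
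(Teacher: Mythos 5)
Your argument matches the paper's proof in both structure and substance: apply the Tadi\'c formula twice, force $\delta'\otimes\sigma'=1\otimes\sigma_c$ via \eqref{jacquet-strogo-pozitivna} because $\ds{-a}{b}$ has no $\nu^c\rho$ in its cuspidal support, force the right-hand GL segments to be empty (i.e.\ $i=j$, $r=s$) so that the classical-side factor is exactly $\sigma_c$, and then match cuspidal supports on the GL side to pin down exactly the two surviving index tuples. The paper states only the two index options ($r-b=-a,\ i-a=\frac12$ and $i-a=-a,\ r-b=\frac12$, in its labeling) and leaves the final Zelevinsky-type computation implicit, whereas you carry it out; aside from that and a relabeling of the index pairs, the approach is identical.
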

\begin{proof}
    %By \eqref{komnozenje} c
    Consider
    $0\leq s\leq r \leq b+\frac{1}{2}$, $0\leq j\leq i \leq a+\frac{1}{2}$,
    $\delta'\otimes \sigma'\leq \mu^*(\sigma_c)$, and
    \begin{align*}
    \ds{r-b}{-\frac{1}{2}}
    \times \ds{b-s+1}{b}
    \times
    \ds{i-a}{-\frac{1}{2}}
    \times
    \ds{a-j+1}{a}
     \times \delta'&
    \\
    \otimes 
    \ds{b+1-r}{b-s}
    \times \ds{a+1-i}{a-j}
    \rtimes \sigma'&.
    \end{align*}    
    By
    \eqref{jacquet-strogo-pozitivna} we have 
    $\delta'\otimes\sigma'=1\otimes\sigma_c$.
    So $i=j$, $r=s$ and we have
    \begin{align*}
     \ds{r-b}{-\frac{1}{2}}
        \times
        \ds{b-r+1}{b}
        \times \ds{a-i+1}{a}
        \times
        \ds{i-a}{-\frac{1}{2}}
        \otimes \sigma_c.
    \end{align*}
Two options are  
 $r-b=-a$, $i-a=\frac{1}{2}$ 
and 
 $i-a=-a$, $r-b=\frac{1}{2}$.
\end{proof}
The next proposition gives positions of both 
$\sigma_{b,c,a}^+$ and 
$ L(\ds{-a}{b}\rtimes \sigma_c )$.
\begin{proposition} 
\label{lema-ulaganje-tri-reprezentacije}
We have embeddings
\begin{align*}
\sigma_{b,c,a}^+ 
&\hookrightarrow
\ds{-a}{b}\rtimes \sigma_c / \sigma_{a,b,c}^- 
\\
&\hookrightarrow 
\ds{\frac{1}{2}}{b}\rtimes \sigma_{a,c}^+.
\end{align*}
\end{proposition}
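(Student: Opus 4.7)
\emph{First embedding.} Proposition~\ref{druga} gives both $\sigma_{b,c,a}^+$ and $\sigma_{a,b,c}^-$ as non-isomorphic irreducible discrete series subrepresentations of $\ds{-a}{b}\rtimes\sigma_c$. Hence their sum is direct in the socle, and $\sigma_{b,c,a}^+$ embeds in the quotient $\ds{-a}{b}\rtimes\sigma_c/\sigma_{a,b,c}^-$ as a subrepresentation.

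\emph{Setting up the second embedding.} My plan is to produce a nonzero intertwining operator
\[
T\colon\ds{-a}{b}\rtimes\sigma_c\to\ds{\frac{1}{2}}{b}\rtimes\sigma_{a,c}^+
\]
with $\ker T=\sigma_{a,b,c}^-$; passage to the quotient then yields the desired embedding. Both source and target sit inside the common ambient module $M=\ds{\frac{1}{2}}{b}\times\ds{\frac{1}{2}}{a}\rtimes\sigma_c$: the source via $\ds{-a}{b}\hookrightarrow\ds{\frac{1}{2}}{b}\times\ds{-a}{-\frac{1}{2}}$ combined with the isomorphism $\ds{-a}{-\frac{1}{2}}\rtimes\sigma_c\cong\ds{\frac{1}{2}}{a}\rtimes\sigma_c$ from Theorem~\ref{contra}, and the target as the kernel of the surjection $M\twoheadrightarrow\ds{\frac{1}{2}}{b}\rtimes L(\ds{\frac{1}{2}}{a}\rtimes\sigma_c)$ obtained from the short exact sequence $0\to\sigma_{a,c}^+\to\ds{\frac{1}{2}}{a}\rtimes\sigma_c\to L(\ds{\frac{1}{2}}{a}\rtimes\sigma_c)\to 0$ of Lemma~\ref{lema-diskretna-podreprezentacija}. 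Lemma~\ref{jacquet-diskretne-u-malo-vecoj} supplies the multiplicity-two appearance of $\ds{-a}{b}\otimes\sigma_c$ in $\mu^*(M)$, which by Frobenius reciprocity gives a two-dimensional space of intertwining maps $\ds{-a}{b}\rtimes\sigma_c\to M$; the operator $T$ will be extracted from this family as the one not exhausted by the canonical inclusion.

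\emph{Identifying the kernel---the main obstacle.} Proposition~\ref{prop-kompozicijski-fakt1} shows $\sigma_{a,b,c}^-$ is not a composition factor of $\ds{\frac{1}{2}}{b}\rtimes\sigma_{a,c}^+$, so any nonzero $T$ landing in this target necessarily has $\sigma_{a,b,c}^-\subseteq\ker T$. Since the source has socle $\sigma_{b,c,a}^+\oplus\sigma_{a,b,c}^-$ and irreducible cosocle $L(\ds{-a}{b}\rtimes\sigma_c)$, $\ker T$ is either $\sigma_{a,b,c}^-$ itself or the full socle. The key step, and the main difficulty, is to exclude the latter, in which case the image of $T$ would collapse onto the single composition factor $L(\ds{-a}{b}\rtimes\sigma_c)$. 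I expect to rule this out by a dimension count: maps that factor through $L(\ds{-a}{b}\rtimes\sigma_c)$ form a subspace of dimension at most $\dim\text{Hom}(L(\ds{-a}{b}\rtimes\sigma_c),\ds{\frac{1}{2}}{b}\rtimes\sigma_{a,c}^+)\leq 1$, since $L(\ds{-a}{b}\rtimes\sigma_c)$ appears with multiplicity one in $\ds{\frac{1}{2}}{b}\rtimes\sigma_{a,c}^+$ (Proposition~\ref{prop-kompozicijski-fakt1}). Hence within the two-dimensional family there is a map not of this form; that map is our $T$, its image lies in $\ds{\frac{1}{2}}{b}\rtimes\sigma_{a,c}^+$, it is nontrivial on $\sigma_{b,c,a}^+$, and $\ker T=\sigma_{a,b,c}^-$ as required.
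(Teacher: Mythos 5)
Your proof of the first embedding is fine. The second embedding, however, contains a decisive error. You invoke Theorem~\ref{contra} to claim an \emph{isomorphism} $\ds{-a}{-\frac{1}{2}}\rtimes\sigma_c\cong\ds{\frac{1}{2}}{a}\rtimes\sigma_c$. Theorem~\ref{contra} gives no such thing: it produces a \emph{contravariant} functor $\wedge$ with $(\ds{\frac{1}{2}}{a}\rtimes\sigma_c)^\wedge\cong\ds{-a}{-\frac{1}{2}}\rtimes\sigma_c$, so the two are dual to each other, hence share composition factors, but are certainly not isomorphic here --- $\sigma_{a,c}^+$ is a subrepresentation of the former and an irreducible quotient of the latter (Lemma~\ref{lema-diskretna-podreprezentacija} together with Theorem~2-6 of \cite{muic-hanzer:langzel}). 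Consequently, your embedding of $\ds{-a}{b}\rtimes\sigma_c$ into $M=\ds{\frac{1}{2}}{b}\times\ds{\frac{1}{2}}{a}\rtimes\sigma_c$ is unjustified, and the whole "common ambient module" framework collapses.

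There are two further gaps even if that step were repaired. First, the jump from "$\ds{-a}{b}\otimes\sigma_c$ appears with multiplicity two in $\mu^*(M)$" to "a two-dimensional space of intertwiners $\ds{-a}{b}\rtimes\sigma_c\to M$" is not valid Frobenius reciprocity: maps out of an induced module are controlled by subobjects of the Jacquet module for the \emph{opposite} parabolic (second adjunction), not by multiplicities in the semisimplified $\mu^*$. Second, even granted a two-dimensional family of maps into $M$, you never establish that the map you "extract" has image inside the subrepresentation $\ds{\frac{1}{2}}{b}\rtimes\sigma_{a,c}^+\subset M$; the dimension count involving $\mathrm{Hom}(L(\ds{-a}{b}\rtimes\sigma_c),\ds{\frac{1}{2}}{b}\rtimes\sigma_{a,c}^+)$ only applies once you already know $T$ lands in the target, which is precisely the missing step.

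The paper avoids all of this by building the intertwiner directly: from the epimorphism $\ds{-a}{-\frac{1}{2}}\rtimes\sigma_c\twoheadrightarrow\sigma_{a,c}^+$ (Theorem 2-6 of \cite{muic-hanzer:langzel} and Lemma~\ref{lema-diskretna-podreprezentacije}), one gets a composite
\[
\ds{-a}{b}\rtimes\sigma_c\hookrightarrow\ds{\frac{1}{2}}{b}\times\ds{-a}{-\frac{1}{2}}\rtimes\sigma_c\twoheadrightarrow\ds{\frac{1}{2}}{b}\rtimes\sigma_{a,c}^+,
\]
and Proposition~\ref{druga}, Proposition~\ref{diskretne-u-velikoj-prpopzicija} and Lemma~\ref{jacquet-diskretne-u-malo-vecoj} show that $\sigma_{b,c,a}^+$ occurs exactly once in each term while $\sigma_{a,b,c}^-$ is absent from the target, which forces the stated factorization. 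You should abandon the ambient-module and dimension-count strategy and work with this explicit composite instead.
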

\begin{proof}
By Theorem 2-6. of \cite{muic-hanzer:langzel} 
and Lemma \ref{lema-diskretna-podreprezentacija}
we have an epimorphism
\begin{equation*}
\ds{-a}{-\frac{1}{2}}\rtimes \sigma_c \twoheadrightarrow \sigma_{a,c}^+.
\end{equation*}
Now we have a composition of an embedding and an epimorphism
\begin{equation} \label{prvo-ulaganje-u-A+}
\begin{split}
\ds{-a}{b}\rtimes \sigma_c 
&\rightarrow
\ds{\frac{1}{2}}{b}\times \ds{-a}{-\frac{1}{2}}\rtimes \sigma_c
\\
&\rightarrow
\ds{\frac{1}{2}}{b}\rtimes \sigma_{a,c}^+.
\end{split}
\end{equation}
By Proposition \ref{druga}, and 
 \ref{diskretne-u-velikoj-prpopzicija} and
Lemma
\ref{jacquet-diskretne-u-malo-vecoj}, all representations in 
\eqref{prvo-ulaganje-u-A+}, 
have $\sigma_{b,c,a}^+$ as a subquotient, with multiplicity one, and the last doesn't contain
$\sigma_{a,b,c}^-$. 
\end{proof}

Now we want to see a position of 
$L(\ds{\frac{1}{2}}{a}
\rtimes \sigma_{b,c}^+)$.

\begin{lemma}
\label{lema-L12abc-jacq}
    We have in $R(G)$, with multiplicity one:
\begin{equation}
\label{lema-L12abc-jacq-f0}
\mu^*(
L(\ds{\frac{1}{2}}{a} \rtimes \sigma_{b,c}^+ )
)
    \geq 
    \ds{-b}{c}
        \otimes 
            L(\ds{\frac{1}{2}}{a}\rtimes \sigma).
\end{equation}
\end{lemma}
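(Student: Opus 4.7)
The plan is to exploit the decomposition from Lemma \ref{lema-pola-a-bc-tvrdnja}, namely
\[
\ds{\frac{1}{2}}{a}\rtimes\sigma_{b,c}^+ = \sigma_{b,c,a}^+ + L(\ds{\frac{1}{2}}{a}\rtimes\sigma_{b,c}^+) \quad \text{in } R(G),
\]
and, writing $\pi := \ds{-b}{c}\otimes L(\ds{\frac{1}{2}}{a}\rtimes\sigma)$, to reduce the claim to (a) $\mu^*(\ds{\frac{1}{2}}{a}\rtimes\sigma_{b,c}^+)\geq\pi$ with multiplicity one, and (b) $\pi\nleq\mu^*(\sigma_{b,c,a}^+)$. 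Additivity of $\mu^*$ in $R(G)$ then combines these into the stated multiplicity-one inequality.

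For (a), I would apply Tadi\'c's formula \eqref{komnozenje} with $x=\frac{1}{2}$, $y=a$. Frobenius reciprocity applied to $\sigma_{b,c}^+\hookrightarrow\ds{-b}{c}\rtimes\sigma$ (Theorem \ref{prva}) yields $\ds{-b}{c}\otimes\sigma\leq\mu^*(\sigma_{b,c}^+)$; the choice $(i,j,\delta',\sigma')=(a+\frac{1}{2},0,\ds{-b}{c},\sigma)$ makes both $\ds{i-a}{-\frac{1}{2}}$ and $\ds{a+1-j}{a}$ empty, producing $\ds{-b}{c}\otimes(\ds{\frac{1}{2}}{a}\rtimes\sigma) = \ds{-b}{c}\otimes\sigma_a + \pi$ by Theorem \ref{prva}. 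To obtain multiplicity one, I would rule out other tuples by cuspidal support: $\nu^c\rho$ and $\nu^{-b}\rho$ cannot appear in $\ds{i-a}{-\frac{1}{2}}$ or $\ds{a+1-j}{a}$, so both must originate in $\delta'$; matching $\ds{a+1-i}{a-j}\rtimes\sigma'$ against $L(\ds{\frac{1}{2}}{a}\rtimes\sigma)$ forces $\sigma'=\sigma$, after which \eqref{jacquet-segment} pins $\delta'=\ds{-b}{c}$ (arising only at index $-b-1$ in the first sum) and hence $(i,j)=(a+\frac{1}{2},0)$.

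For (b), I would use $\sigma_{b,c,a}^+\hookrightarrow\ds{-b}{c}\rtimes\sigma_a$ (Proposition \ref{druga}) and exactness of Jacquet to embed the relevant Jacquet module of $\sigma_{b,c,a}^+$ (with GL factor of the dimension of $\ds{-b}{c}$) into the corresponding Jacquet module of $\ds{-b}{c}\rtimes\sigma_a$. I would then compute the latter by \eqref{komnozenje} with $\delta'\otimes\sigma'\leq\mu^*(\sigma_a)$ given by \eqref{jacquet-strogo-pozitivna}, retaining only terms whose GL tensor factor has cuspidal support $\{\nu^{-b}\rho,\ldots,\nu^c\rho\}$. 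A support-counting argument forces $\delta'=\emptyset$ (hence $\sigma'=\sigma_a$) and $i=j\in\{c-b,\,c+b+1\}$: any non-empty $\delta'=\ds{j'+1}{a}$ has support in $[\frac{1}{2},a]$, which cannot bridge the disjoint pieces $[-b,-\frac{1}{2}]$ and $[a+1,c]$ that the two remaining segments would otherwise have to cover. Every subquotient with first factor $\ds{-b}{c}$ is therefore of the form $\ds{-b}{c}\otimes\sigma_a$, and since $\sigma_a\ncong L(\ds{\frac{1}{2}}{a}\rtimes\sigma)$, this proves $\pi\nleq\mu^*(\sigma_{b,c,a}^+)$.

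The hardest step will be the exhaustive case analysis in (b): excluding every tuple $(i,j,\delta',\sigma')$ in \eqref{komnozenje} with non-empty $\delta'$ from contributing a subquotient whose left factor is $\ds{-b}{c}$, which rests on the segment-geometric observation noted above.
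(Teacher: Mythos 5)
Your proof is correct and has the same overall structure as the paper's: establish that $\pi := \ds{-b}{c}\otimes L(\ds{\frac{1}{2}}{a}\rtimes\sigma)$ appears with multiplicity exactly one in $\mu^*(\ds{\frac{1}{2}}{a}\rtimes\sigma_{b,c}^+)$, and then rule it out of $\mu^*(\sigma_{b,c,a}^+)$. Your part (a) coincides with the paper's argument. Where you diverge is part (b): you bound $\mu^*(\sigma_{b,c,a}^+)$ using the embedding $\sigma_{b,c,a}^+\hookrightarrow\ds{-b}{c}\rtimes\sigma_a$ (Proposition \ref{druga}) and then carry out a segment-geometric case analysis on \eqref{komnozenje} to show that the only terms of the form $\ds{-b}{c}\otimes *$ in $\mu^*(\ds{-b}{c}\rtimes\sigma_a)$ are $\ds{-b}{c}\otimes\sigma_a$, after which the final step is $\sigma_a\ncong L(\ds{\frac{1}{2}}{a}\rtimes\sigma)$. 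The paper uses the other embedding from Proposition \ref{druga}, $\sigma_{b,c,a}^+\leq\ds{-a}{b}\rtimes\sigma_c$, and gets a shorter exclusion: because $\mu^*(\sigma_c)$ has only positive exponents by \eqref{jacquet-strogo-pozitivna}, the exponent $\nu^{-b}\rho$ in $\ds{-b}{c}$ forces $r=s=0$ in \eqref{komnozenje}, and then the $G$-factor would have to be a strongly positive $\sigma'$ equal to $L(\ds{\frac{1}{2}}{a}\rtimes\sigma)$, which is impossible. Both routes are sound; the paper's choice of embedding places the ``odd'' segment $[\nu^\frac{1}{2}\rho,\nu^c\rho]$ inside the inducing discrete series, so strong positivity of $\sigma_c$ kills the term immediately, whereas your choice leads to the longer support-bridging analysis you flagged. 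One presentational quibble: in (a) you phrase the uniqueness argument as if the $G$-side condition first forces $\sigma'=\sigma$; logically, the cuspidal-support constraint on the GL side forces $\delta'$ to contain $\nu^{\pm b}\rho$ and $\nu^c\rho$, and then \eqref{jacquet-segment} gives $\delta'\otimes\sigma'=\ds{-b}{c}\otimes\sigma$ directly, pinning both factors at once — but this does not affect the validity.
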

\begin{proof}
By Proposition 2.4 of  \cite{ciganovic}, 
and Proposition \ref{druga},  we have
    \begin{align}
    \label{lema-L12abc-jacq-f1}
    \ds{\frac{1}{2}}{a} \rtimes \sigma_{b,c}^+
    &=
        \sigma_{b,a,c}^+
        +
        L(\ds{\frac{1}{2}}{a} \rtimes \sigma_{b,c}^+ ),
    \\
    \label{lema-L12abc-jacq-f2}
    \ds{-a}{b}\rtimes  \sigma_c
    &=
        \sigma_{b,c,a}^+ +
        \sigma_{a,b,c}^- +
        L(\ds{-a}{b}\rtimes  \sigma_c).
    \end{align}
    It is enough to show that considered subquotient
    appears, in the appropriate Jacquet module, in 
    \eqref{lema-L12abc-jacq-f1}, with multiplicity one, 
    but does not appear in \eqref{lema-L12abc-jacq-f2}.
 
    $\bullet$
    In
    \eqref{lema-L12abc-jacq-f1}, we search for 
    $0\leq s\leq r\leq c+\frac{1}{2} $ and 
    $\delta'\otimes \sigma'\leq \mu^*(\sigma_{b,c}^+)$ such that
    \begin{align*}
        &\ds{-b}{c}
        \leq
        \ds{r-a}{-\frac{1}{2}}
            \times \ds{a+1-s}{a}
                \times \delta' \quad \textrm{and}
        \\
        &L(\ds{\frac{1}{2}}{a}\rtimes \sigma)
        \leq 
        \ds{a+1-r}{a-s}\rtimes \sigma'.         
    \end{align*}
    We see that $\nu^{-b}\rho$, 
    $\nu^{b}\rho$ and $\nu^{c}\rho$ are in the cuspidal support of $\delta'$.  By \eqref{jacquet-segment},    
    $\delta'\otimes \sigma'
        =\ds{-b}{c}\otimes \sigma$, once.
    Now
    $r=a+\frac{1}{2}$ and $s=0$.

    $\bullet$
    In \eqref{lema-L12abc-jacq-f2}, 
    we search for
    $0\leq s\leq r\leq b+a+1 $ and 
    $\delta'\otimes \sigma'\leq \mu^*(\sigma_c)$ such that
    \begin{align*}
        &\ds{-b}{c}
        \leq
        \ds{r-b}{a}
            \times \ds{b+1-s}{b}
                \times \delta' \quad \textrm{and}
        \\
        &L(\ds{\frac{1}{2}}{a}\rtimes \sigma)
        \leq 
        \ds{b+1-r}{b-s}\rtimes \sigma'.         
    \end{align*}
    By
    \eqref{jacquet-strogo-pozitivna}
    $\nu^{-b}\rho$ is not in s cuspidal support of $\delta'$.
    So $r=0$ and $s=0$. 
    By
    \eqref{jacquet-strogo-pozitivna},
    the second equation can not be satisfied.
\end{proof}

\begin{lemma}
\label{lema-diskretna-i-L-u-vecoj}
    We have in $R(G)$, with maximum multiplicities:
    \begin{align*}
    \mu^*(\ds{\frac{1}{2}}{b}\times\ds{\frac{1}{2}}{a}\times \sigma_c)
    \geq
    \ds{\frac{1}{2}}{a}\otimes \sigma_{b,c}^+&
    \\
    +
    \ds{-b}{c}
        \otimes 
            L(\ds{\frac{1}{2}}{a}\rtimes \sigma)&.
    \end{align*}
\end{lemma}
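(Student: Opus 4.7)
The plan is to exhibit both summands as contributions to $\mu^*(\ds{\frac{1}{2}}{a}\rtimes \sigma_{b,c}^+)$, using its two composition factors from Lemma~\ref{lema-pola-a-bc-tvrdnja}, and then to transfer them into $\mu^*(\ds{\frac{1}{2}}{b}\times \ds{\frac{1}{2}}{a}\rtimes \sigma_c)$ via an embedding obtained from the known subrepresentation $\sigma_{b,c}^+\hookrightarrow \ds{\frac{1}{2}}{b}\rtimes \sigma_c$.

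For the embedding, I would apply $\ds{\frac{1}{2}}{a}\rtimes{}$ to $\sigma_{b,c}^+\hookrightarrow \ds{\frac{1}{2}}{b}\rtimes \sigma_c$ (which is recalled in the introduction and in Theorem~\ref{muic-diskretne-podreprezentacije}) and invoke the irreducibility $\ds{\frac{1}{2}}{a}\times \ds{\frac{1}{2}}{b}\cong \ds{\frac{1}{2}}{b}\times \ds{\frac{1}{2}}{a}$ (one segment contains the other, so the commutativity quoted in the introduction applies). This yields
\[
\ds{\frac{1}{2}}{a}\rtimes \sigma_{b,c}^+ \hookrightarrow \ds{\frac{1}{2}}{b}\times \ds{\frac{1}{2}}{a}\rtimes \sigma_c,
\]
so every composition factor of $\mu^*(\ds{\frac{1}{2}}{a}\rtimes \sigma_{b,c}^+)$ appears in $\mu^*(\ds{\frac{1}{2}}{b}\times \ds{\frac{1}{2}}{a}\rtimes \sigma_c)$.

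By Lemma~\ref{lema-pola-a-bc-tvrdnja}, $\ds{\frac{1}{2}}{a}\rtimes \sigma_{b,c}^+ = \sigma_{b,c,a}^+ + L(\ds{\frac{1}{2}}{a}\rtimes \sigma_{b,c}^+)$ in $R(G)$, and each composition factor contributes one of the desired summands. Namely, formula \eqref{lema-pola-a-bc-f1} gives $\mu^*(\sigma_{b,c,a}^+)\geq \ds{\frac{1}{2}}{a}\otimes \sigma_{b,c}^+$, while Lemma~\ref{lema-L12abc-jacq} supplies $\mu^*(L(\ds{\frac{1}{2}}{a}\rtimes \sigma_{b,c}^+))\geq \ds{-b}{c}\otimes L(\ds{\frac{1}{2}}{a}\rtimes \sigma)$. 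The two terms are visibly distinct in $R(GL)\otimes R(G)$: the left-hand segments $\ds{\frac{1}{2}}{a}$ and $\ds{-b}{c}$ differ, and the right-hand representations differ as well (one is a discrete series, the other is not), so they add independently.

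I do not anticipate any substantial obstacle; the lemma is essentially a packaging of Lemmas~\ref{lema-pola-a-bc-tvrdnja} and~\ref{lema-L12abc-jacq} through the elementary embedding above, where the substantive Jacquet-module computation was already performed in Lemma~\ref{lema-L12abc-jacq}. The only point requiring any thought is setting up the embedding, which reduces to the commutativity of $\ds{\frac{1}{2}}{a}\times \ds{\frac{1}{2}}{b}$.
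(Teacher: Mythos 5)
Your embedding argument is correct as far as it goes, and it does establish the $\geq$ direction: $\ds{\frac{1}{2}}{a}\rtimes\sigma_{b,c}^+\hookrightarrow\ds{\frac{1}{2}}{b}\times\ds{\frac{1}{2}}{a}\rtimes\sigma_c$ via $\sigma_{b,c}^+\hookrightarrow\ds{\frac{1}{2}}{b}\rtimes\sigma_c$ and the nested-segments commutativity, and then Lemma~\ref{lema-pola-a-bc-tvrdnja} and Lemma~\ref{lema-L12abc-jacq} yield the two displayed Jacquet-module terms. But the lemma claims more than occurrence: the tag ``with maximum multiplicities'' (used consistently throughout the paper, cf.\ Lemma~\ref{dis-lema-jac-prva} and Lemma~\ref{jacquet-diskr-A1+}) asserts that the displayed multiplicities are the exact ones, i.e.\ each of $\ds{\frac{1}{2}}{a}\otimes\sigma_{b,c}^+$ and $\ds{-b}{c}\otimes L(\ds{\frac{1}{2}}{a}\rtimes\sigma)$ occurs with multiplicity \emph{exactly} one in $\mu^*(\ds{\frac{1}{2}}{b}\times\ds{\frac{1}{2}}{a}\rtimes\sigma_c)$. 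Pushing contributions from the subrepresentation $\ds{\frac{1}{2}}{a}\rtimes\sigma_{b,c}^+$ up into the ambient induced representation gives a lower bound only; it cannot rule out additional occurrences of the same terms arising from other pieces of the Geometric Lemma filtration of the bigger representation.

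That upper bound is precisely what is needed immediately afterwards: Proposition~\ref{prop-L12abc} uses this lemma to conclude that $\sigma_{b,c,a}^+$ and $L(\ds{\frac{1}{2}}{a}\rtimes\sigma_{b,c}^+)$ each appear \emph{at most} once in $\ds{\frac{1}{2}}{a}\times\ds{\frac{1}{2}}{b}\rtimes\sigma_c$, which is what drives the embedding argument there. The paper's own proof therefore works directly from \eqref{komnozenje}, iterating the Geometric Lemma over the two $GL$-segments with $\delta'\otimes\sigma'\leq\mu^*(\sigma_c)$, and shows that for each target term the parameters $(r,s,i,j)$ and $\delta'\otimes\sigma'$ are forced uniquely; combined with \eqref{jacquet-strogo-pozitivna} and the multiplicity-one facts on the residual right-hand factors, this gives the exact count. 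Your argument is missing that uniqueness/counting analysis, and the embedding alone cannot supply it.
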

\begin{proof}
    By \eqref{komnozenje} consider
    $0\leq s\leq r \leq b+\frac{1}{2}$, $0\leq j\leq i \leq a+\frac{1}{2}$,
    $\delta'\otimes \sigma'\leq \mu^*(\sigma_c)$, and
    \begin{align*}
    \ds{r-b}{-\frac{1}{2}}
    \times \ds{b-s+1}{b}
    \times
    \ds{i-a}{-\frac{1}{2}}
    \times
    \ds{a-j+1}{a}
     \times \delta'
    \\
    \otimes 
    \ds{b+1-r}{b-s}
    \times \ds{a+1-i}{a-j}
    \rtimes \sigma'.
    \end{align*}    
For the first subquotient
by \eqref{jacquet-strogo-pozitivna}, we see $\delta'\otimes\sigma'=1\otimes\sigma_c$.
Further $r=b+\frac{1}{2}$, $s=0$, 
$i=a+\frac{1}{2}$ 
and  $j=a+\frac{1}{2}$.
For the second,
we must have $r=0$, so $s=0$ and $i=a+\frac{1}{2}$.
Now  $a-j+1=\frac{1}{2}$, is not possible, since $\sigma'$ is in discrete series, by 
\eqref{jacquet-strogo-pozitivna}. So 
$\nu^\frac{1}{2}\rho$ is in cuspidal support of $\delta'$ and we have 
$\delta'\otimes \sigma'=\ds{\frac{1}{2}}{a}\otimes\sigma$, $j=0$.
\end{proof}

The next proposition gives position of 
$L(\ds{\frac{1}{2}}{a}
\rtimes \sigma_{b,c}^+)$
\begin{proposition} 
\label{prop-L12abc}
    There exists an embedding
    \[
        \ds{\frac{1}{2}}{a}
         \rtimes \sigma_{b,c}^+
            \hookrightarrow
        \ds{\frac{1}{2}}{b}
         \rtimes \sigma_{a,c}^+.
    \]
\end{proposition}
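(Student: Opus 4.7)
My plan is to realize both $A := \ds{\frac{1}{2}}{a} \rtimes \sigma_{b,c}^+$ and $B := \ds{\frac{1}{2}}{b} \rtimes \sigma_{a,c}^+$ as subrepresentations of the common induced representation $X := \ds{\frac{1}{2}}{b} \times \ds{\frac{1}{2}}{a} \rtimes \sigma_c$, and then to exhibit the desired embedding via Frobenius reciprocity.

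Both embeddings into $X$ arise from standard constructions. Theorem \ref{prva} gives $\sigma_{b,c}^+ \hookrightarrow \ds{\frac{1}{2}}{b} \rtimes \sigma_c$, and since $[\nu^{\frac{1}{2}}\rho, \nu^{a}\rho] \subsetneq [\nu^{\frac{1}{2}}\rho, \nu^{b}\rho]$, the Introduction's remark on irreducible products yields $\ds{\frac{1}{2}}{a} \times \ds{\frac{1}{2}}{b} \cong \ds{\frac{1}{2}}{b} \times \ds{\frac{1}{2}}{a}$ irreducibly, so inducing $\sigma_{b,c}^+$ by $\ds{\frac{1}{2}}{a}$ and swapping lands $A$ inside $X$. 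Similarly, the analogue of Lemma \ref{lema-diskretna-podreprezentacija} (with $c$ in the role of $b$, valid since $a<c$) gives $\sigma_{a,c}^+ \hookrightarrow \ds{\frac{1}{2}}{a} \rtimes \sigma_c$, and inducing by $\ds{\frac{1}{2}}{b}$ embeds $B$ into $X$.

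For the main embedding, I would apply Frobenius reciprocity at the parabolic with Levi $M \cong GL((b+\tfrac{1}{2})m_\rho,F)\times G_{n'}$: a nonzero element of $\mathrm{Hom}(A,B)$ corresponds to a nonzero $M$-homomorphism $r_M(A) \to \ds{\frac{1}{2}}{b} \otimes \sigma_{a,c}^+$. Taking $i=-a-1$, $j=c$ in \eqref{jacquet-segment} yields $\mu^*(\sigma_{b,c}^+) \geq \ds{a+1}{b} \otimes \sigma_{a,c}^+$, and plugging this into Tadić's formula \eqref{komnozenje} with outer indices $i=j=a+\tfrac{1}{2}$ produces the summand $\ds{\frac{1}{2}}{a}\times\ds{a+1}{b}\otimes\sigma_{a,c}^+$ of $\mu^*(A)$; since $\ds{\frac{1}{2}}{b}$ is a composition factor of $\ds{\frac{1}{2}}{a}\times\ds{a+1}{b}$, one extracts $\ds{\frac{1}{2}}{b}\otimes\sigma_{a,c}^+$ as a subquotient, and a multiplicity-one analysis in the spirit of Sections \ref{mult_L1}--\ref{mult_L3} should upgrade this to a genuine quotient, yielding a nonzero map $f:A\to B$.

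Finally, $f$ must be injective: its kernel, if nonzero, would contain the unique irreducible subrepresentation $\sigma_{b,c,a}^+$ of $A$ provided by Lemma \ref{lema-pola-a-bc-tvrdnja}. But $\sigma_{b,c,a}^+$ embeds in $B$ by Proposition \ref{lema-ulaganje-tri-reprezentacije} and appears there with multiplicity one by Proposition \ref{prop-kompozicijski-fakt1}, which forces $f|_{\sigma_{b,c,a}^+}$ to coincide (up to scalar) with that embedding, hence to be injective. The main obstacle is the multiplicity-one upgrade of the subquotient $\ds{\frac{1}{2}}{b}\otimes\sigma_{a,c}^+$ to a quotient of $r_M(A)$, which will require a finer Jacquet-module filtration argument beyond those encountered so far in the paper.
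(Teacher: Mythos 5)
You correctly identify the common ambient representation $\pi = \ds{\frac{1}{2}}{b}\times\ds{\frac{1}{2}}{a}\rtimes\sigma_c$ and correctly embed both $A=\ds{\frac{1}{2}}{a}\rtimes\sigma_{b,c}^+$ and $B=\ds{\frac{1}{2}}{b}\rtimes\sigma_{a,c}^+$ into it; this matches the paper's setup. But then you diverge, and the divergence opens two genuine gaps.

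First, the Frobenius reciprocity step does not close. Showing $\ds{\frac{1}{2}}{b}\otimes\sigma_{a,c}^+ \leq \mu^*(A)$ only says it is a \emph{subquotient} of the Jacquet module; Frobenius reciprocity $\mathrm{Hom}_G(A,B)\cong\mathrm{Hom}_M(r_M(A),\ds{\frac{1}{2}}{b}\otimes\sigma_{a,c}^+)$ requires it to be a \emph{quotient} of $r_M(A)$. You acknowledge this hole yourself, but it is the crux, and a multiplicity-one count alone cannot resolve it — the geometric lemma gives a filtration of $r_M(A)$, and nothing in your computation locates $\ds{\frac{1}{2}}{b}\otimes\sigma_{a,c}^+$ at the top of that filtration. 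Second, even granting a nonzero $f:A\to B$, your injectivity argument is not sound: ``$\sigma_{b,c,a}^+$ appears in $B$ with multiplicity one'' does not force $f|_{\sigma_{b,c,a}^+}$ to be nonzero — the restriction of $f$ could simply vanish. To rule out $\sigma_{b,c,a}^+\subseteq\ker f$ you would need to show $L(\ds{\frac{1}{2}}{a}\rtimes\sigma_{b,c}^+)$ is not a subrepresentation of $B$, which is circular at this point (that fact is only established in Proposition \ref{prop-pola-b-ac-+}, which depends on the result you are trying to prove).

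The paper avoids both problems by exploiting the common embedding into $\pi$ directly: since $A$ has exactly the two composition factors $\sigma_{b,c,a}^+$ and $L(\ds{\frac{1}{2}}{a}\rtimes\sigma_{b,c}^+)$ (Lemma \ref{lema-pola-a-bc-tvrdnja}), each occurs with multiplicity one in $\pi$ (Lemmas \ref{lema-L12abc-jacq} and \ref{lema-diskretna-i-L-u-vecoj}), and both occur in $B$ (Proposition \ref{prop-kompozicijski-fakt1}, proved earlier), the quotient $A/(A\cap B)\hookrightarrow\pi/B$ can have no composition factors, hence $A\subseteq B$. If you want to salvage your approach, replace the Frobenius-reciprocity construction and the injectivity step with this comparison of multiplicities inside $\pi$.
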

\begin{proof}
    Denote 
    $\pi=\ds{\frac{1}{2}}{a} \times 
       \ds{\frac{1}{2}}{b}
        \rtimes \sigma_c$.
    By Lemma
    \ref{lema-diskretna-podreprezentacija}, we have
\begin{align*}
   \ds{\frac{1}{2}}{b}
         \rtimes \sigma_{a,c}^+
            \hookrightarrow
    \ds{\frac{1}{2}}{b}\times \ds{\frac{1}{2}}{a}\rtimes \sigma_c \cong \pi,& \quad 
    \textrm{ and by \eqref{lema-pola-a-bc-f} }
    \\
    \sigma_{b,c,a}^+
    +
    L
    (\ds{\frac{1}{2}}{a} \rtimes \sigma_{b,c}^+ )
    \overset{R(G)}{=}
    \ds{\frac{1}{2}}{a}
    \rtimes \sigma_{b,c}^+
        \hookrightarrow \pi.&
\end{align*}
It is enough to show that both 
$\sigma_{b,c,a}^+$
    and
$\textrm{L}
(\ds{\frac{1}{2}}{a}\rtimes \sigma_{b,c}^+ )$
appear in 
$\pi$ once. 
This follows by  
Lemmas 
\ref{lema-pola-a-bc-tvrdnja},
\ref{lema-L12abc-jacq}
and
\ref{lema-diskretna-i-L-u-vecoj}.
\end{proof}
Finally
\begin{proposition} \label{prop-pola-b-ac-+}
    Induced representation
    $\ds{\frac{1}{2}}{b}
         \rtimes \sigma_{a,c}^+$ has a unique irreducible subrepresentation and unique quotient. We have an exact sequence
    \begin{align*}
        L(\ds{\frac{1}{2}}{a}\rtimes \sigma_{b,c}^+)
        +
        L(\ds{-a}{b}\rtimes \sigma_c)
    &\longrightarrow
    \\
        \ds{\frac{1}{2}}{b}
            \rtimes \sigma_{a,c}^+
                /
                \sigma_{a,b,c}^+
     &\longrightarrow
         L(\ds{\frac{1}{2}}{b}
         \rtimes \sigma_{a,c}^+).
    \end{align*}    
\end{proposition}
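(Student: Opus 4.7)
Write $\pi=\ds{\frac{1}{2}}{b}\rtimes\sigma_{a,c}^+$. Since $\sigma_{a,c}^+$ is a discrete series (Theorem~\ref{prva}), hence tempered, and $e(\ds{\frac{1}{2}}{b})>0$, the representation $\pi$ is a standard module, so its Langlands quotient $L(\pi):=L(\ds{\frac{1}{2}}{b}\rtimes\sigma_{a,c}^+)$ is the unique irreducible quotient. By Proposition~\ref{prop-kompozicijski-fakt1}, the remaining composition factors are $\sigma_{b,c,a}^+$, $L_1:=L(\ds{-a}{b}\rtimes\sigma_c)$ and $L_2:=L(\ds{\frac{1}{2}}{a}\rtimes\sigma_{b,c}^+)$, each with multiplicity one. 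The plan is to assemble the filtration from the two embeddings already produced.

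Set $U_1:=\ds{-a}{b}\rtimes\sigma_c/\sigma_{a,b,c}^-\hookrightarrow\pi$ from Proposition~\ref{lema-ulaganje-tri-reprezentacije} and $U_2:=\ds{\frac{1}{2}}{a}\rtimes\sigma_{b,c}^+\hookrightarrow\pi$ from Proposition~\ref{prop-L12abc}. Combining Proposition~\ref{druga}, Lemma~\ref{lema-pola-a-bc-tvrdnja}, and the fact that the Langlands quotient of a non-irreducible standard module is not a subrepresentation of that module, each $U_i$ has length two with simple socle $\sigma_{b,c,a}^+$ and simple cosocle $L_1$, respectively $L_2$. Since $\sigma_{b,c,a}^+$ has multiplicity one in $\pi$ (Proposition~\ref{diskretne-u-velikoj-prpopzicija}), its copies inside $U_1$ and $U_2$ coincide as submodules of $\pi$, so $\sigma_{b,c,a}^+\subseteq U_1\cap U_2$; conversely, the composition factors of $U_1\cap U_2$ must belong to both $\{\sigma_{b,c,a}^+,L_1\}$ and $\{\sigma_{b,c,a}^+,L_2\}$, which forces $U_1\cap U_2=\sigma_{b,c,a}^+$. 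Setting $V:=U_1+U_2$, the images of $U_1,U_2$ in $\pi/\sigma_{b,c,a}^+$ are the distinct simple submodules $L_1,L_2$, which intersect trivially, so
\[
V/\sigma_{b,c,a}^+\cong L_1\oplus L_2.
\]
By Proposition~\ref{prop-kompozicijski-fakt1} the remaining composition factor forces $\pi/V\cong L(\pi)$, and the filtration $\sigma_{b,c,a}^+\subset V\subset\pi$ yields the claimed exact sequence.

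For uniqueness of the irreducible subrepresentation, consider a simple $W\hookrightarrow\pi$ with $W\not\cong\sigma_{b,c,a}^+$. If $W\cap V=0$, then $W\hookrightarrow\pi/V\cong L(\pi)$, so $W\cong L(\pi)$ and $\pi\cong V\oplus L(\pi)$; this is impossible since the right-hand side would exhibit three distinct irreducible quotients $L_1$, $L_2$, $L(\pi)$, contradicting uniqueness of the Langlands quotient of the standard module $\pi$. If $W\subseteq V$, then $W$ lifts via the surjection $U_1\oplus U_2\twoheadrightarrow V$ (with kernel the diagonal copy of $\sigma_{b,c,a}^+$) to a length-two submodule $S\subseteq U_1\oplus U_2$ containing this diagonal, and a brief case analysis on the projections $p_i(S)\in\{0,\sigma_{b,c,a}^+,U_i\}$, using $\mathrm{Hom}(U_i,\sigma_{b,c,a}^+)=0$ and $U_1\not\cong U_2$, rules out every outcome except $W\cong\sigma_{b,c,a}^+$. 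This socle computation—verifying that no new simple submodule is introduced by the quotient $V=(U_1\oplus U_2)/\sigma_{b,c,a}^+$—is the main delicate step; once it is in hand, uniqueness of both socle and cosocle, together with the desired exact sequence, follows at once.
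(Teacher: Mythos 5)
Your proof is correct and takes essentially the same approach as the paper's, which assembles the filtration from the two embeddings established in Propositions \ref{lema-ulaganje-tri-reprezentacije} and \ref{prop-L12abc} together with Propositions \ref{druga} and \ref{lema-pola-a-bc-tvrdnja} and the multiplicity information of Proposition \ref{prop-kompozicijski-fakt1}. You merely make explicit the socle computation via $U_1\oplus U_2$ (with the anti-diagonal copy of $\sigma_{b,c,a}^+$ as kernel) that the paper leaves to the reader.
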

\begin{proof}
    Composition factors are determined by 
    Proposition
    \ref{prop-kompozicijski-fakt1}.
    Position of irreducible subquotients are determined by
    Propositions \ref{druga}, 
    \ref{lema-ulaganje-tri-reprezentacije},
    \ref{lema-pola-a-bc-tvrdnja}
    and \ref{prop-L12abc}.
\end{proof}

By Theorem \ref{contra}, we have

\begin{corollary}
\label{kor_-b_-pola-ac-+}
Induced representation
    $\ds{-b}{-\frac{1}{2}}
         \rtimes \sigma_{a,c}^+$ has a unique irreducible subrepresentation and a unique irreducible quotient. We have an exact sequence
    \begin{align*}
        L(\ds{\frac{1}{2}}{a}\rtimes \sigma_{b,c}^+)
        +
        L(\ds{-a}{b}\rtimes \sigma_c)
    &\longrightarrow
    \\
        \ds{-b}{-\frac{1}{2}}
            \rtimes \sigma_{a,c}^+
                /
                L(\ds{\frac{1}{2}}{b}
         \rtimes \sigma_{a,c}^+)
     &\longrightarrow
         \sigma_{a,b,c}^+.
    \end{align*}    
\end{corollary}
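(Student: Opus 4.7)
The plan is to dualize the statement of Proposition \ref{prop-pola-b-ac-+} via the contravariant exact functor $\wedge$ of Theorem \ref{contra}. The key input is the identification
\[
(\ds{\frac{1}{2}}{b}\rtimes \sigma_{a,c}^+)^\wedge
\cong
\ds{-b}{-\frac{1}{2}}\rtimes \sigma_{a,c}^+,
\]
which follows because $\rho$ is self-dual (Proposition 2.4 of \cite{tadic:red-par-ind}, recalled at the start of Section \ref{sect:2}), so $\widetilde{\ds{\frac{1}{2}}{b}} = \ds{-b}{-\frac{1}{2}}$, and because $\sigma_{a,c}^+$ is irreducible, hence fixed by $\wedge$.

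Next, every irreducible subquotient that appears in Proposition \ref{prop-pola-b-ac-+}, namely $\sigma_{a,b,c}^+$, $L(\ds{\frac{1}{2}}{a}\rtimes \sigma_{b,c}^+)$, $L(\ds{-a}{b}\rtimes \sigma_c)$, and $L(\ds{\frac{1}{2}}{b}\rtimes \sigma_{a,c}^+)$, is fixed by $\wedge$ by Theorem \ref{contra}. In particular multiplicities are preserved and the set of composition factors of $\ds{-b}{-\frac{1}{2}}\rtimes \sigma_{a,c}^+$ coincides with that of $\ds{\frac{1}{2}}{b}\rtimes \sigma_{a,c}^+$.

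Applying the contravariant exact functor $\wedge$ to the three-step filtration of Proposition \ref{prop-pola-b-ac-+} reverses the order of the layers. The unique irreducible subrepresentation $\sigma_{a,b,c}^+$ of $\ds{\frac{1}{2}}{b}\rtimes \sigma_{a,c}^+$ becomes the unique irreducible quotient of $\ds{-b}{-\frac{1}{2}}\rtimes \sigma_{a,c}^+$; the unique irreducible quotient $L(\ds{\frac{1}{2}}{b}\rtimes \sigma_{a,c}^+)$ becomes the unique irreducible subrepresentation; and the middle semisimple layer $L(\ds{\frac{1}{2}}{a}\rtimes \sigma_{b,c}^+) + L(\ds{-a}{b}\rtimes \sigma_c)$ is mapped to itself. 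This gives exactly the claimed exact sequence.

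There is no substantive obstacle: the argument is purely formal once Theorem \ref{contra} is available, and the only minor check is the self-duality computation, which is immediate from the explicit description of $\wedge$ and of the contragredient of a segment representation.
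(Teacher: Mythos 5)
Your argument is correct and matches the paper's reasoning exactly: the paper derives this corollary from Proposition \ref{prop-pola-b-ac-+} by a single appeal to Theorem \ref{contra}, which is precisely your dualization argument. You simply spell out the routine verifications (self-duality of $\rho$, fixity of irreducibles under $\wedge$, reversal of the filtration) that the paper leaves implicit.
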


Now we state the
composition series of $\ds{\frac{1}{2}}{b} \rtimes \sigma_{a,c}^-$ as a direct consequence of Proposition \ref{prop-kompozicijski-fakt2}.
\begin{proposition} \label{prop-pola-b-ac--}
 We have a  non split exact sequence
    \begin{align*}
L(\ds{\frac{1}{2}}{a}\rtimes \sigma_{b,c}^-)
\rightarrow
\ds{\frac{1}{2}}{b} \rtimes \sigma_{a,c}^-
\rightarrow
L(\ds{\frac{1}{2}}{b} \rtimes \sigma_{a,c}^-).     
    \end{align*}
\end{proposition}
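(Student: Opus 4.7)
The plan is to read off the two composition factors from the previously established composition series, then use the Langlands classification to determine which is the subrepresentation and which is the quotient, and finally rule out splitting.

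First I would invoke Proposition \ref{prop-kompozicijski-fakt2}, which shows that $\ds{\frac{1}{2}}{b} \rtimes \sigma_{a,c}^-$ has length exactly two, with composition factors $L(\ds{\frac{1}{2}}{b} \rtimes \sigma_{a,c}^-)$ and $L(\ds{\frac{1}{2}}{a}\rtimes \sigma_{b,c}^-)$, each of multiplicity one. Next, since $\sigma_{a,c}^-$ is a discrete series by Theorem \ref{muic-diskretne-podreprezentacije}, and since $e(\ds{\frac{1}{2}}{b}) = (\frac{1}{2}+b)/2 > 0$, the induced representation $\ds{\frac{1}{2}}{b} \rtimes \sigma_{a,c}^-$ is a standard module in the sense of the Langlands classification recalled in the Preliminaries. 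Thus it admits a unique irreducible quotient, namely $L(\ds{\frac{1}{2}}{b} \rtimes \sigma_{a,c}^-)$, which appears with multiplicity one as a quotient. The kernel of the corresponding surjection has length one by additivity, and since it cannot equal the already-exhausted Langlands quotient, it must be isomorphic to $L(\ds{\frac{1}{2}}{a}\rtimes \sigma_{b,c}^-)$. This yields the displayed exact sequence.

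For non-splitness, I would argue by contradiction. If the sequence split, then $\ds{\frac{1}{2}}{b} \rtimes \sigma_{a,c}^-$ would decompose as a direct sum of two non-zero subrepresentations. Each summand admits an irreducible quotient, and by the uniqueness of the Langlands quotient of the ambient standard module, any such quotient must be isomorphic to $L(\ds{\frac{1}{2}}{b} \rtimes \sigma_{a,c}^-)$. Consequently $L(\ds{\frac{1}{2}}{b} \rtimes \sigma_{a,c}^-)$ would occur as a composition factor with multiplicity at least two, contradicting the multiplicity-one statement in Proposition \ref{prop-kompozicijski-fakt2}.

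There is essentially no obstacle here, since all the real work went into proving Proposition \ref{prop-kompozicijski-fakt2}; the present assertion is simply a bookkeeping consequence of that proposition together with the Langlands classification, as the statement itself already signals by calling it a direct consequence.
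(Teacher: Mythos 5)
Your proof is correct and follows exactly the route the paper intends: the paper offers no explicit proof, merely declaring the result a direct consequence of Proposition \ref{prop-kompozicijski-fakt2}, and your argument supplies precisely the missing bookkeeping — reading off the two multiplicity-one factors, identifying the unique irreducible (Langlands) quotient of the standard module, and deducing non-splitness from the fact that a direct-sum decomposition would force the Langlands quotient to appear as a quotient of both summands, contradicting its multiplicity one.
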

By Theorem \ref{contra}, we have
\begin{corollary} \label{kor_-b_-pola_ac-}
 We have a  non split exact sequence
    \begin{align*}
L(\ds{\frac{1}{2}}{b} \rtimes \sigma_{a,c}^-)
\rightarrow
\ds{-b}{-\frac{1}{2}} \rtimes \sigma_{a,c}^-
\rightarrow
L(\ds{\frac{1}{2}}{a}\rtimes \sigma_{b,c}^-).     \end{align*}
\end{corollary}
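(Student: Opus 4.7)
The plan is to deduce this corollary directly from Proposition \ref{prop-pola-b-ac--} by applying the contravariant exact functor $\wedge$ of Theorem \ref{contra}. The strategy is the same one used to pass from Proposition \ref{prop-pola-b-ac-+} to Corollary \ref{kor_-b_-pola-ac-+}.

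First I would identify the image of the middle term under $\wedge$. Since $\rho$ is self-dual (Proposition 2.4 of \cite{tadic:red-par-ind}, recalled at the start of Section \ref{sect:2}), we have $\widetilde{\delta([\nu^{1/2}\rho,\nu^b\rho])} = \delta([\nu^{-b}\rho,\nu^{-1/2}\rho])$. Combined with the identity $\overset{\wedge}{\pi} \cong \pi$ for irreducible $\pi$ applied to the irreducible representation $\sigma_{a,c}^-$, Theorem \ref{contra} yields
\[
(\ds{\frac{1}{2}}{b}\rtimes \sigma_{a,c}^-)^{\wedge} \cong \ds{-b}{-\frac{1}{2}}\rtimes \sigma_{a,c}^-.
\]
The same principle applied to the outer terms gives $L(\ds{\frac{1}{2}}{a}\rtimes \sigma_{b,c}^-)^{\wedge} \cong L(\ds{\frac{1}{2}}{a}\rtimes \sigma_{b,c}^-)$ and $L(\ds{\frac{1}{2}}{b}\rtimes \sigma_{a,c}^-)^{\wedge} \cong L(\ds{\frac{1}{2}}{b}\rtimes \sigma_{a,c}^-)$, since both are irreducible.

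Next I would apply the contravariant exact functor $\wedge$ to the non-split short exact sequence of Proposition \ref{prop-pola-b-ac--}. Exactness together with contravariance reverses the arrows, producing the desired exact sequence
\[
L(\ds{\frac{1}{2}}{b}\rtimes \sigma_{a,c}^-) \hookrightarrow \ds{-b}{-\frac{1}{2}}\rtimes \sigma_{a,c}^- \twoheadrightarrow L(\ds{\frac{1}{2}}{a}\rtimes \sigma_{b,c}^-).
\]
Finally, non-splitting is preserved by any contravariant additive functor: if the dual sequence split, dualizing again would yield a splitting of the original sequence in Proposition \ref{prop-pola-b-ac--}, contradicting its non-split conclusion.

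There is really no obstacle here; the only points requiring care are the self-duality of $\rho$ (which guarantees that $\wedge$ sends $\delta([\nu^{1/2}\rho,\nu^b\rho])$ to $\delta([\nu^{-b}\rho,\nu^{-1/2}\rho])$ and not to some other segment) and the irreducibility of $\sigma_{a,c}^-$ (so that it is fixed by $\wedge$). Both are already established in Section \ref{sect:2} and in Theorem \ref{prva}.
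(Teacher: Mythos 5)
Your proof is correct and takes exactly the same approach as the paper, which simply cites Theorem \ref{contra} to pass from Proposition \ref{prop-pola-b-ac--} to this corollary; you have merely spelled out the routine details (self-duality of $\rho$, fixity of irreducibles under $\wedge$, arrow reversal, and preservation of non-splitting) that the paper leaves implicit.
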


%%%%%%%%%%%%%%%%%%%%%%%%%%%%%%%%%%%%%%%%%%%%%%%%%%%%%%%%%%
%%%%%%%%%%%%%%%%%%%%%%%%%%%%%%%%%%%%%%%%%%

\section{
Composition series of 
$\ds{-c}{b}\times \ds{\frac{1}{2}}{a}\rtimes \sigma$
}
\label{kompozicijski4}
Here we determine the composition series of the kernel $K_2$ from Section \ref{dekompozicija1}.

\begin{proposition}
\label{prop_-cb_pola_a}
    Induced representation
    $\ds{-c}{b}\times \ds{\frac{1}{2}}{a}\rtimes \sigma$
    has exactly one irreducible subrepresentation, and two irreducible quotients. We have an exact sequence
    \begin{align*}
        \sigma_{b,c,a}^+ 
        + 
        L(\ds{-b}{c}\times \ds{\frac{1}{2}}{a}\times \sigma)
        +
        \sigma_{b,c,a}^-
        \longrightarrow
        \\
        \ds{-c}{b}\times \ds{\frac{1}{2}}{a}\rtimes \sigma
        \textrm{/}
        L(\ds{-b}{c}\rtimes \sigma_a)
        \\
        \longrightarrow
        L(\ds{\frac{1}{2}}{a}\rtimes \sigma_{b,c}^+)
        +
        L(\ds{\frac{1}{2}}{a}\rtimes \sigma_{b,c}^-).
    \end{align*}
\end{proposition}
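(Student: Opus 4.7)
The plan is to pin down the socle and cosocle of $K_2 := \delta([\nu^{-c}\rho,\nu^b\rho])\times\delta([\nu^{\frac{1}{2}}\rho,\nu^a\rho])\rtimes\sigma$ by explicit embeddings and surjections, and then use the multiplicity-one data to trap the remaining composition factors in the middle. The composition factors themselves come almost for free: since $K_2 \cong (\ds{-b}{c}\times\ds{\frac{1}{2}}{a}\rtimes\sigma)^{\wedge}$ by Theorem~\ref{contra}, Theorem~\ref{ciganovic-A2} lists the six composition factors of $K_2$, each with multiplicity one.

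For the socle, the composition
\[
L(\ds{-b}{c}\rtimes\sigma_a)\hookrightarrow \ds{-c}{b}\rtimes\sigma_a \hookrightarrow K_2,
\]
where the first arrow is the Langlands-subrepresentation characterization and the second is induced from $\sigma_a\hookrightarrow\ds{\frac{1}{2}}{a}\rtimes\sigma$ (Theorem~\ref{prva}), places $L(\ds{-b}{c}\rtimes\sigma_a)$ into the socle of $K_2$.

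For the cosocle, I first embed each $L(\ds{\frac{1}{2}}{a}\rtimes\sigma_{b,c}^{\pm})$ into $K_2^{\wedge}$. From the Langlands-subrepresentation characterization $L(\ds{\frac{1}{2}}{a}\rtimes\sigma_{b,c}^{\pm})\hookrightarrow \ds{-a}{-\frac{1}{2}}\rtimes\sigma_{b,c}^{\pm}$ and from $\sigma_{b,c}^{\pm}\hookrightarrow\ds{-b}{c}\rtimes\sigma$ (Theorem~\ref{prva}), I obtain
\[
L(\ds{\frac{1}{2}}{a}\rtimes\sigma_{b,c}^{\pm})\hookrightarrow \ds{-a}{-\frac{1}{2}}\times \ds{-b}{c}\rtimes\sigma \cong \ds{-b}{c}\times\ds{-a}{-\frac{1}{2}}\rtimes\sigma \cong K_2^{\wedge},
\]
the middle isomorphism holding because $[-a,-\tfrac{1}{2}]\subsetneq [-b,c]$ forces $\ds{-a}{-\frac{1}{2}}\times\ds{-b}{c}$ to be irreducible. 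Applying the contravariant exact functor $\wedge$ of Theorem~\ref{contra} converts these embeddings into surjections $K_2\twoheadrightarrow L(\ds{\frac{1}{2}}{a}\rtimes\sigma_{b,c}^{\pm})$.

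It then remains to exclude $\sigma_{b,c,a}^{\pm}$ and $L(\ds{-b}{c}\times\ds{\frac{1}{2}}{a}\rtimes\sigma)$ from both the socle and the cosocle of $K_2$, and this is the main obstacle. I would compute $\mu^{*}(K_2)$ via \eqref{komnozenje}, isolate the constituent $\ds{-c}{b}\otimes\ds{\frac{1}{2}}{a}\otimes\sigma$ that governs Frobenius maps into the socle (and dually, the constituent controlling maps onto the cosocle via $K_2^{\wedge}$), and then check, in the spirit of Lemmas~\ref{lema-Labc-jac} and~\ref{lema-L12abc-jacq}, that the corresponding pieces of $\mu^{*}(\sigma_{b,c,a}^{\pm})$ and of $\mu^{*}(L(\ds{-b}{c}\times\ds{\frac{1}{2}}{a}\rtimes\sigma))$ do not match. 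With socle, cosocle and the three middle factors located, each with multiplicity one, the three-step exact sequence in the statement is forced.
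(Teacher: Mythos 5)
Your identification of the socle and cosocle constituents is correct and agrees with the paper: the chain
\[
L(\ds{-b}{c}\rtimes\sigma_a)\hookrightarrow\ds{-c}{b}\rtimes\sigma_a\hookrightarrow K_2
\]
pins the unique irreducible subrepresentation, and dualizing
\[
L(\ds{\frac{1}{2}}{a}\rtimes\sigma_{b,c}^{\pm})\hookrightarrow\ds{-a}{-\frac{1}{2}}\rtimes\sigma_{b,c}^{\pm}\hookrightarrow K_2^{\wedge}
\]
gives the two irreducible quotients. However, the final step is a genuine gap: knowing the socle, the cosocle, and the remaining composition factors (each with multiplicity one) does \emph{not} force the middle term of the exact sequence to be the direct sum $\sigma_{b,c,a}^{+}\oplus L(\ds{-b}{c}\times\ds{\frac{1}{2}}{a}\rtimes\sigma)\oplus\sigma_{b,c,a}^{-}$. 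Once you pass to $V/S$ (where $S$ is the socle), its cosocle is $L(\ds{\frac{1}{2}}{a}\rtimes\sigma_{b,c}^{+})\oplus L(\ds{\frac{1}{2}}{a}\rtimes\sigma_{b,c}^{-})$, but the kernel of that surjection could a priori be an indecomposable module built from the three middle factors, rather than their direct sum as the statement asserts.

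The paper closes this gap by producing explicit length-three subrepresentations of $\pi=K_2$ whose cosocles land in $\pi/S$. Concretely, the subrepresentation $\ds{-c}{b}\rtimes\sigma_a\hookrightarrow\pi$ has socle $S$ and cosocle $\sigma_{b,c,a}^{+}\oplus\sigma_{b,c,a}^{-}$ (Proposition~\ref{druga}), so modulo $S$ this gives $\sigma_{b,c,a}^{+}\oplus\sigma_{b,c,a}^{-}\hookrightarrow\pi/S$; and the subrepresentation $\ds{\frac{1}{2}}{a}\times L(\ds{-b}{c}\rtimes\sigma)\hookrightarrow\pi$ (analyzed via Corollary 4.1 of \cite{ciganovic}) has socle $S$ and cosocle $L(\ds{-b}{c}\times\ds{\frac{1}{2}}{a}\rtimes\sigma)$, giving $L(\ds{-b}{c}\times\ds{\frac{1}{2}}{a}\rtimes\sigma)\hookrightarrow\pi/S$. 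Multiplicity one then combines these into $\sigma_{b,c,a}^{+}\oplus L(\ds{-b}{c}\times\ds{\frac{1}{2}}{a}\rtimes\sigma)\oplus\sigma_{b,c,a}^{-}\hookrightarrow\pi/S$, and the quotient is the cosocle. You would need to add these (or equivalent) embeddings to your argument; the Jacquet-module computations you propose only control what appears in the socle and cosocle of $\pi$ itself and cannot by themselves produce the direct-sum structure of the middle layer.
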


\begin{proof}
Denote the induced representation by $\pi$.
By Theorem \ref{ciganovic-A2}, $\pi$
is a multiplicity one representation
with irreducible subquotients listed in the claim.
Consider embeddings
\begin{align*}
    \ds{-c}{b}\rtimes \sigma_a \rightarrow \pi,
    \\
    \ds{\frac{1}{2}}{a}
    \times L(\ds{-b}{c}\rtimes \sigma) 
    \rightarrow \pi,
    \\
    \ds{-b}{c}\rtimes 
        L(\ds{\frac{1}{2}}{a} \rtimes \sigma)
        \rightarrow
        \pi^\wedge,
     \\
     \ds{-a}{-\frac{1}{2}} \rtimes \sigma_{b,c}^\pm
     \rightarrow
        \pi^\wedge.   
\end{align*}
We shall describe representations on the left and the claim will follow.
\\
$\bullet$
By Proposition \ref{druga}, 
$\ds{-c}{b}\rtimes \sigma_a$
has a unique irreducible subrepresentation: 
\\
$L(\ds{-b}{c}\rtimes \sigma_a)$, and two irreducible quotients: $\sigma_{b,c,a}^\pm$.
\\
$\bullet$ By Corrolary 4.1 of 
\cite{ciganovic}, 
$\ds{\frac{1}{2}}{a}
    \times L(\ds{-b}{c}\rtimes \sigma)$ is a quotient of 
    $\ds{\frac{1}{2}}{a}
    \times \ds{-b}{c}\rtimes \sigma$ containing
    a unique irreducible subrepresentation: 
    $L(\ds{-b}{c}\rtimes \sigma_a)$ and a 
    quotient:
    $L( \ds{-b}{c}
    \times\ds{\frac{1}{2}}{a}
    \rtimes \sigma )
    $.
\\
$\bullet$ By Corrolary 4.1 of 
\cite{ciganovic}, 
$\ds{-b}{c}\rtimes 
        L(\ds{\frac{1}{2}}{a} \rtimes \sigma)$
is a quotient of 
$\ds{\frac{1}{2}}{a}
    \times \ds{-b}{c}\rtimes \sigma$, containing 
    a unique irreducible quotient:
    $L(\ds{\frac{1}{2}}{a}
    \times 
    \ds{-b}{c}\rtimes \sigma)$ and two irreducible subrepresentations:
    $L(\ds{\frac{1}{2}}{a}\rtimes \sigma_{b,c}^\pm)$
    Thus $\pi$ has a quotient, containing two irreducible quotients: 
    $L(\ds{\frac{1}{2}}{a}\rtimes \sigma_{b,c}^\pm)$
    and a unique irreducible subrepresentation:
    $L(\ds{\frac{1}{2}}{a}
    \times 
    \ds{-b}{c}\rtimes \sigma)$.
\\
$\bullet$ Similarly, by Lemma 
\ref{lema-pola-a-bc-tvrdnja},
$\pi$ has a quotient, containing a unique irreducible subrepresentation
$\sigma_{b,c,a}^\pm$
and
a unique irreducible quotient  
$L(\ds{\frac{1}{2}}{a} \rtimes \sigma_{b,c}^\pm )$.   
\end{proof}
By Theorem \ref{contra}, we have
\begin{corollary}
\label{kor_-bc_pola_a}
    Induced representation
    $\ds{-b}{c}\times \ds{-a}{-\frac{1}{2}}\rtimes \sigma$
    has exactly
    two irreducible subrepresentations and 
    one irreducible quotient. We have an exact sequence
    \begin{align*}
        &\sigma_{b,c,a}^+ 
        + 
        L(\ds{-b}{c}\times \ds{\frac{1}{2}}{a}\times \sigma)
        +
        \sigma_{b,c,a}^-
        \longrightarrow
        \\
        &\ds{-b}{c}\times \ds{-a}{-\frac{1}{2}}\rtimes \sigma
        \\
        &\textrm{\ }\qquad \qquad \textrm{/}
        (
                L(\ds{\frac{1}{2}}{a}\rtimes \sigma_{b,c}^+)
            +
                L(\ds{\frac{1}{2}}{a}\rtimes \sigma_{b,c}^-)
        )
        \\
        &\longrightarrow
        L(\ds{-b}{c}\rtimes \sigma_a).
    \end{align*}
\end{corollary}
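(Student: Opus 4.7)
The plan is to derive this corollary as the formal $\wedge$-dual of Proposition \ref{prop_-cb_pola_a}, so essentially no new representation-theoretic input is required beyond what is already in hand. Let $\pi = \ds{-c}{b}\times \ds{\frac{1}{2}}{a}\rtimes \sigma$, whose filtration is described by Proposition \ref{prop_-cb_pola_a}. I would apply the contravariant exact functor $\wedge$ of Theorem \ref{contra} to this entire exact sequence and read off the claim.

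The first step is the computation $\pi^\wedge \cong \ds{-b}{c}\times \ds{-a}{-\frac{1}{2}}\rtimes \sigma$. By Theorem \ref{contra} we have $\pi^\wedge \cong \widetilde{\ds{-c}{b}} \times \widetilde{\ds{\frac{1}{2}}{a}} \rtimes \sigma$, and since $\rho$ is self-dual (Proposition 2.4 of \cite{tadic:red-par-ind}, noted at the start of Section \ref{sect:2}) one has $\widetilde{\ds{x}{y}} = \ds{-y}{-x}$, yielding the asserted form. Every irreducible representation is fixed by $\wedge$, so the composition factors appearing in Proposition \ref{prop_-cb_pola_a} --- namely $L(\ds{-b}{c}\rtimes \sigma_a)$, the three middle factors $\sigma_{b,c,a}^+$, $L(\ds{-b}{c}\times \ds{\frac{1}{2}}{a}\rtimes \sigma)$, $\sigma_{b,c,a}^-$, and the two top factors $L(\ds{\frac{1}{2}}{a}\rtimes \sigma_{b,c}^\pm)$ --- remain the composition factors of $\pi^\wedge$.

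The second step is to use that $\wedge$, being contravariant and exact, reverses the arrows in the exact sequence of Proposition \ref{prop_-cb_pola_a}: a subrepresentation becomes a quotient and vice versa, while the middle piece stays in the middle. Thus the unique irreducible subrepresentation $L(\ds{-b}{c}\rtimes \sigma_a)$ of $\pi$ dualizes to the unique irreducible quotient of $\pi^\wedge$, the cosocle $L(\ds{\frac{1}{2}}{a}\rtimes \sigma_{b,c}^+)+L(\ds{\frac{1}{2}}{a}\rtimes \sigma_{b,c}^-)$ of $\pi$ dualizes to the socle of $\pi^\wedge$ (giving the two irreducible subrepresentations predicted by the corollary), and the middle layer is unchanged. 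Rewriting the resulting filtration as an exact sequence of the form $M \to \pi^\wedge / \mathrm{socle} \to L(\ds{-b}{c}\rtimes \sigma_a)$ produces precisely the sequence in the corollary.

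There is essentially no obstacle here: the only small verifications are the segment duality identity $\widetilde{\delta([\nu^x\rho,\nu^y\rho])} = \delta([\nu^{-y}\rho,\nu^{-x}\rho])$ (which uses the self-duality of $\rho$) and the observation that the three middle factors, each being irreducible and self-dual under $\wedge$, remain in the middle layer. Everything else is formal from Theorem \ref{contra} applied to Proposition \ref{prop_-cb_pola_a}.
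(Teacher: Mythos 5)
Your proposal is correct and follows exactly the paper's route: the paper derives this corollary from Proposition~\ref{prop_-cb_pola_a} in one line by invoking Theorem~\ref{contra}, which is precisely the dualization argument you spell out. The only thing to note is that the paper leaves all the verifications you mention (self-duality of $\rho$, the segment identity, arrow reversal) implicit, whereas you make them explicit.
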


%%%%%%%%%%%%%%%%
%%%%%%%%%%%%%%%%%%%%%%%%%%%%%%%%%%%%%%%%%%%%%%%%%%%%%%%%%%

\section{Composition series of 
$\ds{-a}{c}\rtimes\sigma_{b}$}
\label{kompozicijski3}

Here we determine composition series of 
the kernel $K_3\cong \ds{-c}{a}\rtimes\sigma_{b}$ from Section \ref{dekompozicija1}.
First we determine 
subrepresentations.

\begin{proposition} 
\label{lema-jedinstvena-diskretna-podrep}
We have a unique subrepresentation
\[
\sigma_{b,c,a}^+ 
\hookrightarrow 
\ds{-a}{c}\rtimes \sigma_b.
\]
\end{proposition}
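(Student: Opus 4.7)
\emph{Proof proposal.} My plan is to combine Frobenius reciprocity with the Jacquet module formula \eqref{jacquet-langlandsov-kvocijent}. Theorem \ref{muic-A3} already provides the embedding $\sigma_{b,c,a}^+ \hookrightarrow \ds{-a}{c}\rtimes \sigma_b$ and identifies the four composition factors with multiplicity one as $\sigma_{b,c,a}^+$, $L(\ds{-a}{c}\rtimes \sigma_b)$, $L(\ds{-a}{b}\rtimes \sigma_c)$, and $L(\ds{-b}{c}\rtimes \sigma_a)$. It therefore suffices to rule out each of the three Langlands quotients as a subrepresentation: by Frobenius reciprocity any subrepresentation $\pi \hookrightarrow \ds{-a}{c}\rtimes \sigma_b$ must satisfy $\ds{-a}{c}\otimes \sigma_b \leq \mu^*(\pi)$, so the task reduces to verifying $\ds{-a}{c}\otimes \sigma_b \not\leq \mu^*(\pi)$ for each candidate.

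For $\pi = L(\ds{-b}{c}\rtimes \sigma_a)$ and $\pi = L(\ds{-a}{b}\rtimes \sigma_c)$ the verification is quick. Applying \eqref{jacquet-langlandsov-kvocijent}, the right-hand tensor factors in $\mu^*(\pi)$ have the form $\sigma_a$ or $L(\ds{i+1}{j}\rtimes\sigma_a)$ in the first case, and analogously with $\sigma_c$ in the second. The discrete series $\sigma_a$ and $\sigma_c$ have Jordan blocks $(2a+1,\rho)$ and $(2c+1,\rho)$ distinct from $(2b+1,\rho)$, while the Langlands quotients $L(\ds{i+1}{j}\rtimes\sigma_\ast)$ are strictly non-tempered because the sum forces $j \geq i+1$, making the segment nonempty. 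Hence $\sigma_b$ cannot arise as a right factor.

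For $\pi = L(\ds{-a}{c}\rtimes \sigma_b)$ the right factor genuinely can be $\sigma_b$, so the argument shifts to the left $GL$-factor. By \eqref{jacquet-langlandsov-kvocijent} only the first sum contributes terms with right factor exactly $\sigma_b$, giving left factors $L(\ds{-i}{a},\ds{i+1}{c})$ for $\frac{1}{2} \leq i \leq c$. A short case distinction will show none of these equals the essentially square-integrable $\ds{-a}{c}$: when $i > a$ the two segments are unlinked, so the product is irreducible and its cuspidal support has a gap between $\nu^a\rho$ and $\nu^{i+1}\rho$; when $i = a$ the product $\ds{-a}{a}\times \ds{a+1}{c}$ has $\ds{-a}{c}$ as its unique irreducible subrepresentation but $L(\ds{-a}{a},\ds{a+1}{c})$ as its distinct Langlands quotient; when $\frac{1}{2}\leq i < a$ the element $\nu^a\rho$ appears twice in the cuspidal support, incompatible with the single segment $\ds{-a}{c}$; and when $i = c$ the second segment is empty, yielding $\ds{-c}{a}\neq \ds{-a}{c}$.

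Combined with the multiplicity-one statement for $\sigma_{b,c,a}^+$ coming from Theorem \ref{muic-A3}, these three exclusions will pin down the socle as exactly $\sigma_{b,c,a}^+$. The main obstacle is the enumeration for $L(\ds{-a}{c}\rtimes \sigma_b)$, where I must verify carefully that the essentially square-integrable $\ds{-a}{c}$ cannot appear among the irreducible $GL$-factors produced by \eqref{jacquet-langlandsov-kvocijent}; the other two Langlands quotients are dispatched quickly by Jordan-block and temperedness considerations.
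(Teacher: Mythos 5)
Your overall strategy---show that each Langlands quotient composition factor $\pi$ of $\ds{-a}{c}\rtimes\sigma_b$ fails to satisfy $\ds{-a}{c}\otimes\sigma_b \leq \mu^*(\pi)$, using Frobenius reciprocity---is a legitimate route in principle, and differs from the paper's approach (the paper embeds $\ds{-a}{c}\rtimes\sigma_b$ into $\psi=\ds{\frac{1}{2}}{b}\times\ds{-a}{c}\rtimes\sigma$ and appeals to the socle information of Proposition \ref{lema-ulaganje-tri-reprezentacije} together with multiplicity-one facts about $\psi$, rather than computing Jacquet modules of the Langlands quotients directly). However, your execution contains a genuine error.

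The central problem is that formula \eqref{jacquet-langlandsov-kvocijent} is stated, and derived from \cite{segment}, only for $L(\delta([\nu^{-a}\rho,\nu^c\rho]);\sigma)$ with $\sigma\in\textrm{Irr}_{cusp}G$. You apply it verbatim to $L(\ds{-b}{c}\rtimes\sigma_a)$, $L(\ds{-a}{b}\rtimes\sigma_c)$ and $L(\ds{-a}{c}\rtimes\sigma_b)$, where the second factor is a (non-cuspidal) discrete series. There is no such formula available in the paper for these cases, and the claim you extract from it---that the right-hand tensor factors of $\mu^*(L(\ds{-b}{c}\rtimes\sigma_a))$ are only $\sigma_a$ or $L(\ds{i+1}{j}\rtimes\sigma_a)$---is in fact false: Lemma \ref{lema-jacq-Lbc-a} shows $\mu^*(L(\ds{-b}{c}\rtimes\sigma_a))\geq \ds{\frac{1}{2}}{a}\otimes L(\ds{-b}{c}\rtimes\sigma)$, whose right factor is of neither form. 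The same misapplication undermines the "first sum only" analysis you give for $L(\ds{-a}{c}\rtimes\sigma_b)$. Worse, for $\pi=L(\ds{-b}{c}\rtimes\sigma_a)$ the representation $\ds{-a}{c}\otimes\sigma_b$ genuinely does occur in $\mu^*(\ds{-b}{c}\rtimes\sigma_a)$ (take $k=a$, $j=c-b$, $i=c-a$ in \eqref{komnozenje}); ruling it out of $\mu^*(L(\ds{-b}{c}\rtimes\sigma_a))$ requires a multiplicity count showing this single occurrence is absorbed by $\mu^*(\sigma_{b,c,a}^+)$, a nontrivial step your sketch omits entirely. Only the case $\pi=L(\ds{-a}{c}\rtimes\sigma_b)$ can be dispatched without further computation, by the standard fact that the Langlands quotient of a proper standard module $\ds{-a}{c}\rtimes\sigma_b$ is never a subrepresentation of it; the other two cases, as you have written them, do not go through.
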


\begin{proof}
Compare
Theorem \ref{muic-A3}
and Proposition 
\ref{lema-ulaganje-tri-reprezentacije}
with 
\[
\ds{-a}{c}\rtimes \sigma_b
\hookrightarrow
\ds{\frac{1}{2}}{b} \times
\ds{-a}{c}\rtimes \sigma.
\] 
\end{proof}

Now we determine position of
$L(\ds{-b}{c}\rtimes \sigma_a)$.

\begin{lemma}
\label{multipliciteti-u-pola-c-ab+}
    We have in $R(G)$, with maximum multiplicities
    \begin{align*}
    \ds{\frac{1}{2}}{c}
        \times\ds{\frac{1}{2}}{b} \rtimes \sigma_a
    &\geq \sigma_{b,c,a}^+ +\sigma_{b,c,a}^- 
        +L(\ds{-b}{c}\rtimes \sigma_a),
    \\
    \ds{\frac{1}{2}}{c}\rtimes \sigma_{a,b}^+
    &\geq
    \sigma_{b,c,a}^+ +0\cdot \sigma_{b,c,a}^- 
        +L(\ds{-b}{c}\rtimes \sigma_a). 
    \end{align*}
\end{lemma}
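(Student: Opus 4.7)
The plan is to prove the two inequalities separately, each via an embedding combined with a Jacquet module computation.

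For the first inequality, the starting point is $\ds{-b}{c}\hookrightarrow\ds{\frac{1}{2}}{b}\times\ds{\frac{1}{2}}{c}\cong\ds{\frac{1}{2}}{c}\times\ds{\frac{1}{2}}{b}$, so inducing with $\sigma_a$ yields $\ds{-b}{c}\rtimes\sigma_a\hookrightarrow\ds{\frac{1}{2}}{c}\times\ds{\frac{1}{2}}{b}\rtimes\sigma_a$. Proposition \ref{druga} decomposes the left side in $R(G)$ as $\sigma_{b,c,a}^{+}+\sigma_{b,c,a}^{-}+L(\ds{-b}{c}\rtimes\sigma_a)$, giving the inequality. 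To verify the multiplicities are all exactly $1$, I would compute $\mu^{*}$ of the big representation via \eqref{komnozenje} and check that the discriminating Jacquet factors $\ds{-a}{b}\otimes\sigma_c$ (detecting $\sigma_{b,c,a}^{+}$ by \eqref{jacq-diskretne}), $\ds{\frac{1}{2}}{a}\otimes\sigma_{b,c}^{-}$ (detecting $\sigma_{b,c,a}^{-}$ by Lemma \ref{lema-pola-a-bc-tvrdnja}), and one for $L(\ds{-b}{c}\rtimes\sigma_a)$ (obtained from \eqref{jacquet-langlandsov-kvocijent}) each appear with multiplicity one.

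For the second inequality I would first use Lemma \ref{lema-diskretna-podreprezentacija} to get $\sigma_{a,b}^{+}\leq\ds{\frac{1}{2}}{b}\rtimes\sigma_a$ in $R(G)$, hence by exactness of parabolic induction
\[
\ds{\frac{1}{2}}{c}\rtimes\sigma_{a,b}^{+}\leq\ds{\frac{1}{2}}{c}\times\ds{\frac{1}{2}}{b}\rtimes\sigma_a,
\]
which bounds the multiplicities in $\ds{\frac{1}{2}}{c}\rtimes\sigma_{a,b}^{+}$ from above by those of the first inequality. Three things remain: (i) show $\sigma_{b,c,a}^{+}$ appears, using its characteristic Jacquet factor $\ds{-a}{b}\otimes\sigma_c$ and checking it occurs in $\mu^{*}(\ds{\frac{1}{2}}{c}\rtimes\sigma_{a,b}^{+})$; (ii) show $L(\ds{-b}{c}\rtimes\sigma_a)$ appears, using the factor $\ds{-c}{b}\otimes\sigma_a$ coming from the embedding $L(\ds{-b}{c}\rtimes\sigma_a)\hookrightarrow\ds{-c}{b}\rtimes\sigma_a$, which arises in $\mu^{*}(\ds{\frac{1}{2}}{c}\rtimes\sigma_{a,b}^{+})$ by combining $\ds{\frac{1}{2}}{b}\otimes\sigma_a\leq\mu^{*}(\sigma_{a,b}^{+})$ (from Frobenius and Lemma \ref{lema-diskretna-podreprezentacija}) with \eqref{komnozenje} and rearranging the resulting linked segments; (iii) rule out $\sigma_{b,c,a}^{-}$ by showing $\ds{\frac{1}{2}}{a}\otimes\sigma_{b,c}^{-}\nleq\mu^{*}(\ds{\frac{1}{2}}{c}\rtimes\sigma_{a,b}^{+})$, which by Lemma \ref{lema-pola-a-bc-tvrdnja} is sufficient, and which follows from the fact that every $\sigma'\leq\mu^{*}(\sigma_{a,b}^{+})$ inherits the M\oe glin--Tadi\'c sign $\epsilon(2b+1,\rho)=+1$, whereas $\sigma_{b,c}^{-}$ carries $\epsilon(2b+1,\rho)=-1$.

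The main obstacle will be the sign-preservation argument in step (iii): one must enumerate $\delta'\otimes\sigma'\leq\mu^{*}(\sigma_{a,b}^{+})$ via \eqref{jacquet-segment} and check across all combinations with \eqref{komnozenje} that the $+$ sign on the $(2b+1,\rho)$-component is preserved, precluding $\sigma_{b,c}^{-}$ as a subquotient. This sign tracking parallels the case analysis in Lemma \ref{cuvanje_predznaka} and, once complete, combined with (i) and (ii), yields the full lemma.
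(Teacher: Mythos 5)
Your overall plan — get existence from the embedding $\ds{-b}{c}\rtimes\sigma_a\hookrightarrow\ds{\frac{1}{2}}{c}\times\ds{\frac{1}{2}}{b}\rtimes\sigma_a$ (resp.\ from $\ds{\frac{1}{2}}{c}\rtimes\sigma_{a,b}^+\leq\ds{\frac{1}{2}}{c}\times\ds{\frac{1}{2}}{b}\rtimes\sigma_a$), and pin down the multiplicities by locating a distinguishing Jacquet factor with multiplicity one in $\mu^*$ of the ambient representation — is the same as the paper's. But your choice of discriminating factor for $\sigma_{b,c,a}^+$, namely $\ds{-a}{b}\otimes\sigma_c$, does not work: it appears with multiplicity at least two in $\mu^*(\ds{\frac{1}{2}}{c}\times\ds{\frac{1}{2}}{b}\rtimes\sigma_a)$. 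In the formula \eqref{komnozenje} (double-indexed), the term $\ds{-a}{b}\otimes\sigma_c$ arises from at least two different index choices: $(r=c-a,\ s=0,\ i=b+\frac{1}{2},\ j=b+\frac{1}{2},\ \delta'\otimes\sigma'=1\otimes\sigma_a)$, giving the right-hand side $\ds{a+1}{c}\rtimes\sigma_a\geq\sigma_c$, and $(r=c-a,\ s=0,\ i=b+\frac{1}{2},\ j=b-a,\ \delta'\otimes\sigma'=\ds{\frac{1}{2}}{a}\otimes\sigma)$, giving $\ds{a+1}{c}\times\ds{\frac{1}{2}}{a}\rtimes\sigma\geq\sigma_c$; both contribute $\ds{-a}{b}$ on the left. (Further terms come from the symmetric choice $i=b-a$.) This is consistent with the fact that $\ds{-a}{b}\otimes\sigma_c$ also lies in $\mu^*(\sigma_{a,b,c}^-)$ by \eqref{jacq-diskretne}, so it cannot serve as a uniqueness certificate for $\sigma_{b,c,a}^+$.

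The paper instead uses $\ds{\frac{1}{2}}{a}\otimes\sigma_{b,c}^\pm$ (via \eqref{lema-pola-a-bc-f1}) uniformly for both discrete series, and $\ds{\frac{1}{2}}{a}\otimes L(\ds{-b}{c}\rtimes\sigma)$ (via Lemma~\ref{lema-jacq-Lbc-a}) for the Langlands quotient. The segment $\ds{\frac{1}{2}}{a}$ on the left forces $r=c+\frac{1}{2}$, $s=0$, $i=b+\frac{1}{2}$, $j=0$, $\delta'=\ds{\frac{1}{2}}{a}$, $\sigma'=\sigma$ uniquely, reducing the multiplicity question to $\sigma_{b,c}^\pm$ (resp.\ $L(\ds{-b}{c}\rtimes\sigma)$) inside $\ds{\frac{1}{2}}{c}\times\ds{\frac{1}{2}}{b}\rtimes\sigma$, which is handled by Lemma~\ref{lemma-diskr-u-dva-seg-kusp}. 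For the second inequality, the same factor $\ds{\frac{1}{2}}{a}\otimes\cdot$ forces $\delta'=\ds{\frac{1}{2}}{a}$ hence, by \eqref{jacquet-segment}, $\sigma'=\sigma_b$, so that the exclusion of $\sigma_{b,c,a}^-$ reduces to $\sigma_{b,c}^-\nleq\ds{\frac{1}{2}}{a}\rtimes\sigma_b$ (Lemma~\ref{lema-diskretna-podreprezentacije}) — this makes precise and short what you describe loosely as a ``sign-preservation'' argument. You should switch to the factor $\ds{\frac{1}{2}}{a}\otimes\cdot$ throughout.
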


\begin{proof}
    By Proposition \ref{druga}, 
    $\sigma_{b,c,a}^\pm$ appears in the first formula. For the multiplicity,
    by \eqref{komnozenje}, consider
    $0\leq s\leq r \leq c+\frac{1}{2}$, $0\leq j\leq i \leq b+\frac{1}{2}$,
    $\delta'\otimes \sigma'\leq \mu^*(\sigma_a)$, and
    \begin{equation*}
    \begin{split}
    \ds{r-c}{-\frac{1}{2}}
    \times \ds{c-s+1}{c}
    \times
    \ds{i-b}{-\frac{1}{2}}
    \\
    \times
    \ds{b-j+1}{b}
     \times \delta'   
    \otimes 
    \ds{c+1-r}{c-s}
    \times \ds{b+1-i}{b-j}
    \rtimes \sigma'.
    \end{split}
    \end{equation*}   
By \eqref{lema-pola-a-bc-f1} we search for
$\ds{\frac{1}{2}}{a}\otimes \sigma_{b,c}^\pm$.
Now  $r=c+\frac{1}{2}$, $s=0$, $i=b+\frac{1}{2}$, $j=0$, so
$\delta'\otimes \sigma'=\ds{\frac{1}{2}}{a}\otimes \sigma$, and apply \eqref{lemma-diskr-u-dva-seg-kusp-f1}.
For $L(\ds{-b}{c}\rtimes \sigma_a)$
use Lemma \ref{lema-jacq-Lbc-a} to search for 
$\ds{\frac{1}{2}}{a}\otimes L(\ds{-b}{c}\rtimes \sigma)$ in the same way.
For the second equation consider
$0\leq s\leq r \leq c+\frac{1}{2}$, 
    $\delta'\otimes \sigma'\leq \mu^*(\sigma_{a,b}^+)$ and
\begin{equation*}
    \begin{split}
    \ds{r-c}{-\frac{1}{2}}
    \times \ds{c-s+1}{c}
     \times \delta'   
    \otimes 
    \ds{c+1-r}{c-s}
    \rtimes \sigma'.
    \end{split}
    \end{equation*}  
Look for  $\ds{\frac{1}{2}}{a}\otimes \sigma_{b,c}^\pm$.
So  $r=c+\frac{1}{2}$, $s=0$, and
$\delta'=\ds{\frac{1}{2}}{a}$. In \eqref{jacquet-segment} we have 
$i=-\frac{1}{2}$ and $j=b$, so $\sigma'= \sigma_b $. But
$\sigma_{b,c}^-\nleq \ds{\frac{1}{2}}{a}\rtimes \sigma_b$, by
Lemma \ref{lema-diskretna-podreprezentacija}. 
For $L(\ds{-b}{c}\rtimes \sigma_a)$, use already proved and
$\mu^*(\sigma_{a,b}^+)\geq \ds{\frac{1}{2}}{a}\otimes \sigma_b$.
\end{proof}

\begin{lemma}
\label{ulaganje-ac-b-u-pola-c-ab+}
We have an embedding
\[
\ds{-a}{c}\rtimes \sigma_b
\hookrightarrow
\ds{\frac{1}{2}}{c}\rtimes \sigma_{a,b}^+.
\]    
\end{lemma}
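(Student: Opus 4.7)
The plan is to exhibit a natural morphism $\phi:\ds{-a}{c}\rtimes \sigma_b \to \ds{\frac{1}{2}}{c}\rtimes \sigma_{a,b}^+$ and then verify injectivity using the uniqueness of the irreducible subrepresentation of the source. Define $\phi$ as the composition
\[
\ds{-a}{c}\rtimes \sigma_b \hookrightarrow \ds{\frac{1}{2}}{c}\times \ds{-a}{-\frac{1}{2}}\rtimes \sigma_b \twoheadrightarrow \ds{\frac{1}{2}}{c}\rtimes \sigma_{a,b}^+,
\]
where the first map comes from the standard segment inclusion $\ds{-a}{c}\hookrightarrow \ds{\frac{1}{2}}{c}\times \ds{-a}{-\frac{1}{2}}$, and the second is induced by the epimorphism $\ds{-a}{-\frac{1}{2}}\rtimes \sigma_b \twoheadrightarrow \sigma_{a,b}^+$ obtained by applying Theorem \ref{contra} to the embedding $\sigma_{a,b}^+\hookrightarrow \ds{\frac{1}{2}}{a}\rtimes \sigma_b$ of Lemma \ref{lema-diskretna-podreprezentacija}. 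By exactness of parabolic induction, the kernel of the second arrow is $\ds{\frac{1}{2}}{c}\rtimes L(\ds{\frac{1}{2}}{a}\rtimes \sigma_b)$.

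To show $\phi$ is injective, I would invoke Proposition \ref{lema-jedinstvena-diskretna-podrep}: $\ds{-a}{c}\rtimes \sigma_b$ has $\sigma_{b,c,a}^+$ as its unique irreducible subrepresentation, so $\ker\phi$ is either zero (what we want) or contains $\sigma_{b,c,a}^+$. In the latter case $\sigma_{b,c,a}^+$ would appear as a composition factor of $\ds{\frac{1}{2}}{c}\rtimes L(\ds{\frac{1}{2}}{a}\rtimes \sigma_b)$. I would rule this out by a multiplicity comparison: Lemma \ref{multipliciteti-u-pola-c-ab+} gives multiplicity one of $\sigma_{b,c,a}^+$ in $\ds{\frac{1}{2}}{c}\rtimes \sigma_{a,b}^+$; and tracking the Jacquet-quotient $\ds{\frac{1}{2}}{a}\otimes \sigma_{b,c}^+ \leq \mu^*(\sigma_{b,c,a}^+)$ from Lemma \ref{lema-pola-a-bc-tvrdnja} through the formula \eqref{komnozenje}, one sees that the total multiplicity of $\sigma_{b,c,a}^+$ in $\ds{\frac{1}{2}}{c}\times \ds{-a}{-\frac{1}{2}}\rtimes \sigma_b$ is also one, so nothing is left over for the kernel.

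The main obstacle is the last Jacquet-module multiplicity count. It is routine bookkeeping, but one must argue carefully that the cuspidal-support constraints (positions of $\nu^{\pm b}\rho$, $\nu^{\pm c}\rho$, and $\nu^{-a}\rho,\ldots,\nu^{-1/2}\rho$) force a single configuration of the summation indices in \eqref{komnozenje}, in the style of the proofs of Lemmas \ref{multipliciteti-u-pola-c-ab+} and \ref{jacquet-diskretne-u-malo-vecoj}. Once that is settled, the exact sequence coming from $\ds{-a}{-\frac{1}{2}}\rtimes \sigma_b \twoheadrightarrow \sigma_{a,b}^+$ accounts for the single copy of $\sigma_{b,c,a}^+$ on the quotient side, leaving the kernel $\sigma_{b,c,a}^+$-free, and injectivity of $\phi$ follows.
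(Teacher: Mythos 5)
Your proof follows essentially the same route as the paper's: compose the segment embedding $\ds{-a}{c}\rtimes\sigma_b\hookrightarrow\ds{\frac{1}{2}}{c}\times\ds{-a}{-\frac{1}{2}}\rtimes\sigma_b$ with the surjection onto $\ds{\frac{1}{2}}{c}\rtimes\sigma_{a,b}^+$, then use the uniqueness of the irreducible subrepresentation $\sigma_{b,c,a}^+$ of the source (Proposition \ref{lema-jedinstvena-diskretna-podrep}) together with a multiplicity-one count of $\sigma_{b,c,a}^+$ in the middle and target representations (via Lemma \ref{multipliciteti-u-pola-c-ab+}, Theorem \ref{muic-A3}, and the Jacquet-module tag $\ds{\frac{1}{2}}{a}\otimes\sigma_{b,c}^+$ of Lemma \ref{lema-pola-a-bc-tvrdnja}) to force the kernel to vanish. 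This matches the paper's argument in both structure and the lemmas invoked.
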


\begin{proof}
    By Lemma \ref{lema-diskretna-podreprezentacija}
    we have composition of an embedding and an epimorphism
    \begin{align*}
    \ds{-a}{c}\rtimes \sigma_b
    &\hookrightarrow
    \ds{\frac{1}{2}}{c}
            \times
            \ds{-a}{-\frac{1}{2}}\rtimes \sigma_b
    \\
    &\twoheadrightarrow
    \ds{\frac{1}{2}}{c}\rtimes \sigma_{a,b}^+.
    \end{align*}
Lemmas 
\ref{lema-diskretna-podreprezentacija} and
\ref{lema-pola-a-bc-tvrdnja}, 
Theorem 
\ref{muic-A3},
and Lemma 
\ref{multipliciteti-u-pola-c-ab+}, 
imply that all representations here have 
$\sigma_{b,c,a}^+$ as a subquotient, with multiplicity one.
By Proposition \ref{lema-jedinstvena-diskretna-podrep}, 
$\sigma_{b,c,a}^+$
is a unique irreducible subrepresentation of 
$\ds{-a}{c}\rtimes \sigma_b$. The claim follows.
\end{proof}

\begin{lemma}
\label{ulaganje-bc-a/bca-upola-c-ab}
    We have an embedding
    \[
        \ds{-b}{c}\rtimes \sigma_a /\sigma_{b,c,a}^- 
    \hookrightarrow 
        \ds{\frac{1}{2}}{c} \rtimes \sigma_{a,b}^+.
    \]
\end{lemma}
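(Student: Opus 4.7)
The plan is to construct a nonzero map $\phi:\ds{-b}{c}\rtimes \sigma_a \to \ds{\frac{1}{2}}{c}\rtimes \sigma_{a,b}^+$ and show that its kernel equals $\sigma_{b,c,a}^-$. Following the template of Lemma \ref{ulaganje-ac-b-u-pola-c-ab+}, the candidate map is the composition
\[
\ds{-b}{c}\rtimes \sigma_a \hookrightarrow \ds{\frac{1}{2}}{c} \times \ds{-b}{-\frac{1}{2}}\rtimes \sigma_a \twoheadrightarrow \ds{\frac{1}{2}}{c}\rtimes \sigma_{a,b}^+,
\]
where the first arrow uses the standard embedding $\ds{-b}{c}\hookrightarrow \ds{\frac{1}{2}}{c}\times \ds{-b}{-\frac{1}{2}}$, and the second is induced from the epimorphism $\ds{-b}{-\frac{1}{2}}\rtimes \sigma_a \twoheadrightarrow \sigma_{a,b}^+$ obtained by applying Theorem \ref{contra} to the embedding $\sigma_{a,b}^+\hookrightarrow \ds{\frac{1}{2}}{b}\rtimes \sigma_a$ from Lemma \ref{lema-diskretna-podreprezentacija}.

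By Proposition \ref{druga}, $\ds{-b}{c}\rtimes \sigma_a$ has the three composition factors $\sigma_{b,c,a}^+$, $\sigma_{b,c,a}^-$, $L(\ds{-b}{c}\rtimes \sigma_a)$, with both discrete series appearing as subrepresentations and the Langlands quotient being the unique irreducible quotient. Since $\sigma_{b,c,a}^+\not\cong \sigma_{b,c,a}^-$, the socle is $\sigma_{b,c,a}^+\oplus \sigma_{b,c,a}^-$, and the only nonzero proper submodules are $\sigma_{b,c,a}^+$, $\sigma_{b,c,a}^-$, and their direct sum. Hence it suffices to verify (i) $\sigma_{b,c,a}^-\subseteq \ker\phi$ and (ii) $\sigma_{b,c,a}^+\nsubseteq \ker\phi$; these together force $\ker\phi=\sigma_{b,c,a}^-$.

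Claim (i) is immediate from the second equation of Lemma \ref{multipliciteti-u-pola-c-ab+}, which states that $\sigma_{b,c,a}^-$ is not a composition factor of $\ds{\frac{1}{2}}{c}\rtimes \sigma_{a,b}^+$, so the corresponding subrepresentation of $\ds{-b}{c}\rtimes \sigma_a$ must be killed. Claim (ii), the main obstacle, is a multiplicity-one tracing argument. By Theorem \ref{contra}, $\ds{-b}{-\frac{1}{2}}\rtimes \sigma_a$ and $\ds{\frac{1}{2}}{b}\rtimes \sigma_a$ share composition factors, so the middle term $\ds{\frac{1}{2}}{c} \times \ds{-b}{-\frac{1}{2}}\rtimes \sigma_a$ has the same Jordan--H\"older constituents as $\ds{\frac{1}{2}}{c}\times \ds{\frac{1}{2}}{b}\rtimes \sigma_a$. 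Both parts of Lemma \ref{multipliciteti-u-pola-c-ab+} then yield multiplicity one for $\sigma_{b,c,a}^+$ in the middle and the right terms, while Proposition \ref{druga} gives multiplicity one on the left. Consequently the unique copy of $\sigma_{b,c,a}^+$ embedded as a subrepresentation on the left maps via inclusion onto the unique copy sitting in the middle, which in turn projects nontrivially onto the unique copy on the right (since surjections preserve all composition factors counted with multiplicity). Thus $\phi$ is nonzero on $\sigma_{b,c,a}^+$, completing the argument.
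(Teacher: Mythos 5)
Your proof is correct and follows essentially the same approach as the paper: construct the composition $\ds{-b}{c}\rtimes\sigma_a \hookrightarrow \ds{\frac{1}{2}}{c}\times\ds{-b}{-\frac{1}{2}}\rtimes\sigma_a \twoheadrightarrow \ds{\frac{1}{2}}{c}\rtimes\sigma_{a,b}^+$, use Proposition \ref{druga} for the structure of the source, and use Lemma \ref{multipliciteti-u-pola-c-ab+} to pin down the kernel. The paper's own proof is considerably terser, but the content (including the multiplicity-one tracing of $\sigma_{b,c,a}^+$ through the chain and the vanishing of $\sigma_{b,c,a}^-$ in the target) is exactly what you spell out.
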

\begin{proof}
Consider a composition of an embedding and an epimorphism
\begin{align*}
        \ds{-b}{c}\rtimes \sigma_a  
    &\hookrightarrow 
               \ds{\frac{1}{2}}{c}\times\ds{-b}{-\frac{1}{2}}
        \rtimes
                \sigma_a
    \\            
    &\twoheadrightarrow
        \ds{\frac{1}{2}}{c} \rtimes \sigma_{a,b}^+.
\end{align*}
By Proposition \ref{druga}
$\ds{-b}{c}\rtimes  \sigma_a=
\sigma_{b,c,a}^+ +
\sigma_{b,c,a}^- +
L(\ds{-b}{c}\rtimes \sigma_a),
$
with discrete series being subrepresentations.
Apply Lemma \ref{multipliciteti-u-pola-c-ab+}.
\end{proof}

The next proposition give a position of 
$L(\ds{-b}{c}\rtimes \sigma_a)$.
\begin{proposition} \label{prop-ulaganje-d-ac_b_prvo}
    We have an embedding
    \[
        \ds{-b}{c}\rtimes \sigma_a /\sigma_{b,c,a}^- 
    \hookrightarrow 
        \ds{-a}{c} \rtimes \sigma_b.
    \]
\end{proposition}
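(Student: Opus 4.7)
The plan is to follow the template of Lemma \ref{ulaganje-bc-a/bca-upola-c-ab}: exhibit a non-zero morphism
\[
\Phi\colon \ds{-b}{c}\rtimes\sigma_a \longrightarrow \ds{-a}{c}\rtimes\sigma_b
\]
as a composition of an embedding and an induced epimorphism, then identify $\ker\Phi=\sigma^-_{b,c,a}$. The embedding comes from the Zelevinsky segment inclusion $\delta([\nu^{-b}\rho,\nu^c\rho])\hookrightarrow \delta([\nu^{-a}\rho,\nu^c\rho])\times \delta([\nu^{-b}\rho,\nu^{-a-1}\rho])$ induced with $\rtimes\sigma_a$. For the epimorphism I first produce $\sigma_b\hookrightarrow \ds{a+1}{b}\rtimes\sigma_a$: on the standard parabolic with GL part of degree $(b-a)m_\rho$, formula \eqref{jacquet-strogo-pozitivna} shows that the corresponding Jacquet module $r_P(\sigma_b)=\ds{a+1}{b}\otimes\sigma_a$ (it is the unique term of $\mu^*(\sigma_b)$ of that GL size, hence irreducible), so Frobenius reciprocity yields the embedding. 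Dualizing via Theorem \ref{contra} produces $\ds{-b}{-a-1}\rtimes\sigma_a\twoheadrightarrow\sigma_b$ with kernel $L(\ds{a+1}{b}\rtimes\sigma_a)$; inducing with $\ds{-a}{c}\times\,\cdot\,$ then gives the epimorphism.

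Non-vanishing of $\Phi$ is established by tracking the multiplicity of $\sigma^+_{b,c,a}$ through the composition, in the spirit of Lemma \ref{ulaganje-ac-b-u-pola-c-ab+}. By Proposition \ref{druga}, $\sigma^+_{b,c,a}$ sits once in the source as a subrepresentation; by Theorem \ref{muic-A3}, it appears once in the target. The middle $\ds{-a}{c}\times\ds{-b}{-a-1}\rtimes\sigma_a$ fits into a short exact sequence with cokernel $\ds{-a}{c}\rtimes\sigma_b$ and kernel $\ds{-a}{c}\rtimes L(\ds{a+1}{b}\rtimes\sigma_a)$; a cuspidal-support comparison rules out $\sigma^+_{b,c,a}$ as a composition factor of this kernel (the kernel has $\nu^{a+1}\rho,\dots,\nu^b\rho$ with multiplicity two and never $\nu^{-a-1}\rho,\dots,\nu^{-b}\rho$, whereas $\sigma^+_{b,c,a}\hookrightarrow\ds{-b}{c}\rtimes\sigma_a$ has the former once and the latter present). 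Hence $\sigma^+_{b,c,a}$ occurs once in the middle as well. The inclusion carries the unique copy in the source to the unique copy in the middle; the epimorphism then sends this to the unique copy in the target; so $\Phi|_{\sigma^+_{b,c,a}}\ne 0$ and $\Phi\ne 0$.

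For the kernel: since the target has no composition factor $\sigma^-_{b,c,a}$ (Theorem \ref{muic-A3}) while the source does, $\sigma^-_{b,c,a}\subseteq\ker\Phi$; the preceding step excludes $\sigma^+_{b,c,a}$ from $\ker\Phi$. By Proposition \ref{druga} the socle of $\ds{-b}{c}\rtimes\sigma_a$ is $\sigma^+_{b,c,a}\oplus\sigma^-_{b,c,a}$ and its quotient is the irreducible $L(\ds{-b}{c}\rtimes\sigma_a)$, so the only subrepresentation containing $\sigma^-_{b,c,a}$ but not $\sigma^+_{b,c,a}$ is $\sigma^-_{b,c,a}$ itself; thus $\ker\Phi=\sigma^-_{b,c,a}$ and $\Phi$ induces the required embedding. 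The main obstacle is the non-vanishing step, since it requires the cuspidal-support verification that the kernel of the induced epimorphism out of the middle does not contain $\sigma^+_{b,c,a}$; without this, one cannot exclude the possibility that the composition annihilates $\sigma^+_{b,c,a}$ and hence vanishes.
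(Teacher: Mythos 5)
Your strategy is genuinely different from the paper's. The paper does not construct a direct morphism $\ds{-b}{c}\rtimes\sigma_a\to\ds{-a}{c}\rtimes\sigma_b$ at all: instead (using Lemmas \ref{ulaganje-ac-b-u-pola-c-ab+} and \ref{ulaganje-bc-a/bca-upola-c-ab}, both already proved) it embeds both $\ds{-a}{c}\rtimes\sigma_b$ and $\ds{-b}{c}\rtimes\sigma_a/\sigma^-_{b,c,a}$ into the common ambient representation $\ds{\frac{1}{2}}{c}\rtimes\sigma_{a,b}^+$, where Lemma \ref{multipliciteti-u-pola-c-ab+} has established that the relevant subquotients occur with multiplicity one, so the containment in $R(G)$ forces the embedding. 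Your route, building $\Phi$ as a composition of a segment-splitting embedding and an induced epimorphism $\ds{-a}{c}\times\ds{-b}{-a-1}\rtimes\sigma_a\twoheadrightarrow\ds{-a}{c}\rtimes\sigma_b$, is a reasonable alternative in spirit, but it stands or falls on the non-vanishing step.

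That step, as you wrote it, has a genuine gap. You claim that a ``cuspidal-support comparison'' excludes $\sigma^+_{b,c,a}$ from the kernel $\ds{-a}{c}\rtimes L(\ds{a+1}{b}\rtimes\sigma_a)$, on the grounds that the kernel ``has $\nu^{a+1}\rho,\dots,\nu^b\rho$ with multiplicity two and never $\nu^{-a-1}\rho,\dots,\nu^{-b}\rho$, whereas $\sigma^+_{b,c,a}$ has the former once and the latter present.'' But for representations of a classical group the cuspidal support is only defined up to the Weyl group action, which identifies $\nu^x\rho$ with $\nu^{-x}\widetilde\rho=\nu^{-x}\rho$; a sign-sensitive count is not an invariant. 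Indeed, after passing to the Weyl orbit both representations have exactly the same cuspidal support: for every $x\in\{a+1,\dots,b\}$ the multiplicity of $\{\nu^{\pm x}\rho\}$ is two in both $\sigma^+_{b,c,a}$ (once from the $b$-segment, once from the $c$-segment) and in $\ds{-a}{c}\rtimes L(\ds{a+1}{b}\rtimes\sigma_a)$ (once from $\ds{-a}{c}$, once from $\ds{a+1}{b}$), with similar matches in the other ranges. So the cuspidal support distinguishes nothing here, and the argument that $\Phi\ne 0$ is unsupported. To repair this you would need an actual multiplicity computation — for instance, showing via a Tadi\'c-formula count (in the spirit of Lemma \ref{multipliciteti-u-pola-c-ab+}) that a characteristic Jacquet-module term of $\sigma^+_{b,c,a}$, such as $\ds{-b}{c}\otimes\sigma_a$, occurs only once in $\mu^*(\ds{a+1}{b}\times\ds{-a}{c}\rtimes\sigma_a)$ — which is effectively the computation the paper does, just in a different ambient representation.
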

\begin{proof}
    By Proposition \ref{druga} and Theorem \ref{muic-A3}
    we have in $R(G)$
    \[
    \ds{-b}{c}\rtimes \sigma_a \textrm{/} \sigma_{b,c,a}^-
    \leq
    \ds{-a}{c}\rtimes \sigma_b.
    \]
By Lemmas \ref{ulaganje-ac-b-u-pola-c-ab+}
and
\ref{ulaganje-bc-a/bca-upola-c-ab}, these representations embedd into 
$\ds{\frac{1}{2}}{c} \rtimes \sigma_{a,b}^+$, which has irreducible subquotiens of the first representations with multiplicity one, by Lemma \ref{multipliciteti-u-pola-c-ab+}.
\end{proof}

Finally we determine  position of 
$L(\ds{-a}{b}\rtimes \sigma_c)$.

\begin{proposition}
\label{prop-ulaganje-d-ac_b_drugo}
We have an embedding 
\[
    \ds{-a}{b}\rtimes \sigma_c
    \textrm{/}
    \sigma_{a,b,c}^-
\hookrightarrow
    \ds{-a}{c}\rtimes \sigma_b.
\]
   
\end{proposition}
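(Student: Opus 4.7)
The strategy is to follow the blueprint of the just-proved Proposition \ref{prop-ulaganje-d-ac_b_prvo}: realize both $A:=\ds{-a}{b}\rtimes \sigma_c/\sigma_{a,b,c}^-$ and $B:=\ds{-a}{c}\rtimes \sigma_b$ as subrepresentations of a common induced representation $Q$ in which the two composition factors of $A$ each occur with multiplicity one, and then invoke the standard sum-intersection multiplicity argument to conclude $A\hookrightarrow B$.

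In the Grothendieck group the inequality $A\leq B$ is immediate: Proposition \ref{druga} gives $\ds{-a}{b}\rtimes \sigma_c = \sigma_{b,c,a}^+ + \sigma_{a,b,c}^- + L(\ds{-a}{b}\rtimes \sigma_c)$ with the two discrete summands as subrepresentations, so $A$ has composition factors $\sigma_{b,c,a}^+$ and $L(\ds{-a}{b}\rtimes \sigma_c)$, both of which appear in $B$ by Theorem \ref{muic-A3}.

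For $Q$ I take the same representation $\ds{\frac{1}{2}}{c}\rtimes \sigma_{a,b}^+$ used in Proposition \ref{prop-ulaganje-d-ac_b_prvo}. The embedding $B\hookrightarrow Q$ is already Lemma \ref{ulaganje-ac-b-u-pola-c-ab+}, and Lemma \ref{multipliciteti-u-pola-c-ab+} gives multiplicity one for $\sigma_{b,c,a}^+$ in $Q$. I still need (i) the embedding $A\hookrightarrow Q$ and (ii) multiplicity one for $L(\ds{-a}{b}\rtimes \sigma_c)$ in $Q$. For (i), a composition of intertwining operators analogous to that in Lemma \ref{ulaganje-bc-a/bca-upola-c-ab} produces a morphism $\ds{-a}{b}\rtimes \sigma_c\to Q$; since $\sigma_{a,b,c}^-$ does not occur as a composition factor of $Q$ (by an $\epsilon$-parity check in the M\oe glin--Tadi\'c classification, parallel to the vanishing of $\sigma_{b,c,a}^-$ in Lemma \ref{multipliciteti-u-pola-c-ab+}), $\sigma_{a,b,c}^-$ must lie in the kernel, and a Jacquet-module multiplicity count pins the kernel down to exactly $\sigma_{a,b,c}^-$. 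For (ii), Lemma \ref{lema-Labc-jac} provides the distinguished Jacquet subquotient $\ds{\frac{1}{2}}{a}\otimes L(\ds{\frac{1}{2}}{b}\rtimes \sigma_c)$ of $L(\ds{-a}{b}\rtimes \sigma_c)$, and combining \eqref{komnozenje} with Lemma \ref{lema-mult-L-dva-seg} shows this subquotient occurs at most once in $\mu^*(Q)$. The sum-intersection argument from Proposition \ref{prop-ulaganje-d-ac_b_prvo} then yields $A\hookrightarrow B$.

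The hardest part of the plan is (i): constructing the morphism $\ds{-a}{b}\rtimes \sigma_c\to Q$ and identifying its kernel with $\sigma_{a,b,c}^-$. Unlike Lemma \ref{ulaganje-bc-a/bca-upola-c-ab}, where the epimorphism $\ds{-b}{-\frac{1}{2}}\rtimes \sigma_a\twoheadrightarrow \sigma_{a,b}^+$ coming from the contragredient of Lemma \ref{lema-diskretna-podreprezentacija} fit cleanly with the $\sigma_a$ appearing in the source, no analogous direct epimorphism from a $\sigma_c$-based representation onto $\sigma_{a,b}^+$ is available, so a slightly longer chain of intertwining operators must be assembled.
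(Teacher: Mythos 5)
Your general strategy---embed $A$ and $B$ in a common multiplicity-free ambient representation and use the sum-intersection argument---is indeed the right template, but your choice of ambient representation $Q=\ds{\frac{1}{2}}{c}\rtimes \sigma_{a,b}^+$ leaves a genuine gap that you yourself flag and do not close. The problem with $Q$ is exactly what you note at the end: there is no clean morphism from a $\sigma_c$-based induced representation onto $\sigma_{a,b}^+$, so the analogue of the short intertwining chain in Lemma~\ref{ulaganje-bc-a/bca-upola-c-ab} does not exist. Saying that ``a slightly longer chain of intertwining operators must be assembled'' is not a proof of step (i); without that construction there is no embedding $A\hookrightarrow Q$, and the whole argument collapses. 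Likewise step (ii) (multiplicity one of $L(\ds{-a}{b}\rtimes\sigma_c)$ in $Q$) is asserted but not carried out; Lemma~\ref{multipliciteti-u-pola-c-ab+} only covers $\sigma_{b,c,a}^{\pm}$ and $L(\ds{-b}{c}\rtimes\sigma_a)$, not the Langlands quotient you need here.

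The paper circumvents both difficulties by changing the ambient representation to $\psi=\ds{-a}{c}\times\ds{\frac{1}{2}}{b}\rtimes\sigma$ itself. Proposition~\ref{lema-ulaganje-tri-reprezentacije} already supplies $A\hookrightarrow\ds{\frac{1}{2}}{b}\rtimes\sigma_{a,c}^+$, which embeds into $\psi$ since $\sigma_{a,c}^+\hookrightarrow\ds{-a}{c}\rtimes\sigma$ and $\ds{\frac{1}{2}}{b}\times\ds{-a}{c}\cong\ds{-a}{c}\times\ds{\frac{1}{2}}{b}$ (the segments are nested, hence commute); and $B=\ds{-a}{c}\rtimes\sigma_b\hookrightarrow\psi$ directly since $\sigma_b\hookrightarrow\ds{\frac{1}{2}}{b}\rtimes\sigma$. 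The crucial multiplicity-one input is then simply Theorem~\ref{glavni-rezultat}, which says $\psi$ itself is multiplicity free, so no new Jacquet-module estimates are required. Combined with the inequality $A\leq B$ in $R(G)$ (Proposition~\ref{druga} and Theorem~\ref{muic-A3}), the conclusion follows at once. If you want to salvage your write-up, replace $Q$ by $\psi$ and invoke Theorem~\ref{glavni-rezultat}; the sum-intersection argument you describe then goes through verbatim and all the hard steps (i) and (ii) disappear.
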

\begin{proof}
    By Proposition 
    \ref{lema-ulaganje-tri-reprezentacije} we have
  \[
  \ds{-a}{b}\rtimes \sigma_c
  \textrm{/}
  \sigma_{a,b,c}^-
  \hookrightarrow
  \ds{\frac{1}{2}}{b}\rtimes \sigma_{a,c}^+,
  \textrm{ so}
  \]
  \begin{align*}
    \ds{-a}{b}\rtimes \sigma_c \textrm{/}
    \sigma_{a,b,c}^-
        \hookrightarrow
            &\ds{-a}{c}\times\ds{\frac{1}{2}}{b}\rtimes \sigma, 
                \textrm{ and }
    \\
     \ds{-a}{c}\rtimes\sigma_b           
        \hookrightarrow
            &\ds{-a}{c}\times\ds{\frac{1}{2}}{b}\rtimes \sigma.
  \end{align*}
The claim follows, 
since by Theorem
\ref{glavni-rezultat} the representation on the right is multiplicity one, 
and by
Proposition \ref{druga} and Theorem \ref{muic-A3} we have in $R(G)$
\[
\ds{-a}{b}\rtimes \sigma_c \textrm{/}
    \sigma_{b,a,c}^-
\leq 
\ds{-a}{c}\rtimes\sigma_b. 
\]
\end{proof}

Now we write composition series for 
$\ds{-a}{c}\rtimes\sigma_{b}$.

\begin{proposition}
\label{prop_-ac_b}
    Induced representation $\ds{-a}{c}\rtimes\sigma_{b}$ has a unique irreducible subrepresentation. We have an exact sequence
    \begin{align*}
        L(\ds{-b}{c}\rtimes \sigma_a)
        +
        L(\ds{-a}{b}\rtimes \sigma_c)
        &\longrightarrow
        \\
        \ds{-a}{c}\rtimes\sigma_{b}/\sigma_{a,b,c}^+
        &\longrightarrow
        L(\ds{-a}{c}\rtimes\sigma_{b}).
    \end{align*}
\end{proposition}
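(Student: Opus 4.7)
The plan is to build a three-step filtration of $\pi := \ds{-a}{c}\rtimes\sigma_{b}$ whose successive quotients match the four composition factors furnished by Theorem \ref{muic-A3}. These factors are $\sigma_{a,b,c}^+ = \sigma_{b,c,a}^+$, $L(\ds{-b}{c}\rtimes \sigma_a)$, $L(\ds{-a}{b}\rtimes \sigma_c)$, and the Langlands quotient $L(\ds{-a}{c}\rtimes\sigma_b)$, each appearing with multiplicity one. The bottom of the filtration is immediate from Proposition \ref{lema-jedinstvena-diskretna-podrep}, which identifies $\sigma_{a,b,c}^+$ as the unique irreducible subrepresentation of $\pi$.

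For the middle layer I would use the two embeddings established in Propositions \ref{prop-ulaganje-d-ac_b_prvo} and \ref{prop-ulaganje-d-ac_b_drugo}. Let $\pi_1 \subseteq \pi$ be the image of $\ds{-b}{c}\rtimes\sigma_a / \sigma_{b,c,a}^-$ and $\pi_2 \subseteq \pi$ the image of $\ds{-a}{b}\rtimes\sigma_c / \sigma_{a,b,c}^-$. By Proposition \ref{druga}, each $\pi_i$ has length two with socle $\sigma_{a,b,c}^+$ (the discrete series is a subrepresentation both of $\ds{-b}{c}\rtimes\sigma_a$ and of $\ds{-a}{b}\rtimes\sigma_c$) and with quotient $L(\ds{-b}{c}\rtimes\sigma_a)$, respectively $L(\ds{-a}{b}\rtimes\sigma_c)$. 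Set $V = \pi_1 + \pi_2 \subseteq \pi$. Since $\sigma_{a,b,c}^+$ is the unique irreducible subrepresentation of $\pi$, each $\pi_i$ has only $0$, $\sigma_{a,b,c}^+$, $\pi_i$ as subrepresentations; the option $\pi_1 \cap \pi_2 = \pi_i$ would force $\pi_1 = \pi_2$, contradicting that their Langlands quotients differ. Hence $\pi_1 \cap \pi_2 = \sigma_{a,b,c}^+$, and inclusion--exclusion in the Grothendieck group yields $[V] = \sigma_{a,b,c}^+ + L(\ds{-b}{c}\rtimes\sigma_a) + L(\ds{-a}{b}\rtimes\sigma_c)$, so $V/\sigma_{a,b,c}^+$ carries exactly the two middle composition factors.

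For the top, since $L(\ds{-a}{c}\rtimes\sigma_b)$ does not occur among the composition factors of $V$, the restriction to $V$ of the canonical surjection $\pi \twoheadrightarrow L(\ds{-a}{c}\rtimes\sigma_b)$ is zero, so $V$ is contained in its kernel; multiplicity one from Theorem \ref{muic-A3} then forces $V$ to equal the kernel. Passing to the quotient by $\sigma_{a,b,c}^+$ yields the claimed exact sequence. The main delicate point is the intersection identity $\pi_1 \cap \pi_2 = \sigma_{a,b,c}^+$: this is what prevents the middle layer from collapsing and rests squarely on the socle uniqueness supplied by Proposition \ref{lema-jedinstvena-diskretna-podrep}; everything else is direct bookkeeping.
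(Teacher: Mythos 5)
Your proof is correct and follows the same strategy the paper intends: invoke Theorem \ref{muic-A3} for the multiplicity-one composition factor list, Proposition \ref{lema-jedinstvena-diskretna-podrep} for the simple socle, and the two embeddings of Propositions \ref{prop-ulaganje-d-ac_b_prvo} and \ref{prop-ulaganje-d-ac_b_drugo} (whose length-two structure comes from Proposition \ref{druga}) to pin down the middle layer. You supply the intersection argument $\pi_1\cap\pi_2=\sigma_{a,b,c}^+$ explicitly, which the paper leaves implicit; note that $\pi_1\cap\pi_2\neq 0$ is automatic because each $\pi_i$ already contains the unique irreducible subrepresentation of $\ds{-a}{c}\rtimes\sigma_b$, so only the case $\pi_1\cap\pi_2=\pi_i$ needs to be excluded as you do.
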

\begin{proof}
    Composition factors are determined by 
    Theorem
    \ref{muic-A3}.
    Positions of irreducible subquotients are determined by
    Propositions
    \ref{druga}, 
    \ref{prop-ulaganje-d-ac_b_prvo}
    and 
    \ref{prop-ulaganje-d-ac_b_drugo}.
\end{proof}

By Theorem \ref{contra}, we have
\begin{corollary}
\label{kor_-ca_b}
    Induced representation $\ds{-c}{a}\rtimes\sigma_{b}$ has a unique irreducible quotient. We have an exact sequence
    \begin{align*}
        L(\ds{-b}{c}\rtimes \sigma_a)
        +
        L(\ds{-a}{b}\rtimes \sigma_c)
        &\longrightarrow
        \\
        \ds{-c}{a}\rtimes\sigma_{b}/
        L(\ds{-a}{c}\rtimes\sigma_{b})
        &\longrightarrow
        \sigma_{a,b,c}^+
        .
    \end{align*}
\end{corollary}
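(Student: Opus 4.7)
The plan is to deduce Corollary \ref{kor_-ca_b} from Proposition \ref{prop_-ac_b} by applying the contravariant exact functor $\wedge$ of Theorem \ref{contra}, since the induced representation in question is precisely the $\wedge$-dual of the one treated in that proposition. First I would use the formula $(\delta_1 \times \cdots \times \delta_k \rtimes \tau)^\wedge \cong \widetilde{\delta_1} \times \cdots \times \widetilde{\delta_k} \rtimes \tau$ together with self-duality of $\rho$ (so that $\widetilde{\delta([\nu^{-a}\rho,\nu^c\rho])} \cong \delta([\nu^{-c}\rho,\nu^a\rho])$) to identify $(\ds{-a}{c}\rtimes\sigma_b)^\wedge \cong \ds{-c}{a}\rtimes\sigma_b$, and observe that every irreducible composition factor named in Proposition \ref{prop_-ac_b}, including the discrete series $\sigma_{a,b,c}^+$ and each of the Langlands quotients, is $\wedge$-fixed.

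Next, I would reformulate Proposition \ref{prop_-ac_b} as a filtration $0 \subsetneq V_1 \subsetneq V_2 \subsetneq V_3 = \ds{-a}{c}\rtimes\sigma_b$ with $V_1 = \sigma_{a,b,c}^+$, $V_2/V_1 \cong L(\ds{-b}{c}\rtimes\sigma_a) + L(\ds{-a}{b}\rtimes\sigma_c)$, and $V_3/V_2 \cong L(\ds{-a}{c}\rtimes\sigma_b)$. Applying the contravariant exact $\wedge$ to the three short exact sequences produced by this filtration reverses the ordering, yielding a filtration $0 \subsetneq (V_3/V_2)^\wedge \subsetneq (V_3/V_1)^\wedge \subsetneq V_3^\wedge \cong \ds{-c}{a}\rtimes\sigma_b$ whose successive quotients, from the bottom up, are $L(\ds{-a}{c}\rtimes\sigma_b)$, then $L(\ds{-b}{c}\rtimes\sigma_a) + L(\ds{-a}{b}\rtimes\sigma_c)$, and finally $\sigma_{a,b,c}^+$. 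Dividing out the bottom subrepresentation $L(\ds{-a}{c}\rtimes\sigma_b)$ produces exactly the three-term exact sequence asserted in the corollary.

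For the uniqueness of the irreducible quotient, the same functor $\wedge$, being contravariant exact and fixing isomorphism classes of irreducibles, sets up a bijection between irreducible subrepresentations of any finite-length $\pi$ and irreducible quotients of $\pi^\wedge$. Proposition \ref{prop_-ac_b} asserts that $\sigma_{a,b,c}^+$ is the unique irreducible subrepresentation of $\ds{-a}{c}\rtimes\sigma_b$, so this bijection immediately identifies $\sigma_{a,b,c}^+$ as the unique irreducible quotient of $\ds{-c}{a}\rtimes\sigma_b$. There is no genuine obstacle in this argument beyond careful bookkeeping of the order reversal produced by the contravariance of $\wedge$ and the $\wedge$-invariance of the pieces; all heavy lifting has already been done in Proposition \ref{prop_-ac_b}.
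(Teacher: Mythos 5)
Your argument is correct and is precisely the paper's own proof: the paper derives Corollary \ref{kor_-ca_b} from Proposition \ref{prop_-ac_b} by the single phrase ``By Theorem \ref{contra}, we have,'' which unwound in detail is exactly your application of the contravariant exact functor $\wedge$ to the filtration of $\ds{-a}{c}\rtimes\sigma_b$, reversing sub/quotient order while fixing the irreducible factors. The only bookkeeping worth flagging is what you already noted: self-duality of $\rho$ gives $\widetilde{\delta([\nu^{-a}\rho,\nu^c\rho])}\cong\delta([\nu^{-c}\rho,\nu^a\rho])$, and Theorem \ref{contra} fixes $\sigma_b$ rather than sending it to a contragredient, so the target really is $\ds{-c}{a}\rtimes\sigma_b$.
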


%%%%%%%%%%%%%%%%%%%%%%%%%%%%%%%%%%%%%%%%%%%

%%%%%%%%%%%%%%%%%%%%%%%%%%%%%%%%%%%%%%%%%%%

\section{The main result}
\label{dekompozicija2}
Here we give the main result, a composition series of the  representation $\psi$.
\begin{theorem} 
Let $\psi=\delta([\nu^{-a}\rho,\nu^c\rho])\times \delta([\nu^\frac{1}{2}\rho,\nu^b\rho])\rtimes \sigma$ and define representations
\begin{align*}
    W_1=&\sigma_{b,c,a}^+ +L(\ds{\frac{1}{2}}{a}\rtimes \sigma_{b,c}^-),
        \\
    W_2=&L(\ds{\frac{1}{2}}{a}\rtimes \sigma_{b,c}^+)
        +
        L(\ds{-a}{b}\rtimes \sigma_c)
        +
        \\
        &L(\ds{\frac{1}{2}}{b}\rtimes \sigma_{a,c}^-)
        +
        L(\ds{-b}{c}\rtimes \sigma_a),
        \\
     W_3=&L(\ds{\frac{1}{2}}{b} \rtimes \sigma_{a,c}^+)+
            L(\ds{-a}{c}\rtimes\sigma_{b})+
        \\
          &\sigma_{b,c,a}^- + 
                            L(\ds{-b}{c}\times
                                \ds{\frac{1}{2}}{a} 
                                    \rtimes \sigma),
        \\
      W_4=&L(\psi).                            
\end{align*}
Then there exists a sequence $\{0\}=V_0\subseteq V_1
\subseteq V_2 \subseteq V_3
\subseteq V_4=\psi$,
such that
\begin{equation*}
V_i/V_{i-1}\cong W_i,\quad  i=1,\ldots,4.
\end{equation*}    
Further,  $W_1$ is chosen to be the largest possible, then $W_2$, and so on.
\end{theorem}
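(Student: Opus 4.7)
The plan is to build the filtration from the bottom up, defining $V_i$ so that $V_i/V_{i-1}$ is the socle of $\psi/V_{i-1}$, and using the intertwining operator diagram of Section~\ref{dekompozicija1} together with the socle/cosocle analyses of $K_1, K_2, K_3$ carried out in Sections~\ref{kompozicijski1}--\ref{kompozicijski3}. Denote by $M_i$ the preimage in $\psi$ of $\ker(f_{i-1}\circ\cdots\circ f_0)$, so that $M_1\cong K_1$, $M_2/M_1\hookrightarrow K_2$, $M_3/M_2\hookrightarrow K_3$, and $\psi/M_3\cong L(\psi)$; the multiplicities determined in Theorem~\ref{glavni-rezultat} will pin down $M_2/M_1$ and $M_3/M_2$ as concrete subrepresentations of $K_2$ and $K_3$.

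First I would set $V_1=\mathrm{soc}(\psi)$ and show $V_1\cong W_1$. Since $K_1\hookrightarrow\psi$ decomposes as the direct sum $(\ds{\p}{b}\rtimes \sigma_{a,c}^+)\oplus(\ds{\p}{b}\rtimes \sigma_{a,c}^-)$, Propositions~\ref{prop-pola-b-ac-+} and~\ref{prop-pola-b-ac--} identify its socle as $\sigma_{b,c,a}^+\oplus L(\ds{\p}{a}\rtimes \sigma_{b,c}^-)\cong W_1$. Maximality requires ruling out every other composition factor from Theorem~\ref{glavni-rezultat} as a candidate subrepresentation: each of these is either the Langlands quotient $L(\psi)$, or it is placed strictly above the socle in the socle filtration of some $K_i$, so the multiplicity-one accounting together with those structural descriptions excludes it from $\mathrm{soc}(\psi)$.

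Next, three summands of $W_2$ appear in the socle of $\psi/V_1$ by inheritance from $K_1/W_1$: by Proposition~\ref{prop-pola-b-ac-+} the new socle of the $+$ summand is the former middle layer $L(\ds{\p}{a}\rtimes \sigma_{b,c}^+)+L(\ds{-a}{b}\rtimes \sigma_c)$, and by Proposition~\ref{prop-pola-b-ac--} the two-step $K_1^-$ contributes its cosocle $L(\ds{\p}{b}\rtimes \sigma_{a,c}^-)$ as a new socle element. The fourth summand $L(\ds{-b}{c}\rtimes \sigma_a)$ is the socle of $K_2$ by Proposition~\ref{prop_-cb_pola_a}, and it lifts to a subrepresentation of $\psi/V_1$ through the embedding $M_2/M_1\hookrightarrow K_2$. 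The socle of $\psi/V_2$ is then $W_3$: $L(\ds{\p}{b}\rtimes \sigma_{a,c}^+)$ is the remaining (now maximal) layer of $K_1^+$; $\sigma_{b,c,a}^-$ and $L(\ds{-b}{c}\times \ds{\p}{a}\rtimes \sigma)$ constitute the middle of $K_2$; and $L(\ds{-a}{c}\rtimes \sigma_b)$ is the socle of $K_3$ by Proposition~\ref{prop_-ac_b}. Finally $\psi/V_3\cong L(\psi)=W_4$ is the unique irreducible (Langlands) quotient.

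The main obstacle is to rigorously verify maximality at each step. Although $W_i\hookrightarrow\psi/V_{i-1}$ can be constructed explicitly through the embeddings above, ruling out further socle elements requires a careful multiplicity argument: the kernel filtration forces $M_2/M_1$ and $M_3/M_2$ to be the specific subrepresentations of $K_2$ and $K_3$ whose composition factors are exactly the complement (in the sense of Theorem~\ref{glavni-rezultat}) of $K_1$ and $L(\psi)$; these subrepresentations, together with the socle/cosocle structures of Propositions~\ref{prop-pola-b-ac-+}--\ref{prop_-ac_b} transferred via the contragredient functor of Theorem~\ref{contra}, then pin down exactly which irreducible summands enter each successive socle layer and no more.
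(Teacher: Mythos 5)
The overall plan—build the filtration layer by layer using the structures of the kernels from Sections~\ref{kompozicijski1}--\ref{kompozicijski3}—is consistent with the paper, but the two most delicate lifting steps are not actually carried out, and your description of how to carry them out would not work as stated.

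First, you try to place $L(\ds{-b}{c}\rtimes\sigma_a)$ into the socle of $\psi/V_1$ via ``the embedding $M_2/M_1\hookrightarrow K_2$.'' But $M_2/M_1=k_2$ is a subrepresentation of $\psi/K_1$, not of $\psi/V_1$, and $V_1\subsetneq K_1$: the surjection $\psi/V_1\twoheadrightarrow\psi/K_1$ goes the wrong way, so a subrepresentation of $\psi/K_1$ does not lift to one of $\psi/V_1$ without further argument. The paper avoids this entirely by using the \emph{other} chain: $L(\ds{-b}{c}\rtimes\sigma_a)$ sits in the middle layer of $H_1=\operatorname{Ker}g_1$, which (unlike $K_2$) is itself a subrepresentation of $\psi$, and $H_1\cap V_1=\sigma_{b,c,a}^+$ is exactly the socle of $H_1$; hence $L(\ds{-b}{c}\rtimes\sigma_a)$ embeds directly into $\psi/V_1$ from $H_1/\sigma_{b,c,a}^+$. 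This two-chain symmetry (\eqref{fka}--\eqref{fhc}) is structural, not cosmetic.

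Second, and more seriously, you assert that $\sigma_{b,c,a}^-$ and $L(\ds{-b}{c}\times\ds{\p}{a}\rtimes\sigma)$ ``constitute the middle of $K_2$'' and hence enter the socle of $\psi/V_2$, but you never address why the length-two object $M=\sigma_{b,c,a}^-+L(\ds{-b}{c}\times\ds{\p}{a}\rtimes\sigma)$ embeds into $\tau=\psi/V_2$ itself. What the chains actually give is $M\hookrightarrow\tau/\zeta$ (from $k_2$) and $M\hookrightarrow\tau/\nu$ (from $h_2$), with $\zeta=L(\ds{\p}{b}\rtimes\sigma_{a,c}^+)$, $\nu=L(\ds{-a}{c}\rtimes\sigma_b)$; neither of these embeddings lifts to $\tau$ automatically. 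The paper's crucial step is the preimage-intersection argument: using multiplicity one of $\psi$ to show $P_\zeta^{-1}(M)\cap P_\nu^{-1}(M)\cong M$ inside $\tau$, which produces the genuine embedding $M\hookrightarrow\tau$. Your proposal contains no analogue of this argument—``the kernel filtration forces\ldots'' and ``pins down exactly which irreducible summands enter each successive socle layer'' are assertions, not proofs, and this is precisely the step where such assertions can fail for modules of length~$\geq2$. Without it the construction of $V_3$ (and hence the maximality claim) is incomplete.
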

\begin{proof}
We use the notation $K_i$, $H_i$, and $f_i$ from Section \ref{dekompozicija1}.
Composition series of $K_1, K_2$ and $K_3$ are determined by Propositions \ref{prop-pola-b-ac-+}, \ref{prop-pola-b-ac--},
\ref{prop_-cb_pola_a}, and Corollary \ref{kor_-ca_b}.
Composition series of $H_1, H_2$ and $H_3$ are determined by Proposition \ref{prop_-ac_b} and 
Corollaries \ref{kor_-bc_pola_a}, 
\ref{kor_-b_-pola_ac-} and \ref{kor_-b_-pola-ac-+}.
For all $i\geq 1$ denote
\begin{align*}
k_i&=K_i  
\cap { Im (f_{i-1}\circ \cdots \circ f_0)  },   
 \quad
h_i=H_i  
\cap{ Im (g_{i-1}\circ \cdots \circ g_0)  }.   
\end{align*}
After subtracting, taking only summands of classes of irreducible representations, with non-negative signs, we have in $R(G)$
\begin{alignat*}{3}
    k_1&=K_1,
    \quad 
    k_2=&
    \lfloor
     K_2-k_1 \rfloor_{R^+_0(G)},
    \quad
    k_3=& \lfloor K_3-k_2 - k_1 \rfloor_{R^+_0(G)},
\\
    h_1&=H_1,
    \quad 
    h_2=&
    \lfloor H_2-h_1 \rfloor_{R^+_0(G)},
    \quad
    h_3=& \lfloor H_3-h_2 - h_1 \rfloor_{R^+_0(G)}.
\end{alignat*}
We have $k_1\cong K_1$ and $h_1\cong H_1$.
Further, calculating composition factors of $k_2$ and $h_2$ and
comparing with composition series of $K_2$ and $H_2$, we see that 
$k_2$ and $h_2$ have exactly two irreducible quotients. Similarly, we determine $K_3$ and $H_3$. So we have
exact sequences, and no irreducible subquotient can go on lower position:
\begin{align}
\label{fka}    
 \begin{split}
 &L(\ds{\frac{1}{2}}{a}\rtimes \sigma_{b,c}^+)
        +
        L(\ds{-a}{b}\rtimes \sigma_c)
        +
        L(\ds{\frac{1}{2}}{b}\rtimes \sigma_{a,c}^-)
        \rightarrow
\\
        &k_1
            /
            (\sigma_{a,b,c}^+ 
                +L(\ds{\frac{1}{2}}{a}\rtimes \sigma_{b,c}^-)
                    \rightarrow
                        L(\ds{\frac{1}{2}}{b}
                            \rtimes \sigma_{a,c}^+).
\end{split}
\\   
\label{fkb}
&k_2/L(\ds{-b}{c}\rtimes \sigma_a)
             \cong 
                \sigma^-_{b,c,a} 
                +
                L(\ds{-b}{c}\times 
                    \ds{\frac{1}{2}}{a}
                        \rtimes \sigma),
\\
\label{fkc}
&k_3=L(\ds{-a}{c}\rtimes \sigma_b),
\\
\label{fha}
    \begin{split}
    &L(\ds{-b}{c}\rtimes \sigma_a)
        +
        L(\ds{-a}{b}\rtimes \sigma_c)
        \rightarrow
        \\
        &h_1/\sigma_{a,b,c}^+
        \rightarrow
        L(\ds{-a}{c}\rtimes\sigma_{b}),
    \end{split}
\\
\label{fhb}  
\begin{split}
&h_2/( 
    L(\ds{\frac{1}{2}}{a} \rtimes \sigma_{b,c}^+)
    +
    L(\ds{\frac{1}{2}}{a} \rtimes \sigma_{b,c}^-)
    )
    \cong
    \\
    &
    \qquad \qquad 
    \sigma_{b,c,a}^-
            + 
    L(\ds{-b}{c}\times
        \ds{\frac{1}{2}}{a} 
            \rtimes \sigma),
\end{split}
\\
\label{fhc}   
&h_3\cong 
    L(\ds{\frac{1}{2}}{b} \rtimes \sigma_{a,c}^+)
    +
    L(\ds{\frac{1}{2}}{b} \rtimes \sigma_{a,c}^-).
\end{align}
We define representations $V_i$, $i=1,\ldots,4$ as follows. 
By \eqref{fka} \( V_1:=W_1
 \hookrightarrow \psi\).
Further, \eqref{fka} and \eqref{fha} show      
\(
 W_2 \hookrightarrow \psi/V_1.
\)
Let $V_2$ be the preimage of $W_2$, in $\psi$. Denote representations
\begin{align*}
    \zeta=&L(\ds{\frac{1}{2}}{b} \rtimes \sigma_{a,c}^+), \\
    \nu=&L(\ds{-a}{c}\rtimes\sigma_{b}), \\
    M=&\sigma_{b,c,a}^- + 
                            L(\ds{-b}{c}\times
                                \ds{\frac{1}{2}}{a} 
                                    \rtimes \sigma), \\
     \tau=&\psi/V_2.
\end{align*}
By \eqref{fka} and \eqref{fha} we have embeddings
\begin{align} \label{dva-ulaganja-u-tau}
    \zeta \hookrightarrow \tau \hookleftarrow \nu.
\end{align}
By \eqref{fka} and \eqref{fkb} we have an embedding and an epimorphism
\[
M \hookrightarrow \tau/\zeta \overset{P_\zeta}{\leftarrow} \tau,
\quad \textrm{ thus }\quad  P_\zeta^{-1}(M)/\zeta \cong M. 
\]
By \eqref{fha} and \eqref{fhb} we have an embedding and an epimorphism
\[
M \hookrightarrow \tau/\nu \overset{P_\nu}{\leftarrow} \tau,
\quad \textrm{ thus }\quad  P_\nu^{-1}(M)/\nu \cong M.
\]
By Proposition \ref{prop-kompozicijski-fakt2}, 
$\psi$ is a multiplicity one, and so is $\tau$ and an embedding
\[
    P_\zeta^{-1}(M)
        /
        (P_\zeta^{-1}(M) \cap P_\nu^{-1}(M))
        \longrightarrow \tau /P_\nu^{-1}(M)
\]
shows that in $R(G)$:
\begin{align*}
  &M+\zeta-P_\zeta^{-1}(M) \cap P_\nu^{-1}(M)\leq \tau-M-\nu,\quad \textrm{ so } 
  \\
  &M+\zeta+\nu \leq P_\zeta^{-1}(M) \cap P_\nu^{-1}(M)
                    +(\tau-M).
\end{align*}
We conclude 
  $  M\cong P_\zeta^{-1}(M) \cap P_\nu^{-1}(M)$,
and have an embedding
\begin{align} \label{ulaganje-M}
    M
    \hookrightarrow \tau.
\end{align}
Combining \eqref{dva-ulaganja-u-tau} and \eqref{ulaganje-M} we have
an embedding
\begin{align*}
W_3 \hookrightarrow \psi/ V_2.
\end{align*}
Let $V_3$ be the preimage of $W_3$ in $\psi$.
We see that in $R(G)$:
\[
\psi\overset{R(G)}{=} W_1+W_2+W_3 +L(\psi).
\]
We proved the filtration formula. Now we show the last claim, about maximality.
Decompositions of $k_1$ and $h_1$, 
\eqref{fka} and \eqref{fha}, show that no irreducible subquotient of $W_2$, can be a subrepresentation of $\psi$. They also show that
$L(\ds{\frac{1}{2}}{b}\rtimes \sigma_{a,c}^+)$ and $L(\ds{-a}{c}\rtimes \sigma_b)$ can not be embedded into $\psi/V_1$. To see the same for factors of $M$, first assume that $\sigma_{b,c,a}^-\hookrightarrow \psi/V_1$. Since
$k_1 /V_1 \hookrightarrow \psi/V_1$, and $k_1$ doesn't contain $\sigma_{b,c,a}^-$,
we obtain 
$\sigma_{b,c,a}^-\hookrightarrow \psi/k_1$. On the other hand
$k_2 \hookrightarrow \psi/k_1$, and $k_2$ contains 
$\sigma_{b,c,a}^-$, but not as a subrepresentation. Since $\psi$ is a multiplicity one, we got a contradiction. Similarly for the other factor of $M$.
\end{proof}

\newpage

\def\cprime{$'$} \def\cprime{$'$}
\providecommand{\bysame}{\leavevmode\hbox to3em{\hrulefill}\thinspace}
\providecommand{\MR}{\relax\ifhmode\unskip\space\fi MR }
\providecommand{\MRhref}[2]{
  \href{http://www.ams.org/mathscinet-getitem?mr=#1}{#2}
}
\providecommand{\href}[2]{#2}

\end{document}